\newtheorem{theoremalph}{Theorem}
\renewcommand\thetheoremalph{\Alph{theoremalph}}
\newtheorem*{Main Theorem}{Main Theorem}
\newtheorem{Theorem}{Theorem}[section]
\newtheorem*{Theorem A}{Theorem A}
\newtheorem*{Theorem A'}{Theorem A'}
\newtheorem*{Theorem B'}{Theorem B'}
\newtheorem{Definition}[Theorem]{Definition}
\newtheorem{Proposition}[Theorem]{Proposition}
\newtheorem{Lemma}[Theorem]{Lemma}
\newtheorem{Remark}[Theorem]{Remark}
\newtheorem{Corollary}[Theorem]{Corollary}
\newtheorem*{Claim}{Claim}
\newtheorem{Claim-numbered}[Theorem]{Claim}
 \def\NN{{\mathbb N}} 
 \def\RR{{\mathbb R}} 
\def\TT{{\mathbb T}}
 \def\ZZ{{\mathbb Z}}
\def\Si{\Sigma}
\def\La{\Lambda}
\def\cA{{\cal A}}  \def\cG{{\cal G}} \def\cM{{\cal M}} 
   \def\cN{{\cal N}} 
\def\cC{{\cal C}}  \def\cI{{\cal I}}  
   \def\cP{{\cal P}} \def\cV{{\cal V}}
    \def\cW{{\cal W}}
\def\cF{{\cal F}}  \def\cL{{\cal L}}
\newcommand{\Id}{\operatorname{Id}}
\newcommand{\diff}{{\operatorname{Diff}}}
\def\diff{\operatorname{Diff}}
\def\dim{\operatorname{dim}}
\def\diam{\operatorname{Diam}}
\def\supp{\operatorname{Supp}}
\def\ud{\operatorname{d}}
\def\e{{\varepsilon}}
\def\det{\operatorname{det}}
\def\Leb{\operatorname{Leb}}
\def\homeo{\operatorname{Homeo}}
\begin{document}

\title{Empirical measures of partially hyperbolic attractors}

\author{Sylvain Crovisier,  Dawei Yang and  Jinhua Zhang\footnote{S.C and J.Z were partially supported by  the ERC project 692925 \emph{NUHGD}.
D.Y  was partially supported by NSFC 11671288, 11822109,  11790274.}}


\maketitle


\begin{abstract}
In this paper, we study the limit measures of the empirical measures of Lebesgue almost every point in the basin of a partially hyperbolic attractor. They are strongly related to a notion named \emph{Gibbs $u$-state}, which can be defined in a large class of diffeomorphisms with less regularity and which is the same as Pesin-Sinai's notion for partially hyperbolic attractors of $C^{1+\alpha}$ diffeomorphisms.

In particular, we prove that for partially hyperbolic $C^{1+\alpha}$ diffeomorphisms with one-dimensional center,
and for Lebesgue almost every point: (1) the center Lyapunov exponent is well defined, but (2) the sequence of  empirical measures may not converge.

We also give some consequences on \emph{SRB measures} and \emph{large deviations}.

\hspace{-1cm}\mbox
\smallskip

\noindent{\bf Mathematics Subject Classification (2010).} 30C40, 37A35, 37D30, 60F10.
\\
{\bf Keywords.} Entropy along an unstable foliation, Gibbs $u$-state, empirical measure, SRB measure, large deviations, partial hyperbolicity.
\end{abstract}
\section{Introduction}
Let $f$ be a diffeomorphism of a closed manifold $M$. As a general goal of dynamical systems,
we are interested in describing the asymptotic behavior of the orbits of $f$.
In particular, it is expected (see~\cite{Ru4,T1,T2}) that, for most systems and Lebesgue almost every point $x\in M$,
one gets convergence as $n\to +\infty$ of
the sequence of \emph{empirical measures}
$$m_{x,n}:=\frac 1n \sum_{i=0}^{n-1} \delta_{f^i(x)},$$
although there exist examples of systems where for Lebesgue a.e. $x$ the limit does not exist
(Bowen has built such example inside the wandering set of a surface diffeomorphism, see~\cite{T1};
another example occurs inside the quadratic family on the interval, see~\cite{HK}).
In a second step, one may wonder if the set of limit measures (associated to points in a set with full Lebesgue measure)
is finite -- this is clearly not satisfied when $f$ is the identity map. This leads to the problem of the existence of a \emph{physical measure},
i.e.  an  $f$-invariant probability measure $\mu$
such that the set $\{x\in M:\; m_{x,n}\to \mu\}$ has positive Lebesgue measure.

In 1970s, Y. Sinai, D. Ruelle and R. Bowen ~\cite{Sinai,Bow75,Ru3}
have shown that uniformly hyperbolic $C^{1+\alpha}$ diffeomorphisms may be described by finitely many
physical measures satisfying additional geometrical properties and called \emph{SRB measures},
whereas these systems in general also possess many invariant probability measures that are not observable.
In this paper, we discuss systems satisfying a weaker form of hyperbolicity, called partial hyperbolicity.

\subsection{Empirical measures of partially hyperbolic attractors with 1D-center}
A diffeomorphism $f$ is $C^{1+\alpha}$, for $\alpha>0$, if it is $C^1$ and both $Df$ and $Df^{-1}$ are $\alpha$-H\"older.
Let $\Lambda$ be an \emph{attracting} compact set, i.e. it admits an open neighborhood $U$
such that $f(\overline{U})\subset U$ and $\La=\bigcap_{n\in\mathbb{N}} f^n(U)$.
Its attracting \emph{basin} is the open set $\bigcup_{n\in \mathbb{Z}}f^n(U)$.
The set $\Lambda$ is \emph{partially hyperbolic} if there exists an invariant dominated
splitting $T_\Lambda M=E^{ss}\oplus E^{c}\oplus E^{uu}$ of the tangent space over $\Lambda$,
such that $E^{uu}$ is uniformly expanded and $E^{ss}$ is uniformly contracted, see Section~\ref{Sec:definition-partial}.
(One of the extremal bundles may be degenerate and the splitting is then denoted by
$E^{cs}\oplus E^{uu}$ or $E^{ss}\oplus E^{cu}$).
When $\Lambda$ is attracting, the bundle $E^{cs}$ extends uniquely
as a continuous invariant bundle over a neighborhood of $\Lambda$.

Most of the works addressing existence of physical measures in the partially hyperbolic
setting assume  that the bundle $E^{cs}$ (or $E^{cu}$) satisfies some weak form of contraction
(or expansion), see for instance~\cite{ABV, BV}.
In this paper we first consider the case where the center $E^c$ is one-dimensional
and allow mixed behavior.
We prove that Lebesgue almost every point has a well defined center Lyapunov exponent.
We recall that a $f$-invariant probability measure $\mu$ is \emph{hyperbolic}
if for $\mu$-almost every $x\in M$ and any non-zero vector $v\in T_xM$,
the quantity $\frac 1 n \log \|Df^n(x).v\|$ does not converge to $0$ as $n\to \infty$.

\begin{theoremalph}~\label{thm-I}
Let $f$ be a $C^{1+\alpha}$ diffeomorphism of a closed manifold and $\La$ be an attracting set with a partially hyperbolic splitting
$T_\La M=E^{ss}\oplus E^c\oplus E^{uu}$ such that $\dim(E^c)=1$.
Then for Lebesgue almost every point $x$ in a neighborhood $U$ of $\Lambda$
the following limit exists:
	$$\lambda^c(x):=\lim_{n\rightarrow+\infty}\frac{1}{n}\log\|Df^n|_{E^{cs}(x)}\|.$$
Moreover, if $\lambda^c(x)\neq 0$, then $x$ is in the basin of a hyperbolic and ergodic physical measure.
\end{theoremalph}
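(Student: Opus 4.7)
My plan is to reduce the question to the behaviour of the integral $\int \log\|Df|_{E^c}\|\, d\mu$ over Gibbs $u$-states $\mu$, exploiting $\dim(E^c)=1$. Since $\Lambda$ is an attractor, every orbit in $U$ enters any fixed neighbourhood of $\Lambda$; I combine this with the standard fact (revisited in earlier sections of the paper) that for Lebesgue-a.e.\ $x\in U$ every weak-$*$ accumulation point of $(m_{x,n})_n$ is a Gibbs $u$-state supported on $\Lambda$. The continuous extension of $E^{cs}$ to a small neighbourhood of $\Lambda$ provides, via the quotient $E^{cs}/E^{ss}$, a continuous identification of the $1$-dimensional bundle $E^c$ there, so $\phi:=\log\|Df|_{E^c}\|$ is a continuous function on that neighbourhood. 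The dominated splitting $E^{ss}\oplus E^c$ then yields
\[
\log\|Df^n|_{E^{cs}(x)}\|\;=\;\max\Bigl(\log\|Df^n|_{E^{ss}(x)}\|,\;\sum_{i=0}^{n-1}\phi(f^i(x))\Bigr)+O(1),
\]
with the $E^{ss}$ term bounded above by $-cn$ for some uniform $c>0$. Convergence of $\tfrac{1}{n}\log\|Df^n|_{E^{cs}(x)}\|$ thus reduces to convergence of $\int\phi\,dm_{x,n}$.

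Next I would classify ergodic Gibbs $u$-states by the sign of $\lambda^c_\mu := \int\phi\, d\mu$. If $\lambda^c_\mu>0$, then $\mu$ is a classical SRB measure; by the $C^{1+\alpha}$ hypothesis there are only finitely many such and each is physical. If $\lambda^c_\mu<0$, then $\mu$ admits, $\mu$-a.e., a Pesin stable manifold of dimension $\dim(E^{ss})+1$ tangent to $E^{ss}\oplus E^c$, and the absolute continuity of the strong-unstable foliation (which uses $C^{1+\alpha}$) shows that $\mu$ is again physical, with again finitely many in total. The remaining ergodic Gibbs $u$-states have $\lambda^c_\mu=0$ and contribute $0$ to $\int\phi\, d\mu$.

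The heart of the argument is then the following dichotomy for Lebesgue-a.e.\ $x\in U$: either $(m_{x,n})_n$ converges to a single hyperbolic, ergodic physical measure $\mu$, so that $\lambda^c(x)=\lambda^c_\mu\neq 0$ and the ``moreover'' clause holds; or every accumulation of $(m_{x,n})_n$ is an integral of zero-exponent ergodic Gibbs $u$-states, whence $\int\phi\, dm_{x,n}\to 0$ and $\lambda^c(x)=0$. The main obstacle is ruling out that, on a Lebesgue-positive set, $(m_{x,n})_n$ accumulates on both a hyperbolic and a zero-exponent Gibbs $u$-state simultaneously, as that would destroy convergence of $\lambda^c(x)$. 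I plan to settle this by a trapping argument: once the orbit enters a sufficiently small neighbourhood of the support of a hyperbolic physical measure $\mu$, Pesin theory along $E^{ss}\oplus E^c$ (in the $\lambda^c_\mu<0$ case) or along $E^{uu}\oplus E^c$ (in the $\lambda^c_\mu>0$ case), combined with absolute continuity of the corresponding invariant foliation, forces $m_{x,n}\to\mu$ for Lebesgue-a.e.\ $x$ landing in the Pesin basin. Lebesgue-a.e.\ $x$ that avoids every such trap then accumulates only on zero-exponent Gibbs $u$-states, completing the proof. This trapping step is the most delicate, and the $C^{1+\alpha}$ regularity enters crucially through the absolute continuity of invariant foliations.
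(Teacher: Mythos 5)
Your overall strategy mirrors the paper's: reduce $\lambda^c(x)$ to Birkhoff averages of the continuous function $\phi=\log\|Df|_{E^c}\|$ (using domination of $E^{ss}$ by $E^c$), note that limit measures of Lebesgue-typical points are Gibbs $u$-states, split according to the sign of the center exponent of the ergodic components, and ``trap'' the orbit in the basin of a hyperbolic physical measure whenever a nonzero exponent occurs. But two of your steps have genuine gaps. First, you assert that an ergodic Gibbs $u$-state with $\lambda^c_\mu>0$ is automatically an SRB measure. This is false in general: for a product of a volume-preserving Anosov diffeomorphism of $\mathbb{T}^2$ with a weak north--south circle map, the product of volume with the Dirac mass at the repelling fixed point is a Gibbs $u$-state with positive center exponent whose entropy misses that exponent, so it is not SRB. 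What rescues the limit measures of Lebesgue-typical points is the \emph{additional} generalized Pesin inequality $h_\mu(f)\geq\int\log|\det Df|_{E^c\oplus E^{uu}}|\,d\mu$, valid for such measures by the appendix of the paper applied to the dominated splitting $E^{ss}\oplus(E^c\oplus E^{uu})$; only the combination of this inequality with the Gibbs $u$-state equality upgrades them to SRB measures (Corollary~\ref{c.SRB}). Your sketch never invokes this second inequality, and without it the positive-exponent components need not be physical and your dichotomy collapses.

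Second, your trapping step in the case $\lambda^c_\mu<0$ is not merely ``delicate'': the justification you offer is circular. When the center exponent is negative, the stable manifolds of $\mu$-generic points are Pesin manifolds tangent to $E^{ss}\oplus E^c$, so the basin of $\mu$ is a positive-measure set that is nowhere essentially open near $\supp(\mu)$. Absolute continuity of this lamination shows the basin has positive Lebesgue measure; it does not show that Lebesgue-a.e.\ point whose orbit merely accumulates on $\supp(\mu)$ actually meets the lamination, which is what you need. The paper fills this with a dedicated tool: a density basis along unstable discs obtained by pulling back fixed-size unstable balls (a dynamical Vitali covering argument, Theorem~\ref{thm.u-density-point}), followed by a contradiction comparing the density of the complement of the basin in forward images of unstable balls with the positive transverse measure of the stable lamination over an unstable disc inside $\supp(\mu)$ (Proposition~\ref{p.physical-negative}). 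Some substitute for this argument is indispensable. (Your claim that there are only finitely many hyperbolic ergodic SRB measures is also unjustified at this level of generality, but it is not needed: the paper works with one measure at a time, using the essentially open basins produced in the positive-exponent case and the countable reduction in Proposition~\ref{p.physical-negative}.)
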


When $\lambda^c(x)=0$, the sequence of empirical measures of $x$ may not converge,
as the following example shows. Contrary to Bowen's example mentioned above,
the dynamics is non-wandering. 

\begin{theoremalph}~\label{thm-II}
There exists a transitive diffeomorphism $f\in\diff^\infty(\mathbb{T}^3)$ with
a partially hyperbolic splitting $T\mathbb{T}^3=E^{ss}\oplus E^c\oplus E^{uu}$, $\dim(E^c)=1$,
such that Lebesgue almost every point $x\in\mathbb{T}^3$ has a dense orbit and   its sequence of empirical measures
	$\frac 1 n \sum_{i=0}^{n-1} \delta_{f^i(x)}$ does not converge.
\end{theoremalph}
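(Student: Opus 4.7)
The plan is to construct $f$ as a $C^\infty$ perturbation of a derived-from-Anosov diffeomorphism on $\mathbb{T}^3$, starting from a linear Anosov map $L$ whose three eigenvalues $0<\lambda_{ss}<1<\lambda_c<\lambda_{uu}$ are real and distinct. Any small $C^1$ perturbation keeps the partially hyperbolic splitting $E^{ss}\oplus E^c\oplus E^{uu}$ with $\dim(E^c)=1$, so that conclusion will be automatic. Working with a perturbation that remains $C^1$-close to $L$ outside two small regions preserves the minimality of the strong foliations $\cF^{uu}$ and $\cF^{ss}$, hence the transitivity of $f$; density of Lebesgue almost every orbit then follows from the minimality of $\cF^{uu}$ together with absolute continuity.

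Inside small neighborhoods of two distinct fixed points $p$ and $q$, I would perform Ma\~n\'e-style modifications in the center direction, producing periodic orbits with center Lyapunov exponents of opposite signs $\lambda^c(p)<0<\lambda^c(q)$. The modifications should be arranged to create a robust heterodimensional cycle between $\orb(p)$ and $\orb(q)$, possibly through a blender, so that every strong unstable disk eventually intersects both a neighborhood of $p$ and a neighborhood of $q$, and the dynamics transports unstable disk pieces back and forth between them.

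By design the two hyperbolic ergodic measures supported on $\orb(p)$ and $\orb(q)$ should be the only ergodic invariant measures with hyperbolic center exponent relevant to the asymptotics of almost every orbit, but neither is a physical measure: one has negative, the other positive center exponent, so neither is simultaneously SRB for $f$ and for $f^{-1}$. Applying Theorem~\ref{thm-I} then forces $\lambda^c(x)=0$ for Lebesgue almost every $x$: if a positive Lebesgue measure set had $\lambda^c\neq 0$, these points would lie in the basin of a hyperbolic physical measure, which the construction rules out.

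Finally, the crucial geometric step is a Bowen-eye-type oscillation argument. Because the return times to a small neighborhood of $p$ or of $q$ grow logarithmically in the distance to the corresponding periodic orbit, and because of the heterodimensional cycle, a typical orbit visits the two periodic orbits alternately with residence times growing exponentially. Along one subsequence of iterates the empirical measure $m_{x,n}$ is then concentrated near $\delta_{\orb(p)}$, while along another subsequence it concentrates near $\delta_{\orb(q)}$, producing at least two distinct accumulation points of $(m_{x,n})$. The main obstacle is promoting this oscillation from a single strong unstable disk (where it can be established rather directly via the heterodimensional mechanism) to Lebesgue almost every point of $\mathbb{T}^3$; this propagation step relies on absolute continuity of $\cF^{uu}$ and on the fact that the Lebesgue measure class is carried by the $u$-conditionals of the Gibbs $u$-states along unstable plaques.
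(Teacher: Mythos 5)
Your construction cannot work as described, and the obstruction comes from this very paper's Theorem~\ref{thm-III}: for a partially hyperbolic attracting set (here all of $\mathbb{T}^3$), \emph{every} accumulation point $\mu$ of the empirical measures of Lebesgue-almost every point must satisfy $h_\mu(f,\cF^u)=\int\log|\det(Df|_{E^{uu}})|\,\ud\mu>0$, hence has positive entropy. The Dirac measures $\delta_{\orb(p)}$, $\delta_{\orb(q)}$ on your periodic orbits have zero entropy, so no measure in a small weak-$*$ neighborhood of them can be a Gibbs $u$-state, and consequently $m_{x,n}$ cannot ``concentrate near $\delta_{\orb(p)}$'' along any subsequence for a.e.\ $x$. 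Bowen's eye evades this because the relevant points are wandering and there is no unstable lamination forcing the entropy formula; in your transitive, nonwandering DA setting the mechanism is forbidden. Relatedly, your quantitative claim that residence times near $p$ and $q$ grow exponentially and alternate for a typical orbit is asserted, not proved, and is exactly the kind of statement that fails once orbits are recurrent: a heterodimensional cycle between two saddles of different index does not by itself produce divergence of Birkhoff averages (Kan-type and most DA examples with mixed center behavior have convergent empirical measures a.e.). The deduction ``$\lambda^c(x)=0$ a.e.'' via Theorem~\ref{thm-I} is fine but does not help: zero center exponent is compatible with convergence.

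The paper's actual route is quite different and is designed precisely so that the two competing limit measures are genuine Gibbs $u$-states. One compactifies the skew translation $g(x,t)=(A(x),t+\phi(x))$ over a volume-preserving Anosov map $A$ of $\mathbb{T}^2$, with $\int\phi\,\ud m=0$ and $\phi$ not a coboundary, into a diffeomorphism of $\mathbb{T}^3$ having two invariant tori $\mathbb{T}^2\times\{0\}$ and $\mathbb{T}^2\times\{1/2\}$ carrying the only two ergodic Gibbs $u$-states $\nu_1=m\times\delta_0$ and $\nu_2=m\times\delta_{1/2}$ (each with $h_{\nu_i}(f,\cF^u)=\int\log|\det Df|_{E^{uu}}|\,\ud\nu_i$). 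Non-convergence then comes not from a cycle but from the almost sure oscillation of the Birkhoff sums $S_n\phi(x)$: the functional central limit theorem and the full support of the Wiener measure give, for a.e.\ $x$, arbitrarily large times $n$ at which $S_j\phi(x)\ge\sqrt{\sigma n}$ for a proportion $(1-\rho)$ of the $j\le n$, pushing the fiber coordinate close to $1/2$ for most of the orbit segment and making $m_{x,n}$ close to $\nu_2$ (and symmetrically for $\nu_1$). Transitivity and density of a.e.\ orbit come from Guivarc'h's ergodicity criterion for the infinite-measure skew product, not from minimality of strong foliations. If you want to salvage a perturbative construction, you would need the two alternating limit objects to be nontrivial Gibbs $u$-states (e.g.\ supported on $u$-saturated sets of positive entropy), and you would need a quantitative mechanism replacing the invariance principle; as written, the proposal has a fatal gap.
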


\subsection{Gibbs u-states revisited}
We are aimed at studying the properties of the limits $\mu$ of the empirical measures $m_{x,n}$
(before discussing their uniqueness).
For instance, when $f$ is $C^{1+\alpha}$ for some $\alpha>0$  and preserves a volume $\mu$, Pesin ~\cite{P}  has shown that the entropy of $\mu$
is equal to the sum of its positive Lyapunov exponents.
This can be generalized as follows (see~\cite{CCE,CaY} and the appendix~\ref{Appendix-A}):

\paragraph{Generalized Pesin's inequality.}
\emph{For any $C^1$ diffeomorphism $f$, if $\Lambda$ is an invariant compact set with a dominated splitting $E\oplus F$,
then for Lebesgue almost every point $x$ satisfying $\omega(x)\subset \Lambda$,
the entropy of any limit measure $\mu$ of the sequence $\frac 1 n \sum_{i=0}^{n-1} \delta_{f^i(x)}$
is bounded from below:
\begin{equation}\label{e.pesin}
h_\mu(f)\geq \int \log|\det Df|_F|d\mu.
\end{equation}}
We stress that we only require $f$ to be $C^1$ and
the measure $\mu$ is not known a priori.
\begin{Remark}
Note that it has the following interesting consequence: for any $f\in \diff^1(M)$ and any fixed point $p$, if $|\det(Df(p)|>1$, then the Dirac measure $\delta_p$ is not physical.
\end{Remark}

When $\Lambda$ is an attracting set with a partially hyperbolic splitting
$T_\Lambda M=E^{cs}\oplus E^{uu}$, it contains each strong unstable leaf $\cF^{u}(x)$ of its points
and therefore is the support of a lamination denoted as $\cF^{u}$.
To any invariant measure $\mu$ supported on $\La$, 
an \emph{entropy $h_\mu(f,\cF^{u})$ along the strong unstable lamination $\cF^u$} is associated (see Definition~\ref{def.partial-entropy}):
in this setting this has been introduced by Yang in~\cite{Yan16} and for $C^2$-diffeomorphisms it coincides with Ledrappier-Young entropy~\cite{LY1} along the
invariant bundle $E^{uu}$.
Our next result shows that it satisfies an equality similar to Pesin's formula.

\newcounter{theorembisbis}
\setcounter{theorembisbis}{\value{theoremalph}}
\begin{theoremalph}\label{thm-III}
For any $C^1$ diffeomorphism $f$, if $\Lambda$ is an attracting set with a partially hyperbolic splitting $E^{cs}\oplus E^{uu}$, then there exists a small neighborhood $U$ of $\La$ such that  for Lebesgue almost every point $x\in U$, any limit $\mu$
of the sequence $\{\frac 1 n \sum_{i=0}^{n-1} \delta_{f^i(x)}\}$ satisfies
\begin{equation}\label{e.u-gibbs}
h_\mu(f,\cF^{u})=\int\log{|\det(D f|_{E^{uu}})|}\ud\mu,
\end{equation}
where $\cF^u$ is the strong unstable lamination on $\Lambda$ tangent to $E^{uu}$.
\end{theoremalph}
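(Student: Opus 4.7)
I would deduce the equality~(\ref{e.u-gibbs}) by matching two one-sided bounds on the partial entropy $h_\mu(f,\cF^u)$: a Ruelle-type upper bound valid for every $f$-invariant measure supported on $\La$, and a lower bound produced by applying the generalized Pesin inequality with $E:=E^{cs}$ and $F:=E^{uu}$. Since $\La$ is attracting, one first takes $U$ to be a trapping neighborhood ($f(\overline U)\subset U$ and $\La=\bigcap_n f^n(U)$), so that every $x\in U$ satisfies $\omega(x)\subset\La$ and any accumulation point $\mu$ of $m_{x,n}$ is automatically $f$-invariant and supported on $\La$.

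For the upper bound I would work directly from the definition of $h_\mu(f,\cF^u)$ of~\cite{Yan16}: take a measurable partition $\xi$ subordinate to $\cF^u$, record the monotonicity $\xi\prec f^{-1}\xi$ coming from the expansion of $E^{uu}$ under $f^{-1}$, and estimate the one-step conditional entropy $H_\mu(f^{-1}\xi\mid\xi)$ by the logarithm of the $\cF^u$-leafwise Jacobian. Integrating with respect to $\mu$ gives the bound
\begin{equation*}
h_\mu(f,\cF^u)\le \int \log|\det(Df|_{E^{uu}})|\,d\mu.
\end{equation*}
This part requires only the invariance of $\mu$, the partial hyperbolicity of $\La$, and $C^1$ regularity; no genericity of $x$ is used.

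For the lower bound I would revisit the proof of the generalized Pesin inequality presented in Appendix~\ref{Appendix-A}. The argument there is not a mere entropy comparison: it builds a measurable partition $\xi$ whose atoms are contained in strong unstable plaques, and it uses the absolute continuity of the Lebesgue measure along $\cF^u$ (together with the trapping property to keep the orbit in $U$) to estimate $H_\mu(\xi\mid f^{-n}\xi)$ from below by $\int\log|\det(Df|_{E^{uu}})|\,d\mu$ up to an $n\to\infty$ vanishing error. Since $\xi$ is subordinate to $\cF^u$, this is in fact a lower bound on $h_\mu(f,\cF^u)$, not only on $h_\mu(f)$. Combined with the upper bound above it yields~(\ref{e.u-gibbs}) for Lebesgue almost every $x\in U$ and every limit $\mu$ of $m_{x,n}$.

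The main obstacle is precisely this last refinement: I have to check that the construction in the appendix can be arranged so that the partition $\xi$ has plaques tangent only to $E^{uu}$ (rather than to some larger Oseledets subspace involving directions in $E^{cs}$), and that the expansion rate appearing in the entropy lower bound is the Jacobian along $E^{uu}$ alone. Two related technical points complicate this step: in $C^1$ regularity the strong unstable lamination is only H\"older transversally, so that subordinate measurable partitions have to be constructed by hand and their atoms estimated carefully; and the partial entropy $h_\mu(f,\cF^u)$ is not known to be upper semi-continuous in $\mu$, so the inequality must be produced at the limit measure directly and cannot simply be transported through weak-$\ast$ convergence from the empirical measures $m_{x,n}$.
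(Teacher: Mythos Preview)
Your upper bound is exactly what the paper uses (it is Theorem~\ref{thm.ruelle}, due to~\cite{WWZ}), and your overall plan of matching it against a lower bound for Lebesgue-typical $x$ is the right shape. But there is a genuine gap in the lower-bound half.

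You mischaracterize what Appendix~\ref{Appendix-A} actually does. The proof there does \emph{not} build a measurable partition subordinate to strong unstable plaques; it bounds the volume of the convergent set $C_n(\mu,\eta)$ inside a disc tangent to $\cC^F$ by $c_\e\cdot e^{n(-\int\log|\det Df|_F|\,d\mu+\e)}\cdot\#X_{n,\rho}$, and then controls the cardinality of the separated set $X_{n,\rho}$ by $\exp(n(h_\mu(f)+\e))$ via the variational principle (the lemma quoted from~\cite{KH}). That last step is a \emph{global} count of separated points and produces the full entropy $h_\mu(f)$, not $h_\mu(f,\cF^u)$. There is no partition $\xi$ in Appendix~\ref{Appendix-A} that you could simply ``arrange to have plaques tangent only to $E^{uu}$''. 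Replacing $h_\mu(f)$ by $h_\mu(f,\cF^u)$ in that exponent is precisely the content of Theorem~\ref{thm.volume-control-by-entropy}, and it requires the entire machinery of Sections~\ref{s.measurable-partitions-to-unstable}--\ref{s.volume-estimate}: the construction of finite partitions $\alpha\prec\beta$ approximating a partition $\mu$-subordinate to $\cF^u$ (Theorem~\ref{thm.switch-to-finite}), together with the localization argument (Lemma~\ref{l.error-estimate}) showing that on discs tangent to a sufficiently iterated unstable cone the partitions $\alpha$ and $\beta$ agree, so that the conditional entropies collapse to $h_\mu(f,\cF^u)$ rather than $h_\mu(f)$. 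Your ``main obstacle'' paragraph correctly locates the difficulty but does not resolve it; what you describe as a refinement to check is in fact the core technical contribution of the paper.

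Once the volume estimate with $h_\mu(f,\cF^u)$ is in hand, the paper concludes by a pseudo-physicality argument (Theorem~\ref{thm.relative-pseudo-physical}): if some limit measure $\mu$ had $h_\mu(f,\cF^u)<\int\log|\det Df|_{E^{uu}}|\,d\mu-2\e$, the measure of $\bigcup_{n\ge k}C_n(\mu,\eta)$ in the disc would decay to zero, contradicting that $\mu$ is pseudo-physical relative to the disc. One minor correction: contrary to what you write, the upper semi-continuity of $\mu\mapsto h_\mu(f,\cF^u)$ \emph{is} known in this setting (Theorem~\ref{Thm:usc}, due to~\cite{Yan16}); but the paper's proof of Theorem~\ref{thm-III} does not rely on it, proceeding instead through the volume estimate directly.
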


This immediately gives the following consequence.
\begin{Corollary}\label{coro-I}
Let $f$ be a $C^{1}$ diffeomorphism and $\La$ be an attracting set with a partially
hyperbolic splitting $ E^{cs}\oplus E^{uu}$. Assume that there exists a unique measure $\mu$ on $\La$ satisfying~\eqref{e.u-gibbs},then $\mu$ is a physical measure; moreover its basin has full Lebesgue measure in the basin of $\La$.
\end{Corollary}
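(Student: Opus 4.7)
The plan is to derive this as an almost immediate consequence of Theorem~\ref{thm-III}, combined with a standard compactness argument in the space of probability measures and the quasi-invariance of Lebesgue null sets under $C^1$ diffeomorphisms.

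First, I fix the neighborhood $U$ of $\La$ provided by Theorem~\ref{thm-III} (up to shrinking, I may assume $f(\overline{U})\subset U$, so that for every $x\in U$ one has $\omega(x)\subset \La$). Let $X\subset U$ be the full-Lebesgue-measure set of points $x$ for which every weak-$*$ limit of $\{m_{x,n}\}$ satisfies~\eqref{e.u-gibbs}. Pick $x\in X$. The sequence $\{m_{x,n}\}$ lies in the compact space of Borel probability measures on $\overline{U}$ and any subsequential limit is $f$-invariant by the usual telescoping argument and supported on $\La$ because $\omega(x)\subset \La$. By hypothesis, the only $f$-invariant probability measure on $\La$ satisfying~\eqref{e.u-gibbs} is $\mu$; hence every subsequential limit of $\{m_{x,n}\}$ equals $\mu$. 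Since the ambient space of probability measures is metrizable, this forces $m_{x,n}\to \mu$ in the weak-$*$ topology.

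This shows that the basin $B(\mu):=\{x\in M:\; m_{x,n}\to \mu\}$ contains $X$, and in particular has positive Lebesgue measure; thus $\mu$ is a physical measure. To upgrade to the statement about the basin of $\La$, I use that $B(\mu)$ is $f$-invariant: if $x\in B(\mu)$ then $m_{f(x),n}-m_{x,n}$ tends to zero in total variation, so $f(x)\in B(\mu)$, and similarly $f^{-1}(x)\in B(\mu)$. Since $f$ is a $C^1$ diffeomorphism, $f$ and $f^{-1}$ preserve Lebesgue-null sets, so $U\setminus B(\mu)$ being Lebesgue-null implies $f^n(U)\setminus B(\mu)$ is Lebesgue-null for every $n\in\mathbb{Z}$. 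The basin of $\La$ is $\bigcup_{n\in\mathbb{Z}}f^n(U)$, so $B(\mu)$ has full Lebesgue measure inside it, as claimed.

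There is essentially no obstacle here beyond packaging Theorem~\ref{thm-III}: the only point that needs a line of justification is that convergence along every subsequence to the same limit implies convergence of the whole sequence, which is a standard fact in a compact metrizable space.
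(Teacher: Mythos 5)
Your proof is correct and is exactly the elaboration the paper has in mind: the paper states Corollary~\ref{coro-I} as an immediate consequence of Theorem~\ref{thm-III}, and your argument (all subsequential limits of $m_{x,n}$ are invariant, supported on $\La$, and satisfy~\eqref{e.u-gibbs}, hence equal $\mu$ by uniqueness, hence $m_{x,n}\to\mu$ by compactness, then spread to the whole basin of $\La$ by invariance of ${\rm Basin}(\mu)$ and quasi-invariance of null sets) is the standard way to fill in the details. No gaps.
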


\begin{Remark}
When there is more than one measure satisfying~\eqref{e.u-gibbs}, there may not be any physical measure,
as it is the case in Theorem~\ref{thm-II}.
\end{Remark}

This motivates the following definition.
\begin{Definition}
Let $\cF^u$ be an unstable lamination of a $C^1$-diffeomorphism $f$.
An invariant probability $\mu$ supported on  $\cF^u$ is a \emph{Gibbs u-state} if
it satisfies~\eqref{e.u-gibbs}.
\end{Definition}

When $f$ is $C^{1+\alpha}$, this property is known to be equivalent to
the fact that the disintegrations of $\mu$ along the unstable leaves
are absolutely continuous with respect to the Lebesgue measure along the leaves,
which is the original definition of Gibbs u-state given by Pesin and Sinai
(see for instance~\cite[Chapter 11]{BDV} and the Section~\ref{s.partialentropyalongexpandingfoliation}).
For $C^1$ diffeomorphisms however, an invariant measure may satisfy~\eqref{e.u-gibbs}
without having absolutely continuous disintegrations, see~\cite{RY,CQ,BMOS}.
For $C^1$ diffeomorphisms, the Gibbs u-states satisfy some properties (well-known
for smoother diffeomorphisms):
the set of Gibbs u-states is convex, compact for the weak-$*$ topology
and varies upper semi-continuously with respect to the systems in $C^{1}$-topology, see Section~\ref{s.partialentropyalongexpandingfoliation}.

Under the $C^{1+\alpha}$ smoothness hypothesis, and also assuming that $\Lambda=M$,
Theorem~\ref{thm-III} follows from~\cite[Theorem 11.15]{BDV} and Corollary~\ref{coro-I}
is~\cite[Corollary 2]{D}.
So our main improvements is to provide a different proof with no distortion arguments which applies to the $C^1$-case
and to show how it extends to the basin of $\Lambda$ (where the partially hyperbolic structure
does not exist in general). Proofs of generalized Pesin's formula have been obtained in various $C^1$ settings see~\cite{M,
CQ, Q, CCE, CaY} for instance; in our case we have to work with the entropy along an unstable lamination.
\medskip

If $\La$ is an invariant compact set admitting a partially hyperbolic splitting $E^{cs}\oplus E^{uu}$,
one says that a subset of $TM$ is an \emph{unstable cone field} $\mathcal{C}^{u}$
if there exists a continuous extension $T_UM=E\oplus F$ of $E^{cs}\oplus E^{uu}$
over a neighborhood $U$ of $\Lambda$, and a continous map $\theta\colon U\to (0,+\infty)$
such that for each $x\in U$ the set $\cC^{u}(x)=\cC^u\cap T_xM$ coincides with the cone:
$$\cC^{u}(x):=\{v=v^E+v^F\in E_x\oplus F_x:\;\; \theta\cdot \|v^F\|\geq \|v^E\|\}.$$

This allows to state a more general version of Theorem~\ref{thm-III}
for (not necessarily attracting) unstable laminations, which addresses the limit of empirical measures
for Lebesgue almost every point $x$ in any disc tangent to an unstable cone fields
(rather than almost every point whose forward orbit stays in a neighborhood of $\Lambda$),
see Theorem~\ref{thm-III-bis} in Section~\ref{s.Proofs}.
As a consequence, we prove that the construction of Gibbs u-states for $C^{1+\alpha}$-diffeomorphisms
done by Pesin and Sinai~\cite{PS} can be adapted to $C^1$-diffeomorphisms.

\begin{Corollary}\label{coro-II}
Consider a $C^1$ diffeomorphism $f$, an attracting set $\Lambda$ with a partially hyperbolic splitting $ E^{cs}\oplus E^{uu}$
and an unstable cone field $\cC^u$.
Then there exists a neighborhood $U$ of $\Lambda$ such that
for any probability measure $\Leb^*_{D}$ which is the normalized Lebesgue measure
on a disc $D\subset U$ tangent to $\cC^{u}$,
each limit measure $\mu$ of the sequence
$$\frac{1}{n}\sum_{i=0}^{n-1}f^i_*\Leb^*_{D}$$
satisfies the entropy formula~\eqref{e.u-gibbs}.
\end{Corollary}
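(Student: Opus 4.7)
The plan is to deduce Corollary~\ref{coro-II} directly from the stronger pointwise statement Theorem~\ref{thm-III-bis}, combined with the convexity and closedness of the set of Gibbs u-states. The key observation is the Fubini identity
\begin{equation*}
\mu_n \;:=\; \frac{1}{n}\sum_{i=0}^{n-1} f^i_*\Leb^*_D \;=\; \int_D m_{x,n}\,d\Leb^*_D(x),
\end{equation*}
which expresses each push-forward average as the $\Leb^*_D$-average of the pointwise empirical measures $m_{x,n}=\frac 1n\sum_{i=0}^{n-1}\delta_{f^i(x)}$.

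By Theorem~\ref{thm-III-bis} applied to the disc $D$ (which is tangent to $\cC^u$ and contained in the neighborhood $U$ of $\Lambda$), for $\Leb^*_D$-almost every $x\in D$ every weak-$*$ limit of $(m_{x,n})_{n\geq 1}$ satisfies the entropy formula~\eqref{e.u-gibbs}, hence lies in the set $\cG\subset \cP(M)$ of Gibbs u-states. As recalled in Section~\ref{s.partialentropyalongexpandingfoliation}, $\cG$ is convex and weak-$*$ compact. Fixing a metric $d$ compatible with the weak-$*$ topology, this means $d(m_{x,n},\cG)\to 0$ for $\Leb^*_D$-a.e.\ $x$. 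By Egorov, for each $\varepsilon>0$ there exist a measurable subset $A_\varepsilon\subset D$ with $\Leb^*_D(A_\varepsilon)\geq 1-\varepsilon$ and an integer $N_\varepsilon$ such that $d(m_{x,n},\cG)<\varepsilon$ for every $x\in A_\varepsilon$ and every $n\geq N_\varepsilon$.

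Fix $n\geq N_\varepsilon$. A standard measurable selection then provides, for each $x\in A_\varepsilon$, a probability $\eta_{x,n}\in \cG$ with $d(m_{x,n},\eta_{x,n})<\varepsilon$. The average
\begin{equation*}
\bar\eta_n \;:=\; \frac{1}{\Leb^*_D(A_\varepsilon)}\int_{A_\varepsilon} \eta_{x,n}\,d\Leb^*_D(x)
\end{equation*}
lies in $\cG$ by convexity, and Jensen's inequality applied termwise to $d$ yields $d\bigl(\tfrac{1}{\Leb^*_D(A_\varepsilon)}\int_{A_\varepsilon} m_{x,n}\,d\Leb^*_D,\;\bar\eta_n\bigr)\leq\varepsilon$. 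Combined with $\Leb^*_D(D\setminus A_\varepsilon)\leq\varepsilon$, this gives $d(\mu_n,\cG)\leq C\varepsilon$ for every $n\geq N_\varepsilon$, where $C$ is the $d$-diameter of $\cP(M)$. Taking a weak-$*$ limit $\mu=\lim_k \mu_{n_k}$ and then letting $\varepsilon\to 0$, closedness of $\cG$ forces $\mu\in\cG$, which is exactly the entropy formula~\eqref{e.u-gibbs}.

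The substantive content lies entirely in Theorem~\ref{thm-III-bis}; once the pointwise result is granted, the passage from a.e.\ convergence along $D$ to the convergence of the push-forward averages is a soft argument using Fubini and convexity. The only mildly delicate step is ensuring that $d(m_{x,n},\cG)\to 0$ can be made uniform on a set of $\Leb^*_D$-measure close to $1$ (Egorov) and that the approximating Gibbs u-states $\eta_{x,n}$ can be chosen measurably in $x$, both of which are standard but should be spelled out when the proof is written up.
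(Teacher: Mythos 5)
Your proposal is correct and follows essentially the same route as the paper: the paper also writes $\frac1{n}\sum f^i_*\Leb^*_D$ as the $\Leb^*_D$-average of the pointwise empirical measures, restricts to a subset $D_{k}$ of measure close to $1$ on which $m_{x,n}$ lies uniformly in an $\varepsilon$-neighborhood of the convex compact set $\mathcal M_{\rm u}$ (your Egorov step), and concludes by convexity and compactness of $\mathcal M_{\rm u}$. The only cosmetic difference is that you invoke a measurable selection and Jensen where the paper uses convexity of the $\varepsilon$-neighborhood $\mathcal V_\varepsilon$ directly; both are fine.
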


Another important class of measures related to their observability are SRB measures.
\begin{Definition}
An invariant probability $\mu$ of a $C^1$ diffeomorphism $f$ is an \emph{SRB measure} if
$$h_{\mu}(f)=\int \sum\lambda^+(z)\ud\mu(z),$$
where $\sum\lambda^+(z)$ is the sum of all the positive Lyapunov exponents of $z$
(with multiplicities).
\end{Definition}
For $C^{1+\alpha}$ diffeomorphisms, this is equivalent to require that
the disintegrations of $\mu$ along its unstable manifolds are absolutely continuous
(see~\cite{LY1,Brown} but we will not use this fact).

\begin{Corollary}\label{coro-III}
Consider a $C^1$ diffeomorphism $f$ and an attracting set $\La$ with a partially hyperbolic splitting
$T_\La M=E^{ss}\oplus E^c\oplus E^{uu}$ such that $\dim(E^c)=1$.
Then for Lebesgue almost every point $x$ in a neighborhood of $\Lambda$,
the $\omega$-limit of $x$ supports an SRB measure.
\end{Corollary}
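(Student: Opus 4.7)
The plan is to combine Theorem~\ref{thm-III} (the Gibbs u-state property of limit measures) with the generalized Pesin inequality from the Appendix, using the one-dimensionality of $E^c$ to promote each ergodic Gibbs u-state to an SRB measure. Let $U$ be a neighborhood of $\La$ provided by Theorem~\ref{thm-III}, shrunk so that $f(\overline U)\subset U$ and hence $\omega(x)\subset\La$ for every $x\in U$. For Lebesgue almost every $x\in U$ two things hold simultaneously: (a) every limit measure $\mu$ of the empirical sequence $\{m_{x,n}\}$ is a Gibbs u-state, and (b) for both choices of dominated splitting $E\oplus F\in\{E^{ss}\oplus E^{cu},\ E^{cs}\oplus E^{uu}\}$ the generalized Pesin lower bound $h_\mu(f)\geq \int\log|\det Df|_F|\,d\mu$ holds. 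Fix such an $x$ and any limit measure $\mu$; its support lies in $\omega(x)$.

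Decompose $\mu=\int\mu_y\,d\tilde\mu(y)$ into ergodic components. The partial entropy along $\cF^u$ is affine in the measure and satisfies the general upper bound $h_\nu(f,\cF^u)\leq \int\log|\det Df|_{E^{uu}}|\,d\nu$ for every invariant probability $\nu$, so the Gibbs u-state identity for $\mu$ forces $h_{\mu_y}(f,\cF^u)=\int\log|\det Df|_{E^{uu}}|\,d\mu_y$ for $\tilde\mu$-almost every $y$; hence $\tilde\mu$-almost every ergodic component $\mu_y$ is itself a Gibbs u-state. Since $\dim(E^c)=1$, each such $\mu_y$ carries a single center Lyapunov exponent $\lambda^c(\mu_y)$. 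Partition the parameter space into $A=\{y:\lambda^c(\mu_y)\leq 0\}$ and $B=\{y:\lambda^c(\mu_y)>0\}$.

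Suppose first $\tilde\mu(A)>0$ and pick any $y\in A$. The Gibbs u-state bound $h_{\mu_y}(f)\geq h_{\mu_y}(f,\cF^u)=\int\log|\det Df|_{E^{uu}}|\,d\mu_y$, combined with Ruelle's inequality $h_{\mu_y}(f)\leq \sum\lambda^+(\mu_y)=\int\log|\det Df|_{E^{uu}}|\,d\mu_y$ (using $\lambda^c(\mu_y)\leq 0$), forces $h_{\mu_y}(f)=\sum\lambda^+(\mu_y)$, i.e.\ $\mu_y$ is an SRB measure. Otherwise $\tilde\mu(B)=1$. For $y\in B$, since $\lambda^c(\mu_y)>0$ and $\dim(E^c)=1$, one has $\sum\lambda^+(\mu_y)=\int\log|\det Df|_{E^{cu}}|\,d\mu_y$, so Ruelle's inequality reads $h_{\mu_y}(f)-\int\log|\det Df|_{E^{cu}}|\,d\mu_y\leq 0$ pointwise. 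Integrating against $\tilde\mu$ and invoking the generalized Pesin bound (b) for the splitting $E^{ss}\oplus E^{cu}$ yields the reverse integrated inequality, so the pointwise gap must vanish $\tilde\mu$-almost everywhere; hence $\mu_y$ is SRB for $\tilde\mu$-almost every $y$. In either case $\omega(x)$ supports an SRB measure.

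The main subtlety is the case where every ergodic component of $\mu$ has positive center Lyapunov exponent: individually such a Gibbs u-state need not be SRB (that would require absolute continuity of the disintegrations along the full center-unstable manifolds, which is stronger than the strong-unstable absolute continuity encoded in the Gibbs u-state property), so the argument has to be carried out in the aggregate via the integrated generalized Pesin inequality. The one-dimensionality of $E^c$ is essential to package $\sum\lambda^+(\mu_y)$ as the single log-Jacobian $\int\log|\det Df|_{E^{cu}}|\,d\mu_y$ when $\lambda^c(\mu_y)>0$; in the higher dimensional center case this identification fails and the present method does not close.
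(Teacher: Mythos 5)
Your proposal is correct and follows essentially the same route as the paper's proof (which establishes the stronger Corollary~\ref{c.SRB}): reduce to ergodic components via Corollary~\ref{c.convex-compact}, handle components with nonpositive center exponent by combining the Gibbs $u$-state identity with Ruelle's inequality, and handle the all-positive case by the integrated argument pairing Ruelle's inequality with the generalized Pesin bound for the splitting $E^{ss}\oplus E^{cu}$. The only cosmetic difference is where you split the dichotomy ($\lambda^c\le 0$ versus the paper's $\lambda^c<0$), which does not change the substance.
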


This extends~\cite{CoYo} which proves (using random perturbations)
that for $C^2$ diffeomorphisms, attracting sets that are partially hyperbolic with one-dimensional center support
an SRB measure.

\subsection{Large deviations}
Our approach  can also be used for bounding the large deviations for $C^1$-partially hyperbolic attracting sets with respect to continuous functions. (Theorem~\ref{thm-III} can also be deduced from that result by applying it to a countable and dense subset of $C^0(M,\mathbb{R})$.)

\newcounter{theorembisbisD}
\setcounter{theorembisbisD}{\value{theoremalph}}
\begin{theoremalph}\label{thm-IV}
Let $f$ be a $C^1$-diffeomorphism and
$\La$ be an attracting set with a partially hyperbolic splitting $T_\La M=E^{cs}\oplus E^{uu}$. 
Then there exists a small neighborhood $U$ of $\La$ such that for any continuous function $\varphi: M\to\mathbb{R}$ and  any $\e>0$, there exist $a_\e,b_\e>0$ such that 
	$$\Leb\bigg\{ x\in U: \ud\bigg(\frac{1}{n}\sum_{i=0}^{n-1}\varphi(f^i(x)), I(\varphi)\bigg)\geq\e\bigg\}<a_\e\cdot e^{-n b_\e} \quad\textrm{ for any  $n\in\mathbb{N}$},$$
	$$\text{where}\quad I(\varphi):=\bigg\{\int\varphi\ud\mu:\;\; \textrm{$\mu\in\mathcal{M}_{\rm inv}(\La,f)$ satisfies $h_\mu(f,\cF^u)=\int\log|\det(Df|_{E^{uu}})|\ud\mu$} \bigg\}.$$
\end{theoremalph}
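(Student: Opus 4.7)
The plan is to derive the exponential bound through a compactness-and-covering argument over the space of invariant measures, exploiting the strict entropy gap that distinguishes non-Gibbs-u-states from Gibbs u-states. First, let $\mathcal{G}\subset\mathcal{M}_{\rm inv}(\La,f)$ be the set of Gibbs u-states, so that $I(\varphi)=\{\int\varphi\,\ud\mu:\mu\in\mathcal{G}\}$. The set
$$K_\e:=\Bigl\{\mu\in\mathcal{M}_{\rm inv}(\La,f):\ud\bigl(\int\varphi\,\ud\mu,I(\varphi)\bigr)\ge\e\Bigr\}$$
is weak-$*$ compact and disjoint from $\mathcal{G}$. For every $\mu\in K_\e$ one has $h_\mu(f,\mathcal{F}^u)<\int\log|\det(Df|_{E^{uu}})|\,\ud\mu$; combined with the upper semi-continuity of $\mu\mapsto h_\mu(f,\mathcal{F}^u)$ developed earlier in the paper, the lower semi-continuous function $\mu\mapsto\int\log|\det(Df|_{E^{uu}})|\,\ud\mu-h_\mu(f,\mathcal{F}^u)$ attains a positive minimum $\delta=\delta(\e)>0$ on $K_\e$.

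Next, I would cover $K_\e$ by finitely many weak-$*$ open neighborhoods $V_1,\dots,V_N$ in the space of probability measures, each centered at some $\mu_k\in K_\e$ and small enough that the function $m\mapsto\int\log|\det(Df|_{E^{uu}})|\,\ud m$ varies by less than $\delta/8$ on $V_k$. In particular, if $m_{x,n}\in V_k$ then $\tfrac1n\log|\det(Df^n|_{E^{uu}(x)})|=\int\log|\det(Df|_{E^{uu}})|\,\ud m_{x,n}$ lies within $\delta/8$ of $\int\log|\det(Df|_{E^{uu}})|\,\ud\mu_k$. A standard reduction using that weak-$*$ limits of $m_{x,n}$ are $f$-invariant absorbs the original bad set into $\bigcup_k V_k$ for large $n$, so the theorem follows once we establish, for each $k$, an individual bound $\Leb\{x\in U:m_{x,n}\in V_k\}\le a_ke^{-nb_k}$.

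For the per-neighborhood bound I would reuse the plaque-covering machinery underlying the proofs of Theorem~\ref{thm-III} and Corollary~\ref{coro-II}. Cover $U$ by finitely many foliation boxes tangent to an unstable cone field and disintegrate $\Leb|_U$ along their plaques $D$. On each plaque set $A_n:=\{x\in D:m_{x,n}\in V_k\}$ and cover $f^n(A_n)\subset f^n(D)$ by unstable $(n,\e')$-Bowen balls; by a Brin--Katok/Katok-type estimate for the partial entropy, the number of such balls is at most $\exp(n(h_{\mu_k}(f,\mathcal{F}^u)+\delta/8))$, and each contributes uniformly bounded $E^{uu}$-Lebesgue volume. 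On the other hand, the change of variables
$$\Leb_{f^nD}(f^n(A_n))=\int_{A_n}|\det(Df^n|_{TD})|\,\ud\Leb_D$$
combined with the previous step yields $|\det(Df^n|_{TD}(x))|\ge\exp(n(\int\log|\det(Df|_{E^{uu}})|\,\ud\mu_k-\delta/4))$ uniformly on $A_n$, the extra margin absorbing the small angle between $TD$ and $E^{uu}$ along iterates. Dividing the two bounds and using the gap $h_{\mu_k}(f,\mathcal{F}^u)\le\int\log|\det(Df|_{E^{uu}})|\,\ud\mu_k-\delta$ gives $\Leb_D(A_n)\le Ce^{-n\delta/2}$, and integrating transversally completes the proof.

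The main obstacle is the quantitative, non-asymptotic Bowen-ball count: for every $n\ge 1$ one must cover $f^n(A_n)$ by at most $\exp(n(h_{\mu_k}(f,\mathcal{F}^u)+o(1)))$ unstable $(n,\e')$-Bowen balls, with the $o(1)$ independent of $n$. This demands a Brin--Katok/Katok-type theorem for entropy along the unstable lamination valid in $C^1$ regularity, where no H\"older distortion is available; together with the $C^1$ upper semi-continuity of $\mu\mapsto h_\mu(f,\mathcal{F}^u)$ and the stability of the entropy gap on neighborhoods of $K_\e$, this is precisely the technical input whose $C^1$ development constitutes the novel contribution of the paper to this line of large deviation results.
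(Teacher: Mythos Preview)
Your proposal is correct and follows essentially the same route as the paper. Both arguments reduce to a compactness-plus-covering step in the space of invariant measures, together with the key volume estimate Theorem~\ref{thm.volume-control-by-entropy}: for each $\mu$ with a strict entropy gap and suitable $\eta>0$, the convergent set $C_n(\mu,\eta)$ in any $u$-disc has Lebesgue measure at most $c\cdot\exp(n(h_\mu(f,\cF^u)-\int\log|\det(Df|_{E^{uu}})|\,\ud\mu+\e))$. Your ``per-neighborhood'' Bowen-ball count is exactly this estimate, and you correctly identify its $C^1$ proof (the partition machinery of Theorem~\ref{thm.switch-to-finite}) as the technical heart.

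The only organizational difference is that the paper covers \emph{all} of $\cM_{\rm inv}(\La,f)$ rather than just your $K_\e$, and then uses Lemma~\ref{l.amount-to-iterate} to guarantee that for $n$ large and $x$ in a sufficiently small neighborhood $U_\e$, the empirical measure $m_{x,n}$ falls in one of the chosen balls; only afterwards does one restrict to the balls centered at measures in $\cN_\e$. Your ``standard reduction using that weak-$*$ limits of $m_{x,n}$ are $f$-invariant'' is exactly this lemma, but note that it is where the shrinking of the neighborhood $U$ to some $U_\e$ enters the argument --- you need the orbit to stay close enough to $\La$ for $n$ steps so that $m_{x,n}$ is close to $\cM_{\rm inv}(\La,f)$ at all, before you can conclude it lies near $K_\e$. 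The paper's final passage from $u$-discs to Lebesgue measure on $U$ is the same Fubini step you describe. Your preliminary use of upper semi-continuity of $h_\mu(f,\cF^u)$ to extract a uniform gap $\delta$ on $K_\e$ is harmless but not actually needed: once the cover is finite, one simply takes $b_\e=\min_i b_{\mu_i}$.
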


Some results on the existence of SRB measures and the large deviation property for  singular hyperbolic attractors are obtained in Appendix~\ref{Sec:flow}.

\subsection*{Organization of the paper}
This paper proceeds as follows.
 In Section~\ref{s.preliminary}, we state the known results and notions used in the paper. 
 In Section~\ref{s.measurable-partitions-to-unstable}, we build increasing  measurable partitions subordinate to the strong unstable foliations and finite partitions approaching the measurable partition.
 In Section~\ref{s.volume-estimate}, we state and prove an intermediate result to Theorem~\ref{thm-III}.
 In Section~\ref{s.Proofs}, we firstly give the proof of a stronger version of Theorem~\ref{thm-III}
and we use it to give the proofs of Corollaries~\ref{coro-II} and~\ref{coro-III}. Then we prove our large deviations results. In Section~\ref{s.example},
we conclude the proof of (a stronger version of) Theorem~\ref{thm-I} and we build the example (Theorem~\ref{thm-II}).
Appendix A is devoted to extending the entropy inequality obtained in~\cite[Theorem 1]{CCE} to a semi-local setting,
whereas Appendix~\ref{Sec:flow} uses the results in Appendix A to prove the existence of physical measures for singular hyperbolic attractors
of $C^{1+\alpha}$-vector fields and a large deviations result.
\medskip

\noindent{\it Acknowledgments.}
We would like to thank Christian Bonatti, Yongluo Cao, Dmitry Dolgopyat, Fran\c{c}ois Ledrappier, Davi Obata, Yi Shi and Yuntao Zang for useful comments and discussions. {In a conference in Shenzhen in 2018, Y. Hua and J. Yang kindly mentioned us personally they are also working in a similar direction of a result in this paper. In particular~\cite[Theorems A and C]{HYY} are similar to the Theorems A and C here.}

\section{Preliminary}~\label{s.preliminary}

In this section, we collect the basic notions, tools and  known results used in this paper.
\subsection{Partial hyperbolicity}\label{Sec:definition-partial}

Let $f$ be a $C^1$-diffeomorphism of a closed manifold $M$.
An invariant splitting $T_\Lambda M=E^{cs}\oplus E^{uu}$ of the tangent bundle over an invariant compact set
$\La$ is  \emph{partially hyperbolic},
if there exists $N\in\mathbb{N}$ such that  
$$\|Df^N|_{E^{cs}(x)}\|\cdot \|Df^{-N}|_{E^{uu}(f^N(x))}\|\leq \frac{1}{2} \textrm{\:\: and \:\:} \| Df^{-N}|_{E^{uu}(x)}\| \leq \frac 12. $$
The bundle $E^{cs}$ then extends uniquely as a continuous invariant bundle
on the set of points whose forward orbit is included in a neighborhood of $\Lambda$
(as the limit of the backward iterates of a center-stable cone field, see~\cite[Chapter 2]{CP}),
moreover, each point $x\in \Lambda$ belongs to an injectively immersed submanifold $\cF^u(x)$ tangent to $E^{uu}(x)$,
and called strong unstable manifold.
One sometimes also assumes a finer invariant decomposition of the center-stable bundle
$E^{cs}:=E^{ss}\oplus E^c$ which satisfies:
$$\|Df^N|_{E^{ss}(x)}\|\cdot \|Df^{-N}|_{E^{c}(f^N(x))}\|\leq \frac{1}{2} \textrm{\:\: and \:\:} \| Df^{N}|_{E^{ss}(x)}\| \leq \frac 12. $$

A \emph{u-laminated set} is a $f$-invariant compact set $\Lambda$
endowed with a partially hyperbolic splitting $TM|_{\Lambda}=E^{cs}\oplus E^{uu}$
which satisfies the following property:
the (strong) unstable manifold
$\cF^u(x)$ at each point $x\in\Lambda$ tangent to $E^{uu}(x)$ is contained in $\Lambda$
(this is the case if $\Lambda$ is an attracting set).
The collection of unstable manifolds
defines a lamination called \emph{unstable lamination} associated to the u-laminated set $\Lambda$;
it is denoted by $\cF^u$. For each $x\in \Lambda$ and $\rho>0$,
we denote by $\cF^u_\rho(x)$ the ball in $\cF^u(x)$ centered at $x$ and of radius $\rho$.

\begin{Remark}\label{r.cone}
If $\mathcal{C}^u_1$, $\mathcal{C}^u_2$ are unstable cone fields on a neighborhood of $\Lambda$,
the domination implies that there exist a neighborhood $U$ of $\Lambda$ and $N\geq 1$
such that for any $x\in U\cap f^{-1}(U)\cap\dots\cap f^{-N}(U)$, we have
$Df^N(x)\mathcal{C}^u_1(x)\subset \mathcal{C}^u_2(f^N(x))$.
\end{Remark}

\subsection{Probability measures}

Let $X$ be a compact metric space. We recall that the space of probability Borel measures supported on $X$ is a compact metric space: consider a countable dense subset $\{\varphi_n\}_{n=0}^\infty$ in $C^0(X,\mathbb{R})$;
then the distance between two probability measures $\mu,\nu$ is given by
$$\ud(\mu,\nu):=\sum_{n=0}^\infty\frac{|\int\varphi_n\ud\mu-\int\varphi_n\ud\nu|}{2^n\cdot\sup_{x\in X}|\varphi_n(x)|},$$
and this gives the weak$*$-topology on the space of probability measures.

 \subsection{Pseudo-physical measures}
Let $f$ be a homeomorphism on a compact manifold $M$ and   $\cM_{\rm inv}(f)$ (or $\cM_{\rm inv}(M,f)$) be the set of $f$-invariant probability measures.
As before, given a point $x\in M$ we denote by $\mathcal{M}(x)\subset \cM_{\rm inv}(f)$ the set of accumulation points of 
the measures $\frac{1}{n}\sum_{i=0}^{n-1}\delta_{f^i(x)}$ as $n\to +\infty$.

For any $\mu\in \cM_{\rm inv}(f)$, we define its \emph{basin} to be 
$${\rm Basin}(\mu):=\{x\in M:\cM(x)=\{\mu\}\}.$$
The measure $\mu$ is said to be \emph{physical} if $\Leb({\rm Basin}(\mu))>0$.

We will use
a more general notion, introduced in~\cite{CE,CCE}.
The invariant measure $\mu$ is \emph{pseudo-physical} if for any $\eta>0$, one has 
  $$\Leb(\{x\in M: \ud(\cM(x),\mu)<\eta \})>0,$$
i.e. there exists a limit measure $\nu\in \cM(x)$ which is $\eta$-close to $\mu$.

A pseudo-physical measure is not necessary a physical measure.
In general, for a system, physical measures might not exist, however there always exist pseudo-physical measures. 
\begin{Theorem}[Theorems 1.3 and 1.5 in \cite{CE}]~\label{thm.pseudo-physical}
	Let $f\in\homeo(M)$. The set of pseudo-physical measures is non-empty and compact. Moreover, for Lebesgue a.e. $x\in M$, the set $\cM(x)$ is contained in the set of pseudo-physical measures.
\end{Theorem}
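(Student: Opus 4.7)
My plan is to prove the three assertions of Theorem~\ref{thm.pseudo-physical}---non-emptiness of the set $\cP$ of pseudo-physical measures, compactness of $\cP$, and the statement that Lebesgue-a.e.\ $x$ has $\cM(x)\subset\cP$---in that order, using only the weak-$*$ compactness of $\cM_{\rm inv}(f)$ together with a Lindel\"of covering argument for the final part.

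For non-emptiness, I would first establish a finite-scale version: for every $\eta>0$ there exists $\mu_\eta\in\cM_{\rm inv}(f)$ with $\Leb\{x\in M:\ud(\cM(x),\mu_\eta)<\eta\}>0$. This is a pigeonhole step. Cover the compact space $\cM_{\rm inv}(f)$ by finitely many balls of radius $\eta$ centred at invariant measures; for each $x\in M$ pick any $\nu_x\in\cM(x)$, which is non-empty by compactness of the space of Borel probabilities. By pigeonhole some single ball $B(\mu_\eta,\eta)$ captures $\nu_x$ for a positive-Lebesgue-measure set of $x$, and for such $x$ we obtain $\ud(\cM(x),\mu_\eta)\leq\ud(\nu_x,\mu_\eta)<\eta$. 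Choosing $\eta_k\to 0$ and extracting a weak-$*$ convergent subsequence $\mu_{\eta_k}\to\mu$, a triangle-inequality argument then forces $\mu$ to be pseudo-physical. Compactness of $\cP$ reduces to closedness inside the compact space $\cM_{\rm inv}(f)$: if $\mu_n\in\cP$ and $\mu_n\to\mu$, I pick $n$ with $\ud(\mu_n,\mu)<\eta/2$ so that the positive-measure set $\{x:\ud(\cM(x),\mu_n)<\eta/2\}$ is contained in $\{x:\ud(\cM(x),\mu)<\eta\}$.

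The main step is the Lebesgue-a.e.\ statement, which I would prove by contrapositive through a countable cover. Since $\cP$ is closed by the previous step, its complement in the compact metric space $\cM_{\rm inv}(f)$ is open, hence separable, hence Lindel\"of. For each $\nu\notin\cP$ the definition of pseudo-physicality yields some $\eta(\nu)>0$ with $\Leb\{x:\ud(\cM(x),\nu)<\eta(\nu)\}=0$. I cover $\cM_{\rm inv}(f)\setminus\cP$ by the balls $B(\nu,\eta(\nu))$, extract a countable subcover $\{B(\nu_k,\eta_k)\}_{k\geq 0}$ with $\eta_k=\eta(\nu_k)$, and set $E_k:=\{x:\ud(\cM(x),\nu_k)<\eta_k\}$; each $E_k$ is Lebesgue-null by the choice of $\eta_k$, so $E:=\bigcup_k E_k$ is Lebesgue-null. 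For $x\notin E$, any hypothetical $\rho\in\cM(x)\setminus\cP$ would lie in some $B(\nu_k,\eta_k)$, forcing $\ud(\cM(x),\nu_k)\leq\ud(\rho,\nu_k)<\eta_k$, hence $x\in E_k\subset E$, a contradiction.

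The main subtlety I anticipate is conceptual rather than computational: the third assertion really uses the second (one needs $\cP$ to be closed for its complement to be open and thus Lindel\"of), and one has to observe that sets of the form $\{x:\ud(\cM(x),\nu)<\eta\}$ are Borel measurable---this follows from the identity $\ud(\cM(x),\nu)=\liminf_n\ud(m_{x,n},\nu)$, valid because by weak-$*$ compactness the accumulation points of $m_{x,n}$ realise every accumulation value of $\ud(m_{x,n},\nu)$. Beyond that, the proof is essentially bookkeeping around the weak-$*$ compactness of $\cM_{\rm inv}(f)$ and triangle-inequality estimates.
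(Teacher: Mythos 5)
Your proof is correct and follows essentially the same route as the paper's argument for the relative version (Theorem~\ref{thm.relative-pseudo-physical}): closedness of the set of pseudo-physical measures via the triangle inequality, then a countable cover of its complement by balls $B(\nu,\eta(\nu))$ each of which pulls back to a Lebesgue-null set (the paper exhausts the complement by the compact sets $A_n=\{\mu:\ud(\mu,\cP)\geq 1/n\}$ and takes finite subcovers where you invoke the Lindel\"of property, but the idea is identical). The only real addition is your direct pigeonhole construction of a pseudo-physical measure, which the paper instead gets for free from the a.e.\ statement together with $\cM(x)\neq\emptyset$; just phrase the pigeonhole over the finitely many measurable sets $\{x:\ud(\cM(x),\mu_i)<\eta\}$ rather than over $\{x:\nu_x\in B(\mu_i,\eta)\}$, since the latter need not be measurable for an arbitrary selection $x\mapsto\nu_x$.
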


Let $D$ be an embedded compact $C^1$-disc in $M$. Then $\mu$ is called a \emph{pseudo-physical measure relative to $D$},
if for any $\eta>0$, one has 
$$\Leb_D(\{x\in D: \ud(\cM(x),\mu)<\eta \})>0.$$
Theorem~\ref{thm.pseudo-physical} is generalized as follows:
\begin{Theorem}~\label{thm.relative-pseudo-physical}
Let $f\in\homeo(M)$ and $D$ be an embedded compact $C^1$-disc. Then the set of pseudo-physical measures relative to $D$ is a compact non-empty set. Moreover, for Lebesgue a.e. $x\in D$, the set $\cM(x)$ is contained in the set of  pseudo-physical measures relative to $D$.
	\end{Theorem}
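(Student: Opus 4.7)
The plan is to adapt the proof of Theorem~\ref{thm.pseudo-physical} from~\cite{CE} in a soft way, replacing the ambient Lebesgue measure by $\Leb_D$ throughout; note that $\Leb_D$ is singular with respect to $\Leb$ on $M$, so Theorem~\ref{thm.relative-pseudo-physical} is not a formal consequence of Theorem~\ref{thm.pseudo-physical} and the proof must be redone. Write $\cP(D)$ for the set of measures pseudo-physical relative to $D$. Since $\cM(x)\neq\emptyset$ for every $x\in D$ by weak-$*$ compactness of the space of Borel probabilities on $M$, once I establish that $\cM(x)\subset\cP(D)$ for $\Leb_D$-a.e. $x\in D$, non-emptiness of $\cP(D)$ follows automatically.

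First I would check that $\cP(D)$ is closed in $\cM_{\rm inv}(f)$, and hence weak-$*$ compact. Given $\mu_k\in\cP(D)$ with $\mu_k\to\mu$ and $\eta>0$, pick $k$ with $\ud(\mu_k,\mu)<\eta/2$; the triangle inequality yields
$$\{x\in D:\ud(\cM(x),\mu_k)<\eta/2\}\subset\{x\in D:\ud(\cM(x),\mu)<\eta\},$$
so the right-hand side has positive $\Leb_D$-measure, showing $\mu\in\cP(D)$.

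The main step is a Lindel\"of-style exhaustion. For each $\mu\in\cM_{\rm inv}(f)\setminus\cP(D)$, choose $\eta(\mu)>0$ with $\Leb_D\{x\in D:\ud(\cM(x),\mu)<\eta(\mu)\}=0$. Because $\cM_{\rm inv}(f)$ is a separable metric space, the open cover $\{B(\mu,\eta(\mu)/2)\}_{\mu\notin\cP(D)}$ of $\cM_{\rm inv}(f)\setminus\cP(D)$ admits a countable subcover $\{B(\mu_n,\eta(\mu_n)/2)\}_{n\geq 1}$. If $x\in D$ satisfies $\cM(x)\not\subset\cP(D)$, then some $\nu\in\cM(x)$ lies in one of the balls $B(\mu_n,\eta(\mu_n)/2)$, so $\ud(\cM(x),\mu_n)<\eta(\mu_n)$ and $x$ belongs to the $\Leb_D$-null set indexed by $\mu_n$. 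The countable union of these null sets contains the exceptional set, concluding the proof. The argument is entirely soft and presents no real obstacle; the only subtlety is that one must invoke the Lindel\"of property of $\cM_{\rm inv}(f)$ to extract a \emph{countable} subcover, since the open set $\cM_{\rm inv}(f)\setminus\cP(D)$ need not be relatively compact.
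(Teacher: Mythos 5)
Your proof is correct and follows essentially the same route as the paper: show the complement of $\cP_D$ is open (equivalently, $\cP_D$ is closed in the compact space $\cM_{\rm inv}(f)$), then cover the complement by countably many balls $B(\mu_n,\eta(\mu_n))$ on each of which the corresponding convergent set is $\Leb_D$-null. The only cosmetic difference is how the countable subcover is produced — you invoke separability/Lindel\"of directly, while the paper exhausts $\cP_D^c$ by the compact sets $A_n=\{\mu:\ud(\mu,\cP_D)\geq 1/n\}$ and takes finite subcovers — but the decomposition and the conclusion (a countable union of null sets contains the exceptional set) are identical.
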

\proof
By definition, $\mu$ is not pseudo-physical if and only if there exists $\eta_\mu>0$ such that
$$
\Leb_D(\{x\in D: \ud(\cM(x),\mu)<\eta_\mu\})=0.
$$
Then any measure $\nu$ such that $\ud(\nu,\mu)<\eta_\mu/2$ is not pseudo-physical either
(take $\eta_\nu=\eta_\mu/2$). This proves the compactness.

We now denote by $\cP_D$ the set of   pseudo-physical measures relative to $D$
and consider its complement $\cP_D^c$  in $\cM_{\rm inv}(f)$.
Then $\cP_D^c=\cup_{n=1}^\infty A_n$, where $A_n:=\{\mu: \ud(\mu,\cP_D)\geq \frac{1}{n} \}$.
We define
$$W_n:=\{x\in D: \cM(x)\cap A_n\neq\emptyset \}.$$

Since each measure in $A_n$ is not pseudo-physical relative to $D$  and $A_n$ is compact, there exist $\mu_1,\cdots, \mu_l$ together with $l$ positive  numbers $\eta_1,\cdots,\eta_l$ such that 
\begin{itemize}
	\item $A_n\subset\bigcup_{i=1}^lB_{\eta_i}(\mu_i)$;
	\item $\Leb_D(\{x\in D: \ud(\cM(x),\mu_{i})<\eta_{i}\})=0$ for each $i$.
	\end{itemize}
This implies that $\Leb_D(W_n)=0$ for each $n$ and then $\cM(x)\subset \cP_D$ for Lebesgue a.e. $x\in D$.
\endproof

\begin{Remark}\label{r.localize}
This statement can be localized: if $X\subset D$ is a measurable subset that
has positive measure for $\text{Leb}_D$,
then $\mu$ is called a \emph{pseudo-physical measure relative to $X$},
if for any $\eta>0$,
$$\Leb_D\big(\big\{x\in X: \ud(\cM(x),\mu)<\eta \big\}\big)>0.$$
The proof of Theorem~\ref{thm.relative-pseudo-physical} also shows that
for Lebesgue a.e. $x\in X$, the set $\cM(x)$ is contained in the set of  pseudo-physical measures relative to $X$.
\end{Remark}

\subsection{Entropy for a general measurable partition}
In this part, we recall the notions of a measurable partition, entropy of a measurable partition, and their properties  from~\cite[$\S1-\S5$ and $\S9$]{Ro}.

If $\alpha$ is a partition of $X$, we denote $\alpha(x)$ the element of $\alpha$ which contains $x\in X$.
We denote $\alpha\prec \beta$ if $\beta(x)\subset \alpha(x)$ for each $x\in X$.
And if $(\alpha_i)_{i\in I}$ is a family of partitions, we denote by $\vee_{i\in I} \alpha_i$
the partition it generates, i.e. the partition $\alpha$ whose elements $\alpha(x)$
coincides with $\cap_i \alpha_i(x)$.
When $X$ is a metric space, the diameter of $\alpha$ is $\diam(\alpha)=\sup_{x\in X}\diam(\alpha(x)).$
\medskip

Let $\alpha$ be a partition of a Borel space $(X,\mathfrak{B})$. It is a \emph{measurable partition}, if there exists a sequence of finite measurable partitions $\alpha_1\prec\alpha_2\prec\cdots\prec\alpha_n\prec\cdots$ such that $\alpha=\bigvee_{i\in\mathbb{N}}\: \alpha_i$.
\medskip

Let $(X,\mathfrak{B},\mu)$ be a Lebesgue space and $\alpha$ be a measurable partition. We denote by $\mathfrak{B}_\alpha$ the $\sigma$-algebra of the Lebesgue space $X/\alpha$.  Then for $\mu$-a.e. $x\in X$, there exists a probability measure $\mu_x^\alpha$ supported on $\alpha(x)$ such that for any measurable set $A$ of $X$:
\begin{itemize}
\item  the map $x\mapsto \mu_x^\alpha(A)$ is $\mathfrak{B}_\alpha$-measurable;
\item  $\mu(A)=\int \mu_x^\alpha(A)\ud\mu(x).$
\end{itemize}
The probability measures $\mu_x^\alpha$ are called \emph{conditional measures} of $\mu$ with respect to $\alpha$.

Let $A_1,A_2,\cdots,$ be all the elements of $\alpha$ with positive $\mu$-measure.  The entropy $H_\mu(\alpha)$ of the measurable partition $\alpha$ is defined by

\begin{displaymath}
H_\mu(\alpha)= \left\{ \begin{array}{ll}
-\sum_{i=1}^\infty\mu(A_i)\cdot\log{\mu(A_i)} & \textrm{if $\mu(\cup_{i=1}^\infty A_i)=1$}\\
\infty & \textrm{otherwise.}
\end{array} \right.
\end{displaymath}

Let us consider another measurable partition $\beta$. Then $\alpha$ induces a partition $\alpha|_B$ of each element $B\in\beta$.
If $\mu_x^\beta$ denotes the conditional measures with respect to $\beta$, then the \emph{mean conditional entropy of  $\alpha$ with respect to $\beta$} is defined as
$$H_\mu(\alpha|\beta)=\int H_{\mu_x^\beta}(\alpha|_{\beta(x)})\ud\mu(x).$$
For measurable partitions, one has the following result:

\begin{Lemma}[5.9 in~\cite{Ro}]\label{Lem:H-condition}
For any three measurable partitions $\alpha,\beta,\gamma$, we have
$$H_\mu(\alpha\vee\beta|\gamma)=H_\mu(\alpha|\gamma)+H_\mu(\beta|\alpha\vee\gamma).$$
\end{Lemma}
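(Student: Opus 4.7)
The plan is to reduce the claim to a fiberwise chain rule for conditional entropy on the Lebesgue spaces $(\gamma(x),\mu_x^\gamma)$, and to establish this fiberwise identity by approximating measurable partitions from below by finite ones.

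I would first invoke the consistency of conditional measures (the ``conditionals of conditionals'' property): if $\gamma$ is a measurable partition and $\alpha$ is any measurable partition, then for $\mu$-a.e. $x$ and $\mu_x^\gamma$-a.e. $y\in\gamma(x)$, the conditional measures satisfy
$$\mu_y^{\alpha\vee\gamma}=(\mu_x^\gamma)_y^{\alpha|_{\gamma(x)}}.$$
Using this together with the definition $H_\mu(\xi|\eta)=\int H_{\mu_x^\eta}(\xi|_{\eta(x)})\,d\mu(x)$, both sides of the claimed identity can be rewritten as integrals over $x$ of quantities computed on $(\gamma(x),\mu_x^\gamma)$, with the partitions $\alpha|_{\gamma(x)}$ and $\beta|_{\gamma(x)}$. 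It therefore suffices to prove, for $\mu$-a.e. $x$, the fiberwise chain rule
$$H_{\mu_x^\gamma}\bigl((\alpha\vee\beta)|_{\gamma(x)}\bigr)=H_{\mu_x^\gamma}\bigl(\alpha|_{\gamma(x)}\bigr)+H_{\mu_x^\gamma}\bigl(\beta|_{\gamma(x)}\,\big|\,\alpha|_{\gamma(x)}\bigr).$$

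For \emph{finite} partitions, this is the elementary chain rule and follows by expanding $H(\xi|\eta)=-\sum_{A,B}\mu(A\cap B)\log(\mu(A\cap B)/\mu(B))$ and using $\log(pq)=\log p+\log q$ on conditional probabilities. For general measurable partitions, I would write $\alpha|_{\gamma(x)}=\bigvee_n\alpha_n$ and $\beta|_{\gamma(x)}=\bigvee_n\beta_n$ with increasing sequences of finite partitions, apply the finite chain rule to each $(\alpha_n,\beta_n)$, and pass to the monotone limit. The key analytic input is that $H(\alpha_n)\uparrow H(\alpha)$ and $H(\beta_n|\alpha_n)\uparrow H(\beta|\alpha)$ along increasing sequences of finite measurable partitions, which are standard properties of entropy recorded in the earlier sections of Rokhlin's paper.

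The main obstacle is the bookkeeping in the case of infinite entropy: one must verify that both sides are simultaneously infinite, which is handled by observing that if some finite-level entropy on one side tends to $+\infty$, the corresponding term on the other side dominates it. A secondary issue is that to form the outer integral $\int\,d\mu(x)$, one needs the fiberwise entropies to depend measurably on $x$; this is ensured by realizing the abstract measurable partitions $\alpha$ and $\beta$ as countable joins of finite measurable partitions of the ambient space $X$, so that all conditional entropies in the approximation are measurable functions of $x$ by the standard properties of conditional measures on Lebesgue spaces.
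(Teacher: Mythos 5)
Your reduction to the fiberwise statement via the transitivity of conditional measures is fine (and is essentially built into the definition $H_\mu(\xi|\eta)=\int H_{\mu_x^\eta}(\xi|_{\eta(x)})\,d\mu(x)$ that the paper uses); note that the paper itself offers no proof here, only the citation to Rokhlin. The genuine gap is in your limiting step: the claim that $H(\beta_n|\alpha_n)\uparrow H(\beta|\alpha)$ along increasing finite sequences $\alpha_n\nearrow\alpha$, $\beta_n\nearrow\beta$ is not a standard property and is false. The quantity $H(\beta_n|\alpha_n)$ is not monotone (refining $\beta_n$ increases it, refining $\alpha_n$ decreases it), and it need not even converge to $H(\beta|\alpha)$: on $([0,1],\mathrm{Leb})$ take $\alpha_n$ the dyadic partition of level $n$ and $\beta_n$ the dyadic partition of level $2n$, so that both increase to the partition $\varepsilon$ into points; then $H(\beta_n|\alpha_n)=n\log 2\to\infty$ while $H(\beta|\alpha)=H(\varepsilon|\varepsilon)=0$. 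This is exactly the infinite-entropy regime you dismiss as bookkeeping, and it is where the diagonal passage to the limit breaks down (one also cannot subtract $H(\alpha_n|\gamma)$ from $H(\alpha_n\vee\beta_n|\gamma)$ when these tend to $+\infty$).

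The repair is the standard two-step limit, which never takes a diagonal. First fix $\beta$ \emph{finite} and let $\alpha_n\nearrow\alpha$: the finite identity gives $H_\mu(\alpha_n\vee\beta|\gamma)=H_\mu(\alpha_n|\gamma)+H_\mu(\beta|\alpha_n\vee\gamma)$, the first two quantities increase to $H_\mu(\alpha\vee\beta|\gamma)$ and $H_\mu(\alpha|\gamma)$ by Lemma~\ref{l.updown}(1), and the last decreases to $H_\mu(\beta|\alpha\vee\gamma)$ by Lemma~\ref{l.updown}(2), whose finiteness hypothesis holds precisely because $H_\mu(\beta|\alpha_1\vee\gamma)\le\log\#\beta<\infty$ for finite $\beta$. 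Each term converges monotonically, so the identity passes to the limit (with the convention $\infty+c=\infty$). Then, with $\alpha,\gamma$ arbitrary measurable, let $\beta_m\nearrow\beta$ with $\beta_m$ finite and apply Lemma~\ref{l.updown}(1) to both $H_\mu(\alpha\vee\beta_m|\gamma)$ and $H_\mu(\beta_m|\alpha\vee\gamma)$. Your measurability remarks for the outer integral are fine, but the order of the two limits is essential and cannot be collapsed into one.
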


\begin{Lemma}[5.7 and 5.11 in~\cite{Ro}]\label{l.updown}
Let $\alpha_1\prec\alpha_2\prec\cdots\prec\alpha_n\prec \cdots$ be an increasing sequence of measurable  partitions and $\beta$ be another measurable partition, then
\begin{enumerate}
\item
$H_{\mu}(\alpha_n|\beta)\nearrow H_\mu\big(\bigvee_{i=1}^\infty\alpha_i|\beta\big);$
\item if $H_{\mu}(\beta|\alpha_1)<\infty$, then
$H_{\mu}(\beta|\alpha_n)\searrow H_\mu\big(\beta|\bigvee_{i=1}^\infty\alpha_i \big).$
\end{enumerate}
\end{Lemma}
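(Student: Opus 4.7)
The plan is to represent both conditional entropies as integrals of information functions
\[
I_\mu(\gamma\mid\beta)(x):=-\log\mu_x^\beta(\gamma(x)),\qquad H_\mu(\gamma\mid\beta)=\int I_\mu(\gamma\mid\beta)\,\ud\mu,
\]
set $\alpha_\infty:=\bigvee_{i=1}^\infty\alpha_i$, and reduce each statement to an almost-sure convergence of conditional probabilities, which is then pushed through the integral by a standard convergence theorem.

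For part~(1), I would first observe that since $\alpha_n\prec\alpha_{n+1}$ we have $\alpha_{n+1}(x)\subset\alpha_n(x)$, hence $I_\mu(\alpha_n\mid\beta)\le I_\mu(\alpha_{n+1}\mid\beta)$ pointwise and $H_\mu(\alpha_n\mid\beta)$ is non-decreasing. For $\mu$-a.e.\ $x$, a standard property of measurable partitions of a Lebesgue space says the atoms $\alpha_n(x)$ form a decreasing sequence whose intersection equals $\alpha_\infty(x)$ up to a $\mu_x^\beta$-null set, so by continuity from above $\mu_x^\beta(\alpha_n(x))\searrow\mu_x^\beta(\alpha_\infty(x))$. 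Monotone convergence then yields $H_\mu(\alpha_n\mid\beta)\nearrow H_\mu(\alpha_\infty\mid\beta)$.

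For part~(2), conditioning on finer information decreases entropy, so $H_\mu(\beta\mid\alpha_n)$ is non-increasing. For each atom $B$ of $\beta$ the process $M_n^B(x):=\mu_x^{\alpha_n}(B)$ is a martingale with respect to the increasing filtration $\mathfrak{B}_{\alpha_n}\subset\mathfrak{B}_{\alpha_\infty}$, so Doob's convergence theorem gives $M_n^B(x)\to\mu_x^{\alpha_\infty}(B)$ for $\mu$-a.e.\ $x$, whence $I_\mu(\beta\mid\alpha_n)(x)\to I_\mu(\beta\mid\alpha_\infty)(x)$ almost everywhere. To pass this pointwise convergence to the integrals I would invoke dominated convergence, which requires an integrable majorant of $\sup_n I_\mu(\beta\mid\alpha_n)$. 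Supplying this is the main obstacle, and is precisely where the hypothesis $H_\mu(\beta\mid\alpha_1)<\infty$ enters: the standard remedy is the information-theoretic maximal inequality
\[
\int\sup_{n\ge 1}I_\mu(\beta\mid\alpha_n)\,\ud\mu\le H_\mu(\beta\mid\alpha_1)+c
\]
for some absolute constant $c$, obtained by summing over the atoms $B\in\beta$ a Doob-type tail bound $\mu\{x\in B:\,-\log M_n^B(x)>t\}\le\mu(B)e^{-t}$ which holds uniformly in $n$. With this bound in hand, dominated convergence closes the argument.
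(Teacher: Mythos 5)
The paper does not prove this lemma; it quotes 5.7 and 5.11 of Rokhlin's notes, so I am comparing your argument with the standard proof of those statements. Part (1) is correct: by the paper's definition of $\bigvee_i\alpha_i$ one has $\alpha_\infty(x)=\bigcap_n\alpha_n(x)$ exactly, continuity from above gives $\mu_x^\beta(\alpha_n(x))\searrow\mu_x^\beta(\alpha_\infty(x))$ for $\mu$-a.e.\ $x$, and monotone convergence applies to the non-negative, increasing information functions. For part (2) your strategy (Doob convergence of $\mu_x^{\alpha_n}(B)$, a Chung--Neveu maximal inequality, dominated convergence) is the standard route and does work, but the maximal inequality --- which is the entire content of the hypothesis $H_\mu(\beta|\alpha_1)<\infty$ --- is not correctly derived as written.

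Concretely: (i) the tail bound you state, $\mu\{x\in B:-\log M_n^B(x)>t\}\le\mu(B)\,e^{-t}$, is wrong as a product; the argument yields $\min(\mu(B),e^{-t})$. Had the product bound held for the supremum, integrating in $t$ and summing over $B$ would give $\int\sup_nI_\mu(\beta|\alpha_n)\,\ud\mu\le 1$, which is absurd since the left-hand side dominates $H_\mu(\beta|\alpha_1)$. (ii) A bound valid for each fixed $n$, however uniform, does not control $\sup_n$: one must decompose $B\cap\{\sup_n(-\log M_n^B)>t\}$ according to the first index $\tau_B(x)$ with $M_n^B(x)<e^{-t}$; since $\{\tau_B=n\}$ is $\mathfrak{B}_{\alpha_n}$-measurable, $\mu(B\cap\{\tau_B=n\})=\int_{\{\tau_B=n\}}M_n^B\,\ud\mu\le e^{-t}\mu(\tau_B=n)$, and summing over $n$ gives the correct estimate for the maximal function. (iii) Summing over the atoms of $\beta$ presupposes $\beta$ countable --- which $H_\mu(\beta|\alpha_1)<\infty$ does not imply (take $\beta=\alpha_1$ the partition into points) --- and even then produces $H_\mu(\beta)+c$ on the right, not $H_\mu(\beta|\alpha_1)+c$. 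To get the conditional form you need, first disintegrate $\mu$ over the atoms $A$ of $\alpha_1$ (for $\mu$-a.e.\ $A$ the induced partition $\beta|_A$ has finite, hence countable, $\mu_x^{\alpha_1}$-entropy), apply the maximal inequality fibrewise to the filtration $(\alpha_n|_A)_n$, identify the fibrewise information functions with $I_\mu(\beta|\alpha_n)$ by transitivity of conditional measures, and integrate over $A$. Finally, the identification of the Doob limit with $\mu_x^{\alpha_\infty}(B)$ uses that $\mathfrak{B}_{\bigvee_n\alpha_n}=\bigvee_n\mathfrak{B}_{\alpha_n}$ mod $0$; this is true in a Lebesgue space but deserves mention, as it is where the hypothesis that the $\alpha_n$ are measurable partitions enters.
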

\medskip

Let $f$ be a homeomorphism of a compact metric space $X$ preserving a probability measure $\mu$.
Then $f$ is an automorphism of the Lebesgue space $(X,\mathfrak{B},\mu)$,
where $\mathfrak{B}$ denotes its Borel $\sigma$-algebra.
One defines the entropy $h_\mu(f,\alpha)$ with respect to a measurable partition $\alpha$:
$$h_\mu(f,\alpha):=H_\mu\left(\bigvee_{i=0}^\infty f^i(\alpha)\big|\bigvee_{i=1}^\infty f^i(\alpha)\right)=H_\mu\left(\alpha\big|\bigvee_{i=1}^\infty f^i(\alpha)\right).$$
A standard argument based on Lemma~\ref{Lem:H-condition} (see for instance~\cite[\S 7.3]{Ro}.) gives the following:
\begin{Lemma}~\label{l.alternative}
If $\alpha$ is a measurable partition such that $H_\mu(\alpha|f(\alpha))<\infty$, then
$$h_\mu(f,\alpha)=\inf \frac{1}{m}H_\mu\left(\bigvee_{j=1}^m f^{-j}(\alpha)\big|\alpha\right)=\liminf_{m\rightarrow\infty}\frac{1}{m}H_\mu\left(\bigvee_{j=1}^m f^{-j}(\alpha)\big|\alpha\right).$$
\end{Lemma}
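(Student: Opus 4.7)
The plan is to reduce the quantity $\frac{1}{m} H_\mu\!\left(\bigvee_{j=1}^{m} f^{-j}(\alpha) \,\big|\, \alpha\right)$ to a Cesàro average of the conditional entropies
\[
a_\ell \;:=\; H_\mu\!\left(\alpha \,\big|\, \bigvee_{i=1}^{\ell} f^{i}(\alpha)\right),
\]
and then to invoke Lemma~\ref{l.updown}(2) to conclude that $a_\ell \searrow h_\mu(f,\alpha)$. The three claimed equalities then follow from the elementary fact that Cesàro averages of a non-increasing sequence are themselves non-increasing and share the same limit.

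First I would push the conditional entropy forward by $f^{m}$, using the $f$-invariance of $\mu$ (which preserves conditional entropy under a simultaneous pushforward of both partitions), to get
\[
H_\mu\!\left(\bigvee_{j=1}^{m} f^{-j}(\alpha) \,\big|\, \alpha\right) \;=\; H_\mu\!\left(\bigvee_{k=0}^{m-1} f^{k}(\alpha) \,\big|\, f^{m}(\alpha)\right).
\]
Applying Lemma~\ref{Lem:H-condition} iteratively, I peel off the factors $f^{m-1}(\alpha), f^{m-2}(\alpha), \dots, \alpha$ one at a time to obtain
\[
H_\mu\!\left(\bigvee_{k=0}^{m-1} f^{k}(\alpha) \,\big|\, f^{m}(\alpha)\right) \;=\; \sum_{k=0}^{m-1} H_\mu\!\left(f^{k}(\alpha) \,\big|\, \bigvee_{i=k+1}^{m} f^{i}(\alpha)\right).
\]
Invoking $f$-invariance of $\mu$ once more, this time applying $f^{-k}$ inside each summand, and re-indexing by $\ell = m-k$, identifies the $k$-th term with $a_{m-k}$ and yields the desired expression
\[
H_\mu\!\left(\bigvee_{j=1}^{m} f^{-j}(\alpha) \,\big|\, \alpha\right) \;=\; \sum_{\ell=1}^{m} a_\ell.
\]

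To finish, I would observe that the sequence $(a_\ell)_{\ell\geq 1}$ is non-increasing because enlarging the conditioning partition only decreases entropy, and that by hypothesis $a_1 = H_\mu(\alpha\,|\,f(\alpha)) < \infty$. Lemma~\ref{l.updown}(2) therefore applies and yields $a_\ell \searrow H_\mu\!\left(\alpha \,\big|\, \bigvee_{i=1}^{\infty} f^{i}(\alpha)\right) = h_\mu(f,\alpha)$. An elementary check shows that for a non-increasing sequence the Cesàro averages $b_m := \frac{1}{m}\sum_{\ell=1}^m a_\ell$ are themselves non-increasing (using the inequalities $a_{m+1}\leq a_m\leq b_m$) and converge to the same limit as $(a_\ell)$. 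This yields $\inf_m b_m = \lim_m b_m = \liminf_m b_m = h_\mu(f,\alpha)$, which is the statement of the lemma. There is no serious obstacle here: this is the classical Rokhlin-style entropy computation adapted to the present convention (in which $f$ coarsens $\alpha$ rather than refining it), the only care being to track indices correctly through the chain rule and the invariance applications.
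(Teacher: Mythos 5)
Your proof is correct and is exactly the ``standard argument based on Lemma~\ref{Lem:H-condition}'' that the paper invokes (citing Rokhlin \S 7.3) without writing out: the chain rule turns $H_\mu(\bigvee_{j=1}^m f^{-j}(\alpha)\,|\,\alpha)$ into the partial sum $\sum_{\ell=1}^m a_\ell$ of the non-increasing sequence $a_\ell=H_\mu(\alpha\,|\,\bigvee_{i=1}^\ell f^i(\alpha))$, and Lemma~\ref{l.updown}(2) plus the Ces\`aro monotonicity argument finish the proof. The index bookkeeping and the use of $f$-invariance are all handled correctly, so there is nothing to add.
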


One can now define the metric entropy (see~\cite[9.1]{Ro}):
\begin{eqnarray*}
	h_\mu(f)&=&\sup\{h_\mu(f,\alpha): \textrm{$\alpha$ is a finite measurable partition of $X$}  \}
	\\
	&=&\sup\{h_\mu(f,\alpha): \textrm{$\alpha$ is a measurable partition of $X$}  \}.
\end{eqnarray*}

The following property is obtained by applying inductively Lemma~\ref{Lem:H-condition}.
\begin{Corollary}\label{c.H-condition}
For any probability measure $\nu$, any sequence of finite measurable  partitions $\{\alpha_i\}_{i\in \mathbb{N}}$ and any integers $0\leq \ell< m<n$,
\begin{eqnarray*}
	H_\nu\big(\bigvee_{i=0}^{n-1}f^{-i}(\alpha_i)\big)
&=&H_\nu\big(\bigvee_{i=0}^{\ell}f^{-i}(\alpha_i)\big)\\
&&\quad\quad +\sum_{k=0}^{[\frac{n-\ell}{m}]-1}H_{f^{\ell+km}_*\nu}\big(\bigvee_{i=1}^{m}f^{-i}(\alpha_{i+\ell+km})|\bigvee_{i=0}^{km+\ell}f^{km+\ell-i}(\alpha_i)\big)
\\
&&\quad\quad\quad\quad\quad\quad\quad\quad+H_{\nu}\big(\bigvee_{i=\ell+[\frac{n-\ell}{m}]m+1}^{n-1}f^{-i}(\alpha_i)|\bigvee_{i=0}^{\ell+[\frac{n-\ell}{m}]m}f^{-i}(\alpha_i)\big)
.
\end{eqnarray*}
	\end{Corollary}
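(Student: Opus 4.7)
The plan is to decompose the join $\bigvee_{i=0}^{n-1} f^{-i}(\alpha_i)$ into an initial segment, $K:=[(n-\ell)/m]$ blocks of length $m$, and a (possibly empty) remainder block, then to chain these pieces with iterated applications of Lemma~\ref{Lem:H-condition}, and finally to conjugate each intermediate term by a power of $f$ so that the reference measure becomes the announced pushforward $f^{\ell+km}_*\nu$.

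Concretely, I would set
$$\beta_{-1}:=\bigvee_{i=0}^{\ell} f^{-i}(\alpha_i),\qquad \beta_k:=\bigvee_{i=1}^{m} f^{-(\ell+km+i)}(\alpha_{\ell+km+i}) \text{ for } 0\le k\le K-1,\qquad \beta_K:=\bigvee_{i=\ell+Km+1}^{n-1} f^{-i}(\alpha_i),$$
so that $\bigvee_{i=0}^{n-1} f^{-i}(\alpha_i)=\beta_{-1}\vee\beta_0\vee\cdots\vee\beta_K$ and the partial joins telescope, namely $\beta_{-1}\vee\beta_0\vee\cdots\vee\beta_{k-1}=\bigvee_{i=0}^{\ell+km} f^{-i}(\alpha_i)$. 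Applying Lemma~\ref{Lem:H-condition} iteratively (first with trivial conditioning, then with the current partial join as the conditioning partition) yields
$$H_\nu\bigl(\bigvee_{i=0}^{n-1} f^{-i}(\alpha_i)\bigr)=H_\nu(\beta_{-1})+\sum_{k=0}^{K-1}H_\nu\bigl(\beta_k\,\big|\,\bigvee_{i=0}^{\ell+km} f^{-i}(\alpha_i)\bigr)+H_\nu\bigl(\beta_K\,\big|\,\bigvee_{i=0}^{\ell+Km} f^{-i}(\alpha_i)\bigr).$$
The first and last summands on the right coincide verbatim with the first and last terms in the claimed identity.

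For each middle term ($0\le k\le K-1$), I would apply the elementary invariance identity $H_\nu(f^{-j}\xi\mid f^{-j}\eta)=H_{f^j_*\nu}(\xi\mid\eta)$ with $j:=\ell+km$. Since $f^{-(\ell+km+i)}(\alpha_{\ell+km+i})=f^{-j}\bigl(f^{-i}(\alpha_{j+i})\bigr)$ for $1\le i\le m$ and $f^{-i}(\alpha_i)=f^{-j}\bigl(f^{j-i}(\alpha_i)\bigr)$ for $0\le i\le j$, pulling the common factor $f^{-j}$ out of both the partition and the conditioning turns the $k$-th summand into exactly $H_{f^{\ell+km}_*\nu}\bigl(\bigvee_{i=1}^{m} f^{-i}(\alpha_{i+\ell+km})\,\big|\,\bigvee_{i=0}^{km+\ell} f^{km+\ell-i}(\alpha_i)\bigr)$, as required. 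The proof is essentially bookkeeping; the only care needed is to track indices correctly under the shift by $f^j$, and to notice that the boundary case $n-\ell=Km$ (for which $\beta_K$ is the trivial partition and contributes $0$) is consistent with the stated formula.
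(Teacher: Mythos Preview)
Your proposal is correct and follows exactly the approach the paper indicates: the paper merely says the identity ``is obtained by applying inductively Lemma~\ref{Lem:H-condition}'', and you have written out precisely that induction together with the routine pushforward identity $H_\nu(f^{-j}\xi\mid f^{-j}\eta)=H_{f^{j}_*\nu}(\xi\mid\eta)$ needed to rewrite each middle block. There is nothing to add.
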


\subsection{Entropy along an unstable lamination}\label{s.partialentropyalongexpandingfoliation}
In this paper, we focus on the entropy of an invariant measure along an unstable lamination
as introduced in~\cite{VY} and~\cite{Yan16}.
Throughout this section, $f$ is a $C^1$-diffeomorphism of a compact manifold $M$ and
$\La$ is a u-laminated set. As before the associated unstable lamination is denoted by $\cF^u$.
We consider a probability measure $\mu$ supported on $\Lambda$.

A partition $\alpha$ of $M$ is \emph{$\mu$-subordinate to the unstable lamination $\cF^{u}$ of $\Lambda$},
if for $\mu$-a.e.~$x$,
\begin{itemize}
\item $\alpha(x)$ is contained in the strong unstable leaf $\cF^u(x)$ of the point $x$, and
\item $\alpha(x)$ contains an open neighborhood of $x$ in $\cF^u(x)$.
\end{itemize}
A partition $\alpha$ is \emph{increasing} if $f(\alpha(x))\supset \alpha(f(x))$ for  all $ x\in M$.

The existence of an increasing measurable partition $\mu$-subordinate to the unstable lamination is guaranteed by \cite[Proposition 3.1]{LS}  and ~\cite[Lemma 3.2]{Yan16}:
\begin{Lemma}\label{l.increasing}
For any $\mu\in\mathcal{M}_{\rm inv}(\La,f)$ there exists an increasing measurable partition which is $\mu$-subordinate to the unstable lamination $\cF^u$.
\end{Lemma}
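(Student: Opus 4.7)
The plan is to adapt the classical construction of Ledrappier--Strelcyn---as carried out by Yang~\cite{Yan16} in the partially hyperbolic setting---by starting from a carefully chosen finite measurable partition $\mathcal{P}$ of $M$ of small diameter, cutting the strong unstable lamination by $\mathcal{P}$ to form a ``plaque partition'' $\eta_0$, and then taking the forward-iterate completion
\[
\eta := \bigvee_{n\ge 0} f^n(\eta_0),\qquad \eta(x)=\bigcap_{n\ge 0} f^n\bigl(\eta_0(f^{-n}(x))\bigr).
\]
This $\eta$ is measurable because it is the countable join of finite measurable partitions. It is automatically increasing: a short computation shows $\eta(f(x))\subset f(\eta(x))$, since the intersection defining $\eta(f(x))$ contains one more term (namely the $n=0$ factor $\eta_0(f(x))$) than the one defining $f(\eta(x))$.

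To construct $\eta_0$, I would choose $\mathcal{P}=\{P_1,\dots,P_N\}$ with open elements such that (i) every $P_i$ has diameter smaller than a uniform plaque-injectivity radius for $\cF^u$, and (ii) $\partial\mathcal{P}$ is a finite union of smooth hypersurfaces transverse to the strong unstable lamination; both can be arranged using the continuity of $E^{uu}$ on a neighborhood of $\Lambda$. Set
\[
\eta_0(x):= \text{the connected component of } \cF^u(x)\cap\mathcal{P}(x) \text{ containing } x.
\]
Then $\eta_0(x)$ is an open precompact sub-disc of $\cF^u(x)$ whose topological boundary in the leaf is contained in $\cF^u(x)\cap\partial\mathcal{P}$; in particular $\eta_0(x)\subset\cF^u(x)$, so $\eta(x)\subset\cF^u(x)$. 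For a generic choice one has $\mu(\partial\mathcal{P})=0$, and the $f$-invariance of $\mu$ then guarantees that at $\mu$-a.e.~$x$ the element $\eta_0(f^{-n}(x))$ is a relative neighborhood of $f^{-n}(x)$ in its leaf for every $n\ge 0$.

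The substantive step is to show that at $\mu$-a.e.~$x$, $\eta(x)$ contains an open neighborhood of $x$ in $\cF^u(x)$. Write $\ud^u$ for intrinsic leaf distance. Uniform expansion of $Df|_{E^{uu}}$ yields $\lambda>1$ such that $f^n$ sends the leafwise ball of radius $r$ around $y$ onto a set containing the leafwise ball of radius $\lambda^n r$ around $f^n(y)$; hence, with $r_n:=\ud^u\bigl(f^{-n}(x),\partial\mathcal{P}(f^{-n}(x))\bigr)$,
\[
f^n\bigl(\eta_0(f^{-n}(x))\bigr)\supset B^u_{\lambda^n r_n}(x).
\]
It therefore suffices to choose $\mathcal{P}$ so that
\[
\sum_{n\ge 0}\mu\bigl(\bigl\{y:\ud^u(y,\partial\mathcal{P}(y))<\lambda^{-n/2}\bigr\}\bigr)<+\infty,
\]
for then Borel--Cantelli together with $f$-invariance of $\mu$ (via $\mu\{x:f^{-n}x\in A_n\}=\mu(A_n)$) gives $r_n\ge\lambda^{-n/2}$ for all large $n$ at $\mu$-a.e.~$x$, so that $\eta(x)$ contains the ball of radius $\min_n\lambda^n r_n>0$ around $x$ in $\cF^u(x)$. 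The main obstacle is producing such a $\mathcal{P}$: I would take the boundary hypersurfaces in a one-parameter family $(\Sigma_t)_{t\in[0,1]}$, e.g.\ normal translates of a fixed transverse hypersurface, and apply Fubini to $\mu\otimes\operatorname{Leb}_{[0,1]}$: for Lebesgue-a.e.~$t$ the $\cF^u$-$\delta$-neighborhood of $\Sigma_t$ has $\mu$-measure $O(\delta)$, which is far more than summable at $\delta=\lambda^{-n/2}$.
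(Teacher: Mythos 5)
Your construction is correct and is essentially the paper's own (which it attributes to Ledrappier--Strelcyn and to Yang): a finite partition whose boundary has summable $\mu$-measure of exponentially shrinking neighborhoods, intersected with local unstable plaques and completed by the forward join $\bigvee_{n\ge 0}f^n(\cdot)$, exactly as carried out in Lemmas~\ref{l.partition}--\ref{l.increasing partition} of Section~\ref{s.partition-to-ubstable-foliation}. The only imprecision is the assertion that Fubini yields $\mu\bigl(N_\delta(\Sigma_t)\bigr)=O(\delta)$ for a.e.\ fixed $t$ uniformly in $\delta$; Fubini only gives $\int_0^1\mu\bigl(N_\delta(\Sigma_t)\bigr)\,dt=O(\delta)$, and the summability $\sum_n\mu\bigl(N_{\lambda^{-n/2}}(\Sigma_t)\bigr)<\infty$ for a.e.\ $t$ is then extracted by Markov's inequality and Borel--Cantelli in $t$ along the sequence $\delta_n=\lambda^{-n/2}$, which is precisely the content of the cited Lemma~\ref{l.partition}.
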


The following result is an adapted version of Lemma 3.1.2 in ~\cite{LY}.

\begin{Lemma}
For any $\mu\in\mathcal{M}_{\rm inv}(\La,f)$ and any two increasing measurable partitions $\alpha_1,\alpha_2$ that are $\mu$-subordinate to the unstable lamination $\cF^{u}$,  one has
$h_{\mu}(f,\alpha_1)=h_{\mu}(f,\alpha_2).$
\end{Lemma}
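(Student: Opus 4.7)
The plan is to introduce the common refinement $\beta:=\alpha_1\vee\alpha_2$ and prove $h_\mu(f,\alpha_1)=h_\mu(f,\beta)$; the symmetric argument then gives $h_\mu(f,\alpha_2)=h_\mu(f,\beta)$, completing the proof. Observe first that $\beta$ is itself a measurable, increasing, $\mu$-subordinate partition: subordination and measurability are immediate, and the increasing property follows from $f\beta=f\alpha_1\vee f\alpha_2\prec\alpha_1\vee\alpha_2=\beta$. The inequality $h_\mu(f,\alpha_1)\leq h_\mu(f,\beta)$ is then immediate from monotonicity of $h_\mu(f,\cdot)$ under refinement, so all the work lies in the reverse inequality.

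The key geometric step is to prove that the measurable partition $\xi:=\bigvee_{i=0}^{\infty}f^{-i}\alpha_1$ has $\mu$-a.e.\ atom reduced to a single point; equivalently, $\bigcap_{i\geq 0}f^{-i}(\alpha_1(f^i x))=\{x\}$ for $\mu$-a.e.~$x$. I would combine three ingredients: by subordination, there exist $C>0$ and a measurable set $B$ with $\mu(B)>0$ such that the leafwise diameter satisfies $\text{diam}_{\cF^u}(\alpha_1(y))\leq C$ for all $y\in B$; by Poincar\'e recurrence, for $\mu$-a.e.~$x$ there is a subsequence $n_k\to\infty$ with $f^{n_k}(x)\in B$; and the uniform backward contraction $\|Df^{-N}|_{E^{uu}}\|\leq 1/2$ from Section~\ref{Sec:definition-partial} implies that $f^{-n_k}$-images of leafwise balls of radius $C$ shrink exponentially to a point in $\cF^u(x)$.

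Granted this, $\alpha_2\prec\xi$, hence $\beta\prec\xi$, modulo $\mu$-null sets. After checking $H_\mu(\alpha_2\mid\alpha_1)<\infty$ (which, combined with Lemma~\ref{Lem:H-condition}, implies $H_\mu(\beta\mid\alpha_1)<\infty$), Lemma~\ref{l.updown}(2) applied to the increasing family $\bigvee_{i=0}^N f^{-i}\alpha_1$ yields $H_\mu(\beta\mid\bigvee_{i=0}^N f^{-i}\alpha_1)\searrow 0$ as $N\to\infty$. Fix $\epsilon>0$ and pick $N$ so that this quantity is below $\epsilon$. Two applications of the chain rule (Lemma~\ref{Lem:H-condition}) together with $f$-invariance of $\mu$ give
$$H_\mu\Big(\bigvee_{j=1}^m f^{-j}\beta\,\Big|\,\beta\Big)\leq H_\mu\Big(\bigvee_{j=1}^m f^{-j}\alpha_1\,\Big|\,\alpha_1\Big)+m\epsilon+N\cdot H_\mu(\alpha_2\mid\alpha_1),$$
where the boundary term $N\cdot H_\mu(\alpha_2\mid\alpha_1)$ absorbs the last $N$ indices near $m$ for which the uniform $\epsilon$-bound is not available. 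Dividing by $m$ and passing to the $\liminf$ on both sides via Lemma~\ref{l.alternative}: $h_\mu(f,\beta)\leq h_\mu(f,\alpha_1)+\epsilon$, and letting $\epsilon\to 0$ concludes.

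The main obstacle is the shrinking step: the contraction estimate $\|Df^{-N}|_{E^{uu}}\|\leq 1/2$ is pointwise, but to conclude that the whole leafwise set $f^{-n_k}(\alpha_1(f^{n_k}x))$ contracts one must iterate this bound along the orbit and exploit the uniform continuity of $E^{uu}$ on compact sets. A secondary technical point is the finiteness of $H_\mu(\alpha_2\mid\alpha_1)$: in the $C^1$ regularity considered here, absolute continuity of conditional measures along unstable leaves is unavailable, so the bound must be obtained directly from the uniform leafwise-diameter control on a set of positive measure, counting the number of $\alpha_2$-atoms that meet a given $\alpha_1$-atom.
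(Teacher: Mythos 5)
The paper does not actually prove this lemma: it is quoted as ``an adapted version of Lemma 3.1.2 in \cite{LY}'', so the benchmark is Ledrappier--Young's classical argument. Your architecture coincides with theirs: pass to the join $\beta=\alpha_1\vee\alpha_2$ and compare each $\alpha_i$ with $\beta$. Your ``key geometric step'' is also fine, and the obstacle you worry about there is not one: by the very definition of partial hyperbolicity in Section~\ref{Sec:definition-partial}, $\|Df^{-N}|_{E^{uu}(x)}\|\leq 1/2$ holds \emph{uniformly} over the compact set $\La$, so leafwise distances contract uniformly under $f^{-1}$, and combined with the nestedness $f^{-(i+1)}\alpha_1(x)\subset f^{-i}\alpha_1(x)$ (from the increasing property) and Poincar\'e recurrence to a set where the leafwise diameter of atoms is bounded, the atoms of $\bigvee_{i\geq 0}f^{-i}\alpha_1$ are indeed single points $\mu$-a.e.

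The genuine gap is the step you dismiss as ``a secondary technical point'': the finiteness of $H_\mu(\alpha_2\mid\alpha_1)$ (equivalently of $H_\mu(\beta\mid\alpha_1)$). Every route through your argument needs it --- it is the hypothesis of Lemma~\ref{l.updown}(2), it appears in your boundary term, and even your ``immediate'' inequality $h_\mu(f,\alpha_1)\leq h_\mu(f,\beta)$ is not formal for measurable partitions with the conditional definition $h_\mu(f,\alpha)=H_\mu(\alpha\mid f\alpha)$, since refining $\alpha$ refines \emph{both} entries of the conditional entropy; proving it again uses the chain rule together with $H_\mu(\beta\mid\alpha_1)<\infty$. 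This finiteness is precisely the technical heart of \cite[Lemma 3.1.2]{LY}, and the justification you sketch does not work: a single $\alpha_1$-atom typically meets \emph{countably many} $\alpha_2$-atoms (the inner leafwise radii $r(x)$ of atoms of a subordinate partition are not uniformly bounded below, only positive a.e.), and without absolute continuity of the conditional measures $\mu_x^{\alpha_1}$ along the leaves --- unavailable in the $C^1$ setting, and in any case part of what the surrounding theory is trying to establish --- a cardinality count cannot bound $\int H_{\mu_x^{\alpha_1}}(\alpha_2|_{\alpha_1(x)})\ud\mu(x)$. One needs the finer argument of \cite{LY,LS}, which exploits the specific structure of the constructed partitions (a Borel--Cantelli control on the measure of points whose atoms have small inner radius, as in Lemma~\ref{l.partition}) to sum the series defining the conditional entropy. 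Finally, a simplification worth noting: once $H_\mu(\beta\mid\alpha_1)<\infty$ is granted, your whole $\epsilon$-detour through the generating property is unnecessary, since for increasing partitions $\bigvee_{j=1}^m f^{-j}\beta=f^{-m}\beta$ and the chain rule (Lemma~\ref{Lem:H-condition}) with invariance gives
\begin{equation*}
H_\mu(f^{-m}\beta\mid\beta)\;\leq\; H_\mu(f^{-m}\beta\mid\alpha_1)\;=\;H_\mu(f^{-m}\alpha_1\mid\alpha_1)+H_\mu(\beta\mid\alpha_1),
\end{equation*}
and dividing by $m$ yields both $h_\mu(f,\beta)\leq h_\mu(f,\alpha_1)$ and, by the symmetric manipulation, the reverse inequality. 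So the generating step, while correct, can be dispensed with; the finiteness cannot.
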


One can thus define the entropy along the unstable lamination as follows.
\begin{Definition}\label{def.partial-entropy} The \emph{entropy of $\mu$ along the unstable lamination $\cF^u$} is
$$h_{\mu}(f,\cF^u)=h_{\mu}(f,\alpha),$$
where $\alpha$ is any increasing measurable partition $\mu$-subordinate to $\cF^u$.
\end{Definition}

\begin{Remark}\label{r.partial entropy} \begin{enumerate}
\item By definition $h_\mu(f,\cF^u)\leq h_\mu(f)$.
\item By~\cite[Proposition 2.14]{HHW}, $\mu\mapsto h_\mu(f,\cF^u)$ is affine from $\cM_{\rm inv}(\La,f)$ to $[0,\infty)$.
\item The notion of entropy along an unstable lamination is literally different from the one defined in~\cite[Section 7.2]{LY1}.
It has been proved that these two notions are the same in the $C^{1+\alpha}$-partially hyperbolic setting, $\alpha>1$. See ~\cite[Proposition 2.4]{VY} for a precise statement.
\end{enumerate}
\end{Remark}

The entropy along an unstable lamination satisfies an inequality generalizing Ruelle's one~\cite{Ru2}.

\begin{Theorem}[Theorem A in~\cite{WWZ}]\label{thm.ruelle}
Let $f$ be a $C^1$ diffeomorphism and $\La$ be a u-laminated set.
Then for any invariant measure $\mu\in\cM_{\rm inv}(\La,f)$, one has
$$h_\mu(f,\cF^u)\leq\int\log{|\det(Df|_{E^{uu}})|}\ud\mu.$$
\end{Theorem}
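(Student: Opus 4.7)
The plan is to adapt Ruelle's classical covering argument to the entropy along the unstable lamination. By Lemma~\ref{l.increasing} fix an increasing measurable partition $\alpha$ that is $\mu$-subordinate to $\cF^u$, so that $h_\mu(f,\cF^u)=h_\mu(f,\alpha)$. Since $\alpha$ is increasing one checks that $\alpha\prec f^{-1}\alpha\prec f^{-2}\alpha\prec\cdots$, and hence $\bigvee_{j=1}^m f^{-j}\alpha=f^{-m}\alpha$, so Lemma~\ref{l.alternative} gives
$$h_\mu(f,\alpha)\le \frac{1}{m}H_\mu(f^{-m}\alpha\mid\alpha) \qquad\text{for every } m\ge 1,$$
once one has verified $H_\mu(\alpha\mid f\alpha)<\infty$, which follows from the subordination property together with a standard covering estimate on $\cF^u$-plaques.

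Next, for $\mu$-a.e.\ $x$ the restriction of $f^{-m}\alpha$ to $\alpha(x)$ corresponds under $f^m$ to the partition of $f^m(\alpha(x))\subset\cF^u(f^m x)$ into $\alpha$-elements. Hence
$$H_{\mu_x^\alpha}\bigl(f^{-m}\alpha|_{\alpha(x)}\bigr)\le \log N_m(x),$$
where $N_m(x)$ is the number of $\alpha$-elements meeting $f^m(\alpha(x))$.

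The core of the argument is to bound $N_m(x)$ by the $E^{uu}$-volume of $f^m(\alpha(x))$. Fix $\delta>0$ and choose a compact set $K_\delta$ with $\mu(K_\delta)>1-\eta$ on which the unstable ball $B^u(\cdot,\delta)$ is contained in $\alpha(\cdot)$. Any $\alpha$-element meeting both $f^m(\alpha(x))$ and $K_\delta$ contains an unstable $\delta$-ball, so its $E^{uu}$-volume is at least $c\,\delta^{\dim E^{uu}}$; thus the number of such elements is at most
$$\frac{\Leb_{\cF^u}\!\bigl(f^m(\alpha(x))\bigr)}{c\,\delta^{\dim E^{uu}}}=\frac{1}{c\,\delta^{\dim E^{uu}}}\int_{\alpha(x)}\bigl|\det Df^m|_{E^{uu}}\bigr|\,\ud\Leb_{\cF^u(x)}.$$
The $\alpha$-elements disjoint from $K_\delta$ carry small total conditional $\mu$-mass; after splitting the conditional entropy over the two families, integrating against $\mu$ and dividing by $m$ one obtains
$$\frac{1}{m}H_\mu(f^{-m}\alpha\mid\alpha)\le \frac{1}{m}\int \log\!\int_{\alpha(x)}\bigl|\det Df^m|_{E^{uu}}\bigr|\,\ud\Leb_{\cF^u(x)}\,\ud\mu(x)+o_m(1)+o_\eta(1).$$
Applying Birkhoff's ergodic theorem to the subadditive cocycle $\log\bigl|\det Df^m|_{E^{uu}}\bigr|$, together with a Jensen-type control on the integrated log-Jacobian, and then letting $m\to\infty$ and $\eta\to 0$, yields $h_\mu(f,\cF^u)\le \int\log|\det Df|_{E^{uu}}|\,\ud\mu$.

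The main obstacle is controlling the $\alpha$-elements that avoid $K_\delta$: their sizes are a priori uncontrolled, so the volume comparison fails on them. Their entropy contribution must be shown to vanish in the limit using only that the conditional $\mu$-mass they carry is small, while matching the error terms carefully against the Birkhoff limit for the Jacobian cocycle.
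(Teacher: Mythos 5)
The paper does not actually prove this statement --- it is quoted from \cite{WWZ} --- so I am evaluating your sketch on its own merits. There is a genuine gap at the final step, and it is fatal to the whole strategy. After bounding $H_{\mu_x^\alpha}(f^{-m}\alpha|_{\alpha(x)})$ by $\log N_m(x)$ and $N_m(x)$ by $\Leb_{\cF^u}(f^m(\alpha(x)))/(c\,\delta^{\dim E^{uu}})$, you are left with $\frac1m\int\log\Leb_{\cF^u}(f^m(\alpha(x)))\,d\mu(x)$, and you claim that Birkhoff plus a ``Jensen-type control'' turns this into $\int\log|\det Df|_{E^{uu}}|\,d\mu$. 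It does not: writing $\phi=\log|\det Df|_{E^{uu}}|$, one has $\Leb_{\cF^u}(f^m(\alpha(x)))=\int_{\alpha(x)}e^{S_m\phi}\,d\Leb$, and since $\alpha$ is a \emph{fixed} partition whose elements have definite size, the exponential growth rate of this integral is governed by the Birkhoff averages of $\phi$ along \emph{Lebesgue-typical} points of the plaque $\alpha(x)$ (i.e.\ by Gibbs $u$-states), not by $\int\phi\,d\mu$; Jensen gives the inequality in the wrong direction, and the $C^1$ hypothesis rules out bounded distortion. Concretely, take $\mu=\delta_p$ for a fixed point $p$ of a transitive hyperbolic attractor whose unstable multiplier $e^{a}$ is smaller than the unstable exponent $\lambda^u$ of the SRB measure: then $\frac1m\log\Leb(f^m(\alpha(p)))\to\lambda^u>a$, so your chain of individually valid inequalities terminates at $\lambda^u$ and cannot yield the claimed bound $h_{\delta_p}(f,\cF^u)\le a$. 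The loss occurs when you replace the conditional entropy by $\log N_m(x)$: for general $\mu$ the conditional measure $f^m_*\mu_x^\alpha$ charges very few of the $N_m(x)$ elements, while the volume count sees all of them. A correct Ruelle-type argument must keep the counting localized along the orbit of $x$: one refines by $\bigvee_{j=0}^{m}f^{-j}(\cP)$ for a finite partition $\cP$ of diameter $\rho$, so that each cell lies in an $(m,\rho)$-Bowen ball on which $S_m\phi$ oscillates by at most $m\e$, and one lets $\rho\to 0$ before $m\to\infty$; only then does Birkhoff at the point $x$ itself enter. This is the structure of the proof in \cite{WWZ} and of the closely related volume estimate of Section~\ref{s.volume-estimate}.

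A secondary, also unresolved, issue is the family of $\alpha$-elements avoiding $K_\delta$: small total conditional mass does \emph{not} bound their entropy contribution (a set of measure $\eta$ split into infinitely many pieces can have arbitrarily large entropy), and their number admits no a priori bound since they need not contain unstable balls of definite radius. Closing this requires either an exponential a priori count or the integrability of the information function $I_\mu(\alpha|f(\alpha))$ combined with absolute continuity of the integral, in the style of Ma\~n\'e's proof of the Pesin formula; it cannot be done ``using only that the conditional $\mu$-mass they carry is small''.
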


The entropy along an unstable lamination varies upper semi-continuously. This result is due to ~\cite{Yan16} (see also~\cite[Proposition 2.15]{HHW}.) 
\begin{Theorem}\label{Thm:usc}
Let $f\in\diff^1(M)$ be a partially hyperbolic diffeomorphism and $\{\mu_n\}$ be a sequence of $f$-invariant measures. Assume that $\mu_n$ converges to $\mu$ in weak$*$-topology,
then
$$\limsup_{n\rightarrow\infty} h_{\mu_n}(f,\cF^u)\leq h_\mu(f,\cF^u).$$
\end{Theorem}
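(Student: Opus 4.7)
The idea is to bound $h_\nu(f,\cF^u)$ from above by a conditional entropy involving \emph{finite} Borel partitions with $\mu$-null boundary. Such quantities are continuous in $\nu$ at $\mu$ for the weak-$*$ topology (by Portmanteau), so the upper semi-continuity of $h_\nu(f,\cF^u)$ will follow.

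Using the Hirsch--Pugh--Shub plaque family theorem applied to $E^{uu}$, fix a continuous family of $C^1$-plaques $\{W^u_\rho(x)\}_{x\in M}$ of uniform size $\rho>0$, tangent to $E^{uu}$ and satisfying the forward invariance $f(W^u_\rho(x))\supset W^u_\rho(f(x))$; these plaques are pieces of the strong unstable leaves $\cF^u(x)$. Choose a finite Borel partition $\beta$ of $M$ with $\mu(\partial\beta)=0$ and $\diam(\beta)$ much smaller than $\rho$, so that each $\beta$-atom meets each plaque in a uniformly bounded number of connected components. Define $\eta_0(x)$ as the connected component of $\beta(x)\cap W^u_\rho(x)$ through $x$, and set $\eta:=\bigvee_{k\ge 0}f^k\eta_0$. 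Then $\eta$ is an increasing measurable partition; a standard Ledrappier--Strelcyn-type argument shows that $\eta$ is $\nu$-subordinate to $\cF^u$ for every $f$-invariant $\nu$, and the decomposition $\eta=\eta_0\vee f\eta$ gives
$$h_\nu(f,\cF^u)=h_\nu(f,\eta)=H_\nu(\eta_0\mid f\eta).$$

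By Lemma~\ref{l.updown}, $H_\nu(\eta_0\mid f\eta)=\inf_N H_\nu(\eta_0\mid\bigvee_{k=1}^N f^k\eta_0)$. Iterating the plaque forward invariance yields the set identity
$$\bigg(\bigvee_{k=1}^N f^k\eta_0\bigg)(x)=\bigg(\bigvee_{k=1}^N f^k\beta\bigg)(x)\cap f(W^u_\rho(f^{-1}x)),$$
which, combined with $\eta_0\subset\beta$ and the bounded combinatorics of $\beta\cap W^u_\rho$, allows one to relate $H_\nu(\eta_0\mid\bigvee_{k=1}^N f^k\eta_0)$ to the finite-partition conditional entropy $H_\nu(\beta\mid\bigvee_{k=1}^N f^k\beta)$ up to a controllable error depending only on the plaque geometry. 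Since $\mu(\partial\beta)=0$, the map $\nu\mapsto H_\nu(\beta\mid\bigvee_{k=1}^N f^k\beta)$ is continuous at $\mu$ for weak-$*$ convergence; taking $\limsup_n$, then $N\to\infty$, and finally refining $\beta$ as $\rho\to 0$ yields $\limsup_n h_{\mu_n}(f,\cF^u)\le h_\mu(f,\cF^u)$.

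\textbf{Main obstacle.} The technical heart is the finite-partition reduction: replacing the conditioning on the uncountably-atomed partition $\bigvee_{k=1}^N f^k\eta_0$ by conditioning on the finite partition $\bigvee_{k=1}^N f^k\beta$, with an error that is small \emph{uniformly} over invariant measures $\nu$ in a weak-$*$ neighborhood of $\mu$ (not merely at $\nu=\mu$). This relies on the observation that, given the transverse $\beta$-coordinate of $x$ together with the iterated plaque information, the plaque through $x$ is determined up to bounded multiplicity, so the residual entropy contribution can be absorbed into a correction that vanishes as $\rho\to 0$ and $\beta$ is refined.
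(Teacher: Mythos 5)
First, a point of reference: the paper does not prove Theorem~\ref{Thm:usc} at all --- it quotes it from~\cite{Yan16} (see also~\cite[Proposition 2.15]{HHW}). Your strategy (reduce the conditioning on an uncountably-atomed subordinate partition to conditioning on finite partitions with $\mu$-null boundary, then invoke weak-$*$ continuity) is indeed the strategy of those references, and it is the same machinery the paper itself develops in Section~\ref{s.measurable-partitions-to-unstable} for Theorem~\ref{thm.switch-to-finite}. So the plan is the right one. However, as written the proposal has genuine gaps, and the step you flag as the ``main obstacle'' is not an obstacle you have overcome --- it is essentially the whole content of the theorem.

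Concretely: (i) the family $\eta_0(x)=$ ``component of $\beta(x)\cap W^u_\rho(x)$ through $x$'' is not a partition, because the plaques $W^u_\rho(x)$ overlap without coinciding (two nearby points $x,y$ on the same leaf have $W^u_\rho(x)\neq W^u_\rho(y)$, so $y\in\eta_0(x)$ does not force $\eta_0(y)=\eta_0(x)$); one must route the construction through transversals, as the paper does with the discs $\Sigma_i$ in Lemma~\ref{l.lamination-box}. (ii) The claim that $\eta=\bigvee_{k\ge0}f^k\eta_0$, built from a \emph{fixed} $\beta$, is $\nu$-subordinate for \emph{every} invariant $\nu$ is false in general: the Ledrappier--Strelcyn argument (Lemma~\ref{l.partition} and Lemma~\ref{l.increasing partition}) needs $\sum_k\nu(B_{\lambda^k}(\partial\beta))<\infty$, a $\nu$-dependent condition, and a measure $\nu$ charging $\partial\beta$ or concentrating near it destroys both the identity $h_\nu(f,\cF^u)=H_\nu(\eta_0\mid f\eta)$ and the inequality $\le$ that you actually need. (iii) Most importantly, the ``finite-partition reduction'' is only asserted. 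Note that the naive version --- conditioning on $\beta$ itself --- does give an upper bound $h_\nu(f,\cF^u)\le \frac1N H_\nu(\bigvee_{k=1}^N f^{-k}\beta\mid\beta)$ that is continuous at $\mu$, but its limit is $h_\mu(f,\beta)$, which may strictly exceed $h_\mu(f,\cF^u)$ (it retains the transverse entropy). What is needed is a \emph{finite} conditioning partition whose atoms are saturated by whole local unstable plaques, fine enough transversally to kill the center-stable entropy yet with $\mu$-null boundary, together with the verification that it sits below the $\nu$-subordinate partitions for all invariant $\nu$ near $\mu$. This is exactly the construction of the ``tube'' partitions $\cA^u_l$ in Proposition~\ref{p.finite-partition-infinite-partition} and the two-sided estimate of Theorem~\ref{thm.switch-to-finite}; your proposal gestures at it (``bounded multiplicity'', ``controllable error'') but supplies no argument, so the proof is incomplete precisely where the work lies.
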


One gets the following consequence from the previous results.
\begin{Corollary}\label{c.convex-compact}
Let $f$ be a $C^1$ diffeomorphism and $\La$ be a u-laminated set.
Then the set
\begin{equation}
\mathcal{M}_{\rm u}:=\big\{\mu\in \cM_{\rm inv}(\Lambda,f):\; h_\mu(f,\cF^u)=\int\log{|\det(Df|_{E^{uu}})|}\ud\mu\big\}
\end{equation}
is convex and compact. A measure belongs to $\mathcal{M}_{\rm u}$ iff each of its ergodic component does.
\end{Corollary}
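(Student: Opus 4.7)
The plan is to combine the three preceding results (Ruelle's inequality along $\cF^u$, upper semi-continuity of $\mu\mapsto h_\mu(f,\cF^u)$, and affinity of that functional) to deduce all three assertions of the corollary. Write $\Phi(\mu):=\int\log|\det(Df|_{E^{uu}})|\,\ud\mu-h_\mu(f,\cF^u)$ on $\cM_{\rm inv}(\Lambda,f)$. By Theorem~\ref{thm.ruelle} we have $\Phi\ge 0$ everywhere, and $\mathcal{M}_{\rm u}=\Phi^{-1}(0)$.

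For compactness, first note that $E^{uu}$ is a continuous subbundle over $\La$ and $f$ is $C^1$, so the function $\log|\det(Df|_{E^{uu}})|$ is continuous on $\La$; hence $\mu\mapsto \int\log|\det(Df|_{E^{uu}})|\,\ud\mu$ is continuous on $\cM_{\rm inv}(\Lambda,f)$ in the weak-$\ast$ topology. By Theorem~\ref{Thm:usc}, $\mu\mapsto h_\mu(f,\cF^u)$ is upper semi-continuous, so $\Phi$ is lower semi-continuous. Since $\cM_{\rm inv}(\Lambda,f)$ is compact, $\mathcal{M}_{\rm u}=\{\Phi\le 0\}$ is closed, hence compact.

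For convexity, integration against $\log|\det(Df|_{E^{uu}})|$ is linear in $\mu$, and Remark~\ref{r.partial entropy}(2) asserts that $\mu\mapsto h_\mu(f,\cF^u)$ is affine on $\cM_{\rm inv}(\Lambda,f)$. Therefore $\Phi$ is affine, and its zero set is convex.

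For the ergodic components, consider an ergodic decomposition $\mu=\int \mu_\xi\,\ud\tau(\xi)$ of $\mu\in\cM_{\rm inv}(\Lambda,f)$. The integral $\int\log|\det(Df|_{E^{uu}})|\,\ud\mu$ decomposes as $\int\bigl(\int\log|\det(Df|_{E^{uu}})|\,\ud\mu_\xi\bigr)\ud\tau(\xi)$ by Fubini, and the affine property of $h_{(\cdot)}(f,\cF^u)$ (in the form of an integral decomposition formula, which is the standard consequence of affinity for entropy functionals; this is what Remark~\ref{r.partial entropy}(2) encodes) gives $h_\mu(f,\cF^u)=\int h_{\mu_\xi}(f,\cF^u)\,\ud\tau(\xi)$. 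Consequently,
\[\Phi(\mu)=\int \Phi(\mu_\xi)\,\ud\tau(\xi).\]
Since $\Phi(\mu_\xi)\ge 0$ for $\tau$-a.e.\ $\xi$ by Theorem~\ref{thm.ruelle}, we deduce $\Phi(\mu)=0$ iff $\Phi(\mu_\xi)=0$ for $\tau$-a.e.\ $\xi$, i.e.\ $\mu\in\mathcal{M}_{\rm u}$ iff almost every ergodic component lies in $\mathcal{M}_{\rm u}$. The only non-trivial ingredient is the integral form of affinity for $h_{(\cdot)}(f,\cF^u)$ along the ergodic decomposition; this is the point where a careful reference to~\cite[Proposition 2.14]{HHW} is needed, and it is the main (minor) obstacle of the argument.
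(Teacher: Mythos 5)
Your argument is correct and is precisely the intended derivation: the paper gives no proof beyond ``one gets the following consequence from the previous results,'' and those results are exactly the three you combine (Theorem~\ref{thm.ruelle} for the sign of $\Phi$, Theorem~\ref{Thm:usc} for closedness, and the affinity from Remark~\ref{r.partial entropy}). Your flagged point about needing the integral (barycentric) form of affinity over the ergodic decomposition is the right thing to be careful about, and it is covered by the cited \cite[Proposition 2.14]{HHW} (and in any case follows since an affine upper semi-continuous function on the simplex of invariant measures satisfies the barycenter formula).
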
 

In the case $f$ is a $C^{1+\alpha}$-diffeomorphism, $\alpha>0$,
Pesin and Sinai ~\cite{PS} have introduced a class of invariant measures supported on unstable laminations
(which they called Gibbs u-states):
these are measures whose disintegrations along the unstable plaques of a laminated box of the unstable lamination
$\cF^u$ are absolutely continuous with respect to the Lebesgue measure along the plaques,
see also~\cite[Chapter 11]{BDV}.
The set $\mathcal{M}_{\rm u}$ is included in this class of measures:

\begin{Theorem}[Theorem 3.4 in~\cite{L}]\label{t.ledrappier}
Let $f$ be a $C^{1+\alpha}$ diffeomorphism, $\alpha>0$.
Then for any measure $\mu\in \mathcal{M}_{\rm u}$, the disintegrations along the unstable leaves
are absolutely continuous with respect to the Lebesgue measure.
\end{Theorem}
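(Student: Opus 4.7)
The strategy is the converse direction of the classical SRB characterization, specialized to the partial entropy along the unstable lamination. By Corollary~\ref{c.convex-compact}, $\mathcal{M}_{\rm u}$ is stable under ergodic decomposition and absolute continuity of the unstable conditionals passes through ergodic components, so I would first reduce to the case where $\mu$ is ergodic.

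Next I would fix an increasing measurable partition $\xi$ that is $\mu$-subordinate to $\cF^u$ (Lemma~\ref{l.increasing}), so that $h_\mu(f,\cF^u)=h_\mu(f,\xi)$. Rokhlin's entropy formula for the increasing partition $\xi$ expresses the partial entropy as an integral of a conditional Jacobian:
$$h_\mu(f,\xi)\;=\;-\int\log\mu_x^\xi\bigl(f^{-1}(\xi(f(x)))\bigr)\,\ud\mu(x).$$
On the Lebesgue side, for the leaf volume $\lambda_x$ on $\xi(x)\subset\cF^u(x)$, the change-of-variables formula along the smooth leaves gives $\lambda_{f(x)}(f(\xi(x)))=\int_{\xi(x)}|\det(Df|_{E^{uu}})|\,\ud\lambda_x$, and the $C^{1+\alpha}$ hypothesis yields the standard bounded-distortion estimate for $Df^n|_{E^{uu}}$ uniformly in $n$.

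Writing the Lebesgue decomposition $\mu_x^\xi=\rho_x\lambda_x+\nu_x$ on each atom, one compares $-\log\mu_x^\xi(A)$ with $-\log\lambda_x(A)/\lambda_x(\xi(x))$ for the shrinking sets $A=f^{-1}(\xi(f(x)))$ and then, via iteration, for the atoms of the refining sequence $\{f^{-n}\xi\}_{n\geq 1}$. A Jensen/Gibbs-type inequality, integrated over $\mu$ and combined with Birkhoff's ergodic theorem and bounded distortion, produces an estimate
$$h_\mu(f,\xi)\;\leq\;\int\log|\det(Df|_{E^{uu}})|\,\ud\mu\;-\;\Delta(\mu),$$
where the nonnegative defect $\Delta(\mu)$ measures the deviation of $\mu_x^\xi$ from being a multiple of $\lambda_x$ and vanishes precisely when $\nu_x\equiv 0$ for $\mu$-a.e.\ $x$. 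The assumption $\mu\in\mathcal{M}_{\rm u}$ is the equality case of this inequality, hence $\Delta(\mu)=0$ and the conditional measures are absolutely continuous along the unstable leaves.

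The main obstacle is the equality-case analysis: turning the vanishing of the integrated defect $\Delta(\mu)$ into a pointwise identification $\mu_x^\xi\ll\lambda_x$ requires a martingale / Lebesgue-differentiation argument along unstable leaves, using the refining sequence $f^{-n}\xi$ whose atoms shrink geometrically along $\cF^u$. The bounded distortion coming from the H\"older continuity of $Df|_{E^{uu}}$ is essential here: it ensures that Lebesgue asymptotics on the shrinking atoms match the Shannon--McMillan--Breiman asymptotics on $\mu$-generic orbits, so that the equality of the two asymptotic rates (forced by $\mu\in\mathcal{M}_{\rm u}$) propagates to the desired pointwise absolute continuity. Without the $C^{1+\alpha}$ hypothesis one cannot control the Jacobian ratios uniformly along iterates and this transfer breaks down, which is why the statement genuinely needs the extra H\"older regularity beyond the $C^1$ setting used in the rest of the paper.
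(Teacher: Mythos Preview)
The paper does not give its own proof of this statement: it is quoted verbatim as Theorem~3.4 of Ledrappier~\cite{L} and used as a black box. Your outline is a faithful sketch of Ledrappier's original argument (reduce to the ergodic case, express the partial entropy via Rokhlin's formula for an increasing $\mu$-subordinate partition, compare with the leafwise Lebesgue Jacobian using the $C^{1+\alpha}$ bounded distortion, and read off absolute continuity from the equality case via a Jensen-type inequality and a martingale/SMB argument along the refining $f^{-n}\xi$), so there is no alternative approach in the paper to compare against.
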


\begin{Remark}
The converse property also holds for any $C^1$-diffeomorphism (but we will not use that property).
This is a consequence of our Theorem~\ref{thm-III-bis} below.
For that reason we prefer to define the \emph{Gibbs u-states} as the measures in the class $\mathcal{M}_u$,
which is also adapted to $C^1$-diffeomorphisms.
\end{Remark}

\subsection{Unstable density basis}
The notion of Lebesgue density points does not behaves well under iterations.
Pugh and Shub ~\cite{PuSh2} have introduced a notion of unstable density point
inside the leaves of a globally partially hyperbolic diffeomorphism,
and from then have defined Julienne density points inside the manifold.
We here extend unstable density points inside an attracting neighborhood of
a partially hyperbolic attracting set.

Throughout this section, $\La$ is an invariant set endowed with
a partially hyperbolic splitting $T_\La M=E^{cs}\oplus E^{uu}$ for a $C^{1+\alpha}$ diffeomorphism $f$ of
a compact manifold $M$, $\alpha>0$ and $\mathcal{C}^{u}$ is an unstable cone field on a neighborhood $U$
of $\Lambda$.

A \emph{u-disc} $D\subset U$ is an embedded $C^1$-disc of dimension $\dim(E^{uu})$ 
that is tangent to an unstable cone field $\mathcal{C}^{u}$.
For $\delta>0$ we denote by $B_{D}(z,\delta)$ the closed $\delta$-ball centered at $z$ for the metric induced on $D$.

We now fix $\delta>0$ arbitrarily and
for any $z\in D\cap \bigcap_{n\geq 0}f^{-n}(U)$ and $n\in\mathbb{N}$ we define
$$B_{D,n}(z):=f^{-n}(B_{f^n(D)}(f^n(z),\delta)).$$
\begin{Theorem}~\label{thm.u-density-point}
Let $\La$ be an invariant set endowed with
a partially hyperbolic splitting $T_\La M=E^{cs}\oplus E^{uu}$ for a $C^{1+\alpha}$ diffeomorphism $f$, $\alpha>0$.
Let $\cC^u$ be an unstable cone field on a neighborhood $U$
and   $D$ be a $C^{1+\alpha}$ u-disc in the basin of $\Lambda$.

The collection $\{B_{D,n}(z)\}_{n\in\mathbb{N},z\in D}$ is a density basis of the u-disc $D$:
if $A\subset D\cap \bigcap_{n\geq 0}f^{-n}(U)$ is a measurable set with positive Lebesgue measure, then for Lebesgue almost
every $z\in A$,
$$\lim_{n\rightarrow\infty}\frac{\Leb_D(A\cap B_{D,n}(z))}{\Leb_D(B_{D,n}(z))}=1.$$
Such a point $z$ is called an \emph{unstable density point of $A$ in $D$}.
\end{Theorem}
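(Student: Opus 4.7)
The plan is to adapt Pugh--Shub's julienne density argument to the setting of a $u$-disc in the basin of a partially hyperbolic attracting set. The main technical input is a bounded distortion estimate for $f^n|_D$ which uses the $C^{1+\alpha}$ regularity of both $f$ and $D$.

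First I would extend the strong unstable bundle $E^{uu}$ as an $\alpha$-H\"older continuous invariant bundle on a forward-invariant neighborhood $U$ of $\La$, by forward-iterating a slightly narrower unstable cone field (cf.~Remark~\ref{r.cone}). Then I would establish the core bounded distortion estimate: there exists $K>0$ such that for every $C^{1+\alpha}$ u-disc $D$ tangent to $\cC^u$, every $z\in D$ whose forward orbit stays in $U$, every $n\geq 0$, and every $y\in B_{D,n}(z)$,
$$\left|\log \frac{|\det Df^n|_{T_y D}|}{|\det Df^n|_{T_z D}|}\right| \leq K.$$
This is proved by writing the left-hand side as a telescoping sum over $0\leq k<n$ and bounding each term using the $\alpha$-H\"older continuity of $Df$ together with the $\alpha$-H\"older control of the tangent planes of $f^k(D)$ (which converge exponentially fast, by domination, to the H\"older bundle $E^{uu}$). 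The intrinsic distances $d_{f^k(D)}(f^k(y),f^k(z))$ decay geometrically as $k$ runs from $n$ down to $0$ (backward contraction along u-discs), so the telescoping sum is geometrically summable uniformly in $y,z,n$.

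The density statement then follows from a Vitali-type covering argument. Bounded distortion implies that for any measurable $A'\subset B_{D,n}(z)$ the ratio $\Leb_D(A')/\Leb_D(B_{D,n}(z))$ agrees, up to a factor $e^{2K}$, with $\Leb_{f^n(D)}(f^n(A'))/\Leb_{f^n(D)}(B_{f^n(D)}(f^n(z),\delta))$. Using this one establishes a weak-type $(1,1)$ maximal inequality for the basis $\{B_{D,n}(z)\}$: given any finite subfamily, bounded distortion allows one to extract a disjoint subfamily whose controlled enlargement still covers the union, in the spirit of the Vitali covering lemma for round balls. Combined with the pointwise convergence of averages on continuous functions (which is elementary once $\diam B_{D,n}(z)\to 0$), this yields differentiation for arbitrary measurable sets, hence density~$1$ at almost every $z\in A$.

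The hard part will be handling the fact that the sets $B_{D,n}(z)$ are not round balls in the intrinsic metric of $D$: when $\dim E^{uu}\geq 2$, the singular values of $(Df^n|_{T_zD})^{-1}$ have different exponential decay rates, so $B_{D,n}(z)$ becomes highly elongated and its aspect ratio can grow to infinity with $n$. One therefore cannot simply invoke the Lebesgue differentiation theorem for Euclidean balls with a bounded shape factor; the julienne trick is needed precisely to circumvent this obstacle by transferring density questions forward to $f^n(D)$, where the reference set $B_{f^n(D)}(f^n(z),\delta)$ remains a genuine round ball of fixed radius~$\delta$.
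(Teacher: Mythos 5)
Your proposal is correct and follows essentially the same route as the paper, which likewise adapts Pugh--Shub's argument: a bounded distortion estimate for $f^n$ along u-discs (using the $C^{1+\alpha}$ regularity and the exponential backward contraction of intrinsic distances inside $B_{D,n}(z)$), followed by a Vitali-type covering argument whose engulfing and doubling properties come precisely from viewing the sets $B_{D,n}(z)$ as preimages of genuine round $\delta$-balls in $f^n(D)$. The only cosmetic difference is that the paper runs the covering argument directly on the set of points where the density ratio has $\liminf$ below a fixed $\rho<1$, rather than passing through a weak-type maximal inequality and a dense class of continuous functions.
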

The proof follows~\cite[Theorem 3.1]{PuSh2}. For completeness, we present it here.
\proof
Since $E^{uu}$ is uniformly expanded, there exist $N\geq 1$ and $\tau>1$
such that $\|Df^N(v)\|\geq \tau$ for any unit vector $v\in \cC^{u}$ at a point $x\in U\cap\dots\cap f^{-N}(U)$.
\begin{Lemma}\label{l.density-basis}
	\begin{enumerate}
		\item $\Leb_D(B_{D,n}(z))$ tends to zero as $n\to \infty$.
		\item  For any $m\in\mathbb{N}$, there exists a constant $K>1$ such that 
		$\Leb_D(B_{D,n}(z))\leq K\; {\Leb_D(B_{D,n+m}(z))}$.
		\item There exists $\ell\in\mathbb{N}$ such that
		$B^u_{D,n+\ell}(z_1)\cap B^u_{D,n+\ell}(z_2)\neq\emptyset\Rightarrow B^u_{D,n+\ell}(z_1)\subset B^u_{D,n}(z_2)$. 
	\end{enumerate}
\end{Lemma}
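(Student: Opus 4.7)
My overall strategy is to transport everything forward by $f^n$: the dynamical ball $B_{D,n}(z)$ becomes the round ball $B_{f^n(D)}(f^n(z),\delta)$ of fixed radius $\delta$ in $f^n(D)$, where metric and measure comparisons are elementary, and then to pull the conclusions back via $f^{-n}$. Two ingredients control this transport: the uniform expansion along $\cC^u$ (which gives $\|Df^N(v)\|\geq \tau$) forces $f^{-n}$ to contract diameters of $\delta$-balls exponentially, while the $C^{1+\alpha}$ regularity of $f$ provides bounded distortion for the restriction of $f^{-n}$ to these $\delta$-balls inside $f^n(D)$. Remark~\ref{r.cone} ensures that $f^n(D)$ remains tangent to an unstable cone field, so the intrinsic geometry of the u-discs and the hyperbolicity constants stay uniform along the orbit.

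Items (1) and (3) follow directly from the expansion factor $\tau$. For (1), $f^n$ restricts to a diffeomorphism $B_{D,n}(z)\to B_{f^n(D)}(f^n(z),\delta)$ multiplying distances by at least $\tau^{\lfloor n/N\rfloor}$, so $\diam(B_{D,n}(z))\leq \delta\,\tau^{-\lfloor n/N\rfloor}\to 0$. For (3), the same contraction gives that $f^n(B_{D,n+\ell}(z_j))$ sits inside the ball of radius $\delta\,\tau^{-\lfloor \ell/N\rfloor}$ around $f^n(z_j)$ in $f^n(D)$. If the two sets intersect, then $d_{f^n(D)}(f^n(z_1),f^n(z_2))\leq 2\delta\,\tau^{-\lfloor \ell/N\rfloor}$, so every point of $f^n(B_{D,n+\ell}(z_1))$ is within $3\delta\,\tau^{-\lfloor \ell/N\rfloor}$ of $f^n(z_2)$. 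Choosing $\ell$ so that $\tau^{\lfloor \ell/N\rfloor}\geq 3$ then yields $f^n(B_{D,n+\ell}(z_1))\subset B_{f^n(D)}(f^n(z_2),\delta)=f^n(B_{D,n}(z_2))$, and applying $f^{-n}$ concludes.

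Item (2) is where bounded distortion enters. Fix $m$ and set $L_m:=\sup_x\|Df^m(x)\|$. Since $f^m$ expands distances by at most $L_m$, one has the immediate inclusion $B_{f^n(D)}(f^n(z),\delta/L_m)\subset f^n(B_{D,n+m}(z))$. A standard distortion estimate then provides a constant $C_0$, independent of $n$ and $z$, such that the restricted Jacobian $|Jf^{-n}|_{f^n(D)}|$ varies by at most a factor $C_0$ over $B_{f^n(D)}(f^n(z),\delta)$; integrating on the two round balls of radii $\delta$ and $\delta/L_m$ (whose Lebesgue measures in $f^n(D)$ are comparable with a ratio controlled by $m$ and by the uniform geometry of the u-discs) yields $\Leb_D(B_{D,n}(z))\leq K\cdot\Leb_D(B_{D,n+m}(z))$ for some $K=K(m)$.

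The main obstacle is this last distortion estimate. Its proof telescopes ratios of $|Jf^{-1}|$ along the backward orbit, using that $\log|Jf^{-1}|$ restricted to the tangent bundles of u-discs is $\alpha$-H\"older (from $C^{1+\alpha}$ regularity and Remark~\ref{r.cone}), and that for $y_1,y_2$ in the $\delta$-ball of $f^n(D)$ the distances $d(f^{-k}(y_1),f^{-k}(y_2))$ decay geometrically in $k$, so that the telescoped series converges uniformly in $n$ and $z$. Verifying that the H\"older constants and the geometry remain uniformly bounded as $n$ varies is the delicate point, and it is handled by the compactness of the relevant neighborhood $U$ together with the invariance of unstable cone fields under forward iteration.
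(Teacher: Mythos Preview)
Your proposal is correct and follows essentially the same approach as the paper's proof: items (1) and (3) come from the uniform expansion along the unstable cone field, and item (2) from a bounded distortion estimate for $f^{-n}$ restricted to the $\delta$-ball in $f^n(D)$, obtained by telescoping Jacobian ratios and using the $C^{1+\alpha}$ regularity together with the exponential contraction of backward iterates. The paper's argument is more terse but identical in substance; it additionally stresses that the tangent spaces of $f^{n-k}(D)$ at nearby points get exponentially close as $k$ grows (a consequence of the domination and cone invariance you invoke via Remark~\ref{r.cone}), which is exactly what is needed so that the restricted Jacobian---which depends on both the base point and the tangent plane---can be compared along the orbit.
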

\begin{proof}
The first and the third items follow from the expansion along $E^{uu}$.
The second uses a distortion argument: for any $0\leq k\leq n$ and any two points
$x,y\in B_{f^n(D)}(f^n(z),\delta)$, the tangent spaces at $f^{-k}(x), f^{-k}(y)$
to $Tf^{-k}(B_{f^n(D)}(f^n(z),\delta))$ get exponentially close as $k$ gets larger.
Since $f$ is $C^{1+\alpha}$, the determinants $|\det Df|_{B_{f^n(D)}(f^n(z),\delta)}|$
at $f^{-k}(x)$ and $f^{-k}(y)$ is exponentially close in $k$, which concludes.
\end{proof}

Let $A\subset D\cap \bigcap_{n\geq 0}f^{-n}(U)$ be a measurable subset with positive Lebesgue measure in $D$.
For any $\rho\in(0,1)$, we denote  
$$A_\rho=\bigg\{z\in A: \liminf_{n\rightarrow\infty} \frac{\Leb_D(A\cap B_{D,n}(z))}{\Leb_D(B_{D,n}(z))}<\rho\bigg\}.$$
One only needs to  show that $A_\rho$ has zero Lebesgue measure.

For $\e>0$, we take an open neighborhood $U$ of $A_\rho$ such that $\Leb_D(U)<(1+\e)\Leb_D(A_\rho)$
and we consider a covering of $A_\rho$ given by 
$$\cV=\bigg\{ B_{D,n}(z)\subset U: \textrm{$z\in A_\rho$ and $\frac{\Leb_D(A\cap B_{D,n}(z))}{\Leb_D(B_{D,n}(z))}<\rho$}\bigg\}.$$
We then build inductively a sequence $\{V_i\}$ of pairwise disjoint sets in $\cV$ as follows.
Let us assume that the $V_j$ for $j<i$ have been chosen. Since they are closed sets,
and the diameter of the $B_{D,n}(z)$ tend to zero, for any point $z\in A_\rho\setminus \cup_{j<i} V_j$ there is an integer $n(z)=n_{i}(z)$ such that $B_{D,n(z)}(z)\in\cV$ is disjoint from the $V_j$ and we may choose the smallest integer $n(z)$ with this property.
We choose $z_i\in A_\rho\setminus \cup_{j<i} V_j$ which minimizes $n(z_i)$ and we take $V_i=B_{D,n(z_i)}(z_i)$.
\begin{Claim} The set $\tilde{A}_\rho:=A_\rho\setminus \cup_{i\in\mathbb{N}} V_i$ has zero Lebesgue measure in $D$.
\end{Claim}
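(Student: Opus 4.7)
The plan is to execute a Vitali-type covering argument, extracting a covering of $\tilde{A}_\rho$ by slight enlargements of the $V_i$, and then letting their total measure tend to zero. First I would record two preliminary observations. Since the $V_i$ are pairwise disjoint subsets of $U$, we get $\sum_i \Leb_D(V_i)\leq\Leb_D(U)<\infty$. Combined with item (1) of Lemma~\ref{l.density-basis} (which in the present setting also gives a uniform \emph{lower} bound on $\Leb_D(B_{D,n}(z))$ as long as $n$ stays bounded, using that $f$ is a diffeomorphism on compact sets), this forces $n(z_i)\to\infty$. In particular, for $j$ large enough we have $n(z_j)\geq \ell$, where $\ell$ is the constant furnished by item (3) of Lemma~\ref{l.density-basis}. (If the greedy procedure terminates after finitely many steps, then necessarily $\tilde{A}_\rho=\emptyset$, because any $z\in A_\rho\setminus\bigcup_{j}V_j$ has $B_{D,n}(z)\in\cV$ for $n$ large enough and disjoint from the finitely many chosen $V_j$, contradicting the termination.)

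The core step is the covering inclusion: for every sufficiently large $N\in\mathbb{N}$,
$$\tilde{A}_\rho\subset\bigcup_{j>N}B_{D,n(z_j)-\ell}(z_j).$$
Fix $z\in\tilde{A}_\rho$. Since $z\in A_\rho$, the $\liminf$ condition provides infinitely many integers $n$ with $B_{D,n}(z)\in\cV$; since $V_1,\dots,V_N$ are compact sets not containing $z$ and the diameters of the $B_{D,n}(z)$ shrink to $0$, we may choose such an $n$ large enough that $B_{D,n}(z)$ is also disjoint from $V_1,\dots,V_N$. At step $N+1$ this ball is eligible for $z$. If $B_{D,n}(z)$ remained disjoint from all subsequent $V_j$'s, then $n_i(z)\leq n$ for every $i>N$, and by the greedy choice $n(z_i)\leq n_i(z)\leq n$ for all those $i$, contradicting $n(z_i)\to\infty$. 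Let $j>N$ be the smallest index for which $V_j\cap B_{D,n}(z)\neq\emptyset$; at step $j$, $B_{D,n}(z)$ is still eligible for $z$, so $n(z_j)=n_j(z_j)\leq n_j(z)\leq n$. Since $B_{D,n}(z)\subset B_{D,n(z_j)}(z)$, the two balls $B_{D,n(z_j)}(z)$ and $V_j=B_{D,n(z_j)}(z_j)$ meet, and item (3) of Lemma~\ref{l.density-basis} (applied with the common level $n(z_j)$, which we can assume $\geq\ell$) yields $B_{D,n(z_j)}(z)\subset B_{D,n(z_j)-\ell}(z_j)$; in particular $z\in B_{D,n(z_j)-\ell}(z_j)$.

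Finally, by item (2) of Lemma~\ref{l.density-basis} applied with $m=\ell$, there is a constant $K>0$ (depending only on $\ell$) such that $\Leb_D(B_{D,n(z_j)-\ell}(z_j))\leq K\,\Leb_D(V_j)$ for every $j$. Summing the covering inclusion gives
$$\Leb_D(\tilde{A}_\rho)\leq K\sum_{j>N}\Leb_D(V_j)$$
for every large $N$, and since $\sum_j\Leb_D(V_j)<\infty$, letting $N\to\infty$ forces $\Leb_D(\tilde{A}_\rho)=0$. The main obstacle is the bookkeeping in the second paragraph: one must exploit the freedom in choosing $n$ (from the infinitely many $n$ produced by the $\liminf$ condition) to push the relevant index $j$ arbitrarily far along the greedy sequence, which crucially relies on having first established that $n(z_i)\to\infty$.
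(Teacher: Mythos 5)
Your proof is correct and follows essentially the same Vitali-type covering argument as the paper: you cover $\tilde{A}_\rho$ by the enlarged balls $B_{D,n(z_j)-\ell}(z_j)$ for $j$ large, compare their measures with those of the pairwise disjoint $V_j$ via item (2) of Lemma~\ref{l.density-basis}, and let the tail of the convergent series $\sum_j \Leb_D(V_j)$ go to zero. The only difference is that you make explicit two steps the paper leaves implicit, namely that $n(z_i)\to\infty$ and, as a consequence, that every eligible ball of a point of $\tilde{A}_\rho$ must eventually meet some later $V_j$.
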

\proof
For any integer $i$ and $z\in\tilde{A}_\rho$,
we consider the integer $n_i(z)$ introduced during the construction of $V_i$.
The set $B_{D,n_i(z)}(z)$ does not belong to $\{V_i\}$.

Note that by definition there exists $V_k=B_{D,n_k(z_k)}(z_k)$ with $k>i$ such that
$$V_k\cap B_{D,n_i(z)}(z)\neq \emptyset \text{ and } n_k(z_k)\leq n_i(z).$$
By the third item in Lemma~\ref{l.density-basis},
$$B_{D,n_i(z)}(z)\subset B_{D,n_k(z_k)-\ell}(z_k).$$
For any $k>\ell$, let us denote  $\tilde{V}_k=B_{D,n_k(z_k)-\ell}(z_k)$.  
We have proved that for any integer $i$, 
$$\tilde{A}_\rho\subset \bigcup_{k=i}^\infty\tilde{V}_k.$$

By the second item of Lemma~\ref{l.density-basis}, there exists a constant $K>1$ such that 
$$\Leb_D(\tilde{V}_k)<K\Leb_D(V_k).$$
Since the $V_k$ are pairwise disjoint, $\sum_{k\in\mathbb{N}}\Leb_D(V_k)$ converges.
For each $i$, one has 
$$\Leb_D(\tilde{A}_\rho)\leq\sum_{k=i}^\infty\Leb_D(\tilde{V}_k)\leq K\cdot\sum_{k=i}^\infty\Leb_D(V_k),$$
which implies that $\tilde{A}_\rho$ has zero Lebesgue measure. 
\endproof
By the Claim above, one has the estimate
\begin{eqnarray*}
	\Leb_D(A_\rho)&=&\sum_{i\in\mathbb{N}}\Leb_D(V_i\cap A_\rho)\leq\sum_{i\in\mathbb{N}}\Leb_D(V_i\cap A)\\
	&\leq& \rho \cdot \sum_{i\in\mathbb{N}}\Leb_D(V_i) \leq \rho \cdot\Leb_D(U)\\
	&\leq& \rho\cdot (1+\varepsilon)\Leb_D(A_\rho).
\end{eqnarray*}	
By the arbitrariness of $\e$ and the fact that $\rho<1$, one has $\Leb_D(A_\rho)=0$.
\endproof

\section{Measurable partitions associated to an unstable lamination}~\label{s.measurable-partitions-to-unstable}
The aim of this section is to construct finite partitions which allow to approximate the entropy along an unstable lamination.
One can find such constructions in~\cite{HHW,Yan16} for global partially hyperbolic diffeomorphisms:
\cite[Section 4]{Yan16} provide finite partitions which satisfy the first two items in the theorem below;
in~\cite[Propositions 2.12 and 2.13]{HHW} ~\cite{HHW} the entropy along an unstable lamination is
approached by the entropy of finite partitions that are conditioned by measurable partitions.
One of the novelty of the next theorem is the third item, which will crucial in Section~\ref{s.volume-estimate}.

\begin{Theorem}~\label{thm.switch-to-finite}
Let $f$ be a $C^1$-diffeomorphism of a compact manifold, $\Lambda$
be a $u$-laminated set with an unstable cone field $\cC^{u}$ on a neighborhood $U$. There is $r_0>0$ with the following properties.
For any $\mu\in \cM_{\rm inv}(\La, f)$ and any $\e,\rho>0$, there exist $\eta_0>0$, an integer $m_0$,  and  two finite measurable partitions $\alpha\prec\beta$ of $M$  such that 
  \begin{itemize}
  	\item $\diam(\beta)\leq\diam(\alpha)\leq \rho$;
  	\item any (not necessarily invariant) probability measure $\nu$ such that $\ud(\nu,\mu)<\eta_0$ satisfies
	$$\bigg|\frac{1}{m_0}H_\nu\left(\bigvee_{i=1}^{m_0}f^{-i}(\alpha)\big|\beta\right)-h_\mu(f,\cF^{u})\bigg|<\e;$$
	\item for any $\delta>0$, there exist an open set $V$ and an integer $N\geq 1$ such that 
	\begin{itemize}
		\item[--] $\mu(V)>1-\delta;$
		\item for any $x\in V$ and any disc $D$ tangent to $Df^N(\cC^{u})$ with $x\in D$ and ${\rm Diam}(D)<r_0$,
		$$\alpha(x)\cap D=\beta(x)\cap D.$$
		\end{itemize}
	\end{itemize}
\end{Theorem}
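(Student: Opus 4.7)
The strategy is to produce finite partitions $\alpha \prec \beta$ whose refinement from $\alpha$ to $\beta$ happens only transversally to $\cF^u$, and which jointly approximate an increasing measurable partition $\xi$ subordinate to $\cF^u$: this simultaneously controls $\frac{1}{m_0} H_\mu\big(\bigvee_{i=1}^{m_0} f^{-i}(\alpha)\,\big|\,\beta\big)$ in terms of $h_\mu(f,\cF^u)$ and guarantees that small u-discs through most points are not cut by $\partial\beta \setminus \partial\alpha$. First, using Remark~\ref{r.cone} together with uniform continuity of the bundles, I fix $r_0 > 0$ and an integer $N_0$ (depending only on $(f, \Lambda, \cC^u)$, not on $\mu$) so that every disc of diameter $< r_0$ tangent to $Df^{N_0}(\cC^u)$ lies inside a foliated box of $\cF^u$ and stays $C^0$-close to a single strong unstable plaque.

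I then apply Lemma~\ref{l.increasing} to obtain an increasing $\mu$-subordinate measurable partition $\xi$ satisfying $h_\mu(f,\cF^u) = h_\mu(f,\xi)$, and invoke Lemma~\ref{l.alternative} to pick $m_0$ with
\begin{equation*}
\left|\frac{1}{m_0} H_\mu\big(\bigvee_{j=1}^{m_0} f^{-j}(\xi) \,\big|\, \xi \big) - h_\mu(f,\cF^u)\right| < \e/3.
\end{equation*}
Writing $\xi = \bigvee_k \xi_k$ with $\xi_k$ finite and using Lemma~\ref{l.updown}, I may replace $\xi$ by $\xi_k$ for some $k$ large at the cost of an additional error $\e/3$. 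Using a finite cover of $M$ by charts adapted to $E^{cs}\oplus E^{uu}$, I build $\alpha$ as a finite partition of $M$ with elements of diameter $\leq \rho$, $\mu$-null boundaries, and roughly aligned with the local product structure. Inside each element of $\alpha$ the lamination $\cF^u$ decomposes the element into u-plates, and I define $\beta$ as the refinement of $\alpha$ obtained by further cutting each element only along center-stable hypersurfaces transverse to $\cC^u$, chosen so that $\beta$ matches $\xi_k$ plate by plate and still has $\mu$-null boundaries.

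Item (1) is then built in. For (2), the approximation of $\xi_k$ by $\beta$ plate by plate puts $\frac{1}{m_0} H_\mu\big(\bigvee_{i=1}^{m_0} f^{-i}(\alpha)\,\big|\,\beta\big)$ within $\e/3$ of the corresponding quantity for $\xi_k$, hence within $\e$ of $h_\mu(f,\cF^u)$; the weak-$*$ continuity of $\nu \mapsto \frac{1}{m_0} H_\nu\big(\bigvee_{i=1}^{m_0} f^{-i}(\alpha)\,\big|\,\beta\big)$, valid since $\alpha$ and $\beta$ are finite with $\mu$-null boundaries, supplies the required $\eta_0 > 0$. For (3), the additional boundary $\partial\beta \setminus \partial\alpha$ is $\mu$-null and lies in center-stable hypersurfaces, so its $\eta$-tubular neighborhood has arbitrarily small $\mu$-measure as $\eta \to 0$. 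Given $\delta$, I pick $\eta$ so that the complement $V$ of this neighborhood satisfies $\mu(V) > 1 - \delta$, and then choose $N \geq N_0$ large enough that $Df^N(\cC^u)$ is a cone so thin around $E^{uu}$ that every disc of diameter $< r_0$ tangent to it through $x \in V$ cannot reach the additional boundary; this yields $\alpha(x) \cap D = \beta(x) \cap D$.

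The main obstacle is the third step of the construction, namely producing a finite $\beta$ that refines $\alpha$ only in the cs-direction, approximates $\xi$ closely enough to control the conditional entropy, and has $\mu$-null boundaries. This demands a compatible choice of local transversals to $\cF^u$ across the finite chart cover; it is precisely this cs-only refinement feature that goes beyond~\cite{Yan16, HHW} and enables property (3), which is later essential for the volume estimates in Section~\ref{s.volume-estimate}.
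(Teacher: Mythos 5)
Your overall architecture is the right one and matches the paper's (a measurable partition subordinate to $\cF^u$, finite approximations with $\mu$-null boundaries, weak-$*$ continuity of conditional entropies, and a conditioning manipulation to pass from the measurable to the finite partition). But there are two genuine gaps.

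The more serious one is in item (3). You describe the extra boundary $\partial\beta\setminus\partial\alpha$ as lying in ``center-stable hypersurfaces transverse to $\cC^u$'' and then argue that, for $x$ at distance $\geq\eta$ from this boundary, a disc of diameter $<r_0$ tangent to a very thin cone $Df^N(\cC^u)$ cannot reach it. This does not work: $r_0$ is a fixed macroscopic scale while $\eta$ is taken small depending on $\delta$, so a u-disc through $x$ travels a distance of order $r_0\gg\eta$ and will cross any hypersurface transverse to the unstable direction no matter how thin the cone is. What is actually needed (and what makes the paper's construction delicate) is the opposite geometry: the pieces of $\beta$ inside an element of $\alpha$ must be saturated by \emph{full-length} local unstable plaques $\cF^u_{2\rho_0}(y)$ together with a uniform tubular margin, so that the extra boundary runs \emph{parallel} to the unstable direction over the whole length of the plaques. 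Then a u-disc of diameter $\leq r_0$ through a point $x$ deep inside a piece $C^1$-shadows the plaque through $x$ (this is where the large iterate $N$ of the cone field enters) and stays inside the same piece, giving $\alpha(x)\cap D=\beta(x)\cap D$. Compare the sets $\tilde C$, $\hat C$ and the compact sets $C''$ with their $r_\delta$- and $2r_\delta$-margins in Proposition~\ref{p.finite-partition-infinite-partition}; your tubular-neighborhood-plus-thin-cone argument replaces none of this.

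The second gap is in item (2). Passing from $\frac1{m_0}H_\mu\big(\bigvee_{i=1}^{m_0}f^{-i}(\xi)\,\big|\,\xi\big)$ to $\frac1{m_0}H_\mu\big(\bigvee_{i=1}^{m_0}f^{-i}(\alpha)\,\big|\,\beta\big)$ involves two distinct replacements: the refined partition ($\xi\rightsquigarrow\alpha$) and the conditioning partition ($\xi\rightsquigarrow\beta$). Conditional entropy is not continuous under ``plate by plate approximation'' of the partitions, so your single $\e/3$ step is not justified. The paper handles the first replacement by an exact identity, $\bigvee_{i=0}^{m}f^{-i}(\cA^u)=\bigvee_{i=0}^{m}f^{-i}(\cA)\vee\cA^u$ (Lemma~\ref{l.order equivalent}), which holds because $\cA^u$ is \emph{built from} $\cA$ by intersecting its elements with unstable plaques; and the second by the monotone convergence of Lemma~\ref{l.updown}(2), using $\beta=\cA^u_{l_0}\nearrow\cA^u$. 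With your order of choices ($\xi$ and $m_0$ first, $\alpha$ afterwards) the identity is unavailable for a generic $\xi$ from Lemma~\ref{l.increasing}; you need to construct $\alpha$ first and derive the subordinate partition from it.
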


The proof of Theorem~\ref{thm.switch-to-finite} occupies the next  three  sections.

\subsection{Measurable partitions $\mu$-subordinate to an unstable lamination}\label{s.partition-to-ubstable-foliation}
In the following, we will construct a measurable partition $\mu$-subordinate to the strong unstable lamination.
A similar construction is done in~\cite{Yan16} in the case of global partially hyperbolic diffeomorphisms.
 
\paragraph{Transverse sections.}
The unstable cone field $\cC^{u}$ is defined on a small neighborhood $U$ of $\Lambda$.
The compactness of $\La$ and the transversality between $E^{cs},E^{uu}$ give:
\begin{Lemma}\label{l.lamination-box}
There exist $\rho_0>0$ and a family of compact discs $(\Sigma_x)_{x\in \Lambda}$ in $U$ satisfying:
	\begin{itemize}
	\item the disc $\Sigma_x$ has dimension $\dim(E^{cs})$, is centered at $x$, and has radius larger than $\rho_0;$ 
			\item $\Sigma_x$ is transverse to  $\cC^{u}$;
				\item for any $x,y\in \La$ with $d(x,y)<\rho_0$,
				$\cF^{u}_{2\rho_0}(y)$ intersects $\Sigma_x$ at a unique point, in the interior of $\Sigma_x$; in particular the set $ \cup_{y\in\Sigma_x}\cF^{u}_{2\rho_0}(y)$ contains an open $\rho_0$-neighborhood of $x$ in $\La$. 
	\end{itemize}
\end{Lemma}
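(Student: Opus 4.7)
The plan is to construct each $\Sigma_x$ as the image under the exponential map of a small disc in $E^{cs}(x)$, and use compactness and continuity arguments to obtain uniformity in $\rho_0$. Recall from Section~\ref{Sec:definition-partial} that partial hyperbolicity of $\La$ gives a continuous invariant extension of $E^{cs}$ to a neighborhood of $\La$, and that $E^{uu}$ is continuous on $\La$. The continuity of these two bundles, together with compactness of $\La$, will be the main tools.

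First, I would set $\Sigma_x := \exp_x(E^{cs}(x)\cap B(0,r))$ for a small radius $r>0$ chosen uniformly for all $x\in\La$. Compactness of $\La$ and continuity of $E^{cs}$ ensure each $\Sigma_x$ is a smoothly embedded disc of the correct dimension centered at $x$, with tangent plane depending continuously on both $x\in\La$ and on the base point inside $\Sigma_x$. By definition of the unstable cone field, any nonzero $v\in\cC^u(x)$ satisfies $\theta(x)\|v^F\|\geq \|v^E\|$ with $v^F\neq 0$, so $\cC^u(x)$ is disjoint from the subspace $E^{cs}(x)$ and the angle between them is bounded away from zero uniformly on $\La$. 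Continuity of tangent planes then yields, after possibly shrinking $r$, that $\Sigma_x$ is everywhere transverse to $\cC^u$.

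For the third bullet, I would fix $x\in\La$ and work in the exponential chart at $x$. The disc $\Sigma_x$ appears as a $C^1$-graph over $E^{cs}(x)$, close to that linear subspace; meanwhile, for $y\in\La$ close to $x$, the local unstable leaf $\cF^u_{2\rho_0}(y)$ is tangent to $E^{uu}$ and hence to $\cC^u$, and appears as a $C^1$-graph over a subspace close to $E^{uu}(x)$. Both submanifolds have complementary dimensions and uniformly transverse tangent spaces across $\La$, so for $\rho_0$ small enough (uniform in $x$ by compactness), an implicit function argument produces a unique transverse intersection point of $\cF^u_{2\rho_0}(y)$ with $\Sigma_x$, lying in the interior of $\Sigma_x$. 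The ``in particular'' statement is then immediate: given $z\in\La$ with $d(x,z)<\rho_0$, the leaf $\cF^u_{2\rho_0}(z)$ meets $\Sigma_x$ at some point $y$, and $z\in\cF^u_{2\rho_0}(y)$ since $y$ and $z$ lie in the same local unstable plaque.

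The main obstacle is the uniformity: producing a single $\rho_0>0$ that works simultaneously for every $x\in\La$. This uses in an essential way (i) the continuity of the extended bundle $E^{cs}$ on a neighborhood of $\La$, which gives uniform control on the tangent planes of the family $(\Sigma_x)$; (ii) the continuity of $E^{uu}$ and the uniform $C^1$-size of local unstable plaques (standard consequence of the stable manifold theorem for partially hyperbolic sets); and (iii) compactness of $\La$, which upgrades all pointwise transversality and graph-control bounds into uniform ones.
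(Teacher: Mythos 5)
Your proof is correct and follows exactly the route the paper intends: the authors state this lemma without proof, justifying it only by ``the compactness of $\La$ and the transversality between $E^{cs}$, $E^{uu}$'', and your construction via $\exp_x(E^{cs}(x)\cap B(0,r))$ together with the uniform angle bound and the graph/implicit-function argument is the standard way to fill in those details. The only point worth making explicit is the quantitative role of the factor $2$ in $\cF^u_{2\rho_0}(y)$: since $d(x,y)<\rho_0$ and both families of graphs have small Lipschitz constants, the leaf needs length only slightly more than $\rho_0$ to cross $\Sigma_x$, so radius $2\rho_0$ suffices.
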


The set $\La$ is covered by balls $B_1,\cdots,B_k$
of radius $\rho_0$ centered at points $x_1,\cdots,x_k\in\La$. Set $\Sigma_i=\Sigma_{x_i}$ for simplicity.

\paragraph{The choice of $r_0$.} We denote by $r_0$ the Lebesgue number of the covering $\{B_1,\cdots,B_k\}$.

\paragraph{A finite partition $\cA$.}
Let $\lambda=\sup_{x\in\La}\|Df^{-1}|_{E^{uu}(x)}\|<1$. We apply
the following lemma.
\begin{Lemma}[Lemma 3.1.2 in~\cite{Yan16} and Proposition 3.2 in~\cite{LS}]\label{l.partition}
For any $0<\lambda<1$ and $\rho>0$, there is a finite measurable partition $\cA$ of $M$ such that
	
	$$\diam(\cA)<\rho\textrm{ and } \sum_{i\in\mathbb{N}}\mu(B_{\lambda^i}(\partial{\cA}))<+\infty,$$
	where $B_{\lambda^i}(\partial{\cA})$ denotes the $\lambda^i$-neighborhood of the boundary  $\partial{\cA}$ of the partition $\cA$. 
	
\end{Lemma}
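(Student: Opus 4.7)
The plan is to construct $\cA$ as the partition generated by finitely many balls whose radii are tuned so that thin neighborhoods of their boundary spheres carry small $\mu$-mass. The whole construction rests on a single Fubini averaging in the radius variable; it uses no dynamical input and so works for any Borel probability measure $\mu$ on $M$.

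Concretely, for a fixed center $x\in M$ and any $\delta>0$, the triangle inequality gives $B_\delta(\partial B(x,r))\subset\{y : |d(x,y)-r|<\delta\}$, and Fubini yields
\[
\int_{\rho/4}^{\rho/2}\mu\bigl(B_\delta(\partial B(x,r))\bigr)\,dr\;\leq\;2\delta.
\]
Applying this to $\delta=\lambda^i$ and summing on $i$ produces a bound independent of $x$ (namely $2\lambda/(1-\lambda)$), so for Lebesgue-a.e.\ $r\in[\rho/4,\rho/2]$ the series $\sum_i\mu(B_{\lambda^i}(\partial B(x,r)))$ converges. I would then cover the compact manifold $M$ by finitely many balls $B(x_j,\rho/8)$, $j=1,\dots,N$, and for each $j$ independently pick $r_j\in[\rho/4,\rho/2]$ inside the corresponding full-measure set, so that $\sum_i\mu\bigl(B_{\lambda^i}(\partial B(x_j,r_j))\bigr)<\infty$.

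Finally, I would set $A_j:=B(x_j,r_j)\setminus\bigcup_{k<j}B(x_k,r_k)$ and $\cA:=\{A_1,\dots,A_N\}$. Since $r_j\geq\rho/4>\rho/8$, the enlarged balls still cover $M$ and $\cA$ is a genuine partition; its diameter is at most $2\max_j r_j<\rho$, and the inclusion $\partial\cA\subset\bigcup_j\partial B(x_j,r_j)$ yields
\[
\sum_{i\in\NN}\mu\bigl(B_{\lambda^i}(\partial\cA)\bigr)\;\leq\;\sum_{j=1}^N\sum_{i\in\NN}\mu\bigl(B_{\lambda^i}(\partial B(x_j,r_j))\bigr)\;<\;+\infty.
\]
There is no substantial obstacle: the argument is purely metric--measure-theoretic, and the contraction rate $\lambda<1$ enters only through the geometric convergence of $\sum_i\lambda^i$. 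The only point that merits care is that the Fubini bound above is uniform in the center $x$, which is what lets me select the $r_j$ one by one while preserving the summability of the final finite union.
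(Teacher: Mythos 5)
Your argument is correct and is exactly the standard Fubini-averaging construction behind the references the paper cites for this lemma (Ledrappier--Strelcyn and Yang); the paper itself gives no proof, only the citation. The only cosmetic point is that choosing $r_j\le\rho/2$ gives $\diam(\cA)\le\rho$ rather than $<\rho$, which is fixed by selecting $r_j$ from $[\rho/4,\rho/2)$ (possible since almost every radius in the interval works).
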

 
For any $\rho>0$, one gets a finite measurable partition $\cA$ of the manifold  $M$ such that
\begin{itemize}
	\item $\diam(\cA)<\min\{\rho,r_0/3,1\}$;
	\item $\sum_{i\in\mathbb{N}}\mu(B_{\lambda^i}(\partial{\cA}))<+\infty$; in particular, $\mu(\partial\cA)=0.$
\end{itemize}
Then $\cA$ induces a finite partition $\tilde{\cA}$ of $\La$. By construction, there exists an indexing  map $\cI: \tilde{\cA}\mapsto\{1,\cdots,k\}$ such that the $2r_0/3$-neighborhood of  each element $ A\in\tilde{\cA}$ in $\La$ is contained in the ball $B_{\cI(A)}$. From now on, the indexing map $\cI$ is fixed.

\paragraph{The partition $\cA^u$.}
Each point $x\in \La$ belongs to the set $B_{\cI(A(x))}$ and there exists a unique point $y\in \Sigma_{\cI(A(x))}$
such that $x\in \cF^{u}_{2\rho_0}(y)$; we set $\cA^u(x)={A}(x)\cap \cF_{2\rho_0}^{u}(y)$.
This defines a measurable partition $\cA^u$ on $\La$.
\medskip

We note that the assumption of Lemma~\ref{l.alternative} is satisfied.
\begin{Lemma}\label{l.first-finite} $H_{\mu}(\cA^u|f(\cA^u))<\infty.$
\end{Lemma}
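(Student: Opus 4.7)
The plan is to bound $H_\mu(\cA^u|f(\cA^u))$ uniformly by $\log N$, where $N$ is a combinatorial upper bound on the number of atoms of $\cA^u$ that can meet a single atom of $f(\cA^u)$. As a preliminary size estimate, each atom $\cA^u(x)=A(x)\cap \cF^u_{2\rho_0}(y)$ is contained in an unstable leaf arc of leaf-diameter at most $4\rho_0$ and inside the $\cA$-atom $A(x)$ whose $M$-diameter is less than $\min\{\rho,r_0/3,1\}$. Hence the image $f(\cA^u(x))$ lies inside an unstable leaf arc of leaf-diameter at most $L:=4\rho_0\cdot\|Df|_{E^{uu}}\|_\infty$, so each atom of $f(\cA^u)$ has $M$-diameter bounded by $L$.

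Next I fix an atom $B=f(\cA^u)(x)$ and count the atoms of $\cA^u$ that meet it. Since atoms of $\cA$ have $M$-diameter $<\min\{\rho,r_0/3,1\}$ and $M$ is compact, a set of diameter $\leq L$ meets at most a uniformly bounded number $K_1$ of atoms of $\cA$. For each such $\cA$-atom $A'$ meeting $B$, the atoms of $\cA^u$ contributing to $B\cap A'$ are parametrized by those $y'\in \cF^u(x)\cap \Sigma_{\cI(A')}$ satisfying $d_{\cF^u}(B\cap A',y')\leq 2\rho_0$; all such $y'$ lie in a leaf arc of leaf-length at most $L+4\rho_0$. The key input is the existence of a uniform $\epsilon>0$ such that any two distinct intersections of a single unstable leaf with a single section $\Sigma_i$ are at leaf-distance at least $\epsilon$. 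Granted this, there are at most $K_2:=\lceil(L+4\rho_0)/\epsilon\rceil+1$ such intersection points, so $B$ contains at most $N:=K_1 K_2$ atoms of $\cA^u$, and therefore
\[
H_\mu(\cA^u|f(\cA^u))\leq \log N<\infty.
\]

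The only delicate ingredient is the uniform leaf-distance bound $\epsilon$. It comes from the uniform transversality between each $\Sigma_i$ and the unstable cone field $\cC^u$ provided by Lemma~\ref{l.lamination-box}: at every intersection $q\in \cF^u(x)\cap \Sigma_i$ the unstable leaf exits $\Sigma_i$ at an angle bounded below by some $\theta>0$, so the leaf cannot return to $\Sigma_i$ before travelling a definite leaf-distance. A compactness argument over the finite family $\{\Sigma_1,\dots,\Sigma_k\}$ and the $C^1$-regularity of unstable leaves turn this pointwise statement into a uniform bound $\epsilon>0$. Everything else in the proof is elementary dimension/diameter counting in the compact manifold $M$.
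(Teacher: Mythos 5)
Your proof is correct in substance but follows a different route from the paper's. The paper's argument is shorter and sharper: it observes that on each atom $B$ of $f(\cA^u)$ the trace of $\cA^u$ coincides with the trace of the finite partition $\tilde\cA$ (because, with the chosen sizes $\rho_0$ and $r_0$, the intersection of $B$ with a single $\cA$-atom is contained in a single local plaque $\cF^u_{2\rho_0}(\zeta)$ — the same geometric fact that drives Lemma~\ref{l.order equivalent}), which immediately gives $H_\mu(\cA^u|f(\cA^u))\leq\log\#\tilde\cA$. You instead allow several plaques per $\cA$-atom and control their number by a packing argument based on a uniform leaf-separation $\epsilon$ between distinct intersections of a single leaf with a single section $\Sigma_i$; this is more robust (it does not use the precise containment of $f$-images of plaques in plaques) at the price of a worse constant, which is irrelevant since only finiteness is needed. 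Two small corrections. First, the bound $K_1$ on the number of $\cA$-atoms met by $B$ does not follow from the atoms having small diameter — a measurable partition with small-diameter atoms can still have many atoms meeting a small set — but it holds trivially with $K_1=\#\cA$ because $\cA$ is finite. Second, your count $K_2=\lceil(L+4\rho_0)/\epsilon\rceil+1$ is the one-dimensional packing bound; for $\dim E^{uu}>1$ it should be a packing number of order $((L+4\rho_0)/\epsilon)^{\dim E^{uu}}$, which is still finite and suffices. The separation constant $\epsilon$ itself is correctly justified: it follows from the uniform transversality of the finitely many compact $C^1$ sections $\Sigma_i$ to the unstable cone field together with compactness, and is essentially the uniqueness statement in the third item of Lemma~\ref{l.lamination-box}.
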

\proof By definition, one has that
$H_{\mu}(\cA^u|f(\cA^u))=\int H_{\mu^{f(\cA^u)(x)}}(\cA^u|f(\cA^u)(x))\ud\mu(x)$.
By  definition, $\cA^u$ and $\tilde \cA$ induce on each element $f(\cA^u)(x)\in f(\cA^u)$ the same partition, which is a finite partition. Hence, one has that  $H_{\mu^{f(\cA^u)(x)}}(\cA^u|f(\cA^u)(x))\leq \log{\#\tilde{\cA}}$ which implies
$H_{\mu}(\cA^u|f(\cA^u))\leq\log{\#\tilde{\cA}}$.
\endproof
\medskip

We obtain a partition $\mu$-subordinate to the unstable lamination.
\begin{Lemma}\label{l.increasing partition}
	  $\bigvee_{j=0}^\infty f^j(\cA^u)$ is an increasing partition  $\mu$-subordinate to the lamination $\cF^u$.
\end{Lemma}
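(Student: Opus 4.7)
The plan is to verify that $\cP := \bigvee_{j=0}^{\infty} f^j(\cA^u)$, which is a measurable partition as the countable join of finite measurable partitions, satisfies both the increasing condition and the $\mu$-subordinate condition. For the increasing property, observe that $\cP = \cA^u \vee f(\cP)$ refines $f(\cP) = \bigvee_{j\ge 1} f^j(\cA^u)$, so $\cP(x) \subset f(\cP)(x) = f(\cP(f^{-1}(x)))$ for every $x$; replacing $x$ by $f(x)$ yields $\cP(f(x)) \subset f(\cP(x))$, as required.

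The inclusion $\cP(x) \subset \cF^u(x)$ is immediate from the construction: by definition $\cA^u(x) \subset \cF^u_{2\rho_0}(y) \subset \cF^u(x)$, and since $f$ preserves the strong unstable lamination, $f^j(\cA^u)(x) = f^j(\cA^u(f^{-j}(x))) \subset f^j(\cF^u(f^{-j}(x))) = \cF^u(x)$ for every $j \ge 0$. Hence $\cP(x) = \bigcap_{j\ge 0} f^j(\cA^u)(x) \subset \cA^u(x) \subset \cF^u(x)$, and it has bounded diameter inside $\cF^u(x)$.

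The substantive step is to show that $\cP(x)$ contains an open unstable neighborhood of $x$ for $\mu$-a.e.\ $x$. I plan to produce a uniform $r > 0$ such that the unstable ball $B_u(x,r)$ is contained in $f^j(\cA^u)(x)$ for every $j \ge 0$. From the summability $\sum_i \mu(B_{\lambda^i}(\partial \cA)) < \infty$ of Lemma~\ref{l.partition} and the $f$-invariance of $\mu$, one has $\sum_j \mu(f^j(B_{\lambda^j}(\partial \cA))) < \infty$, so by Borel--Cantelli, for $\mu$-a.e.\ $x$ there exists $j_0(x)$ with $d(f^{-j}(x), \partial \cA) > \lambda^j$ for every $j \ge j_0(x)$. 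Since ambient distance is no larger than the induced unstable distance, this forces $B_u(f^{-j}(x), \lambda^j) \subset A(f^{-j}(x))$; for $j$ large enough that $\lambda^j < \rho_0$, this ball also lies inside the plaque $\cF^u_{2\rho_0}(y_j)$ by Lemma~\ref{l.lamination-box}, hence inside $\cA^u(f^{-j}(x))$. Because $\|Df^{-1}|_{E^{uu}}\| \le \lambda$, the map $f^{-j}$ contracts unstable distances by at most $\lambda^j$, so $f^{-j}(B_u(x, 1)) \subset B_u(f^{-j}(x), \lambda^j)$, which yields $B_u(x, 1) \subset f^j(\cA^u)(x)$ for every $j \ge j_0(x)$. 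For the finitely many $j < j_0(x)$, the invariance of $\mu$ together with $\mu(\partial \cA) = 0$ implies that $f^{-j}(x)$ lies $\mu$-a.e.\ in the interior of $A(f^{-j}(x))$, so each $f^j(\cA^u)(x)$ contains some unstable ball $B_u(x, s_j)$ with $s_j > 0$. Setting $r := \min(1, s_0, \ldots, s_{j_0(x)-1}) > 0$ finishes the argument.

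The main obstacle is the quantitative calibration between the summability rate $\lambda^i$ appearing in Lemma~\ref{l.partition} and the contraction rate $\|Df^{-j}|_{E^{uu}}\| \le \lambda^j$: this match is precisely what allows Borel--Cantelli to deliver a \emph{uniform-in-$j$} unstable radius around $\mu$-a.e.\ $x$. A secondary technical point is certifying that the unstable ball $B_u(f^{-j}(x), \lambda^j)$ does not exit the chosen plaque $\cF^u_{2\rho_0}(y_j)$, which becomes automatic as soon as $\lambda^j < \rho_0$, thanks to the geometry built into Lemma~\ref{l.lamination-box}.
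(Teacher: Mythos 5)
Your argument is correct and rests on exactly the same mechanism as the paper's proof: Borel--Cantelli applied to the summability $\sum_j\mu\big(B_{\lambda^j}(\partial\cA)\big)<\infty$ from Lemma~\ref{l.partition}, calibrated against the contraction rate $\lambda$ of $f^{-1}$ along $E^{uu}$. The only difference is presentational --- the paper shows that the join $\bigvee_{j\geq 0}f^j(\cA^u)(x)$ stabilizes at a finite level $n(x)$ and then invokes $\mu(\partial\cA)=0$ to handle the remaining finitely many terms, whereas you directly extract a uniform unstable ball surviving all $j\geq j_0(x)$; the two formulations are equivalent.
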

\proof
Firstly, notice that the measurable partition $\bigvee_{j=0}^\infty f^j(\cA^u)$ is an increasing partition.

For $\mu$ a.e.  $x\in\La$, one claims that there exists an integer $n(x)\in\mathbb{N}$ such that
$$\bigvee_{j=0}^\infty f^j(\cA^u)(x)=\bigvee_{j=0}^{n(x)} f^j(\cA^u)(x).$$
Since $\mu$ is an invariant measure supported on $\La$, one has
$$\sum_{j=0}^\infty\mu(f^j(B_{\lambda^j}(\partial(\cA))))=\sum_{j=0}^\infty\mu(B_{\lambda^j}(\partial(\cA)))<\infty.$$
Hence, for $\mu$-a.e. $x$, there is $n(x)$ such that$f^{-j}(x)\notin B_{\lambda^j}(\partial(\cA))$ for any $j\geq n(x)$.
Let us assume that there exists $m\geq n(x)$ such that
$$\bigvee_{j=0}^{m+1} f^j(\cA^u)(x)\subsetneq\bigvee_{j=0}^{m} f^j(\cA^u)(x).$$
Since $\diam(\cA)<1$, the diameter of $\bigvee_{j=0}^{m} f^j(\cA^u)(x)$ is smaller than $\lambda^{-m}$,
which implies $f^{-m-1}(x)\in B_{\lambda^{m+1}}(\partial\cA) $, a contradiction. The claim follows.

By the fact that $\mu(\partial(\cA))=0$, for $\mu$-a.e. $x$, the element $\cA^u(x)$ contains an open set  $\cF_{r(x)}^u(x)$ for some $r(x)>0$. Therefore, $\bigvee_{j=0}^{n(x)} f^j(\cA^u)(x)$ contains a neighborhood of $x$ in $\cF^u(x)$.
\endproof
\medskip

In particular, Lemmas~\ref{l.alternative}, \ref{l.first-finite} and~\ref{l.increasing partition},
together with Definition~\ref{def.partial-entropy} give:
\begin{Corollary}~\label{c.11}
	$h_\mu(f,\cF^{u})=\inf \frac{1}{m}H_\mu(\bigvee_{j=1}^m f^{-j}(\cA^u)|\cA^u)=\underset{m\rightarrow\infty}\liminf\frac{1}{m}H_\mu(\bigvee_{j=1}^m f^{-j}(\cA^u)|\cA^u).$
\end{Corollary}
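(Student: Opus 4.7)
The plan is to combine Definition~\ref{def.partial-entropy} with Lemma~\ref{l.alternative}. The only genuine subtlety is that $\cA^u$ itself is \emph{not} subordinate to the unstable lamination; only its ``positive iterate join'' $\alpha := \bigvee_{j=0}^{\infty} f^j(\cA^u)$ is subordinate (this is exactly the content of Lemma~\ref{l.increasing partition}). So the corollary is not a direct application of Lemma~\ref{l.alternative} to $\cA^u$ via Definition~\ref{def.partial-entropy}; one must first identify $h_\mu(f,\cF^u)$ with $h_\mu(f,\cA^u)$.

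First, I would apply Definition~\ref{def.partial-entropy} with the increasing subordinate partition $\alpha$ supplied by Lemma~\ref{l.increasing partition}, obtaining $h_\mu(f,\cF^u) = h_\mu(f,\alpha)$. Next, I would unpack the Rokhlin formula recalled just before Lemma~\ref{l.alternative}, namely $h_\mu(f,\gamma)=H_\mu(\gamma\mid\bigvee_{i=1}^\infty f^i(\gamma))$, and observe two set-theoretic identities at the level of partitions:
\begin{equation*}
\bigvee_{i=1}^\infty f^i(\alpha) \;=\; \bigvee_{i=1}^\infty \bigvee_{j=0}^\infty f^{i+j}(\cA^u) \;=\; \bigvee_{k=1}^\infty f^k(\cA^u),
\end{equation*}
and $\alpha = \cA^u \vee \bigvee_{k=1}^\infty f^k(\cA^u)$. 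Plugging these into the Rokhlin formula and using that conditioning a partition by a refinement containing it kills those factors, I get
\begin{equation*}
h_\mu(f,\alpha) \;=\; H_\mu\!\left(\alpha\,\Big|\,\textstyle\bigvee_{i=1}^\infty f^i(\alpha)\right) \;=\; H_\mu\!\left(\cA^u\,\Big|\,\textstyle\bigvee_{k=1}^\infty f^k(\cA^u)\right) \;=\; h_\mu(f,\cA^u).
\end{equation*}

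Finally, since Lemma~\ref{l.first-finite} guarantees $H_\mu(\cA^u\mid f(\cA^u))<\infty$, I may apply Lemma~\ref{l.alternative} directly to the (not necessarily subordinate) finite measurable partition $\cA^u$, which gives
\begin{equation*}
h_\mu(f,\cA^u) \;=\; \inf_{m\geq 1} \frac{1}{m} H_\mu\!\left(\bigvee_{j=1}^m f^{-j}(\cA^u)\,\Big|\,\cA^u\right) \;=\; \liminf_{m\to\infty}\frac{1}{m} H_\mu\!\left(\bigvee_{j=1}^m f^{-j}(\cA^u)\,\Big|\,\cA^u\right).
\end{equation*}
Combining the two chains of equalities yields the corollary. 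There is no real obstacle; the only point one must be careful about is the partition-level identification in the middle step, which makes precise the fact that passing from $\cA^u$ to its positive-iterate join $\alpha$ changes neither the measurable ``future'' nor the resulting entropy.
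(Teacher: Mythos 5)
Your argument is correct and is precisely the chain the paper intends: identify $h_\mu(f,\cF^u)$ with $h_\mu(f,\cA^u)$ via Lemma~\ref{l.increasing partition}, the Rokhlin formula and Lemma~\ref{Lem:H-condition}, then invoke Lemma~\ref{l.alternative}, whose hypothesis is supplied by Lemma~\ref{l.first-finite}. One terminological slip: $\cA^u$ is a measurable but not a finite partition (its elements are plaques of unstable leaves), which is harmless here since Lemma~\ref{l.alternative} only requires $H_\mu(\cA^u\mid f(\cA^u))<\infty$.
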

\medskip

One important property of the measurable partition $\bigvee_{i=0}^l f^{-i}\cA^u$ is the following:
\begin{Lemma}\label{l.order equivalent}
	For $\mu$-a.e. $x\in M$ and any integer $m\in\mathbb{N}$,
	$$\bigvee_{j=0}^m f^{-j}(\cA^u)(x)=\bigvee_{j=0}^m f^{-j}(\cA)(x)\cap \cA^u(x).$$
\end{Lemma}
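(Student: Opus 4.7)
The plan is to first establish a cleaner characterization of $\cA^u$, after which the lemma becomes essentially a tautology using the $f$-invariance of unstable leaves. Specifically, for every $x\in\La$ lying in an element $A\in\tilde\cA$, I claim
$$\cA^u(x)=A\cap\cF^u(x),$$
i.e.\ $\cA^u(x)$ is the \emph{entire} intersection of $A$ with the unstable leaf through $x$, not merely a local plaque of radius $2\rho_0$. The inclusion $\subset$ is immediate from the definition. For $\supset$, given any $y\in A\cap\cF^u(x)$, the hypothesis that the $2r_0/3$-neighborhood of $A$ in $\La$ lies in $B_{\cI(A)}=B_{\rho_0}(x_{\cI(A)})$ gives $d(y,x_{\cI(A)})<\rho_0$, so by Lemma~\ref{l.lamination-box} the plaque $\cF^u_{2\rho_0}(y)$ meets $\Sigma_{\cI(A)}$ at a unique point. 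Since $\cF^u(y)=\cF^u(x)$ and uniqueness holds on that leaf, this point must coincide with the $\tilde y$ defining $\cA^u(x)$. Hence $d_{\cF^u}(y,\tilde y)\leq 2\rho_0$, that is $y\in\cF^u_{2\rho_0}(\tilde y)\cap A=\cA^u(x)$.

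With this characterization, the lemma follows quickly. Membership of $y$ in the left-hand side $\bigvee_{j=0}^m f^{-j}(\cA^u)(x)$ is equivalent to $f^j(y)\in\cA^u(f^j(x))=\cA(f^j(x))\cap\cF^u(f^j(x))$ for every $0\leq j\leq m$. Because unstable leaves are $f$-invariant, the conjunction $f^j(y)\in\cF^u(f^j(x))$ over all $j$ is equivalent to the single condition $y\in\cF^u(x)$. Combined with $f^j(y)\in\cA(f^j(x))$ (which at $j=0$ already forces $y\in\cA(x)$, and hence $y\in\cA(x)\cap\cF^u(x)=\cA^u(x)$), this is exactly membership in the right-hand side $\bigvee_{j=0}^m f^{-j}(\cA)(x)\cap\cA^u(x)$. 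The $\mu$-a.e.\ qualifier only reflects that $\cA^u$ is defined on $\La$ and one needs $x$ and its iterates to lie there, which holds automatically since $\mu$ is supported on the $f$-invariant set $\La$.

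The main obstacle is the leaf-global step in the first paragraph: a priori $\cA^u(x)$ is cut out only by the local plaque $\cF^u_{2\rho_0}(\tilde y)$, so one needs some geometric input to know that two points of $A$ on the same unstable leaf are automatically within unstable-distance $2\rho_0$. This is supplied by the choice $\diam(\cA)<r_0/3$ together with the inclusion of the $2r_0/3$-neighborhood of each $A$ in $B_{\cI(A)}$, which feeds into the transversality/uniqueness statement of Lemma~\ref{l.lamination-box}. Once this geometric point is absorbed, the refinement of partitions commutes with the unstable-leaf condition in the expected way, and both sides of the claimed identity describe the same set of points.
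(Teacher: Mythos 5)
Your reduction hinges on the claim that $\cA^u(x)=\cA(x)\cap\cF^u(x)$, where $\cF^u(x)$ is the \emph{global} unstable leaf, and that claim is false. The uniqueness in Lemma~\ref{l.lamination-box} says only that the \emph{plaque} $\cF^u_{2\rho_0}(y)$ meets $\Sigma_{\cI(A)}$ in a single point; it says nothing about the whole leaf, which in the typical situation (e.g.\ a transitive attractor, where every strong unstable leaf is dense in $\Lambda$) returns to the ball $B_{\cI(A)}$ infinitely often and meets $\Sigma_{\cI(A)}$ in infinitely many points. If $y\in A\cap\cF^u(x)$ lies at large leaf-distance from $x$, then the unique point of $\Sigma_{\cI(A)}\cap\cF^u_{2\rho_0}(y)$ is a \emph{different} intersection point $\zeta\neq\tilde y$, so $y\notin\cF^u_{2\rho_0}(\tilde y)$ and hence $y\notin\cA^u(x)$. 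Thus $\cA^u(x)$ is in general a single local plaque, strictly smaller than $A\cap\cF^u(x)$, and your second paragraph --- which treats the leaf condition as one global constraint that commutes with refinement --- collapses with it.

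This is not cosmetic: under your characterization the lemma would be a tautology, whereas its actual content is a nontrivial locality statement. The inclusion $\bigvee_{j=0}^m f^{-j}(\cA)(x)\cap\cA^u(x)\subset\bigvee_{j=0}^m f^{-j}(\cA^u)(x)$ requires showing that if $f^l(y)\in\cA^u(f^l(x))$ for $l\le m$ and $f^{m+1}(y)\in\cA(f^{m+1}(x))$, then $f^{m+1}(y)$ lies in the \emph{same plaque} $\cF^u_{2\rho_0}(\zeta)$ as $f^{m+1}(x)$ for the correct base point $\zeta\in\Sigma_{\cI(\cA(f^{m+1}(x)))}$; the paper does this by induction on $m$, using that $f(\cF^u_{2\rho_0}(z))$ is a single connected piece of leaf of controlled diameter, so its intersection with an element of $\cA$ (of diameter $<r_0/3$, whose $2r_0/3$-neighborhood lies in one ball $B_i$) cannot pick up a second, far-away return of the leaf. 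Your argument needs this one-step geometric control in place of the global-leaf identity.
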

\proof
Since $\cA\prec\cA^u$, one gets the inclusion
$\bigvee_{i=0}^m f^{-i}(\cA^u)(x)\subset\bigvee_{i=0}^m f^{-i}(\cA)(x)\cap \cA^u(x)$.
One proves the other side by induction. The case $m=0$, is obvious.

Let us assume that $\bigvee_{i=0}^m f^{-i}(\cA)(x)\cap \cA^u(x)\subset \bigvee_{i=0}^m f^{-i}(\cA^u)(x)$.
Consider any point $y$ in $\bigvee_{i=0}^{m+1} f^{-i}(\cA)(x)\cap \cA^u(x)$.
The induction assumption implies $f^l(y)\in\cA^u(f^l(x))$ for all $l\in\{0,\cdots,m\}$;
one thus has $f^{m+1}(y)\in f(\cA^u(f^{m}(x)))\cap \cA(f^{m+1}(x))$.
By the definition of $\cA^u$,  the point $f^{m+1}(y)$ belongs to
$f(\cF^u_{2\rho_0}(z))\cap \cA(f^{m+1}(x))$ for some point $z\in \Lambda$.
Since $\rho_0$ is small, there exists $\zeta\in \Si_{\cI(A(f^{m+1}(x)))}$
such that $f(\cF^u_{2\rho_0}(z)\cap \cA(f^{m+1}(x))= \cF^u_{2\rho_0}(\zeta))\cap \cA(f^{m+1}(x))$.
Hence $f^{m+1}(y)\in\cA^u(f^{m+1}(x))$.
\endproof

\subsection{Finite partitions approaching $\cA^u$}
We continue with the constructions of the previous subsection.

\begin{Proposition}~\label{p.finite-partition-infinite-partition}
	There exist finite measurable partitions $(\cA^u_l)_{l\in\mathbb{N}}$ of $M$ such that 
	\begin{itemize}
			\item $\mu(\partial{\cA^u_l})=0$ for any $l\in\mathbb{N}$;
		\item  for $\mu\: a.e. \: x\in M$, $\cA^u_{l+1}(x)\subset \cA^u_l(x)\subset \cA(x)$ and  $\cA^u(x)=\bigcap_{l\in\mathbb{N}}\cA^u_l(x)$. 
		\item for any $l\in\mathbb{N}$ and any $\delta>0$, there exist an open set $V$ and  an integer $N$ such that 
		\begin{itemize}
			\item[--] $\mu(V)>1-\delta$;
			\item[--] for any $x\in V$ and any disc $D$ tangent to $Df^N(\cC^{u})$ with $x\in D$
			and $\diam(D)\leq r_0$,
			$$D\cap \cA^u_l(x)=D\cap \cA(x).$$
			 \end{itemize}
		\end{itemize}
	\end{Proposition}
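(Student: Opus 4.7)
The plan is to refine $\cA$ by slicing each atom in the direction transverse to the strong unstable lamination, using a shrinking sequence of finite partitions of the transversals $\Si_i$ pulled back under the $\cF^u$-holonomy. Denote by $\pi_i\colon B_i\cap \La \to \Si_i$ the $\cF^u_{2\rho_0}$-holonomy, and extend it continuously to a map $\Pi_i$ on an open neighborhood of $B_i\cap\La$ using a continuous plaque family tangent to the extended cone field $\cC^u$ (Hirsch--Pugh--Shub). For each $l$, an adaptation of Lemma~\ref{l.partition} applied to the finite Borel measure $(\Pi_i)_*\mu$ on $\Si_i$ produces finite measurable partitions $\cQ_l^i$ of $\Si_i$ with $\cQ_{l+1}^i\prec\cQ_l^i$, $\diam(\cQ_l^i)\to 0$, and $\mu(\Pi_i^{-1}(\partial \cQ_l^i))=0$.

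Next, I define the finite measurable partition $\cA^u_l$ of $M$ by
$$\cA^u_l(x) := \cA(x)\cap \Pi_{\cI(A(x))}^{-1}\bigl(\cQ_l^{\cI(A(x))}(\Pi_{\cI(A(x))}(x))\bigr)$$
when $x$ lies in the domain of $\Pi_{\cI(A(x))}$, and $\cA^u_l(x):=\cA(x)$ otherwise. Items (i) and (ii) should then be routine: $\partial \cA^u_l$ is contained in $\partial\cA\cup \bigcup_i \Pi_i^{-1}(\partial \cQ_l^i)$ and hence is $\mu$-null; the refinement $\cA^u_{l+1}\prec \cA^u_l\prec\cA$ follows from $\cQ_{l+1}^i\prec\cQ_l^i$; and since $\diam(\cQ_l^i)\to 0$, for $\mu$-a.e.\ $x$ one has $\bigcap_l \cA^u_l(x)=\cA(x)\cap \cF^u_{2\rho_0}(\pi_{\cI(A(x))}(x)) = \cA^u(x)$.

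The main step is item (iii). Given $l$ and $\delta>0$, I would first choose $\eta>0$ small enough that the open set
$$V:=\bigcup_i \bigl\{x:\; d(\Pi_i(x),\partial \cQ_l^i)>\eta \text{ and } d(x,\partial\cA)>\eta\bigr\}$$
satisfies $\mu(V)>1-\delta$; this is possible because $\mu(\partial\cA)=0$ and $\mu(\Pi_i^{-1}(\partial \cQ_l^i))=0$. Next, using the domination between $E^{cs}$ and $E^{uu}$, I pick $N$ large enough that the iterated cone $Df^N(\cC^u)$ is so narrow that any $C^1$-disc tangent to it of diameter less than $r_0$ makes a uniformly small angle with the strong unstable direction, so that its image under $\Pi_{\cI(A(x))}$ has diameter less than $\eta$. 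For $x\in V$ and any such disc $D$ with $x\in D$, the set $D\cap \cA(x)$ then projects under $\Pi_{\cI(A(x))}$ into an $\eta$-ball around $\Pi_{\cI(A(x))}(x)$, hence into the element $\cQ_l^{\cI(A(x))}(\Pi_{\cI(A(x))}(x))$; this yields $D\cap \cA^u_l(x)=D\cap \cA(x)$.

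The hard part will be the quantitative holonomy estimate: showing that, as $N\to\infty$, the $\Pi_i$-image of any disc of diameter less than $r_0$ tangent to $Df^N(\cC^u)$ shrinks to a point uniformly in the disc. This requires that the continuous extension of the strong unstable plaques to a neighborhood of $\La$ is uniformly close to $E^{uu}$, so that iterated discs inside $Df^N(\cC^u)$ become nearly tangent to these plaques and the holonomy transport collapses their $E^{cs}$-component uniformly; the uniform transversality of the $\Si_i$ to $\cC^u$ from Lemma~\ref{l.lamination-box} is then used to convert the angle control into diameter control on $\Si_i$.
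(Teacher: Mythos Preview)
Your overall plan---refine each $\cA$-atom by pulling back a shrinking sequence of finite partitions of the transversals $\Si_i$---is exactly the paper's strategy, and items~(i) and~(ii) go through essentially as you say. The divergence is in how the transversal partition is thickened to the ambient neighborhood, and this is precisely where your ``hard part'' lies.

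There is a genuine gap in your argument for item~(iii). You want to show that for $N$ large, any disc $D$ of diameter $\le r_0$ tangent to $Df^N(\cC^u)$ has $\diam(\Pi_i(D))<\eta$. But the plaque family defining $\Pi_i$ is \emph{fixed}: off $\La$ its plaques are only tangent to the wide cone $\cC^u$, not to $E^{uu}$, and in general make some fixed positive angle with the extended $E^{uu}$-direction. A disc $D$ tangent to the narrow cone $Df^N(\cC^u)$ then makes roughly that same fixed angle with the $\Pi_i$-plaques, so $\diam(\Pi_i(D))$ is bounded below by a quantity of order (angle between plaques and $E^{uu}$)$\times r_0$, which does \emph{not} tend to zero as $N\to\infty$. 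Your sketch assumes the plaques are ``uniformly close to $E^{uu}$'', but being tangent to $\cC^u$ does not give this. The fix is to also intersect $V$ with a sufficiently thin open neighborhood $W$ of $\La$: by $C^1$-continuity of the plaque family (and the fact that on $\La$ the plaques \emph{are} unstable leaves), the plaques over $W$ are as close to $E^{uu}$ as one likes, and since $\mu$ is supported on $\La$ this costs no $\mu$-measure. This step is essential and is missing.

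The paper sidesteps the extended projection entirely. For each element $C$ of the transversal partition $\cC_{i,l}$ it sets $\tilde C$ to be a union of open $r_{x,l}$-tubes around the \emph{actual} unstable leaves $\cF^u_{2\rho_0}(x)$, $x\in C\cap\tilde\Si_i$, with radii chosen so that distinct $\tilde C$'s are disjoint; the partition $\cA^u_l$ is $\cA$ refined by these $\tilde C$'s. For item~(iii) one passes to a compact $C''\subset C$ of nearly full $\mu_i$-measure and a smaller tube $\hat C$ whose closure sits inside $\tilde C$; the set $V$ is built from the $\hat C$'s. The key step is now merely a \emph{containment}: any disc $D$ of diameter $\le r_0$ meeting $\hat C$ and tangent to $Df^N(\cC^u)$ is, for $N$ large, $C^0$-close to one of the leaves in the compact family over $C''$, hence lies entirely in the fixed open set $\tilde C$. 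No projection estimate is needed---only compactness of the leaf family over $C''$ and the shrinking of the iterated cone.
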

\proof Let $\{B_i\}_{i\in\{1,\cdots,k \}}$ be the open cover of  $\La$   given in the previous section and $\Sigma_i$ be the associated transverse discs. 
For each $\Sigma_i$,  the collection of local unstable manifolds$\{\cF^{u}_{2\rho_0}(y)\}_{y\in\Sigma_i\cap \La}$ defines a measurable partition of $\cup_{y\in\Sigma_i\cap \La}\cF^{u}_{2\rho_0}(y)$, and we denote by $\mu_i$ the projection on $\Sigma_i$ of
the measure $\mu$ restricted to $\cup_{y\in\Sigma_i\cap \La}\cF^{u}_{2\rho_0}(y)$.

For each $\Sigma_i$, there is a sequence of finite partitions $\cC_{i,1}\prec\cC_{i,2}\prec\cdots\prec \cC_{i,l}\prec\cdots$ such that
\begin{itemize}
	\item $\diam(\cC_{i,l})\stackrel{l\rightarrow\infty}{\longrightarrow}0$
	\item $\mu_i( \partial{\cC_{i,l}})=0$, where $\partial{\cC_{i,l}}$ denotes the boundary of partition $\cC_{i,l}$ in $\Sigma_i.$
\end{itemize}
Then we denote by $\tilde{\Sigma}_i$ the set $(\Lambda\cap\Sigma_i)\setminus \cup_l \partial{\cC_{i,l}}$.

Let us fix any $l$ and any $C\in \cC_{i,l}$.
For any $x\in \tilde{\Sigma}_i\cap C$, since $x$ is an interior point of $C$, there exists 
$r_{x,l}>0$ such that distance between $\cF^u_{2\rho_0}(x)$ and any other local leaf
$\cF^u_{2\rho_0}(y)$ with $y\in ( \tilde\Si_i\cap\Lambda)\setminus C$ is larger than $3r_{x,l}$.
We define the set $\tilde{C}$
which is the union of the $r_{x,l}$-neighborhood of the local leaf $\cF^u_{2\rho_0}(x)$ over $x\in \tilde{\Sigma}_i\cap C$.

By construction, $\tilde{C}\cap\Sigma_i$ is an open set in $C$, and
has full $\mu_i$-measure in $C$; in particular the boundary of  $\tilde{C}\cap\Si_i$ in $\Si_i$ has $\mu_i$-measure zero.
Moreover by the choice of the numbers $r_{x,l}$, the $\tilde C$'s for different $C\in \cC_{i,l}$ are pairwise disjoint.
The partition $\cP_{i,l}$ for $B_i$ given by 
$$\big\{\tilde{C}\big\}_{C\in\cC_{i,l}}\bigcup \big\{B_i\setminus\bigcup_{C\in\cC_{i,l}}\tilde{C}\big\},$$
is a finite measurable partition whose boundary in $\Si_i$ has $\mu_i$ measure zero.
One may also require the condition $r_{x,l+1}<r_{x,l}$ for each $x\in \tilde{\Sigma}_i$ and each $l$:
this gives $\cP_{i,l+1}\prec\cP_{i,l}$ modulo a set with zero $\mu$ measure.
Since $\diam(\cC_{i,l})\stackrel{l\rightarrow\infty}{\longrightarrow}0$,   
one has $\cap_{l}\cP_{i,l}(x)=\cF^{u}_{2\rho_0}(x)$ for $x\in \tilde{\Sigma}_i$.

For each $A\in\cA$ with $\cI(A)=i$, the finite partition $\cP_{i,l}$ induces a finite measurable partition for $A$, and this defines finite partitions $\cA^u_l$. The fact that $\cap_{l}\cP_{i,l}(x)=\cF^{u}_{2\rho_0}(x)$ for $x\in\tilde{\Sigma}_i$ implies that $\cA^u_l$ satisfies the second item.

Recall that for each $A\in\cA$ with $\cI(A)=i$,  one has $B_{2r_0/3}(A)\subset B_i$. For each $x\in A$, the boundary of the set $\cA^u_l(x)$ is contained in $\partial{A}$ and $\partial{\cP}_{i,l}(x)$. The fact that $B_{2r_0/3}(A)\subset B_i$ and $\mu$ is supported on $\La$ implies that up to modulo a set of zero $\mu$ measure, one has 
$$\partial(\cA^u_l(x))=\partial A \cup \{y\in B_i, \cF^u_{2\rho_0}(y) \cap\tilde{\Sigma}_i=\emptyset\}.$$
Since $\mu(\partial(A))=0$ and $\tilde{\Sigma}_i$ has full $\mu_i$ measure, one has $\mu(\partial(\cA^u_l(x)))=0$.

It remains to prove the last item. We fix an integer $l\in\mathbb{N}$ and $\delta>0$. 
\begin{Claim}
For each $A\in\cA$ with $\mu(A)>0$,	there is an open subset $V_A$ of $A$ and $N_A\geq 1$ such that 
	\begin{itemize}
		\item $\mu(V_A)> (1-\delta)\cdot \mu(A)$,
		\item for any $x\in V$ and any disc $D$ tangent to $Df^{N_A}(\cC^{u})$ with $x\in D$
			and $\diam(D)\leq r_0$,
		        $$D\cap \cA^u_l(x)=D\cap A.$$
		\end{itemize} 
\end{Claim}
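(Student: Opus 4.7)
The plan is as follows. For $\mu$-a.e.~$x \in A$, the point $x$ lies in the open subset $\bigcup_{C \in \cC_{i,l}} \tilde{C}$ of $B_i$, and I denote by $\tilde{C}_x$ (with $C_x \in \cC_{i,l}$) the unique element containing $x$; then $\cA^u_l(x) = A \cap \tilde{C}_x$. Consider the positive measurable functions $r(x) := r_{y(x),l}$ and $d(x) := \mathrm{dist}_{\Sigma_i}(y(x), \partial C_x)$, defined $\mu$-a.e.~on $A$. Using $\mu(\partial A) = 0$, Lusin's theorem, and inner regularity, I extract constants $r_*, d_* > 0$ and a compact set $K \subset \mathrm{Int}(A) \cap \Lambda \cap (\bigcup_C \tilde{C})$ with $\mu(K) > (1-\delta)\mu(A)$, on which $r \geq r_*$, $d \geq d_*$, the holonomy $x \mapsto y(x)$ is continuous, and $y \mapsto r_{y,l}$ stays bounded below by $r_*/2$ in a small $\Sigma_i$-neighborhood of $y(K)$.

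The second ingredient is the forward contraction of $\cC^u$: by Remark~\ref{r.cone}, for any prescribed angle $\theta > 0$ one can choose $N_\theta$ so that $Df^N(\cC^u)(x)$ is contained in the $\theta$-cone around $E^{uu}(x)$ for every $N \geq N_\theta$, uniformly in $x$ in a neighborhood of $\Lambda$. Using the continuous extension of $E^{cs}$ and local graph coordinates over $E^{uu}$, any disc $D$ of diameter $< r_0$ tangent to $Df^{N_A}(\cC^u)$ through a point $x$ near $\Lambda$ is contained in an $\epsilon_{N_A}$-tube around the local unstable plaque through $x$, and its image $\pi(D)$ under the local transverse holonomy $\pi$ onto $\Sigma_i$ lies in an $\epsilon_{N_A}$-neighborhood of $y(x)$, with $\epsilon_N \to 0$ as $N \to \infty$.

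Now choose $N_A$ so large that $\epsilon_{N_A} < \tfrac{1}{4}\min(r_*, d_*)$. By compactness of $K$ and continuity, pick $\eta > 0$ so small that for any $x \in B_\eta(K)$ the nearest $z \in K$ satisfies $\tilde{C}_x = \tilde{C}_z$, the holonomy $\pi$ is defined on $B_\eta(K)$, and the estimates of the previous paragraph remain valid around $y(z)$ in place of $y(x)$. Set $V_A := \mathrm{Int}(A) \cap (\bigcup_C \tilde{C}) \cap B_\eta(K)$, which is open, contained in $A$, and has $\mu(V_A) \geq \mu(K) > (1-\delta)\mu(A)$. Then for $x \in V_A$, any admissible disc $D$ through $x$, and any $w \in D$, the $\Sigma_i$-estimate combined with $d(z) \geq d_*$ gives $\pi(w) \in C_z$, while the $\epsilon_{N_A}$-tube estimate combined with the uniform lower bound on $r_{\cdot,l}$ near $y(z)$ gives $d(w, \cF^u_{2\rho_0}(\pi(w))) < r_*/2 \leq r_{\pi(w),l}$. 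Hence $w \in \tilde{C}_z = \tilde{C}_x$, so $D \subset \tilde{C}_x$, and therefore $D \cap A \subset A \cap \tilde{C}_x = \cA^u_l(x)$; the reverse inclusion is immediate.

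The main obstacle is that $r_0$ is fixed once and for all (as a Lebesgue number) and may well be larger than $r_*$ or the $\Sigma_i$-scale of $C_z$, so the inclusion $D \subset \tilde{C}_x$ cannot follow from size considerations alone. The resolution is to combine two independent pieces of control delivered by partial hyperbolicity: the $C^0$-proximity of $D$ to unstable plaques (from the thin-cone estimate) keeps $D$ near the unstable lamination, while the transverse-holonomy estimate (using the uniform lower bound on $d$ on the good compact set) confines $\pi(D)$ to a single element of $\cC_{i,l}$. Both uniformities come from Lusin's theorem applied to the relevant measurable functions.
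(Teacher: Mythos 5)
Your argument is essentially the paper's: both proofs uniformize the tube radii $r_{\cdot,l}$ and the transverse distance to the boundaries of the elements of $\cC_{i,l}$ on a compact set of $\mu$-measure greater than $(1-\delta)\mu(A)$ (the paper via compact subsets $C''\subset \tilde C\cap C$ of the transversal $\Sigma_i$ and a single radius $r_\delta$, you via Lusin's theorem applied in $M$), take $V_A$ to be a thin open tube around that compact set, and invoke the convergence of $Df^{N}(\cC^{u})$ to $E^{uu}$ to trap any admissible disc of diameter at most $r_0$ inside a tube around unstable plaques contained in a single $\tilde C$. The only step to tighten is the final membership $w\in\tilde C_z$: the holonomy image $\pi(w)$ of an arbitrary $w\in D$ need not lie in $\tilde\Sigma_i$ (the set on which $r_{\cdot,l}$ is defined and whose plaques actually build $\tilde C$), so one should compare $D$ with the plaques through points of $\tilde\Sigma_i\cap C_z$ near $y(z)$ rather than through $\pi(w)$ itself, which is precisely how the paper phrases the containment.
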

\proof[Proof of Claim]
Let $A\in\cA$ with $\mu(A)>0$ and let $i=\cI(A)$. Since $\mu(\partial A)=0$, there is an open set  $A^\prime\subset A$ such that $\mu(A^\prime)=\mu(A)$. 

For each $C\in\cC_{i,l}$ with $\tilde{C}\cap A\neq\emptyset$, the open set $\tilde{C}$ intersects $C$ into an open subset $C^\prime$ of $C$ whose boundary has zero $\mu_i$-measure; then for $\delta>0$, there exist $r_\delta>0$ and a compact  subset $C^{\prime\prime}$ of $C^{\prime}$ such that 
\begin{itemize}
	\item for any $x\in C^{\prime\prime}\cap \tilde{\Si}_i$, the $2r_\delta$-neighborhood of $\cF^{u}_{2\rho_0}(x)$ is
	included in $\tilde{C}$;
	\item $\mu_i(C^{\prime\prime})> \mu_i(C^\prime)-\delta\cdot\frac{\mu(A)}{\#\cC_{i,l}}$.
	\end{itemize}
Now, we define $\hat{C}$ as the union of the $r_\delta$-neighborhood of $\cF^{u}_{2\rho_0}(x)$ over $x\in C^{\prime\prime}\cap \tilde{\Si}_i$. By definition, the closure of $\hat{C}$ is included in $\tilde{C}$.
Let $V_A$ be the union of $\hat{C}\cap A^\prime$ over all $C\in \cC_{i,l}$ with $\tilde{C}\cap A\neq\emptyset$. By the fact that $B_{2r_0/3}(A)\subset B_i$, one gets $\mu(V_A)> (1-\delta)\cdot\mu(A)$.
 
Any disc of radius less or equal to $r_0$, that is $C^1$-close to a leaf $\cF^{u}_{2\rho_0}(x)$ for
$x\in C^{\prime\prime}\cap \tilde{\Si}_i$ and having a point in $B_i$,
is contained in $\widehat C$. By compactness of $C^{\prime\prime}$,
one deduces that if one chooses an integer $N_C\geq 1$
large enough and $r_\delta>0$ small enough, then the following property holds:
for any $n\geq N_C$, any disc $D$ intersecting $V_A=\hat{C}\cap A'$ with diameter $\leq r_0$ and tangent to $Df^{n}(\cC^{u})$ is contained in $\tilde{C}$.
By definition of $\cA^u_l$, for $x\in D$ one gets $D\cap \cA^u_l(x)=D\cap A$.

 Since $\cC_{i,l}$ is finite, one concludes by taking $N_A=\max N_C$ over $C\in\cC_{i,l}$  with $\tilde{C}\cap A\neq\emptyset$.
\endproof
For each $A\in\cA$ with $\mu(A)=0$, we define $V_A=\emptyset$. 
We take $V=\cup_{A\in\cA} V_A$ and $N\gg \max N_A$. By the Claim above, the open set $V$ satisfies the required properties.
\endproof

 \subsection{Proof of Theorem~\ref{thm.switch-to-finite}}
From Corollary~\ref{c.11}, the measurable partition $\cA^u$ satisfies
$$h_\mu(f,\cF^u)=\liminf_{m\rightarrow\infty}\frac{1}{m}H_\mu\big(\bigvee_{j=1}^m f^{-j}(\cA^u)|\cA^u\big).$$
Thus, for any $\varepsilon>0$, there exists an integer $m_0>0$ such that 
$$\big|\frac{1}{m_0}H_\mu\big(\bigvee_{i=1}^{m_0}f^{-i}(\cA^u)|\cA^u\big)-h_\mu(f,\cF^u)\big|\leq  \frac{\e}{3}.$$
By Lemma~\ref{l.order equivalent}, we have 
$\bigvee_{j=0}^{m_0} f^{-j}(\cA^u)=\bigvee_{j=0}^{m_0} f^{-j}(\cA)\vee\cA^u$ (modulo a set with $\mu$-measure zero).
Hence
$$ H_\mu\big(\bigvee_{i=1}^{m_0}f^{-i}(\cA^u)|\cA^u \big)= H_\mu\big(\bigvee_{i=1}^{m_0}f^{-i}(\cA)|\cA^u\big).$$
From Proposition~\ref{p.finite-partition-infinite-partition},
the sequence of finite measurable partitions $(\cA^u_l)_{l\in\mathbb{N}}$
satisfies $\cA^u_l\prec\cA^u_{l+1}\prec\cA^u$ and $\cA^u=\bigvee\cA^u_l$
(modulo a set with $\mu$-measure zero).
From the second item of Lemma~\ref{l.updown}, there exists an integer $l_0$ such that 
$$\big|\frac{1}{m_0}H_\mu\big(\bigvee_{i=1}^{m_0}f^{-i}(\cA)|\cA^u\big)-\frac{1}{m_0}H_\mu\big(\bigvee_{i=1}^{m_0}f^{-i}(\cA)|\cA^u_{l_0}\big)\big|<\frac{\e}{3}.$$
As a consequence, one has 
$$\big|\frac{1}{m_0}H_\mu\big(\bigvee_{i=1}^{m_0}f^{-i}(\cA)|\cA^u_{l_0}\big)-h_\mu(f,\cF^u)\big|<\frac{2\e}{3}.$$
By construction, one has $\mu(\partial(\cA^u_{l_0}))=\mu(\partial(\cA))=0$. Thus
there exists $\eta_0>0$ such that for any probability measure $\nu$
with $d(\mu,\nu)<\eta_0$, one has
	$$\big|\frac{1}{m_0}H_\nu\big(\bigvee_{i=1}^{m_0}f^{-i}(\cA)|\cA_{l_0}^u\big)-\frac{1}{m_0}H_\mu\big(\bigvee_{i=1}^{m_0}f^{-i}(\cA)|\cA_{l_0}^u\big)\big|<\frac{\e}{3}.$$
 To summarize, for any probability measure $\nu\in B_{\eta_0}(\mu)$, one has 
 	$$\big|\frac{1}{m_0}H_\nu\big(\bigvee_{i=1}^{m_0}f^{-i}(\cA)|\cA_{l_0}^u\big)-h_\mu(f,\cF^{u})\big|<\e.$$
 Now, one only needs to take $\alpha=\cA$ and $\beta=\cA^u_{l_0}.$
 By the choice of $\cA$ in Section~\ref{s.partition-to-ubstable-foliation} we have $\diam(\alpha)<\rho$
 and by construction $\alpha\prec \beta$.
 
 For any $\delta$, the existence of $V$ and $N$
 as in the last property of Theorem~\ref{thm.switch-to-finite} is guaranteed by the third item of Proposition~\ref{p.finite-partition-infinite-partition} for the partition $\cA^u_{l_0}$. This ends the proof of Theorem~\ref{thm.switch-to-finite}.
\qed

\section{Volume estimate for convergent sets of invariant measures}~\label{s.volume-estimate}

Given an invariant measure $\mu$ of $f\in\diff^1(M)$, we define for any $n\geq 1$ and $\eta>0$
the \emph{$(n,\eta)$-convergent set}:
$$C_n(\mu,\eta):=\bigg\{x\in M: \ud\bigg(\frac{1}{n}\sum_{i=0}^{n-1}\delta_{f^i(x)},\mu\bigg)<\eta \bigg\}.$$

The aim of this section is to prove:
\begin{theoremalph}~\label{thm.volume-control-by-entropy}
Let $f$ be a $C^1$-diffeomorphism of a compact manifold and $\Lambda$ be a $u$-laminated set.
Then, there exist an unstable cone field $\cC^{u}$ on a neighborhood $U$ of $\La$  and $r_0>0$ with the following property:
	for any $\mu\in\cM_{\mathrm{inv}}(\La,f)$ and $\e>0$, there exist $\eta,c>0$ such that
	for each compact disc $D\subset U$ tangent to $\cC^{u}$ with $\diam(D)<r_0$ and each $n\in\mathbb{N}$, one has 
	$$\Leb_D\big(  C_{n}(\mu,\eta)\cap D\cap \bigcap_{i=0}^{n-1}f^{-i}(U)\big)<c\cdot \exp\bigg(n\bigg(h_\mu(f,\cF^u)-\int\log{|\det(Df|_{E^{uu}})|}\ud\mu+\e\bigg)\bigg).$$
	\end{theoremalph}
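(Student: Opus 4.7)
The plan is to bound the Lebesgue measure of the good set $G_n := C_n(\mu,\eta)\cap D\cap\bigcap_{i=0}^{n-1} f^{-i}(U)$ as a product of two factors: the per-cylinder Lebesgue measure of an atom of the Bowen-like partition $\cP_n := \bigvee_{i=0}^{n-1}f^{-i}\alpha$, controlled by a Jacobian argument to give $\exp(-n\!\int\!\log|\det(Df|_{E^{uu}})|d\mu)$, times the number of cylinders meeting $G_n$, controlled by an entropy argument to give $\exp(n\,h_\mu(f,\cF^u))$.

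\textbf{Setup.} Apply Theorem~\ref{thm.switch-to-finite} to $\mu$ with error $\varepsilon/4$ and a small auxiliary $\rho$, producing $\eta_0,m_0$, an unstable cone field $\cC^u$ on a neighborhood $U$, the constant $r_0$, and finite partitions $\alpha\prec\beta$. Pick a small $\delta>0$ and use item (iii) to get an open set $V$ with $\mu(V)>1-\delta$ and an integer $N$. By Remark~\ref{r.cone}, after possibly shrinking $r_0$, any $\cC^u$-disc $D\subset U$ of diameter $\le r_0$ has all forward iterates tangent to $\cC^u$, and $T_{f^ix}(f^iD)$ converges exponentially to $E^{uu}$. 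For $x\in G_n$, closeness of the empirical measure to $\mu$ and continuity of $\log|\det(Df|_{E^{uu}})|$ give, for $\eta$ small,
\begin{equation*}
\Big|\tfrac{1}{n}\sum_{i=0}^{n-1}\log\bigl|\det(Df|_{T_{f^ix}f^iD})\bigr| - \!\int\! \log\bigl|\det(Df|_{E^{uu}})\bigr|\,d\mu\Big|<\varepsilon/4.
\end{equation*}
A change-of-variables applied to $f^n|_{P\cap D}$, using that $f^n(P\cap D)\subset f^n(D)\cap\alpha(f^nx)$ and that $\alpha$-atoms have bounded area inside the iterated discs, then yields
\begin{equation*}
\Leb_D(P\cap D)\le C_0\exp\!\Big(-n\!\int\!\log\bigl|\det(Df|_{E^{uu}})\bigr|\,d\mu + n\varepsilon/4\Big)
\end{equation*}
uniformly in $P\in\cP_n$ that meets $G_n$, where $C_0$ depends on $\rho$ and the uniform bounded geometry of $f^n(D)$.

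\textbf{Entropy counting.} Let $N(n)$ be the number of such cylinders $P$; pick a representative $x_P\in P\cap G_n$ and form $\hat\rho:=N(n)^{-1}\sum_P\delta_{x_P}$, so that $\log N(n) = H_{\hat\rho}(\cP_n)\le H_{\hat\rho}(\beta)+H_{\hat\rho}(\cP_n|\beta)$. By construction the Cesàro average $\bar\rho:=\tfrac1n\sum_{j=0}^{n-1}f^j_*\hat\rho$ is $\eta$-close to $\mu$, because $\hat\rho$ is a convex combination of point masses whose empirical measures are each $\eta$-close to $\mu$. Decompose the conditional entropy $H_{\hat\rho}(\cP_n|\beta)$ via Corollary~\ref{c.H-condition} into $K=\lfloor n/m_0\rfloor$ blocks of length $m_0$; after dropping the past-orbit conditioning (which only increases the conditional entropy), the $k$-th block is bounded by $H_{f^{km_0}_*\hat\rho}\!\bigl(\bigvee_{i=1}^{m_0}f^{-i}\alpha\,\big|\,f^{km_0}\beta\bigr)$. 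Using concavity of $\nu\mapsto H_\nu(\cdot|\cdot)$ in the measure, the average of these over $k$ is bounded by a single conditional entropy of the averaged measure $\bar\rho$ (which is close to $\mu$), and Theorem~\ref{thm.switch-to-finite}(ii) applies to deliver $\le m_0(h_\mu(f,\cF^u)+\varepsilon/4)$. Summing over $k$ yields $\log N(n)\le n(h_\mu(f,\cF^u)+\varepsilon/2)+O(m_0+\log|\beta|)$.

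\textbf{Conclusion and main obstacle.} Multiplying the per-cylinder bound by $N(n)$ and absorbing constants into $c$ yields the desired estimate. The main difficulty is the conditional-entropy step: Corollary~\ref{c.H-condition} naturally conditions the $k$-th block by $f^{km_0}\beta$ together with the past of $\alpha$, while Theorem~\ref{thm.switch-to-finite}(ii) requires conditioning by $\beta$ under a measure close to $\mu$. Moreover the individual pushforwards $f^{km_0}_*\hat\rho$ are \emph{not} close to $\mu$ (only their average is). The resolution uses two ingredients in tandem: concavity of conditional entropy in the measure transfers the estimate from the individual $f^{km_0}_*\hat\rho$ to their average $\bar\rho\approx\mu$, and property (iii) of Theorem~\ref{thm.switch-to-finite} is exploited to swap $f^{km_0}\beta$ for $\beta$ on the orbit points landing in $V$ --- a fraction $\ge 1-\delta$ of indices $k$ by the Birkhoff-type estimate for the continuous indicator-like function associated with $V$ --- while the remaining $\delta$-proportion contributes only $O(\delta n\log|\alpha|)$, which is absorbed into the final $\varepsilon$-gap once $\delta$ is chosen sufficiently small.
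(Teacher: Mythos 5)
Your overall strategy coincides with the paper's: a per-cylinder Jacobian bound times a cylinder count, the count controlled by a block decomposition of the entropy, concavity in the measure, and property (iii) of Theorem~\ref{thm.switch-to-finite} to reconcile the $\alpha$-versus-$\beta$ conditioning. There is, however, one genuine gap in the entropy-counting step. Concavity of $\nu\mapsto H_\nu(\,\cdot\,|\,\cdot\,)$ bounds the average $\frac1K\sum_{k=0}^{K-1}H_{f^{km_0}_*\hat\rho}\bigl(\bigvee_{i=1}^{m_0}f^{-i}\alpha\,\big|\,\beta\bigr)$ by the conditional entropy of the measure $\frac1K\sum_{k}f^{km_0}_*\hat\rho$, i.e.\ the average of the pushforwards over the block starting times $0,m_0,2m_0,\dots$ \emph{only}. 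This is not your Ces\`aro average $\bar\rho=\frac1n\sum_{j=0}^{n-1}f^j_*\hat\rho$, and it need not be close to $\mu$ even though every empirical measure $\frac1n\sum_j\delta_{f^j(x_P)}$ is $\eta$-close to $\mu$ (the $f^{m_0}$-empirical measure of a point can differ wildly from its $f$-empirical measure; e.g.\ along a periodic orbit of period $m_0$). Since Theorem~\ref{thm.switch-to-finite}(ii) only applies to measures $\eta_0$-close to $\mu$, and $H_\nu\bigl(\bigvee_{i=1}^{m_0}f^{-i}\alpha|\beta\bigr)$ can be as large as $m_0\log\#\alpha$ for a general $\nu$, the assertion that item (ii) ``applies to deliver $\le m_0(h_\mu(f,\cF^u)+\e/4)$'' fails as written. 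Your own remark that ``the individual pushforwards are not close to $\mu$ (only their average is)'' misidentifies which average is controlled.

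The repair is exactly the phase-averaging device the paper builds into its partitions $\cP^\ell_i$: run the block decomposition for each of the $m_0$ offsets $\ell\in\{0,\dots,m_0-1\}$, placing $\beta$ at the times $\ell+km_0$ and $\alpha$ elsewhere. Each offset separately yields $H_{\hat\rho}\bigl(\bigvee_{i=0}^{n-1}f^{-i}\cP^\ell_i\bigr)=\log N(n)$, because all these partitions refine $\bigvee_{i=0}^{n-1}f^{-i}\alpha$ and so still separate your representatives $x_P$; hence $m_0\log N(n)$ is bounded by the sum over $\ell$ of the block decompositions. The average of $f^{\ell+km_0}_*\hat\rho$ over \emph{all} pairs $(\ell,k)$ is then, up to boundary terms, the full Ces\`aro average $\bar\rho$, to which concavity and Theorem~\ref{thm.switch-to-finite}(ii) legitimately apply. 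With this insertion — and keeping your treatment of the $\alpha/\beta$ swap at block boundaries, which costs at most $\log\#\beta\cdot(n\delta/2+n_\e)$ by property (iii) together with $\bar\rho(M\setminus V)\le\delta/2$ (note the error should be measured in $\log\#\beta$, not $\log\#\alpha$, since $\beta$ is the finer partition) — your argument becomes the paper's proof, up to the inessential replacement of $(n,\rho)$-separated sets and Bowen balls by cylinders of $\bigvee_{i=0}^{n-1}f^{-i}\alpha$.
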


%
%

\subsection{Preliminary choices}
\paragraph{\bf Choice of $\cC^{u}, U,r_0$.} From the partially hyperbolicity of $\La$,
there exist a neighborhood $U$ of $\La$, an unstable cone field $\cC^{u}$ defined on $U$, $\lambda\in(0,1)$
and $N\geq 1$ such that 
\begin{itemize}
	\item $Df^N(\cC^{u}(x))\subset \cC^{u}(f^N(x))$, for  any $x\in {U}\cap f^{-1}({U})\cap\cdots\cap f^{-N}( U)$;
	\item $\|Df^{-N}(v)\|\leq \lambda$ for any $x\in {U}$ and any unit vector $v\in\cC^{u}(x)$;
\item
the partially hyperbolic splitting $E^{cs}\oplus E^{uu}$ extends to the maximal invariant set in $U$.
\end{itemize}
We choose a continuous extension $\psi\colon M\to \mathbb{R}$ of the map $x\mapsto-\log{|\det(Df|_{E^{uu}})(x)|}$
defined on the maximal invariant set in $U$.
We also fix a number $r_0>0$ which satisfies Theorem~\ref{thm.switch-to-finite}.

\paragraph{Cone field $\cC^{u}_\varepsilon$.}
Let us fix $\e>0$. There exist $N_\e,L_\e\geq 1$ and $\rho>0$ such that
\begin{itemize}
	\item for any $x,y\in M$ with $\ud(x,y)<\rho$, one has $|\psi(x)-\psi(y)|<\frac{\e}{8};$
\item 
the cone field $\cC^{u}_\e:=Df^{N_\e}\cC^u$ defined on
$U_\e:=\cap_{i=0}^{N_\e-1}f^k(U)$ satisfies:
\begin{itemize}
	\item[--]  for any  disc $D\subset U_\e$ tangent to $\cC^{u}_\e$ and any $x\in D$,
$\big| \log{|\det(Df|_{T_xD_x})|}+\psi(x)\big|<\frac{\varepsilon}{8};$
\item[--] for any compact disc $D\subset U$ with diameter smaller than $r_0$ and tangent to $\cC^{u}$,
the set $f^{N_\e}(D)\cap \bigcap_{i=0}^{N_\e-1}f^i(U)$ is contained in at most $L_\e$ discs tangent to $\cC^{u}_\e$
of diameter $r_0$.
\end{itemize}
\end{itemize}

We fix an invariant measure $\mu$ on $\La$.
From the previous properties, one only needs to prove the Theorem~\ref{thm.volume-control-by-entropy} for discs contained in $U_\e$, tangent to $\cC^{u}_\e$ and with diameter bounded by $r_0.$

\subsection{Volume estimate through pressure}
Let us fix a disc $D$ tangent to $\cC^{u}_\e$ with diameter smaller than $r_0$ and some integer $n\geq 0$.
A set $X$ is \emph{$(n,\rho)$-separated}
if any $x,y\in X$ satisfy $\ud(f^k(x),f^k(y))>\rho$ for some $0\leq k< n$.
For each $x\in D$, we denote by $B_n(x,\rho)$ the \emph{$(n,\rho)$-Bowen ball} in $D$ and centered at $x$, that is,
$$B_n(x,\rho)=\bigcap_{i=0}^{n-1} f^{-i}(B(f^i(x),\rho))\cap D.$$
For any   $n\in\mathbb{N}$, $\eta>0$ and $\rho>0$, let $X_{n,\rho}$ be a $(n,\rho)$-separated set with maximal cardinal of
$$C_{n}(\mu,\eta)\cap D\cap\bigcap_{i=0}^{n-1}f^{-i}(U).$$
We consider the probability measures:
$$\nu_n:=\frac{1}{\# X_{n,\rho}}\sum_{x\in X_{n,\rho}}\delta_x,
\quad\quad \mu_n:=\frac{1}{n}\sum_{i=0}^{n-1}f_*^i\nu_n
=\frac{1}{\# X_{n,\rho}} \sum_{x\in X_{n,\rho}} 
\frac{1}{n}\sum_{i=0}^{n-1}\delta_{f^i(x)}.$$
\begin{Remark} The definition of $ X_{n,\rho}$ and of the convexity of the ball of radius $\eta$ centered at $\mu$ in the space of probability measures gives $d(\mu_n,\mu)<\eta$.
\end{Remark}
\medskip

The volume of $C_{n}(\mu,\eta)$ is estimated as follows.

\begin{Proposition}\label{p.volume-through-separating-set}
	There exist $c_\e>0,\eta_1>0$ (which only depend on $\e$) such that for any $0<\eta<\eta_1$,
	and for any finite measurable partitions $\cP_0,\cdots,\cP_{n-1}$ of $M$ with  diameters smaller than $\rho$,
		$$\Leb_D\big(C_{n}(\mu,\eta)\cap D\cap\bigcap_{i=0}^{n-1}f^{-i}(U)\big)\leq
		 c_\e\cdot	\exp\big({\textstyle{\frac{n\e}{2}}}+n\int\psi\ud\mu+H_{\nu_n}\big(\bigvee_{i=0}^{n-1}f^{-i}(\cP_i)\big)\big).$$
\end{Proposition}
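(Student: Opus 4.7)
\medskip\noindent\emph{Proof plan.} The strategy is a separated-set covering argument. By the maximality of $X_{n,\rho}$, every point of $C_n(\mu,\eta)\cap D\cap\bigcap_{i=0}^{n-1}f^{-i}(U)$ fails to be $(n,\rho)$-separated from $X_{n,\rho}$, hence lies in some Bowen ball $B_n(x,\rho)\cap D$ with $x\in X_{n,\rho}$, so
$$\Leb_D\bigl(C_n(\mu,\eta)\cap D\cap\textstyle\bigcap_i f^{-i}(U)\bigr)\;\leq\;\sum_{x\in X_{n,\rho}}\Leb_D\bigl(B_n(x,\rho)\cap D\bigr).$$
Since each $\cP_i$ has diameter smaller than $\rho$, any two points in a common atom of $\bigvee_{i=0}^{n-1}f^{-i}\cP_i$ stay $\rho$-close along $n$ iterates and therefore cannot both belong to $X_{n,\rho}$. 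Thus each atom carries at most one point of $X_{n,\rho}$, the measure $\nu_n$ is uniform over exactly $\#X_{n,\rho}$ atoms, and one obtains the identity $H_{\nu_n}\bigl(\bigvee_i f^{-i}\cP_i\bigr)=\log\#X_{n,\rho}$. It therefore suffices to establish, for each $x\in X_{n,\rho}$, a per-ball bound of the form $\Leb_D(B_n(x,\rho)\cap D)\leq c_\e\exp\bigl(n\!\int\!\psi\,d\mu+n\e/2\bigr)$.

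For this per-ball bound, I would push $B_n(x,\rho)\cap D$ forward by $f^{n-1}$ into the $\rho$-ball around $f^{n-1}(x)$ inside the disc $f^{n-1}(D)$, whose induced volume is at most $C_0\rho^d$ with $d=\dim E^{uu}$, and then pull back by the inverse Jacobian of $f^{n-1}|_D$. The invariance of the narrow cone $\cC^u_\e$ along the orbit inside $U$ keeps each $f^i(D)$ tangent to $\cC^u_\e$, so the defining property of $\cC^u_\e$ yields $\bigl|\log|\det(Df|_{T_{f^i(y)}f^i(D)})|+\psi(f^i(y))\bigr|<\e/8$ at each $y\in B_n(x,\rho)\cap D$ and $i=0,\ldots,n-2$; chaining these gives a first additive error of $n\e/8$ in $\log|\det Df^{n-1}|_{T_y D}|+\sum\psi(f^i(y))$. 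The distortion between $y$ and the center $x$ is handled not by a $C^{1+\alpha}$ bounded-distortion lemma but by uniform continuity of $\psi$ at the Bowen-ball scale $2\rho$, producing a second error $n\e/8$. Finally, weak-$*$ continuity of the functional $\nu\mapsto\int\psi\,d\nu$ furnishes $\eta_1>0$ such that $\eta<\eta_1$ and $x\in X_{n,\rho}\subset C_n(\mu,\eta)$ force $\sum_{i=0}^{n-1}\psi(f^i(x))\leq n\!\int\!\psi\,d\mu+n\e/8$. Combining the three contributions yields $\Leb_D(B_n(x,\rho)\cap D)\leq C_0\rho^d\exp\bigl(n\!\int\!\psi\,d\mu+3n\e/8\bigr)$; summing over $x\in X_{n,\rho}$ and substituting $\#X_{n,\rho}=\exp H_{\nu_n}(\bigvee_i f^{-i}\cP_i)$ yields the claim with $c_\e:=C_0\rho^d$.

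The main obstacle is the absence of classical bounded distortion in the $C^1$ class; the fix is to replace a distortion lemma by the uniform continuity of the continuous map $\psi$ at the Bowen-ball scale $\rho$, which is precisely what forces $\rho$ to be pre-selected (depending on $\e$) and the partitions $\cP_i$ to be chosen with diameters below this scale. A secondary technical point is ensuring that $\cC^u_\e$ really remains invariant along the orbit inside $U$: this motivates the definition $\cC^u_\e=Df^{N_\e}\cC^u$ as an iterated and therefore narrowed image of $\cC^u$, so that tangent spaces of the discs $f^i(D)$ stay uniformly close to $E^{uu}$ and the finitely many initial iterates before full invariance are harmlessly absorbed into the constant $c_\e$.
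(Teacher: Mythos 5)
Your argument is correct and follows essentially the same route as the paper's proof: covering the convergent set by Bowen balls centered at the maximal $(n,\rho)$-separated set, estimating each ball's volume by a change of variables along the iterated disc with the Jacobian controlled through the continuous extension $\psi$ and the narrowed cone field $\cC^u_\e$ (uniform continuity of $\psi$ replacing $C^{1+\alpha}$ bounded distortion), and identifying $\#X_{n,\rho}$ with $\exp H_{\nu_n}(\bigvee_i f^{-i}\cP_i)$ since each atom of the refined partition meets $X_{n,\rho}$ in at most one point. The only differences are cosmetic (iterating by $f^{n-1}$ rather than $f^n$, and a slightly different bookkeeping of the $\e/8$ error terms, both of which land within the $n\e/2$ budget).
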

\begin{proof}
By the choice of $ X_{n,\rho},$ one has 
\begin{eqnarray*}
	\Leb_D\big(C_{n}(\mu,\eta)\cap D\cap\bigcap_{i=0}^{n-1}f^{-i}(U)\big)&\le&\sum_{x\in X_{n,\rho}}\Leb_D\big(C_{n}(\mu,\eta)\cap B_{n}(x,\rho)\cap\bigcap_{i=0}^{n-1}f^{-i}(U)\big).
\end{eqnarray*}
By the definition of $U_\e$, for any $0\leq i<n$
the point $f^i(y)$ is contained in $U_\e$ and $f^i(D)\cap \bigcap_{j=0}^{i}f^j(U)$ is tangent to the cone field $\cC^{u}_\e$.
By the choice of $\rho$, for $x,y$ in a same $(n,\rho)$-Bowen ball of $D$,
\begin{eqnarray*}
	\big|\log|\det(Df^{-n}|_{T_{f^n(y)} f^n(D)})|&-&\log|\det(Df^{-n}|_{T_{f^n(x)} f^n(D)})|\big|\\
	&\leq&\sum_{i=1}^{n}	\big|\log|\det(Df^{-1}|_{T_{f^i(y)} f^i(D)})|-\log|\det(Df^{-1}|_{T_{f^i(x)} f^i(D)})|\big|
	\\
	&\leq& \sum_{i=1}^{n}\big(|\psi(f^i(x))-\psi(f^i(y))|+{\textstyle \frac{\e}{4}}\big)\;\leq\;
	n\cdot{\textstyle \frac{3\e}{8}}.
\end{eqnarray*}
We denote $S_n\psi(z):=\sum_{i=0}^{n-1}\psi(f^i(z))$ for $z\in M$.
Then,
\begin{eqnarray*}
	\Leb_D\big(B_{n}(x,\rho)\cap\bigcap_{i=0}^{n-1}f^{-i}(U)\big)
	&\leq&   \int_{f^{n}\big(B_n(x,\rho)\cap\bigcap_{i=0}^{n-1}f^{-i}(U)\big)}|\det(Df^{-n}|_{T_y f^n(D)})|\ud\Leb_{f^{n}(D)}(y)\\
	&\le& c_\e\cdot e^{\frac{3n\e}{8}}\cdot e^{S_n\psi(x)},
\end{eqnarray*}
where $c_\e$ is an upper bound for the volume of the discs tangent to $\cC^{u}$ with diameter $\rho$.
   
For $\e>0$, there is $\eta_1>0$ such that for any probability measures $\nu_1,\nu_2$,  if $\ud(\nu_1,\nu_2)<\eta_1$, then $|\int\psi\ud\nu_1-\int\psi\ud\nu_2|<\frac{\e}{8}$. 
   Hence, for $\eta<\eta_1$ and $x\in X_{n,\rho}$, one has 
   $|\frac{1}{n} S_n\psi(x)-\int\psi\ud\mu|<\frac{\e}{8}.$
This gives the estimate
\begin{equation}\label{e1}
\Leb_D\big(C_{n}(\mu,\eta)\cap D\cap\bigcap_{i=0}^{n-1}f^{-i}(U)\big)\leq c_\e\cdot e^{\frac{n\e}{2}}\cdot e^{n\cdot\int\psi\ud\mu}\cdot\# X_{n,\rho}.
\end{equation}

Let $\cP_0,\cdots,\cP_{n-1}$ be finite measurable partitions with diameter smaller than $\rho$. By the choice of $ X_{n,\rho}$, 
each element of $\bigvee_{i=0}^{n-1}f^{-i}(\cP_i)$ contains at most one point of $ X_{n,\rho}$. Hence,
\begin{equation}
\begin{split}\label{e2}
H_{\nu_n}\big(\bigvee_{i=0}^{n-1}f^{-i}(\cP_i)\big)&=\sum_{x\in X_{n,\rho}}-\nu_n\big(\bigvee_{i=0}^{n-1}f^{-i}(\cP_i)(x)\big)\cdot \log\nu_n\big(\bigvee_{i=0}^{n-1}f^{-i}(\cP_i)(x)\big)\\
&=\sum_{x\in  X_{n,\rho}}\frac{1}{\# X_{n,\rho}}\cdot
\log\# X_{n,\rho}\;=\;\log\# X_{n,\rho}.
\end{split}
\end{equation}

The relations~\eqref{e1} and~\eqref{e2} together give the required estimate.
\end{proof}

\subsection{Localization along unstable leaves}
Theorem~\ref{thm.switch-to-finite} associates to $\mu,\e/4,\rho$,
and gives a number $\eta_0>0$, two partitions
 $\alpha\prec\beta$  of $M$ and $m_0\in\NN$. 
For any $0\leq \ell<m_0<n$, let $\cP^\ell_0,\cdots,\cP^\ell_{n-1}$ be finite measurable partitions of $M$ such that 
  \[ \cP^\ell_i = \left\{
 \begin{array}{rl}
 \beta & \text{if $i=\ell+km_0$} ,\\
 \alpha & \text{otherwise}.
 \end{array} \right. \]
For these partitions, we have to estimate the quantity $H_{\nu_n}\big(\bigvee_{i=0}^{n-1}f^{-i}(\cP_i)\big)$
which appears in Proposition~\ref{p.volume-through-separating-set}.
By Corollary~\ref{c.H-condition} and the fact $\#\beta\ge\#\alpha$, one gets
\begin{eqnarray}\label{e.estimateH}
&H_{\nu_n}\big(\bigvee_{i=0}^{n-1}f^{-i}(\cP^\ell_i)\big)
	=H_{\nu_n}\big(\bigvee_{i=0}^{\ell-1}f^{-i}(\alpha)\vee f^{-\ell}(\beta)\big)\hspace{8cm}\nonumber\\
 &\quad\quad\quad\quad +\sum_{k=0}^{[\frac{n-\ell}{m_0}]-1}H_{f^{\ell+km_0}_*\nu_n}\big(\bigvee_{i=1}^{m_0-1}f^{-i}(\alpha)\vee f^{-m_0}(\beta)|\bigvee_{i=0}^{km_0+\ell}f^{km_0+\ell-i}(\cP^\ell_i)\big)\nonumber\\
 & \quad\quad\quad\quad\quad\quad\quad\quad +H_{\nu_n}\big(\bigvee_{i=\ell+[\frac{n-\ell}{m_0}]m_0+1}^{n-1}f^{-i}(\cP^\ell_{i})|\bigvee_{i=0}^{\ell+[\frac{n-\ell}{m_0}]m_0}f^{-i}(\cP^\ell_i)\big)
 \nonumber\\
&\leq 2m_0\cdot\log\#\beta+\sum_{k=0}^{[\frac{n-\ell}{m_0}]-1}H_{f^{\ell+km_0}_*\nu_n}\big(\bigvee_{i=1}^{m_0-1}f^{-i}(\alpha)\vee f^{-m_0}(\beta)|\bigvee_{i=0}^{km_0+\ell}f^{km_0+\ell-i}(\cP^\ell_i)\big).
\end{eqnarray}
 
The main estimate is given by the following lemma.
 
\begin{Lemma}~\label{l.error-estimate}
There exist $\eta_2>0$ and $ N_2\geq 1$ (independent from the choice of $D$)
such that for any $n\geq  N_2$, and assuming $d(\mu_n,\mu)<\eta_2$, we have
\begin{align}\label{e.alpha-beta}
\sum_{\ell=0}^{m_0-1}\sum_{k=0}^{[\frac{n-\ell}{m_0}]-1}H_{f^{\ell+km_0}_*\nu_n}&\bigg(\bigvee_{i=1}^{m_0-1}f^{-i}(\alpha)\vee f^{-m_0}(\beta)\bigg|\bigvee_{i=0}^{km_0+\ell}f^{km_0+\ell-i}(\cP_i^\ell)\bigg)\nonumber\\
&-
\sum_{\ell=0}^{m_0-1}\sum_{k=0}^{[\frac{n-\ell}{m_0}]-1}H_{f^{\ell+km_0}_*\nu_n}\bigg(\bigvee_{i=1}^{m_0}f^{-i}(\alpha)\bigg|\bigvee_{i=0}^{km_0+\ell}f^{km_0+\ell-i}(\cP^\ell_i)\bigg)\;\leq\; \frac{n\e}{4}.	
\end{align}
\end{Lemma}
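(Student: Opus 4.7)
The plan is to use $\alpha\prec\beta$ (given by Theorem~\ref{thm.switch-to-finite}) to reduce the left-hand side of~\eqref{e.alpha-beta} to a sum of conditional entropies $H_{f^j_*\nu_n}(\beta|\alpha)$, which I can then control using the third property of Theorem~\ref{thm.switch-to-finite}. Setting $A':=\bigvee_{i=1}^{m_0-1}f^{-i}\alpha\vee f^{-m_0}\beta$ and $B':=\bigvee_{i=1}^{m_0}f^{-i}\alpha$, the refinement $\alpha\prec\beta$ gives $A'=B'\vee f^{-m_0}\beta$ and $f^{-m_0}\alpha\prec B'$. For the common conditioning partition $\cQ_{\ell,k}:=\bigvee_{i=0}^{km_0+\ell}f^{km_0+\ell-i}\cP^\ell_i$, Lemma~\ref{Lem:H-condition} and the fact that additional conditioning decreases entropy yield
\[
H_{f^{\ell+km_0}_*\nu_n}(A'|\cQ_{\ell,k})-H_{f^{\ell+km_0}_*\nu_n}(B'|\cQ_{\ell,k})\leq H_{f^{\ell+km_0}_*\nu_n}(f^{-m_0}\beta|f^{-m_0}\alpha)=H_{f^{\ell+(k+1)m_0}_*\nu_n}(\beta|\alpha).
\]
Summing over $(\ell,k)$ and reindexing by $j=\ell+(k+1)m_0$, the left-hand side of~\eqref{e.alpha-beta} is bounded by $\sum_{j\in J}H_{f^j_*\nu_n}(\beta|\alpha)$ with $J\subset\{m_0,\ldots,n\}$ and $|J|\leq n$.

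For each such $j$ I would estimate $H_{f^j_*\nu_n}(\beta|\alpha)$ as follows. Apply the third item of Theorem~\ref{thm.switch-to-finite} with a parameter $\delta>0$ (to be fixed later) to obtain an open set $V\subset M$ with $\mu(V)>1-\delta$ and an integer $N\geq 1$ such that $\alpha(x)\cap\Delta=\beta(x)\cap\Delta$ whenever $x\in V$ and $\Delta$ is a disc through $x$ tangent to $Df^N\cC^u$ with $\diam(\Delta)<r_0$. By Remark~\ref{r.cone} one can fix $j_0$ so that for $j\geq j_0$ the disc $f^j(D)$ is tangent to $Df^N\cC^u$ near each of its points staying in $U$; since $\diam(\alpha)<\rho<r_0$, the trace of every $\alpha$-atom on $f^j(D)$ splits into sub-discs of diameter $<r_0$ tangent to $Df^N\cC^u$, on each of which $\alpha$ and $\beta$ coincide. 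Conditioning on $\{V,V^c\}$ one thus obtains
\[
H_{f^j_*\nu_n}(\beta|\alpha)\leq h(p_j)+p_j\log\#\beta,\qquad p_j:=(f^j_*\nu_n)(V^c),
\]
where $h$ is the binary entropy; the $V$-part contributes nothing because on each sub-disc $\alpha$ and $\beta$ agree.

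Summing over $j\in J$ and combining the concavity of $h$ with the identity $\mu_n(V^c)=\tfrac1n\sum_{j=0}^{n-1}p_j$, Jensen's inequality gives
\[
\sum_{j\in J}H_{f^j_*\nu_n}(\beta|\alpha)\leq n\bigl(h(\mu_n(V^c))+\mu_n(V^c)\log\#\beta\bigr)+O(1),
\]
the $O(1)$ absorbing the at most $j_0$ excluded indices. Since $V^c$ is closed and $d(\mu_n,\mu)<\eta_2$, taking $\eta_2$ sufficiently small forces $\mu_n(V^c)<2\delta$; as $\beta$ (hence $\#\beta$) is already fixed, one chooses $\delta$ small enough so that $h(2\delta)+2\delta\log\#\beta<\varepsilon/4$, which yields~\eqref{e.alpha-beta}. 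The main obstacle is the disc-matching argument of the second paragraph: when $f^j(D)\cap A$ (for an $\alpha$-atom $A$) has several connected components, two points of the support of $f^j_*\nu_n$ in different components are in the same $\alpha$-atom but need not be in the same $\beta$-atom, even if both lie in $V$. Overcoming this requires either refining $\alpha$ beforehand by an auxiliary partition of small diameter, or arguing component-by-component, using the disc equality of Theorem~\ref{thm.switch-to-finite} together with the cone-expansion geometry of iterates.
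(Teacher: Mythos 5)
Your reduction of the left-hand side of~\eqref{e.alpha-beta} to $\sum_{j}H_{f^j_*\nu_n}(\beta|\alpha)$ is where the argument breaks, and you correctly diagnose why in your last paragraph: the bound $H_{f^j_*\nu_n}(\beta|\alpha)\leq h(p_j)+p_j\log\#\beta$ is false in general, because an atom of $\alpha$ meets $f^j(D)$ in many connected components (their number grows exponentially in $j$), and two points of $f^j(X_{n,\rho})$ lying in the same $\alpha$-atom but in different components need not lie in the same $\beta$-atom even when both belong to $V$. The third item of Theorem~\ref{thm.switch-to-finite} only identifies $\alpha(x)\cap \Delta$ with $\beta(x)\cap\Delta$ on a \emph{single} small disc $\Delta$ through $x$, so it says nothing about points in distinct branches of $f^j(D)$. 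Neither of your two proposed repairs closes this: no fixed finite refinement of $\alpha$ can separate the exponentially many branches, and ``arguing component-by-component'' requires exactly the localization you discarded when you passed from $H(f^{-m_0}\beta\,|\,B'\vee\cQ_{\ell,k})$ to $H(f^{-m_0}\beta\,|\,f^{-m_0}\alpha)$.

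The missing idea is to \emph{keep the full conditioning}. Writing, as in the paper, $\cP^\ell(k)$ for the push-forward of $\bigvee_{i=1}^{m_0-1}f^{-i}(\alpha)\vee \cQ_{\ell,k}$ by $f^{m_0}$, the $(\ell,k)$-term of the difference equals $H_{(g_{k,\ell})_*\nu_n}(\beta|\cP^\ell(k))-H_{(g_{k,\ell})_*\nu_n}(\alpha|\cP^\ell(k))$ with $g_{k,\ell}=f^{(k+1)m_0+\ell}$, and one compares the two entropies atom by atom of $\cP^\ell(k)$. The point is that every atom $B$ of $\cP^\ell(k)$ has all backward iterates $f^{-i}(B)$, $1\le i\le (k+1)m_0+\ell$, of diameter $<\rho$; since $\nu_n$ is carried by the single disc $D$, this forces $\supp\big((g_{k,\ell})_*\nu_n|_B\big)$ to lie in a \emph{single} disc $D_B$ of diameter $<r_0$ tangent to $Dg_{k,\ell}(\cC^u)$ --- the conditioning selects one branch of $g_{k,\ell}(D)$. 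On such a disc the third item of Theorem~\ref{thm.switch-to-finite} applies directly: if the support of the conditional measure meets $V_\e$ then $\alpha$ and $\beta$ induce the same partition on it, so the atoms contributing a nonzero difference ($\le\log\#\beta$ each) have conditional support in $M\setminus V_\e$, and the total is controlled by $n\log\#\beta\cdot(\mu_n(M\setminus V_\e)+n_\e/n)$ exactly as in your final step (your use of $d(\mu_n,\mu)<\eta_2$ with $V^c$ closed, and the $O(n_\e)$ correction for small $j$, are fine). In short: your outer algebra and your endgame match the paper, but the core localization step is absent, and the statement you would need ($H_{f^j_*\nu_n}(\beta|\alpha)$ small when $f^j_*\nu_n(V^c)$ is small) is not true.
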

\proof
The third item of Theorem~\ref{thm.switch-to-finite} for  $\delta=\frac{\e}{4\log\#\beta}$
gives an open  set $V_\e$ and $n_\e$ such that 
\begin{itemize}
	\item $\mu(V_\e)>1-\delta/4$;
	\item for any disc $\tilde{D}$ tangent to $Df^{n_\e}(\cC^{u})$ containing $x\in V_\e$ and of diameter $r_0$,
	$$\tilde{D}\cap \beta(x)=\tilde{D}\cap \alpha(x).$$
	\end{itemize}
There exists $\eta_2>0$ such that for any probability measure $\nu$
satisfying $\ud(\mu,\nu)<\eta_2$, one has $\nu(V_\e)>1-\delta/2$.
In particular if one assumes $\ud(\mu,\mu_n)<\eta_2$, one gets by Lemma~\ref{Lem:H-condition}:
\begin{align*}
&H_{f^{\ell+km_0}_*\nu_n}\big({\textstyle \bigvee_{i=1}^{m_0-1}f^{-i}(\alpha)\vee f^{-m_0}(\beta)|\bigvee_{i=0}^{km_0+\ell}f^{km_0+\ell-i}(\cP_i^\ell)}\big)\\
&={\textstyle H_{f^{\ell+km_0}_*\nu_n}\big({\textstyle\bigvee_{i=1}^{m_0-1}f^{-i}(\alpha)|\bigvee_{i=0}^{km_0+\ell}f^{km_0+\ell-i}(\cP_i^\ell)\big)}}\\
&\hspace{1cm} \textstyle{+H_{f^{\ell+km_0}_*\nu_n}\big({\textstyle f^{-m_0}(\beta)|\bigvee_{i=1}^{m_0-1}f^{-i}(\alpha)\vee\bigvee_{i=0}^{km_0+\ell}f^{km_0+\ell-i}(\cP_i^\ell)}\big)}
\\
&={\textstyle H_{f^{\ell+km_0}_*\nu_n}\big({\textstyle\bigvee_{i=1}^{m_0-1}f^{-i}(\alpha)|\bigvee_{i=0}^{km_0+\ell}f^{km_0+\ell-i}(\cP_i^\ell)}\big)+H_{f^{\ell+(k+1)m_0}_*\nu_n}\big({\textstyle\beta|\bigvee_{i=0}^{(k+1)m_0+\ell-1}f^{(k+1)m_0+\ell-i}(\cP_i^\ell)}\big)}
\end{align*}
and  similarly
\begin{align*}
&H_{f^{\ell+km_0}_*\nu_n}\big(\bigvee_{i=1}^{m_0}f^{-i}(\alpha)\big|\bigvee_{i=0}^{km_0+\ell}f^{km_0+\ell-i}(\cP_i^\ell)\big)\\
&=H_{f^{\ell+km_0}_*\nu_n}\big(\bigvee_{i=1}^{m_0-1}f^{-i}(\alpha)\big|\bigvee_{i=0}^{km_0+\ell}f^{km_0+\ell-i}(\cP_i^\ell)\big)+H_{f^{\ell+(k+1)m_0}_*\nu_n}\big(\alpha \big|\bigvee_{i=0}^{(k+1)m_0+\ell-1}f^{(k+1)m_0+\ell-i}(\cP_i^\ell)\big).
\end{align*}
For notational convenience, let us denote
$$g_{k,\ell}:=f^{(k+1)m_0+\ell} \quad \text{ and }\quad 
\cP^\ell(k)=\bigvee_{i=0}^{(k+1)m_0+\ell-1}f^{(k+1)m_0+\ell-i}(\cP_i^\ell).$$
In order to prove the lemma, we have to compare $H_{(g_{k,\ell})_*\nu_n}\big(\alpha |\cP^\ell(k)\big)$ with $H_{(g_{k,\ell})_*\nu_n}\big(\beta |\cP^\ell(k)\big)$ for each $\ell\in\{0,m_0-1\}$ and $k\in\{0,[\frac{n-\ell}{m_0}]-1\}$.

For each $B\in\cP^\ell(k)$, let $\alpha|_B$ and $\beta|_B$ be the partitions
on $B$ induced by $\alpha$ and $\beta$ respectively, and $\cP^\ell_{\neq}(k)$ be the set of $B\in\cP^\ell(k)$ such that $\alpha|_B\neq \beta|_B$. Then since $\alpha\prec\beta$,
\begin{align*}
&\big|H_{(g_{k,\ell})_*\nu_n}\big(\alpha |\cP^\ell(k)\big)-H_{(g_{k,\ell})_*\nu_n}\big(\beta |\cP^\ell(k)\big)\big|\\
&\hspace{1cm}=\big|\sum_{B\in\cP^\ell_{\neq}(k)}(g_{k,\ell})_*\nu_n(B)\big( H_{(g_{k,\ell})_*\nu_n|_{B}}(\alpha|_B)-H_{(g_{k,\ell})_*\nu_n|_{B}}(\beta|_B)\big)\big|
\leq \log\#\beta\cdot \sum_{B\in\cP^\ell_{\neq}(k)}(g_{k,\ell})_*\nu_n(B).
\end{align*}

We now localize the support of $(g_{k,\ell})_*\nu_n$:

\begin{Claim}
	For each $B\in\cP^\ell(k)$, the measure $(g_{k,\ell})_*\nu_n\big|_{B}$ is supported on a disc $D_B$ tangent to the cone field $Dg_{k,\ell}(\cC^{u})$. Moreover, one has $\diam(f^{-i}(D_B))<\rho$ for $i=1,\cdots, (k+1)m_0+\ell.$ 
\end{Claim}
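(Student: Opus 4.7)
The plan is to unwind the atom $B$ as a refined cylinder and then take $D_B$ to be the natural image of $D$ inside $B$, from which both the tangency and the diameter bound will be essentially read off.

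First, I would set $m := (k+1)m_0+\ell$ and note that since $B$ is an atom of $\bigvee_{i=0}^{m-1} f^{m-i}(\cP^\ell_i)$, there exist points $z_0,\dots,z_{m-1}$ with
\[
B \;=\; \bigcap_{i=0}^{m-1} f^{m-i}\bigl(\cP^\ell_i(z_i)\bigr).
\]
Changing the index by $j=m-i$, this is equivalent to: $z\in B$ iff $f^{-j}(z)\in \cP^\ell_{m-j}(z_{m-j})$ for every $j=1,\dots,m$; in particular $f^{-j}(B)\subset \cP^\ell_{m-j}(z_{m-j})$ for each such $j$.

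Then I would define $D_B := f^m(D)\cap B$ (or, if one prefers a connected object, the component of this intersection containing the finitely many support points). The disc $D$ is by hypothesis tangent to $\cC^u_\e = Df^{N_\e}(\cC^u)$, which is contained in $\cC^u$ by the choice of $N_\e$, so by Remark~\ref{r.cone} applied along the orbit and the fact that the support points lie in $\bigcap_{i=0}^{n-1}f^{-i}(U)$, the forward image $f^m(D) = g_{k,\ell}(D)$ is tangent to $Df^m(\cC^u)=Dg_{k,\ell}(\cC^u)$; hence so is $D_B$. Since $\nu_n$ is a combination of Dirac masses at points of $X_{n,\rho}\subset D$, the pushforward $(g_{k,\ell})_*\nu_n|_B$ is supported on $f^m(X_{n,\rho})\cap B \subset f^m(D)\cap B = D_B$, which proves the first statement. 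For the diameter bound, the unwinding of $B$ gives $f^{-j}(D_B)\subset f^{-j}(B)\subset \cP^\ell_{m-j}(z_{m-j})$ for every $j=1,\dots,m$, and since $\cP^\ell_{m-j}\in\{\alpha,\beta\}$ with $\diam(\beta)\leq\diam(\alpha)<\rho$ (by Theorem~\ref{thm.switch-to-finite} together with the strict diameter bound imposed on $\cA$ in Section~\ref{s.partition-to-ubstable-foliation}), we conclude $\diam(f^{-j}(D_B))<\rho$.

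The content of the argument is essentially combinatorial: all the dynamical work has been absorbed into the construction of $\cC^u_\e$ and of the partitions $\alpha,\beta$. The only mildly delicate point is ensuring that $f^m(D)$ really is tangent to $Dg_{k,\ell}(\cC^u)$ at each point of $D_B$, which requires that these points lie in the maximal invariant set of $U$ for the relevant iterates; this is guaranteed because $X_{n,\rho}\subset\bigcap_{i=0}^{n-1}f^{-i}(U)$ and $m\leq n$, together with the cone forward-invariance built into the choice of $\cC^u$ and $N_\e$.
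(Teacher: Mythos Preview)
Your argument follows the same route as the paper's: the diameter bound comes from $f^{-j}(B)\subset \cP^\ell_{m-j}(z_{m-j})$ and $\diam(\alpha),\diam(\beta)<\rho$; the tangency comes from forward invariance of the cone field; and the support statement comes from $\nu_n$ being supported on $D$.

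The one point you gloss over is precisely the one the paper singles out: why the support lies in a \emph{single} disc. You set $D_B:=f^m(D)\cap B$ and add parenthetically ``or the component of this intersection containing the finitely many support points,'' but $f^m(D)\cap B$ can a priori be disconnected, and you give no argument that all support points lie in one component. The paper explicitly argues this: it first notes that the support lies in a finite union of disjoint discs inside $g_{k,\ell}(D)$, then observes that all backward iterates $f^{-i}$ (for $i=1,\dots,(k+1)m_0+\ell$) of these pieces remain $\rho$-close (being in $f^{-i}(B)$) and tangent to $\cC^u_\e$, and finally that $\nu_n$ is supported on the single small disc $D$; these facts together force the pieces to be one. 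So your proof is missing exactly the non-combinatorial step in the Claim, which is small but is the point where the geometry of the cone field and the fact that $D$ is a single disc actually enter.
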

\proof By the choice of $B$ and since $\diam(\beta)\leq\diam(\alpha)<\rho$ (first item of Theorem~\ref{thm.switch-to-finite}),
$$\diam(f^{-i}(B))<\rho \textrm{\:for $i=1,\cdots, (k+1)m_0+\ell.$}$$ 
Since $\nu_n$ is supported on $D$ which is tangent to $\cC^{u}$, the image $(g_{k,\ell})_*\nu_n\big|_{B}$ is supported on the union of finitely many disjoint discs in $g_{k,\ell}(D)$ of diameter $\rho$ and  tangent to the cone field $Dg_{k,\ell}(\cC^{u})$. All backward iterates by $f^{-i}$, for $i\in[1, (k+1)m_0+\ell ]$,
remain $\rho$-close and tangent to $\cC^u_\e$; moreover $\nu_n$ is supported on a single disc $D$.
Hence $(g_{k,\ell})_*\nu_n\big|_{B}$ can only be contained in a single disc.
\endproof

For $(k+1)m_0+\ell\geq n_\e$ and $B\in\cP^\ell_{\neq}(k)$,
the third item of Theorem~\ref{thm.switch-to-finite} for $D_B$ and $V_\e$ gives
$$\supp((g_{k,\ell})_*\nu_n |_B)\subset M\setminus V_\e.$$
Now, the left hand side in~\eqref{e.alpha-beta} is bounded by
\begin{align*}
&\sum_{\ell=0}^{m_0-1}\sum_{k=0}^{[\frac{n-\ell}{m_0}]-1}
\log\#\beta\cdot\sum_{B\in\cP_{\neq}^\ell(k)}(g_{k,\ell})_*\nu_n(B)
\leq \log\#\beta\cdot\bigg(n_\e+\sum_{\ell=0}^{m_0-1}\sum_{k=0}^{[\frac{n-\ell}{m_0}]-1} (g_{k,\ell})_*\nu_n(M\setminus V_\e)\bigg)\\
&\hspace{2cm} \leq n\cdot \log\#\beta\cdot\bigg(\mu_n(M\setminus V_\e)+\frac{n_\e}{n}\bigg)
\;\;\leq\;\; n\cdot \log\#\beta\cdot\bigg(\frac{\delta}{2}+\frac{n_\e}{n}\bigg).
\end{align*}
By our choice of $\delta$,
this is smaller than $\frac{n\e}4$ provided $n$ is larger or equal to any
$N_2>2n_\e/\delta$.
\endproof
 
\subsection{Proof of Theorem~\ref{thm.volume-control-by-entropy}}
Let $\eta=\min\{\eta_0,\eta_1,\eta_2\}$, where $\eta_0,\eta_1,\eta_2$ are given in Theorem~\ref{thm.switch-to-finite}
(applied for $\varepsilon/4$), Proposition~\ref{p.volume-through-separating-set} and lemma~\ref{l.error-estimate} respectively.
We also get $c_\e$ and $ N_2$ which do not depend on $D$.

Recall~\eqref{e.estimateH}.
Applying successively Lemma~\ref{l.error-estimate}, the concavity of the entropy with respect to the measure,
and the second item of Theorem~\ref{thm.switch-to-finite} (since $d(\mu_n,\mu)<\eta$), we get for $n\geq N_2$
\begin{align*}
\sum_{\ell=0}^{m_0-1}H_{\nu_n}\big(\bigvee_{i=0}^{n-1}f^{-i}(\cP^\ell_i)\big)
&\leq 2m_0^2\cdot\log\#\beta+\frac{n\e}{4}
+\sum_{\ell=0}^{m_0-1}\sum_{k=0}^{[\frac{n-\ell}{m_0}]-1}H_{f^{\ell+km_0}_*\nu_n}\big(\bigvee_{i=1}^{m_0}f^{-i}(\alpha)|\beta\big)\\
&\leq 2m_0^2\cdot\log\#\beta+\frac{n\e}{4}+n\cdot H_{\mu_n}\big(\bigvee_{i=1}^{m_0}f^{-i}(\alpha)|\beta\big)\\
&\leq 2m_0^2\cdot\log\#\beta+\frac{nm_0\e}{2} +n\cdot m_0\cdot h_{\mu}(f,\cF^u).\end{align*}
Proposition~\ref{p.volume-through-separating-set} gives
$$\Leb_D\big(C_{n}(\mu,\eta)\cap D\cap\bigcap_{i=0}^{n-1}f^{-i}(U)\big)\leq
	c_\e\cdot \exp\big({\textstyle{2m_0\cdot\log\#\beta+n\e+n\int\psi\ud\mu}}+n\cdot h_{\mu}(f,\cF^u)\big).$$

Choosing $c\gg c_\e\cdot \exp{\big(2m_0\cdot\log\#\beta\big)}$
gives the estimate of Theorem~\ref{thm.volume-control-by-entropy} for any integer $n$.
\qed

\section{Existence of Gibbs u-states: Proofs of Theorems~\ref{thm-III}, \ref{thm-IV}   and    Corollaries ~\ref{coro-II} and ~\ref{coro-III}}~\label{s.Proofs}
We derive some consequences of Theorem~\ref{thm.volume-control-by-entropy}. 
\subsection{Proof of Theorem~\ref{thm-III}}
We prove a more precise result.
\newcounter{theorembis}
\setcounter{theorembis}{\value{theoremalph}}
\setcounter{theoremalph}{\value{theorembisbis}}
\renewcommand{\thetheoremalph}{\Alph{theoremalph}'}
\begin{theoremalph}\label{thm-III-bis}
Consider a $C^1$-diffeomorphism $f$, an $u$-laminated set $\Lambda$ with a partially hyperbolic splitting $E^{cs}\oplus E^{uu}$
and an unstable cone field $\mathcal{C}^u$.
Then there exists a small neighborhood $U$ of $\La$ such that for any disc $D\subset U$ tangent to $\cC^u$,
and for Lebesgue almost every point $x\in D\cap \bigcap_{n\geq 0}f^{-n}(U)$, any limit $\mu$
of the sequence $\frac 1 n \sum_{i=0}^{n-1} \delta_{f^i(x)}$ satisfies
\begin{equation}\label{e.u-gibbs2}
h_\mu(f,\cF^{u})=\int\log{|\det(D f|_{E^{uu}})|}\ud\mu.
\end{equation}
\end{theoremalph}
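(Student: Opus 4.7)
The plan is to combine Ruelle's inequality for unstable entropy (Theorem~\ref{thm.ruelle}) with the volume decay estimate of Theorem~\ref{thm.volume-control-by-entropy}, using a compactness argument and the Borel--Cantelli lemma in order to rule out simultaneously every ``non-Gibbs'' limit measure. Since Theorem~\ref{thm.volume-control-by-entropy} requires $\diam(D) < r_0$, we may cover $D$ by countably many sub-discs of small diameter and prove the result on each, so we assume $\diam(D) < r_0$ from the outset.

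By Theorem~\ref{thm.ruelle}, every $\mu \in \mathcal{M}_{\rm inv}(\Lambda,f)$ satisfies $h_\mu(f,\cF^u) \leq \int \log|\det(Df|_{E^{uu}})|\,d\mu$, so it suffices to establish the reverse inequality for Lebesgue-a.e. limit measure. Consider the gap functional
$$
\Phi(\mu) := \int \log|\det(Df|_{E^{uu}})|\,d\mu - h_\mu(f,\cF^u), \qquad \mu \in \mathcal{M}_{\rm inv}(\Lambda,f).
$$
It is nonnegative and lower semi-continuous: the integrand is continuous in $\mu$, while $\mu \mapsto h_\mu(f,\cF^u)$ is upper semi-continuous by Theorem~\ref{Thm:usc}. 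Consequently the level sets
$$
K_k := \{\mu \in \mathcal{M}_{\rm inv}(\Lambda,f) : \Phi(\mu) \geq 1/k\}
$$
are compact for each $k \geq 1$.

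Fix $k$. For each $\mu \in K_k$ apply Theorem~\ref{thm.volume-control-by-entropy} with $\varepsilon := 1/(2k)$, obtaining $\eta_\mu, c_\mu > 0$ such that for every $n \geq 1$,
$$
\Leb_D\Big(C_n(\mu,\eta_\mu) \cap D \cap \bigcap_{i=0}^{n-1} f^{-i}(U)\Big) \leq c_\mu \cdot e^{-n/(2k)}.
$$
By compactness of $K_k$, cover it by finitely many balls $B(\mu_j,\eta_{\mu_j}/2)$, $1 \leq j \leq N_k$, and set $\eta_k := \tfrac{1}{2}\min_j \eta_{\mu_j}$. Let $X_k$ denote the set of points $x \in D \cap \bigcap_{n \geq 0} f^{-n}(U)$ admitting a limit measure in $K_k$. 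For any such $x$, there are infinitely many $n$ and some $\mu \in K_k$ with $d(m_{x,n},\mu) < \eta_k$; the triangle inequality places $m_{x,n}$ inside $B(\mu_j,\eta_{\mu_j})$ for some $j$, so $x \in C_n(\mu_j,\eta_{\mu_j})$. Hence
$$
X_k \subset \bigcap_{m \geq 1} \bigcup_{n \geq m} \bigcup_{j=1}^{N_k} \Big( C_n(\mu_j,\eta_{\mu_j}) \cap D \cap \bigcap_{i=0}^{n-1} f^{-i}(U) \Big),
$$
and since $\sum_n \sum_j \Leb_D(\cdot) \leq N_k (\max_j c_{\mu_j}) \sum_n e^{-n/(2k)} < \infty$, Borel--Cantelli yields $\Leb_D(X_k) = 0$. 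Taking the union over $k$, Lebesgue-a.e. $x$ has no limit measure in any $K_k$; equivalently, every limit measure $\mu$ satisfies $\Phi(\mu) = 0$, which is \eqref{e.u-gibbs2}.

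The main obstacle is obtaining a \emph{uniform} exponential volume decay valid over all measures in $K_k$ at once: this is where the combination of upper semi-continuity of $h_\mu(f,\cF^u)$, compactness of $K_k$, and the fact that Theorem~\ref{thm.volume-control-by-entropy} produces a bound depending only (in its exponential rate) on $\Phi(\mu) - \varepsilon$ is essential. A secondary technical point is choosing the neighborhood $U$ small enough that the partial hyperbolicity and the associated unstable cone field extend to a neighborhood of $\La$ (possible by the standard extension of $E^{cs}$ in Section~\ref{Sec:definition-partial}), so that limit measures of orbits remaining in $U$ fall in a class of measures to which the volume estimate legitimately applies.
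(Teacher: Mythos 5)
Your proof is correct. It rests on the same two pillars as the paper's argument -- Ruelle's inequality for the unstable entropy (Theorem~\ref{thm.ruelle}) to reduce to one inequality, and the volume estimate of Theorem~\ref{thm.volume-control-by-entropy} to kill, with summable Lebesgue measure, the set of points whose empirical measures return near a measure with a positive entropy gap -- but the way you quantify over the ``bad'' measures is genuinely different. The paper routes the argument through the notion of pseudo-physical measures relative to the disc (Theorem~\ref{thm.relative-pseudo-physical} and Remark~\ref{r.localize}): for Lebesgue-a.e.\ $x$ every limit measure is relatively pseudo-physical, and a single application of the volume estimate to such a $\mu$ contradicts the positive-density condition in that definition; no semi-continuity of the entropy is needed there, only the compactness of the set of non-pseudo-physical measures, which follows from the definition alone. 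You instead exhaust the bad set directly by the level sets $K_k=\{\Phi\geq 1/k\}$, whose compactness requires the upper semi-continuity of $\mu\mapsto h_\mu(f,\cF^u)$ (Theorem~\ref{Thm:usc}), and then conclude by a finite subcover plus Borel--Cantelli. So you trade one black box for another: you bypass the relative pseudo-physical machinery entirely, at the cost of invoking Theorem~\ref{Thm:usc}, which the paper's proof of this particular statement does not use. Both routes are legitimate; yours is arguably more self-contained for a reader who already has the semi-continuity statement, while the paper's localizes all the measure-theoretic bookkeeping in Section~2. Two minor points you should make explicit: the cone field and radius $r_0$ produced by Theorem~\ref{thm.volume-control-by-entropy} need not be the given $\cC^u$, so after covering $D$ by small sub-discs you should iterate forward and invoke Remark~\ref{r.cone} to place them in the correct cone field; and, as in the paper, one must take $U$ small enough that limit measures of orbits staying in $U$ are supported on $\Lambda$ (or on a set where $h_\mu(f,\cF^u)$ is defined), which you acknowledge but do not carry out.
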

\setcounter{theoremalph}{\value{theorembis}}
\renewcommand{\thetheoremalph}{\Alph{theoremalph}}
\begin{proof}
The set $\Lambda$ is $u$-laminated.
Let $U$, $\widehat \cC^{u}$ and $r_0>0$ be the open neighborhood of $\La$, the cone field defined in $U$ and the positive number given by Theorem~\ref{thm.volume-control-by-entropy} respectively.
Without loss of generality, one can assume that the disc $D\subset U$ tangent to $\cC^{u}$ has its diameter no more than $r_0$,
and by Remark~\ref{r.cone}, that it is tangent to $\widehat \cC^u$.
By Theorem~\ref{thm.relative-pseudo-physical} and Remark~\ref{r.localize}, for Lebesgue $a.e.$ $x\in D$, any $\mu\in\cM(x)$ is pseudo-physical  relative  to
$Z:=D\cap \bigcap_{n\geq 0}f^{-n}(U)$. 

Let us assume by contradiction that~\eqref{e.u-gibbs2} does not hold.
From the inequality in Theorem~\ref{thm.ruelle}, there exists $\varepsilon>0$ such that 
$\int\log|\det(Df|_{E^{uu}})|\ud\mu-h_\mu(f,\cF^{u})>2\e$. Let $\eta>0$ and $c>0$ be the numbers given after $\mu,\e$ by Theorem~\ref{thm.volume-control-by-entropy}. Note that
  $$\big\{x\in M: \ud(\cM(x),\mu)<\eta \big\}=\bigcup_{l=1}^\infty\bigcap_{k=1}^\infty\bigcup_{n=k}^\infty C_n\big(\mu,{\textstyle\frac{l}{l+1}}\eta\big) \subset \bigcap_{k=1}^\infty\bigcup_{n=k}^\infty C_n(\mu,\eta).$$
Since $\mu$ is pseudo-physical relative to $Z$, for  $\eta>0$, there is $\delta_0>0$ such that for any $k\in \mathbb{N}$,
  $$\Leb_D(Z\cap \cup_{n=k}^\infty C_n(\mu,\eta))>\delta_0.$$
By Theorem~\ref{thm.volume-control-by-entropy}, we have 
 \begin{align*}
\Leb_D(Z\cap \cup_{n=k}^\infty C_n(\mu,\eta))\leq \sum_{n=k}^\infty  c\cdot e^{n(-\int\log|\det(Df|_{E^{uu}})|\ud\mu+h_\mu(f,\cF^{u})+\e)}<c\sum_{n=k}^\infty e^{-n\e},
 \end{align*} 
which contradicts to the fact that $\Leb_D(Z\cap \cup_{n=k}^\infty C_n(\mu,\eta))>\delta_0$ for any $k\in\mathbb{N}$.
\end{proof}

\subsection{Proof of Corollary~\ref{coro-II}}
Let us consider the compact and convex set introduced in Corollary~\ref{c.convex-compact}
$$\mathcal{M}_{\rm u}=\big\{\mu\in\mathcal{M}_{\rm inv}(\La,f): h_\mu(f,\cF^u)=\int\log|\det(Df|_{E^{uu}})|\ud\mu\big\}.$$
Let $U$ and $\cC^{u}$ be the neighborhood of $\La$ and the unstable cone field given by Theorem~\ref{thm.volume-control-by-entropy}.
Recall that $\mathcal{M}(x)$ denotes the accumulation set of the empirical measures of $x$.
For any disc $D$ tangent to $\cC^{u}$, there is a full Lebesgue  measure subset $\tilde{D}\subset D$ such that $\mathcal{M}(x)\subset\mathcal{M}_{\rm u}$ with $x\in\tilde{D}$. 

Let $\{n_i\}$ be an increasing sequence of integers
and let $\mu\in \cM_{\rm inv}(\Lambda,f)$ such that $$\lim_{i\rightarrow\infty}\frac{1}{n_i}\sum_{j=0}^{n_i-1}f^j_*\Leb_{D}^*=\mu,$$
where $\Leb_{D}^*$ denotes the normalized Lebesgue measure on $D$.

For any $\e>0$, consider the $\e$-neighborhood $\mathcal{V}_\e$ of $\mathcal{M}_{\rm u}$ and the set
$$D_k=\big\{x\in D:\frac{1}{n_i}\sum_{j=0}^{n_i-1}f^j_*\delta_x\in\mathcal{V}_\e, \textrm{\:\: for $i\geq k$}\big\}.$$
Theorem~\ref{thm.volume-control-by-entropy} gives $\lim_{k\rightarrow\infty}\Leb_D^*(D_k)=1$.
Take $k_0$ so that $\Leb_D^*(D_{k_0})\geq 1-\e$.
For $i\geq k_0$,
$$\frac{1}{n_i}\sum_{j=0}^{n_i-1}f^j_*\Leb_{D}^*=\int_{D_{k_0}}\frac{1}{n_i}\sum_{j=0}^{n_i-1}f^j_*\delta_x~\ud\Leb_D^*+
\int_{D\setminus D_{k_0}}\frac{1}{n_i}\sum_{j=0}^{n_i-1}f^j_*\delta_x~\ud\Leb_D^*.$$
The choice of $D_{k_0}$ and the convexity of $\mathcal{V}_\e$ immediately give:
\begin{Claim}
For each $i\geq k_0$, there exists an invariant measure $\nu_i\in\mathcal{M}_{\rm u}$ such that
$$\ud\bigg({\textstyle \frac{1}{\Leb^*_D(D_{k_0})}}\int_{D_{k_0}}\frac{1}{n_i}\sum_{j=0}^{n_i-1}f^j_*\delta_x~\ud\Leb_D^*\;,\;\nu_i\bigg)<\e.$$
\end{Claim}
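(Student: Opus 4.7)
The plan is to exploit two convexity properties: the convexity of the set $\mathcal{M}_{\rm u}$ of Gibbs u-states, granted by Corollary~\ref{c.convex-compact}, and the convexity of the distance $\ud$ on the space of probabilities, which follows at once from its definition as a weighted series of absolute values $|\int\varphi_n\,\ud\mu - \int\varphi_n\,\ud\nu|$. From this joint convexity of $\ud$, the $\e$-neighborhood $\mathcal{V}_\e$ of the convex set $\mathcal{M}_{\rm u}$ is itself convex.

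By the definition of $D_{k_0}$, for every $x\in D_{k_0}$ and every $i\geq k_0$ the empirical measure $m_{x,n_i}:=\frac{1}{n_i}\sum_{j=0}^{n_i-1}f^j_*\delta_x$ lies in $\mathcal{V}_\e$, i.e. is at distance less than $\e$ from $\mathcal{M}_{\rm u}$. Applying the Kuratowski--Ryll-Nardzewski measurable selection theorem to the lower semi-continuous, closed-valued multifunction $x\mapsto \mathcal{M}_{\rm u}\cap \overline{B_\e(m_{x,n_i})}$, I would choose a Borel map $x\mapsto\nu_x\in\mathcal{M}_{\rm u}$ with $\ud(m_{x,n_i},\nu_x)<\e$, and then define $\nu_i$ as the Bochner average
\[
\nu_i := \frac{1}{\Leb^*_D(D_{k_0})}\int_{D_{k_0}}\nu_x\, \ud\Leb^*_D,
\]
which remains in $\mathcal{M}_{\rm u}$ by convexity and compactness of this set.

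To verify the inequality in the claim, denote by $\mu^{(i)}$ the averaged measure on the left-hand side. For each test function $\varphi_n$ in the definition of $\ud$, Fubini's theorem gives
\[
\Bigl|\int\varphi_n\, \ud\mu^{(i)} - \int\varphi_n\, \ud\nu_i\Bigr|\;\le\;\frac{1}{\Leb^*_D(D_{k_0})}\int_{D_{k_0}}\Bigl|\int\varphi_n\, \ud m_{x,n_i} - \int\varphi_n\, \ud\nu_x\Bigr|\, \ud\Leb^*_D;
\]
dividing by $2^n\sup|\varphi_n|$, summing over $n$, and exchanging the positive series with the integral yields $\ud(\mu^{(i)},\nu_i)\le \frac{1}{\Leb^*_D(D_{k_0})}\int_{D_{k_0}}\ud(m_{x,n_i},\nu_x)\, \ud\Leb^*_D<\e$, as required.

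The only delicate point is the measurable selection step, but it is standard in this setting; alternatively one can bypass it by a direct weak-$*$ approximation: partition $D_{k_0}$ into small Borel pieces, approximate $\mu^{(i)}$ by finite convex combinations $\sum_k\lambda_k\, m_{x_k,n_i}$ each of whose terms lies in $\mathcal{V}_\e$, and use convexity of $\mathcal{V}_\e$ to place $\mu^{(i)}$ in $\overline{\mathcal{V}_\e}$, after which compactness of $\mathcal{M}_{\rm u}$ produces the desired $\nu_i$.
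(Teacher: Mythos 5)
Your proof is correct and follows the same route as the paper, which disposes of this claim in one line by invoking the convexity of $\mathcal{V}_\e$ together with the definition of $D_{k_0}$. You have simply made explicit the details the paper leaves implicit (joint convexity of $\ud$, the measurable selection $x\mapsto\nu_x$, and the Fubini/barycenter argument placing the average in $\mathcal{M}_{\rm u}$), all of which are sound.
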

For any continuous function $\varphi:M\mapsto\mathbb{R}$, one has
\begin{eqnarray*}
&&\bigg|\int\varphi~\ud ({\textstyle\frac{1}{n_i}\sum_{j=0}^{n_i-1}f^j_*\Leb_{D}^*})\;-\;\int\varphi~\ud\nu_i\bigg|\\
&&=\bigg|\int_{D_{k_0}}\int\varphi~\ud({\textstyle\frac{1}{n_i}\sum_{j=0}^{n_i-1}f^j_*\delta_{x}})\ud\Leb_{D}^*\;-\;\int\varphi~\ud\nu_i+\int_{D\setminus D_{k_0}}\int\varphi~\ud({\textstyle\frac{1}{n_i}\sum_{j=0}^{n_i-1}f^j_*\Leb_{D}^*})~\ud\Leb_D^*\bigg|\\
&&\leq \bigg|\int\varphi\ud({\textstyle\frac{1}{\Leb^*_D(D_{k_0})}\int_{D_{k_0}}\frac{1}{n_i}\sum_{j=0}^{n_i-1}f^j_*\delta_x})~\ud\Leb_D^*\;-\;\int\varphi\ud\nu_i\bigg|
+\big({\textstyle\frac{1}{\Leb^*_D(D_{k_0})}}+\e-1\big)\cdot\|\varphi\|
\end{eqnarray*}
which implies
$$\ud\bigg(\frac{1}{n_i}\sum_{j=0}^{n_i-1}f^j_*\Leb_{D}^*\;,\;\nu_i\bigg)<\frac{1}{1-\e}-1+2\e.$$
Then $\ud(\mu,\mathcal{M}_{\rm u})\leq \frac{1}{1-\e}-1+2\e$, hence $\mu\in\mathcal{M}_{\rm u}$ since $\mathcal{M}_{\rm u}$ is compact.
\qed

\subsection{Existence of SRB measures: Proof of Corollary~\ref{coro-III}}
We prove the following stronger result:
\begin{Corollary}\label{c.SRB}
Consider a $C^1$ diffeomorphism $f$ and an attracting set $\La$ with a partially hyperbolic splitting
$T_\La M=E^{ss}\oplus E^c\oplus E^{uu}$ such that $\dim(E^c)=1$.
Then for Lebesgue almost every point $x$ in a neighborhood of $\Lambda$,
and for each $\mu\in\cM(x)$,
	\begin{itemize}
	\item either the center Lyapunov exponent of each ergodic component of $\mu$ is non-negative center Lyapunov exponent; in particular, $\mu$ is an SRB measure;
	\item or there exist ergodic components of $\mu$ that are SRB measures with negative center Lyapunov exponent.
	\end{itemize}
\end{Corollary}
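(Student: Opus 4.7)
The plan is to combine the Gibbs u-state property from Theorem~\ref{thm-III-bis} with the generalized Pesin inequality of the introduction, this time applied to the coarser dominated splitting $E^{ss}\oplus(E^c\oplus E^{uu})$. First I would fix a Lebesgue-full-measure subset of a neighborhood $U$ of $\Lambda$ on which, for every $x$ and every $\mu\in\cM(x)$, one simultaneously has $\omega(x)\subset\Lambda$, the Gibbs u-state equality $h_\mu(f,\cF^u)=\int\log|\det(Df|_{E^{uu}})|\ud\mu$, and the entropy bound $h_\mu(f)\ge\int\log|\det(Df|_{E^c\oplus E^{uu}})|\ud\mu$ provided by generalized Pesin applied to $E^{ss}\oplus(E^c\oplus E^{uu})$.

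Fix such $x$ and $\mu\in\cM(x)$, and decompose $\mu=\int\mu_\xi\ud\hat\mu(\xi)$ into ergodic components. By Corollary~\ref{c.convex-compact} each $\mu_\xi$ is itself a Gibbs u-state, and by ergodicity the number $\lambda^c(\mu_\xi)=\int\log|\det(Df|_{E^c})|\ud\mu_\xi$ is well-defined. If some $\mu_\xi$ has $\lambda^c(\mu_\xi)<0$, then all positive Lyapunov exponents of $\mu_\xi$ live on $E^{uu}$, so $\int\sum\lambda^+\ud\mu_\xi=\int\log|\det(Df|_{E^{uu}})|\ud\mu_\xi$. The Gibbs u-state identity together with $h_{\mu_\xi}(f)\ge h_{\mu_\xi}(f,\cF^u)$ gives a lower bound matching Ruelle's upper bound $h_{\mu_\xi}(f)\le\int\sum\lambda^+\ud\mu_\xi$, so $\mu_\xi$ is SRB, which establishes the second alternative.

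Assume now that every $\mu_\xi$ has $\lambda^c(\mu_\xi)\ge 0$. Then $\lambda^c(y)\ge 0$ at $\mu$-a.e.\ $y$, so
$$\int\sum\lambda^+\ud\mu\;=\;\int\log|\det(Df|_{E^{uu}})|\ud\mu+\int\log|\det(Df|_{E^c})|\ud\mu.$$
I would identify this with $\int\log|\det(Df|_{E^c\oplus E^{uu}})|\ud\mu$ via the cocycle identity
$$\log|\det(Df|_{E^c\oplus E^{uu}})|=\log|\det(Df|_{E^c})|+\log|\det(Df|_{E^{uu}})|+\log\sin\angle(E^c,E^{uu})\circ f-\log\sin\angle(E^c,E^{uu}),$$
whose last two terms cancel after $\mu$-integration because $\mu$ is $f$-invariant and $\log\sin\angle(E^c,E^{uu})$ is bounded (the splitting is uniformly transverse on a neighborhood of $\Lambda$). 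Generalized Pesin then yields $h_\mu(f)\ge\int\sum\lambda^+\ud\mu$, Ruelle's inequality provides the matching upper bound, and $\mu$ is SRB, which establishes the first alternative.

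The only delicate step is that coboundary cancellation for the $E^c\oplus E^{uu}$ Jacobian; the rest is a clean pairing of Ruelle's inequality with two entropy lower bounds — generalized Pesin on the full expanding subspace, Gibbs u-state on $E^{uu}$ alone. The conceptual point worth highlighting is that the Gibbs u-state property only pins down entropy along $E^{uu}$, whereas the first alternative demands an entropy equality on the whole expanding subspace $E^c\oplus E^{uu}$, and this is exactly the gap that generalized Pesin, applied to the coarser dominated splitting $E^{ss}\oplus(E^c\oplus E^{uu})$, closes — without invoking any Pesin unstable manifolds for the individual ergodic components.
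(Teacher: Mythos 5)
Your proposal is correct and follows essentially the same route as the paper: the same pairing of the Gibbs u-state equality (Theorem~\ref{thm-III}, passed to ergodic components via Corollary~\ref{c.convex-compact}) with the generalized Pesin inequality of Appendix~\ref{Appendix-A} applied to the coarser splitting $E^{ss}\oplus(E^c\oplus E^{uu})$, closed off by Ruelle's inequality in each of the two sign cases. The coboundary cancellation you flag as delicate is not really needed: since $E^c\oplus E^{uu}$ is an invariant bundle, $\log|\det(Df|_{E^c\oplus E^{uu}})|$ is an additive cocycle whose Birkhoff integral is directly the sum of the Lyapunov exponents inside that bundle, which is how the paper treats it.
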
 
\noindent
\begin{proof}
From equations~\eqref{e.pesin} and~\eqref{e.u-gibbs}
given by Theorems~\ref{thm-III} and~\ref{thm.volume-bound} in Appendix~\ref{Appendix-A},
for Lebesgue almost every point $x$
in the attracting basin of $\Lambda$,
each limit measure $\mu\in\cM(x)$ satisfies
$$h_{\mu}(f,\cF^u)=\int\log|\det(Df|_{E^{uu}})|\ud\mu \textrm{ and }h_\mu(f)\geq \int\log|\det(Df|_{E^c\oplus E^{uu}})|\ud\mu.$$
Corollary~\ref{c.convex-compact} gives $h_\nu(f,\cF^u)=\int\log|\det(Df|_{E^{uu}})|\ud\nu$ for each ergodic component $\nu$ of $\mu$.

If each ergodic component $\nu$ of $\mu$ has non-negative center Lyapunov exponent, then
$$\int\sum\lambda^+(z)\ud\nu(z)=\int\log|\det(Df|_{E^c\oplus E^{uu}})|\ud\nu.$$
Combining this with~\eqref{e.pesin} and Ruelle inequality, one gets
$$h_\mu(f)\geq\int\log|\det(Df|_{E^c\oplus E^{uu}})|\ud\mu=\int\sum\lambda^+(z)\ud\mu(z)\geq h_\mu(f);$$
therefore $\mu$ is an SRB measure.

If there are ergodic components $\nu$ with negative  center Lyapunov exponent, they satisfy
$$\int\log|\det(Df|_{E^{uu}})|\ud\nu =\int\sum\lambda^+(z)\ud\nu(z).$$
The equation~\eqref{e.u-gibbs} for $\nu$ and Ruelle inequality then give
$$\int\sum\lambda^+(z)\ud\nu(z)=\int\log|\det(Df|_{ E^{uu}})|\ud\nu=h_\nu(f,\cF^u)\leq h_\nu(f)\leq \int\sum\lambda^+(z)\ud\nu(z),$$ therefore, $\nu$ is an SRB measure with negative center Lyapunov exponent.
\end{proof}

\subsection{Large deviation for continuous functions: Proof of Theorem~\ref{thm-IV}}~\label{s.large-deviation}
We prove a stronger version of Theorem~\ref{thm-IV}.

\newcounter{theorembisD}
\setcounter{theorembisD}{\value{theoremalph}}
\setcounter{theoremalph}{\value{theorembisbisD}}
\renewcommand{\thetheoremalph}{\Alph{theoremalph}'}
\begin{theoremalph}\label{thm.large-deviation-for-lamination}
Let $f$ be a $C^1$-diffeomorphism and
$\La$ be a $u$-laminated set with a partially hyperbolic splitting $T_\La M=E^{cs}\oplus E^{uu}$. 
Then
for any continuous function $\varphi: M\to\mathbb{R}$ and any $\e>0$,
there exist a neighborhood $U_\e$ of $\La$, a $Df$-invariant cone field $\cC^{u}$ on $U_\e$ and $r_0, a_\e,b_\e>0$ such that
for any disc $D$ tangent to $\cC^{u}$ of diameter smaller than $r_0$ and any $n\in\mathbb{N}$,
\begin{equation}\label{e.deviation}
\Leb\bigg\{x\in D:\; x\in\bigcap_{i=0}^{n-1}f^{-i}(U_\e)\textrm{ and } \ud\bigg(\frac{1}{n}\sum_{i=0}^{n-1}\varphi(f^i(x)), I(\varphi)\bigg)>\e \bigg\}< a_\e\cdot e^{-n\cdot b_\e}.
\end{equation}
\end{theoremalph}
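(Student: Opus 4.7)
Write $\mu_n^x := \frac{1}{n}\sum_{i=0}^{n-1}\delta_{f^i(x)}$ and
$$X_n := \Big\{x\in D : x\in\bigcap_{i=0}^{n-1}f^{-i}(U_\varepsilon),\; \ud\big(\int\varphi\,d\mu_n^x,\, I(\varphi)\big) > \varepsilon\Big\}.$$
The plan is to reduce the statement to Theorem~\ref{thm.volume-control-by-entropy} by covering the compact set of ``bad'' invariant measures by finitely many balls on which that theorem yields exponential decay, and to show that for $n$ large the empirical measure of any $x\in X_n$ is forced into the union of those balls. The constants will be uniform in the disc $D$ because Theorem~\ref{thm.volume-control-by-entropy} is.

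Fix the neighborhood $U$, cone field $\cC^u$ and radius $r_0$ from Theorem~\ref{thm.volume-control-by-entropy}, and pick $\eta_1>0$ such that $\ud(\nu,\nu')<\eta_1$ implies $|\int\varphi\,d\nu - \int\varphi\,d\nu'|<\varepsilon/4$ (by uniform continuity of $\nu\mapsto\int\varphi\,d\nu$ on the compact space of probability measures on $M$). Consider the compact set
$$L_{\varepsilon/2} := \big\{\nu\in\cM_{\rm inv}(\La, f) : \ud\big(\int\varphi\,d\nu,\, I(\varphi)\big)\geq \varepsilon/2\big\}.$$
By definition of $I(\varphi)$, $L_{\varepsilon/2}$ is disjoint from the set $\mathcal{M}_{\rm u}$ of Gibbs u-states, so Theorem~\ref{thm.ruelle} forces every $\mu\in L_{\varepsilon/2}$ to satisfy $\gamma_\mu := \int\log|\det(Df|_{E^{uu}})|\,d\mu - h_\mu(f,\cF^u) > 0$. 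Applying Theorem~\ref{thm.volume-control-by-entropy} with error $\gamma_\mu/2$ produces $\eta_\mu, c_\mu>0$ for which
$$\Leb_D\big(C_n(\mu,\eta_\mu)\cap D\cap \bigcap_{i=0}^{n-1}f^{-i}(U)\big)\leq c_\mu\, e^{-n\gamma_\mu/2}$$
uniformly in discs $D$ tangent to $\cC^u$ of diameter at most $r_0$ and in $n\in\NN$. By compactness cover $L_{\varepsilon/2}$ by finitely many balls $B_{\eta_{\mu_j}/2}(\mu_j)$, $j=1,\dots,k$, and set $\eta_0 := \min(\eta_1, \min_j\eta_{\mu_j}/2)$ and $b_\varepsilon := \tfrac{1}{2}\min_j\gamma_{\mu_j}$.

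Next choose a neighborhood $U_\varepsilon\subset U$ of $\La$ and an integer $N_0$ so that for every $n\geq N_0$ and every $x\in\bigcap_{i=0}^{n-1}f^{-i}(U_\varepsilon)$ one has $\ud(\mu_n^x, \cM_{\rm inv}(\La, f))<\eta_0$. This is a compactness argument by contradiction: otherwise, along sequences $U_k\downarrow\La$, $n_k\to\infty$ and $x_k$, any weak-$*$ limit $\mu_\infty$ of $\mu_{n_k}^{x_k}$ is $f$-invariant (because $\|f_*\mu_{n_k}^{x_k} - \mu_{n_k}^{x_k}\| = O(1/n_k)$) and is supported in $\bigcap_k \overline{U_k} = \La$ (each $\mu_{n_k}^{x_k}$ being supported in $\overline{U_k}$), contradicting the distance assumption. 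For $x\in X_n$ with $n\geq N_0$, pick $\nu\in\cM_{\rm inv}(\La,f)$ with $\ud(\mu_n^x,\nu)<\eta_0\leq\eta_1$: then $\ud(\int\varphi\,d\nu,\, I(\varphi))>\varepsilon-\varepsilon/4>\varepsilon/2$, so $\nu\in L_{\varepsilon/2}$; choosing $j$ with $\ud(\nu,\mu_j)<\eta_{\mu_j}/2$ gives $\ud(\mu_n^x,\mu_j)<\eta_{\mu_j}$, i.e.\ $x\in C_n(\mu_j,\eta_{\mu_j})$. Summing the exponential bounds over $j$ yields $\Leb_D(X_n)\leq k(\max_j c_{\mu_j})\, e^{-nb_\varepsilon}$ for $n\geq N_0$; for $n<N_0$ the trivial bound $\Leb_D(X_n)\leq \Leb_D(D)$, uniformly bounded for $\diam(D)\leq r_0$, is absorbed into a larger multiplicative constant $a_\varepsilon$. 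The main obstacle is precisely this uniform compactness step that produces a single pair $(U_\varepsilon, N_0)$ valid for all orbits simultaneously: it relies both on the attracting property of $\La$ (to force invariant weak-$*$ limits to live on $\La$) and on the $O(1/n)$ invariance defect of empirical measures, and is where the pointwise statement ``limits are Gibbs u-states'' (Theorem~\ref{thm-III}) is upgraded to a quantitative exponential estimate.
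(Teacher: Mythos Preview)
Your proof is correct and follows essentially the same route as the paper's: cover the ``bad'' measures (those outside $\mathcal{M}_{\rm u}$ with $\int\varphi$ far from $I(\varphi)$) by finitely many balls via Theorem~\ref{thm.volume-control-by-entropy}, then use a compactness argument (the paper's Lemma~\ref{l.amount-to-iterate}) to force empirical measures of orbits remaining in a small neighborhood into those balls. The only cosmetic difference is that you cover $L_{\e/2}$ directly while the paper covers all of $\cM_{\rm inv}(\La,f)$ and then identifies which centers lie in $\cN_\e$; also, in your closing commentary, note that $\La$ is merely $u$-laminated, not attracting---your compactness argument is fine as written and does not use any attracting hypothesis.
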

\setcounter{theoremalph}{\value{theorembisD}}
\renewcommand{\thetheoremalph}{\Alph{theoremalph}}
As before $I(\varphi)$ is the interval defined by
	$$I(\varphi):=\bigg\{\int\varphi\ud\mu:\;\; \textrm{$\mu\in\mathcal{M}_{\rm inv}(\La,f)$ satisfies $h_\mu(f,\cF^u)=\int\log|\det(Df|_{E^{uu}})|\ud\mu$} \bigg\}.$$
This immediately implies Theorem~\ref{thm-IV}: when $\Lambda$ is an attracting set,  any unstable disc
in a neighborhood of $\Lambda$ will eventually be contained in
$\bigcap_{i=0}^{\infty}f^{-i}(U_\e)$ one then apply
Theorem~\ref{thm.large-deviation-for-lamination} to the unstable leaves
of foliated domains covering $\Lambda$ and Fubini Theorem.

\begin{proof}[Proof of Theorem~\ref{thm.large-deviation-for-lamination}]
Let $U$, $\cC^{u}$ and $r_0$ be the neighborhood of $\La$, the $Df$-invariant  cone field defined in $U$ and the positive number given by Theorem~\ref{thm.volume-control-by-entropy}. 
For $\varphi\colon M\to \RR$ and $\e>0$,
let $I_\e\subset \RR$ be the $\e/2$-open neighborhood of $I(\varphi)$ and
$I_\e^c$ its complement and let us denote
$$\mathcal{N}_\epsilon:=\bigg\{\mu\in\mathcal{M}_{\rm inv}(\La,f): \int\varphi\ud\mu\in I_\e^c\bigg\}.$$
For each $\mu\in\cM_{\rm inv}(\La,f)$, let
$$b_\mu:= {\textstyle \frac 1 2} \big(\int\log|\det(Df|_{E^{uu}})|\ud\mu-h_\mu(f,\cF^u)\big)$$
 and let $\eta_{\mu},c_\mu>0$ be the numbers given after $\mu,b_\mu$ by
Theorem~\ref{thm.volume-control-by-entropy}:
for each compact disc $D$ tangent to $\cC^{u}$ of diameter smaller than $r_0$ and any $n\in\mathbb{N}$, one has 
\begin{equation}\label{e.cover}
\Leb(  D\cap \bigcap_{i=0}^{n-1}f^{-i}(U)\cap C_{n}(\mu,2\eta_\mu))<c_\mu\cdot e^{-n b_\mu}.
\end{equation}
We choose the $\eta_\mu$ small so that
for any probability measure $\nu$ satisfying $\ud(\mu,\nu)<\eta$,
\begin{equation}\label{e.cont-measure}
\bigg|\int\varphi\ud\mu-\int\varphi\ud\nu\bigg|<\frac{\e}{4}.
\end{equation}
Since $\mathcal{M}_{\rm inv}(\La,f)$ is a compact set, there exist  $\mu_1,\cdots,\mu_k\in\cN_\e$ such that 
$$\mathcal{M}_{\rm inv}(\La,f)\subset\cup_{j=1}^k B_{\eta_{\mu_j}}(\mu_j),$$
where $B_{\eta_{\mu_j}}(\mu_j)$ denotes the $\eta_{\mu_j}$-neighborhood of $\mu_j$ in the space of probability measures on $M$.
We denote $\eta_j=\eta_{\mu_j}$ and $c_j=c_{\mu_j}$ for simplicity.
The following lemma gives $U_\e.$
\begin{Lemma}~\label{l.amount-to-iterate}
	There exists a neighborhood $U_\e$ of $\Lambda$
	and an integer $N_\e$ such that for any $n\geq N_\e$ and any $x\in \bigcap_{i=0}^{n-1}f^{-i}(U_\e)$,
	$$\frac{1}{n}\sum_{i=0}^{n-1}\delta_{f^i(x)}\in \bigcup_i B_{\eta_i}(\mu_i).$$	
\end{Lemma}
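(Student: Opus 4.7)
The plan is to prove Lemma~\ref{l.amount-to-iterate} by contradiction, using a Krylov--Bogolyubov style argument that forces the empirical measure of a long orbit that stays near $\Lambda$ to accumulate on $\mathcal{M}_{\rm inv}(\Lambda,f)$.

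Suppose no such pair $(U_\e, N_\e)$ exists. Then for each $k \geq 1$, taking the $1/k$-neighborhood $V_k$ of $\Lambda$, I could find an integer $n_k \geq k$ and a point $x_k$ with $x_k, f(x_k), \dots, f^{n_k-1}(x_k) \in V_k$ such that the empirical measure
$$\nu_k := \frac{1}{n_k}\sum_{i=0}^{n_k-1}\delta_{f^i(x_k)}$$
does not lie in the open set $\bigcup_{j=1}^k B_{\eta_j}(\mu_j)$. Since $\mathcal{M}(M,f)$ is weak-$*$ compact, I would pass to a subsequence to extract a weak-$*$ limit $\nu$ of $(\nu_k)$.

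The two points to verify are that $\nu$ is supported on $\Lambda$ and that $\nu$ is $f$-invariant. The first is immediate: any continuous $\varphi \colon M \to \mathbb{R}$ vanishing on $\Lambda$ satisfies $\sup_{V_k}|\varphi| \to 0$ by uniform continuity, so $\int \varphi\,d\nu_k \to 0$ and hence $\int \varphi\,d\nu=0$. The second is the standard telescoping estimate: for any continuous $\varphi$,
$$\left|\int \varphi\circ f\,d\nu_k - \int \varphi\,d\nu_k\right| = \frac{1}{n_k}\big|\varphi(f^{n_k}(x_k)) - \varphi(x_k)\big| \leq \frac{2\|\varphi\|_\infty}{n_k} \longrightarrow 0,$$
which passes to the limit and yields $f$-invariance of $\nu$.

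Hence $\nu \in \mathcal{M}_{\rm inv}(\Lambda,f) \subset \bigcup_{j=1}^k B_{\eta_j}(\mu_j)$, so $\nu$ lies in some $B_{\eta_j}(\mu_j)$. Since this ball is open in the weak-$*$ topology and $\nu_k \to \nu$, we would have $\nu_k \in B_{\eta_j}(\mu_j)$ for all sufficiently large $k$, contradicting the construction of $\nu_k$. I do not expect any real obstacle here: the only mild point is to choose the neighborhoods $V_k$ and the subsequence consistently so that the weak-$*$ limit is genuinely supported on $\Lambda$, but this is handled by taking $V_k$ to shrink to $\Lambda$ and using that $M$ is compact so all $\nu_k$ lie in a single compact metrizable space of probabilities.
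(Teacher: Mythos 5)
Your argument is correct and is essentially the proof the paper gives: a contradiction via shrinking neighborhoods $U(1/k)$ of $\Lambda$, a Krylov--Bogolyubov limit producing an invariant measure supported on $\Lambda$ outside the open union $\bigcup_i B_{\eta_i}(\mu_i)$, contradicting the finite covering of $\mathcal{M}_{\rm inv}(\Lambda,f)$. The only cosmetic difference is that you take a single diagonal limit of the empirical measures while the paper first passes to the limit in $n$ for each fixed neighborhood and then lets the neighborhoods shrink; both are fine.
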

\begin{proof}
Assume,  on the contrary, that the $1/k$-neighborhood $U(1/k)$ of $\La$, there exists
$n$ arbitrarily large and $x\in \bigcap_{i=0}^{n-1}f^{-i}(U(1/k))$ such that $\frac{1}{n}\sum_{i=0}^{n-1}\delta_{f^i(x)}\not \in \bigcup_i B_{\eta_i}(\mu_i)$.
Then taking the limit as $n\to +\infty$ gives an invariant measure $\nu_k$ on the maximal invariant set of $U(1/k)$
which does not belong to $\bigcup_i B_{\eta_i}(\mu_i)$. Any limit of the $\nu_k$ is an invariant measure on $\Lambda$
that does not belong to $\bigcup_i B_{\eta_i}(\mu_i)$, which contradicts~\eqref{e.cover}.
\end{proof}

We can conclude the proof of Theorem~\ref{thm.large-deviation-for-lamination}.
For any $x\in \bigcap_{i=0}^{n-1}f^{-i}(U_\e)$ and $n\geq N_\e$, there exists $i_0$ such that
$\frac{1}{n}\sum_{i=0}^{n-1}\delta_{f^i(x)}\in B_{\eta_{i_0}}(\mu_{i_0})$, ie $x\in C_n(\mu_{i_0},\eta_{i_0})$.
From~\eqref{e.cont-measure}, if one has
$$\bigg|\frac{1}{n}\sum_{i=0}^{n-1}\varphi(f^i(x))-\int\varphi\ud\mu\bigg|\geq \e,$$
then $\bigg|\int\varphi\ud\mu_{i_0}-I(\varphi)\bigg|\geq\e/2,$ hence $\mu_{i_0}$ belongs to $\cN_\e$.

Let $I\subset \{1,\cdots,k\}$ such that $\{\mu_1,\cdots,\mu_k\}\cap \cN_\e=\{\mu_i\}_{i\in I}$. 
Then we have for $n\geq N_\e$
$$
\Leb\bigg\{x\in D\cap \bigcap_{i=0}^{n-1}f^{-i}(U_\e):  \bigg|\frac{1}{n}\sum_{i=0}^{n-1}\varphi(f^i(x))-\int\varphi\ud\mu\bigg|\geq\e \bigg\}
\;\leq \sum_{i\in I} \Leb(C_n(\mu_i,\eta_i))\;\leq \sum_{i\in I} c_{i} e^{-n b_{\mu_i}} .
$$

Now, we only need to consider an upper bound $a$ of the volume of discs tangent to the cone field
$\cC^u$ and with diameter smaller than $r_0$ and to set
$$b_\e=\min\{b_{\mu_1},\cdots,b_{\mu_k}\}
\text{ and }
a_\e=\max\bigg\{a\cdot e^{N_\e\cdot b_\e},\sum_{i=1}^k c_i\bigg\}.$$
\end{proof}

\section{SRB measures for $C^{1+\alpha}$ partially hyperbolic diffeomorphisms}\label{s.example}
We focus on $C^{1+\alpha}$-diffeomorphisms for $\alpha>0$ and partially hyperbolic sets with one-dimensional center. 
\subsection{Existence of the center Lyapunov exponent: proof of Theorem~\ref{thm-I}}
Before Theorem~\ref{thm-I} we prove two preliminary results.
\begin{Proposition}\label{Prop:srb-positive-lyapunov}
Let $f$ be a $C^{1+\alpha}$-diffeomorphisms, $\alpha>0$,
and let $\mu$ be an ergodic SRB measure whose support admits a partially hyperbolic splitting $E^{ss}\oplus E^{cu}$, and whose Lyapunov exponents along $E^{cu}$ are all positive. Then there is an open invariant set $O(\mu)$ such that
$$\Leb(O(\mu)\triangle{\rm Basin}(\mu))=0
\text{ and } O(\mu)\cap \supp(\mu)\neq\emptyset.$$
In particular, for Lebesgue a.e. point $x\in M$, if $\omega(x)\supset \supp(\mu)$ then $x\in {\rm Basin}(\mu)$.
\end{Proposition}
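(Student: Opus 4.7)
The $C^{1+\alpha}$ regularity together with the assumption that $\mu$ is an ergodic SRB measure with all Lyapunov exponents along $E^{cu}$ positive places us in the standard Pesin setting. So for $\mu$-a.e.\ $p$ there is a Pesin unstable manifold $W^u(p)$ of dimension $\dim E^{cu}$ tangent to $E^{cu}(p)$; on a Pesin block $\Lambda_0\subset\supp(\mu)$ of positive $\mu$-measure these leaves have uniform size and vary continuously. The SRB property is equivalent to the absolute continuity of the disintegration of $\mu$ along $W^u$, so Birkhoff's ergodic theorem gives, for $\mu$-a.e.\ $p\in\Lambda_0$, Lebesgue-a.e.\ $y\in W^u(p)$ in ${\rm Basin}(\mu)$. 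The strong stable manifolds $W^{ss}$ provided by partial hyperbolicity form a continuous, absolutely continuous lamination of dimension $\dim E^{ss}$, and ${\rm Basin}(\mu)$ is $W^{ss}$-saturated, since $d(f^i(x),f^i(y))\to 0$ whenever $y\in W^{ss}(x)$, so the two empirical sequences share their weak-$*$ accumulation sets.

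The plan is to build an open set $V\ni p$ such that Lebesgue-a.e.\ $z\in V$ lies in ${\rm Basin}(\mu)$, and then take
$$O(\mu):=\bigcup_{n\in\ZZ}f^n(V).$$
To construct $V$, I would pick $p\in\supp(\mu)\cap\Lambda_0$ satisfying the conclusion of the previous paragraph. Since $\dim W^u(p)+\dim W^{ss}(p)=\dim M$ and the two submanifolds are transverse at $p$, continuity of $W^{ss}$ implies that the union
$$B:=\bigcup_{y\in W^u(p)}W^{ss}_{\mathrm{loc}}(y)$$
contains an open neighborhood $V$ of $p$. Absolute continuity of the $W^{ss}$-holonomy from $W^u(p)$ to a small transversal crossing $V$ then transfers the Lebesgue-a.e.\ statement on $W^u(p)$ to Lebesgue-a.e.\ in $V$; combined with the $W^{ss}$-saturation this gives $\Leb(V\setminus{\rm Basin}(\mu))=0$.

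With $O(\mu)$ defined as above, $f$-invariance and openness are clear and $p\in V\subset O(\mu)\cap\supp(\mu)$. As $f$ is a diffeomorphism and ${\rm Basin}(\mu)$ is $f$-invariant, $\Leb(f^n(V\setminus{\rm Basin}(\mu)))=0$ for each $n\in\ZZ$, whence $\Leb(O(\mu)\setminus{\rm Basin}(\mu))=0$. Conversely, if $x\in{\rm Basin}(\mu)$ then $\omega(x)\supset\supp(\mu)\ni p$, so $f^n(x)$ enters $V$ for some $n\geq 0$ and hence $x\in O(\mu)$; this yields $\Leb(O(\mu)\triangle{\rm Basin}(\mu))=0$. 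The ``in particular'' clause is then immediate: any $x$ with $\omega(x)\supset\supp(\mu)$ satisfies $f^n(x)\in V$ eventually, so $x\in O(\mu)$, and Lebesgue-a.e.\ point of $O(\mu)$ lies in ${\rm Basin}(\mu)$.

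The delicate step will be the second paragraph, namely combining uniform Pesin estimates on $\Lambda_0$ with absolute continuity of the $W^{ss}$-holonomies inside a Pesin chart, in order to obtain simultaneously that $B$ covers an open neighborhood of $p$ and the Fubini-type transfer of the Lebesgue-a.e.\ conclusion from $W^u(p)$ to $V$; both ingredients are classical in Pesin theory, but their joint set-up requires a product Pesin box adapted to the two transverse laminations $W^u$ and $W^{ss}$.
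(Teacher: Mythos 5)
Your argument is correct and follows essentially the same route as the paper's proof: use the absolute continuity of the disintegration of $\mu$ along unstable discs (Ledrappier) to get a full-Lebesgue-measure subset of a $\dim E^{cu}$-dimensional unstable disc contained in ${\rm Basin}(\mu)$, then saturate by the absolutely continuous strong stable lamination to obtain an open set in which the basin has full measure, and finally take the union of its iterates. The only cosmetic difference is that you phrase the construction via Pesin blocks and holonomy maps where the paper directly cites Ledrappier's theorem and Pesin's absolute continuity.
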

\begin{Remark}
Kan's example~\cite{kan} shows that
the basin of $\mu$ may not be essentially open as
in Proposition~\ref{Prop:srb-positive-lyapunov}
when the first bundle is not uniformly contracted.
\end{Remark}

\begin{proof}[Proof]
 By~\cite[Theorem 3.4]{L}, the disintegration of $\mu$ along the unstable manifolds is absolutely continuous with respect to the Lebesgue measure on the unstable manifolds. Consequently there exists a disc $D\subset \supp(\mu)$ tangent to $E^{cu}$ such that the basin of $\mu$ contains a set $X\subset D$ with full
 Lebesgue measure.
 The union of the strong stable manifolds of points of $D$ contains a nonempty open set
 $O_0$ which intersects $\supp(\mu)$. The union of the local strong stable leaves of points of $X$
is absolutely continuous~\cite{P}.
 Consequently, the union of the strong stable manifolds of points of $X$ has full Lebesgue measure in $O_0$.
 This proves that ${\rm Basin}(\mu)$ has full Lebesgue measure in the open set
 $O(\mu):=\bigcup_{n\in \mathbb{Z}} f^n(O_0)$. The orbit of every point $x$ in the basin of $\mu$,
 accumulates any point of $\supp(\mu)$, hence enters in $O_0$.
 Up to removing  the invariant set $O(\mu)\setminus {\rm Basin}(\mu)$ (which has zero Lebesgue measure),
 one concludes that the orbit of Lebesgue almost every point in ${\rm Basin}(\mu)$ is contained in $O(\mu)$.
 Hence $O(\mu)$ and ${\rm Basin}(\mu)$ coincide modulo a set with zero Lebesgue measure.
\end{proof}

\begin{Proposition}~\label{p.physical-negative}
Let $f$ be a $C^{1+\alpha}$-diffeomorphism, $\alpha>0$, and
$\mu$ a hyperbolic SRB measure.
Then Lebesgue almost every point $x\in M$ satisfying:
\begin{itemize}
\item[--] $\omega(x)$ has a partially hyperbolic splitting $E^{cs}\oplus E^{uu}$,
\item[--] $\omega(x)$ contains $\supp(\mu)$ and the Lyapunov exponents of $\mu$ along $E^{cs}$ are all negative,
\end{itemize}
belongs to the basin of $\mu$.
\end{Proposition}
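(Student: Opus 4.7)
My plan is to prove the proposition by constructing a ``Pesin stable tube'' inside $\mathrm{Basin}(\mu)$ and then showing that the forward orbit of Lebesgue almost every $x$ satisfying the hypotheses must land inside this tube.

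\textbf{Step 1 (Reduction and Pesin set-up).} I first reduce to the case where $\mu$ is ergodic: the ergodic decomposition of a hyperbolic SRB measure consists of hyperbolic SRB measures, and the hypothesis that all Lyapunov exponents along $E^{cs}$ are negative passes to each ergodic component; so it suffices to prove the statement componentwise, with $\supp(\mu)$ of each component contained in $\omega(x)$. Next, I identify the Pesin stable and unstable manifolds of $\mu$: since the stable exponents of $\mu$ are exactly those along $E^{cs}$, the Pesin stable manifolds have dimension $\dim E^{cs}$ and are tangent to $E^{cs}$; the positive exponents must lie along $E^{uu}$, so the Pesin unstable manifolds are tangent to $E^{uu}$ and coincide with the strong unstable leaves $\cF^u$ of $\supp(\mu)$ (which is $u$-laminated as a support of a Gibbs $u$-state). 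The SRB property gives absolute continuity of the disintegration of $\mu$ along the $\cF^u$-leaves.

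\textbf{Step 2 (Building the basin tube $T$).} I fix a Pesin block $\Lambda_0 \subset \supp(\mu) \cap \mathrm{Basin}(\mu)$ of positive $\mu$-measure on which the local stable manifolds $W^s_{r_0}(y)$ have a uniform size $r_0 > 0$, vary continuously, and admit a uniformly absolutely continuous stable holonomy (Pesin's absolute continuity theorem). Define
$$T := \bigcup_{y \in \Lambda_0} W^s_{r_0}(y).$$
Since points asymptotic to a Birkhoff-generic orbit share the same empirical limit, $T \subset \mathrm{Basin}(\mu)$; by Pesin's absolute continuity, $T$ has positive Lebesgue measure in $M$. Using Pesin density points inside $\Lambda_0$, I also ensure that for a generic $p \in \Lambda_0 \cap \supp(\mu)$, the intersection of $T$ with a local strong unstable disc $\cF^u_{\mathrm{loc}}(q)$ (for $q$ near $p$) has positive $\Leb_{\cF^u_{\mathrm{loc}}(q)}$-measure, with Lebesgue density uniformly bounded below at $\Lambda_0$-density points in the sense of Theorem~\ref{thm.u-density-point}.

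\textbf{Step 3 (Orbit capture).} I claim that for Lebesgue almost every $x$ satisfying the hypotheses, some forward iterate $f^n(x)$ lies in $T$. The basin $\mathrm{Basin}(\mu)$ is $f$-invariant, so this will finish the proof. The hypothesis $\omega(x) \supset \supp(\mu) \supset \Lambda_0$ guarantees that the orbit of $x$ enters every neighborhood of every point of $\Lambda_0$ infinitely often. Since $\omega(x)$ has a partially hyperbolic splitting $E^{cs} \oplus E^{uu}$, an unstable cone field $\cC^u$ extends to a neighborhood of $\omega(x)$, and a large-enough iterate of $x$ has its dynamics governed by $\cC^u$. I combine this with the unstable density basis of Theorem~\ref{thm.u-density-point} applied to unstable discs in $\Lambda_0$: the preimages $f^{-n}(T)$, intersected with small $\cC^u$-discs near $\supp(\mu)$, accumulate with positive density at Pesin density points of $\Lambda_0$. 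Using that $f$ preserves Lebesgue null sets, I conclude that the set of $x \in H$ whose entire forward orbit avoids $T$ has zero Lebesgue measure.

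\textbf{Main obstacle.} The delicate step is clearly Step~3. The difficulty is that $T$ has only positive — not full — Lebesgue measure in any neighborhood of $\supp(\mu)$, so a naive ``orbit visits a positive-measure set infinitely often'' argument is unavailable without an ergodic theorem for $x$'s orbit (which is precisely what I am trying to prove). The technical heart of the argument is therefore to combine the uniform absolute continuity of Pesin's stable holonomy on $\Lambda_0$ with the unstable density-basis Theorem~\ref{thm.u-density-point} to obtain a \emph{uniform} lower bound on the relative Lebesgue density of $T$ inside any small unstable disc hitting a Pesin density point of $\Lambda_0$, and then use the hypothesis $\omega(x) \supset \supp(\mu)$ together with forward invariance of Lebesgue null sets to rule out Lebesgue-positive-measure families of ``escaping'' orbits.
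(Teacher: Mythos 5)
Your strategy assembles the right ingredients, and they are the same ones the paper uses: Pesin's absolute continuity of the local stable lamination over a positive-Lebesgue-measure subset $X$ of an unstable disc $\Delta\subset\supp(\mu)$ consisting of basin points, the unstable density basis of Theorem~\ref{thm.u-density-point}, and the hypothesis $\omega(x)\supset\supp(\mu)$ to bring the orbit of $x$ back near a density point of $X$. But Step~3 — which you yourself flag as the main obstacle — is precisely the step that is not carried out, and as sketched it aims the density basis at the wrong object: you propose to estimate the density of the preimages $f^{-n}(T)$ inside unstable discs sitting \emph{in} $\Lambda_0$, whereas what is needed is a density statement inside the $\cC^u$-disc $D$ containing the point $x$ whose orbit you are trying to capture. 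As written, "$f$ preserves Lebesgue null sets'' plus "$T$ has positive density near $\Lambda_0$'' does not rule out a positive-measure set of escaping orbits.

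The gap is closed by running the argument by contradiction on the escaping set. Fix (from a countable family — this reduction is needed because the hypothesis only asks that $\omega(x)$ carry \emph{some} partially hyperbolic splitting) a partially hyperbolic set $\Lambda\supset\supp(\mu)$, a neighborhood $U$ with cone field $\cC^u$, and let $\cN$ be the set of $x\in U$ with forward orbit in $U$, $\omega(x)\supset\supp(\mu)$, and $x\notin{\rm Basin}(\mu)$; note $f(\cN)\subset\cN$. If $\Leb(\cN)>0$, Fubini gives a $\cC^u$-disc $D$ with $\Leb_D(\cN\cap D)>0$, and Theorem~\ref{thm.u-density-point} gives an unstable density point $x$ of $\cN\cap D$. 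Pick $n_k\to\infty$ with $f^{n_k}(x)\to z_0$, a density point of $X$ in $\Delta$. The very definition of the basis $B_{D,n}(z)=f^{-n}\big(B_{f^n(D)}(f^n(z),\delta)\big)$, the forward invariance of $\cN$, and bounded distortion along unstable discs (this is where $C^{1+\alpha}$ enters) convert "the density of $\cN$ in $B_{D,n_k}(x)$ tends to $1$'' into "the density of $\cN$ in $B_{f^{n_k}(D)}(f^{n_k}(x),\delta)$ tends to $1$''. Since these discs converge in $C^1$ to $B_\Delta(z_0,\delta)$, absolute continuity of the stable lamination gives a uniform positive lower bound on $\Leb\big(W^s_{loc}(X)\cap B_{f^{n_k}(D)}(f^{n_k}(x),\delta)\big)$, so for large $k$ the set $\cN$ must meet $W^s_{loc}(X)\subset{\rm Basin}(\mu)$ — a contradiction. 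Finally, be careful with your Step~1: the basin of a non-ergodic measure is not the union of the basins of its ergodic components, so the "componentwise'' reduction as stated does not prove the statement for $\mu$ itself; the proposition is meant for (and applied to) ergodic $\mu$.
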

\proof
Let us consider the set $\mathcal{L}$ of
partially hyperbolic sets containing $\supp(\mu)$
with a splitting $E^{cs}\oplus E^{uu}$,
such that the Lyapunov exponents of $\mu$ along $E^{cs}$ are all negative.
There exists a countable sequence $(\Lambda_n)$ in $\mathcal{L}$ such that
any set $\Lambda\in\cL$ is contained in one of the $\Lambda_n$.
As a consequence, it is enough to fix a set $\Lambda\in\Lambda$
and to prove the proposition for Lebesgue almost every point $x$
such that $\supp(\mu)\subset \omega(x)\subset \Lambda$.

Let $\cC^u$ be an unstable cone field on a neighborhood $U$ of $\Lambda$
which satisfies the Theorem~\ref{thm.u-density-point} and $\cA:=\big\{x\in U: \textrm{$\omega(x)\supset \supp(\mu)$ and $f^n(x)\in U$ for all $n\geq 0$}\big\}$. Then one has $f(\cA)\subset \cA$. It is enough to show that
$\cN:=\cA\setminus {\rm Basin}(\mu)$ has zero Lebesgue measure.
Assume, on the contrary,  that $\Leb(\cN)>0$. Thus, there exists a disc $D$ tangent to $\cC^{u}$ such that $\Leb_D(\cN\cap D)>0.$

By~\cite[Theorem 3.4]{L}, there exists an unstable disc $\Delta$ in the support of $\mu$
and a set $X\subset \Delta$ with positive Lebesgue measure such that
any point in $X$ is in the basin of $\mu$ and has a stable manifold tangent to $E^{cs}$.
By~\cite{P}, up to reducing $X$, one can assume that the (local) stable manifolds of points $z\in X$
vary continuously with $z$ and induce an absolutely continuous lamination $W^s_{loc}(X)$.
We fix a density point $z_0\in X$ of $X$ inside $\Delta$.

In order to define unstable density basis inside $D$, we fix $\delta>0$ small.
By Theorem~\ref{thm.u-density-point}, the set of unstable density points of $\cN\cap D$ has full Lebesgue
measure in $\cN\cap D$, and we fix $x\in \cN\cap D$ one of them.
There exists a sequence $n_k\to +\infty$ such that $f^{n_k}(x)\to z_0$.
The density basis $B_{D,n}(x)$ satisfy
$$\frac{\Leb(B_{D, n_k}(x)\cap\cN)}{\Leb(B_{D, n_k}(x))}\underset{k\to +\infty}\longrightarrow 1.$$
The definition of $\cN$,  the density basis and the bounded distortion along the unstable manifold
(using the uniform expansion and the $C^{1+\alpha}$-smoothness) imply:
\begin{equation}\label{e.density-measure}
\frac{\Leb(B_{f^{n_k}(D)}(f^{n_k}(x),\delta)\cap\cN)}{\Leb(B_{f^{n_k}(D)}(f^{n_k}(x),\delta))}\underset{k\to +\infty}\longrightarrow 1.
\end{equation}

Since  $f^{n_k}(x)$ converges to $z_0$ and  the unstable cones converge to the unstable bundle
under forward iterations, the disc $B_{f^{n_k}(D)}(f^{n_k}(x),\delta)$ gets arbitrarily close to
$B_\Delta(z_0,\delta)$ for the $C^1$-topology.
The absolute continuity of the stable lamination over $X$ implies that
for $n_k$ large enough, the Lebesgue measure of $W^s_{loc}(X)\cap B_{f^{n_k}(D)}(f^{n_k}(x),\delta)$
is positive and uniformly bounded away from zero.
With~\eqref{e.density-measure} this implies that for $k$ large enough
$\cN$ intersects $W^s_{loc}(X)$. This is a contradiction since $W^s_{loc}(X)\subset {\rm Basin}(\mu)$.
\endproof
\medskip

Now, we are ready for proving the existence of the center Lyapunov exponent.
\proof[\bf Proof of Theorem~\ref{thm-I}]
Let $U$ be an attracting neighborhood of $\La$.
\begin{Lemma}
The bundle $E^c$ admits a (non-unique) continuous and invariant extension to $U$.
Moreover, for any $x\in U$ we have
$$\lim_{n\to+\infty} \frac{\|Df^n|_{E^c}(x)\|}{\|Df^n|_{E^{cs}}(x)\|}=1.$$
\end{Lemma}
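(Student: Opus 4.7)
The plan is to prove the lemma in two parts: first the existence of a continuous $Df$-invariant extension of $E^c$ to $U$, then the asymptotic ratio.

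\textbf{Part 1: Extending $E^c$ invariantly to $U$.} The bundle $E^{cs}$ already admits a continuous $Df$-invariant extension to $U$ (recalled in Section~\ref{Sec:definition-partial}). I would first extend $E^{ss}$: the sub-dominated splitting $E^{ss}\prec E^c$ on $\Lambda$ provides a continuous cone field $\mathcal{C}^{ss}$ inside $E^{cs}$ around $E^{ss}$ on a neighborhood, which (after shrinking) we may take to be $U$, and which is strictly contracted by $Df^{-1}$ along forward orbits in $U$. Since $f(\overline U)\subset U$, forward orbits of points of $U$ stay in $U$, so the formula
$$E^{ss}(x):=\bigcap_{n\geq 0}Df^{-n}\bigl(\mathcal{C}^{ss}(f^n(x))\bigr)$$
defines a continuous $Df$-invariant sub-bundle of $E^{cs}$ on $U$ extending $E^{ss}|_\Lambda$. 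Next, fix an auxiliary continuous (not necessarily invariant) complement $\tilde E^c\subset E^{cs}$ to $E^{ss}$ on $U$ that coincides with $E^c$ on $\Lambda$. Any continuous complement to $E^{ss}$ in $E^{cs}$ is the graph of a continuous bundle map $\phi:\tilde E^c\to E^{ss}$, and $Df$-invariance of the graph becomes a fixed-point equation $\phi=\mathcal{T}\phi$ of the form
$$\mathcal{T}\phi(x)=C(x)^{-1}\bigl(\phi(f(x))A(x)-B(x)\bigr),$$
where $A(x), B(x), C(x)$ are the blocks of $Df_x|_{E^{cs}(x)}$ in the splittings $\tilde E^c\oplus E^{ss}$ at source $x$ and target $f(x)$. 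Because $f(U)\subset U$, the operator $\mathcal{T}$ preserves the Banach space of bounded continuous bundle maps on $U$; because the domination $E^{ss}\prec E^c$ extends to $U$ by continuity (possibly after shrinking $U$), $\mathcal{T}$ is a contraction in the supremum norm. Its unique fixed point yields an invariant continuous $E^c$ on $U$, which must agree with the given $E^c$ on $\Lambda$ by uniqueness there. The non-uniqueness claimed in the lemma comes from the freedom in choosing $\tilde E^c$ away from $\Lambda$.

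\textbf{Part 2: The asymptotic ratio.} Any unit $v\in E^{cs}(x)$ decomposes as $v=v^{ss}+v^c$ using the invariant splitting on $U$. After shrinking $U$, the continuity of the splitting and compactness of $\overline U$ provide a uniform lower bound on the angle between $E^{ss}$ and $E^c$, hence a constant $K>0$ with $\|v^{ss}\|,\|v^c\|\leq K$ for every such unit $v$. Because $E^c$ is one-dimensional and $Df$-invariant, $\|Df^{-n}|_{E^c(f^n(x))}\|=1/\|Df^n|_{E^c}(x)\|$, and the domination supplies $\lambda\in(0,1)$, $C>0$ such that
$$\|Df^n|_{E^{ss}}(x)\|\leq C\lambda^n\,\|Df^n|_{E^c}(x)\|\quad\text{for all }x\in U,\ n\geq 0.$$
Hence
$$\|Df^n v\|\leq \|Df^n v^{ss}\|+\|Df^n v^c\|\leq K(1+C\lambda^n)\,\|Df^n|_{E^c}(x)\|,$$
while the lower bound $\|Df^n|_{E^{cs}}(x)\|\geq\|Df^n|_{E^c}(x)\|$ is immediate (take $v$ unit in $E^c$). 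Equipping $M$ with a Riemannian metric adapted to the dominated splitting (so that $E^{ss}\perp E^c$ along $\overline U$, making $K=1$), the two bounds pinch the ratio to give $\|Df^n|_{E^c}(x)\|/\|Df^n|_{E^{cs}}(x)\|\to 1$ as $n\to\infty$.

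\textbf{Main obstacle.} The delicate point is Part 1: while $E^{ss}$ extends by the standard cone-field argument (forward orbits in $U$ never escape), the extension of $E^c$ requires a graph transform operating in the opposite direction, and one must verify both that $\mathcal{T}$ preserves the space of continuous bundle maps over $U$ (using $f(U)\subset U$) and that the a priori local domination on $\Lambda$ remains strong enough to make $\mathcal{T}$ a contraction after shrinking $U$. Once this is in place, Part 2 is routine given the choice of an adapted metric.
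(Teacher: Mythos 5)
The decisive gap is in your Part 1: the operator $\mathcal{T}$ is a uniform \emph{expansion}, not a contraction, because the domination goes the other way. Writing $Df^N_x|_{E^{cs}}$ in block form with respect to $\tilde E^c\oplus E^{ss}$, with center block $A_N$ and strong-stable block $C_N$, the domination $\|Df^N|_{E^{ss}(x)}\|\cdot\|Df^{-N}|_{E^{c}(f^N(x))}\|\leq\frac12$ reads $\|C_N\|\cdot\|A_N^{-1}\|\leq\frac12$, whereas the Lipschitz constant of $\mathcal{T}^N$ in the sup norm is $\sup_x\|C_N(x)^{-1}\|\cdot\|A_N(x)\|\geq \|A_N\|/\|C_N\|\geq 2$ (recall $\dim E^c=1$). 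A sanity check: the lemma asserts the invariant extension is \emph{non-unique}, and any two continuous invariant extensions lying in a center cone are graphs over the same $\tilde E^c$, hence two distinct fixed points of the same $\mathcal{T}$ --- impossible if $\mathcal{T}$ were a contraction on sections over $U$. The genuinely contracting operator is the forward graph transform $\Gamma$, with $(\Gamma\phi)(f(x))=(B(x)+C(x)\phi(x))A(x)^{-1}$, but $\Gamma$ sends sections over $U$ to sections over $f(U)$ only: to define $(\Gamma\phi)(y)$ for $y$ in the fundamental domain $U\setminus f(U)$ one would need $\phi(f^{-1}(y))$ with $f^{-1}(y)\notin U$, so iterating $\Gamma$ recovers $E^c$ only on $\bigcap_n f^n(U)=\Lambda$. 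The paper's construction avoids any fixed-point argument: take an arbitrary continuous extension $E$ of $E^c$ inside a center-unstable cone, interpolate $E$ with $Df(E)$ by a cut-off so that invariance holds across the boundary of the fundamental domain, define $E^c$ on $f^n(U)\setminus f^{n+1}(U)$ as $Df^n(E'(f^{-n}(\cdot)))$, and use the cone-field criterion (i.e.\ the contraction of $\Gamma$) only to prove continuity at points of $\Lambda$, where infinitely many forward images of the fundamental domain accumulate. The non-uniqueness is precisely the freedom of the choice on $U\setminus f(U)$.

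Your Part 2 has the right pinching idea (and the paper itself does not spell this part out), but note that with $K>1$ your bounds only force the limit of the ratio into $[1/K,1]$; in a non-adapted metric the ratio actually converges to $\sin\angle(E^{ss}(x),E^c(x))$, which need not be $1$. Passing to a metric in which $E^{ss}\perp E^c$ does give the limit $1$, but only for that metric, since the ratio of operator norms is not metric-independent. This is harmless for the intended application --- the proof of Theorem~\ref{thm-I} only uses that $\frac1n\log$ of the ratio tends to $0$, which your domination estimate yields in any metric --- but you should either state the limit for an adapted metric or weaken the conclusion to the subexponential equivalence.
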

\begin{proof}
Let us consider a continuous extension $E\subset E^{cs}$ of $E^c$: up to shrinking the open set $U$,
one can assume that $E$ is defined on $U$ and is contained in a center-unstable cone.
Using a cut-off function, one can interpolate $E$ with $Df(E)$ and get a continuous extension
$E'$ of $E^c$ such that $E'(f(x))=Df(E'(x))$ for any $x\in U$ outside a small neighborhood of
$f(\overline U)$. One then define $E^c$ on $U\setminus f(U)$ as follows:
for $x\in f^n(U)\setminus f^{n+1}(U)$ we set $E^c(x)=Df^n(E'(f^{-n}(x)))$.
By construction $E^c$ is continuous and invariant on $U\setminus \Lambda$.

The dominated slitting $E^{cs}=E^{ss}\oplus E^c$ and the cone field criterion (see~\cite{CP}) implies that $Df^n(E')$ converges to $E^c|_{\Lambda}$.
Hence the extension of $E^c$ is also continuous at points of $\Lambda$.
\end{proof}
The previous lemma shows that the center Lyapunov exponent of any point $x\in U$
can be studied by considering the Birkhoff averages of the continuous function
$$\varphi\colon x\mapsto \log\|Df|_{E^c}(x)\|.$$

Propositions~\ref{Prop:srb-positive-lyapunov},~\ref{p.physical-negative} show that
Lebesgue almost every point $x$ in the set
$$U^{h}:=\bigg\{x\in U: \; \omega(x) \textrm{\: carries a hyperbolic ergodic SRB measure}\bigg\}.$$
belongs to the basin of a hyperbolic ergodic SRB measure $\mu$, which by~\cite[Theorem 4.9]{L} is physical.
Hence the Birkhoff averages of $\varphi$ along the forward orbit of $x$ converge.
The limit
$$\lambda^c(x):=\lim_{n\to+\infty} \frac 1 n \log \|Df^n|_{E^c}(x)\|=\lim_{n\to+\infty} \frac 1 n \log \|Df^n|_{E^{cs}}(x)\|$$
exists and coincides with the center Lyapunov exponent
$\int \log\|Df|_{E^c}(x)\| \ud\mu(x)=\int\varphi \ud\mu$.
In particular $\lambda^c(x)$ does not vanish.

Then for Lebesgue a.e $x\in U\setminus U^{h}$, the $\omega$-limit set of $x$ does not carry any hyperbolic SRB measures. By Corollary~\ref{c.SRB} this implies that each limit measure $\mu\in\cM(x)$ is SRB and has a vanishing
center exponent $\int\varphi \ud\mu$.
Since $\varphi$ is continuous this shows that
$$\frac 1 n \sum_{k=0}^{n-1} \varphi(f^k(x))\;=\;  \frac 1 n \log \|Df^n|_{E^c}(x)\|\underset{n\to +\infty}\longrightarrow 0.$$
Hence the center Lyapunov $\lambda^c(x)$ of $x$ is also well-defined in this case
and vanishes.
\endproof

\subsection{An example exhibiting historical behavior: proof of Theorem~\ref{thm-II}}
The example described in Theorem~\ref{thm-II} is obtained by compactification
of a skew translation over an Anosov system. It is well-known that the dynamics of these infinite systems
share properties with the \emph{Brownian motion} on $\mathbb{R}$: this will allow us to study precisely the
asymptotic of the empirical measures.

\subsubsection{Limit properties of skew translations}
We first state classical properties of skew translations.

\begin{Proposition}\label{p.skew}
Let $A$ be a smooth Anosov diffeomorphism on $\TT^2$ preserving a smooth volume $m$ and having at least two fixed points $p,q$.
Let $\phi:\TT^2\to\mathbb{R}$ be a smooth function with $\int\phi\ud m=0$ such that $\phi(p)$, $\phi(q)$ are rationally independent.
Then:
\begin{itemize}

\item[(i)] The measure $m\times \mathrm{Leb}$ is ergodic for the diffeomorphism
$g$ of $\mathbb{T}^2\times \mathbb{R}$ defined by
\begin{equation}\label{e.def-g}
g(x,t)=(A(x),t+\phi(x)).
\end{equation}

\item[(ii)] The number $\sigma:=\sum_{n\in\ZZ}\int \phi\cdot\phi\circ A^n\ud m$ is well-defined and positive.

\item[(iii)] For $m$-almost every point $x\in \mathbb{T}^2$,
the continuous functions $X_n\in C([0,1])$ defined by
$$X_n(t):=\frac{1}{\sqrt{\sigma\cdot n}}\int_0^{nt}\phi(A^{[s]}(x))\ud s,$$
induce a random process which converges weakly to the standard Wiener measure.
\end{itemize}
\end{Proposition}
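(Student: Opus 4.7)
My plan is to treat the three items in the order (ii), (i), (iii), since the cohomological argument needed for the positivity of $\sigma$ will also drive the ergodicity statement.

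For (ii), since $A$ is a smooth volume-preserving Anosov diffeomorphism on $\TT^2$ and $\phi$ is smooth, the classical work of Sinai--Ruelle--Bowen yields exponential decay of correlations: there exist $C>0$ and $\theta\in(0,1)$ with $|\int \phi\cdot \phi\circ A^n\,dm|\le C\theta^{|n|}$. Hence the series defining $\sigma$ converges absolutely, and since $\int\phi\,dm=0$ a standard computation identifies $\sigma$ with the non-negative asymptotic variance $\lim_n n^{-1}\int(S_n\phi)^2\,dm$, where $S_n\phi:=\sum_{i=0}^{n-1}\phi\circ A^i$. Positivity is obtained by contradiction: if $\sigma=0$, a classical result (Leonov) forces $\phi$ to be a measurable coboundary $\phi=\psi\circ A-\psi$; by Livsic's theorem for Anosov systems, $\psi$ can be chosen H\"older, and evaluating at the fixed point $p$ yields $\phi(p)=\psi(p)-\psi(p)=0$, making $\phi(p)$ and $\phi(q)$ rationally dependent and contradicting the hypothesis.

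For (i), I view $g$ as an $\RR$-cocycle extension over the ergodic base $(A,m)$ and invoke Schmidt's essential-value theory: since $\int\phi\,dm=0$, Atkinson's recurrence theorem ensures that $\phi$ is conservative, and the extension $g$ is ergodic for the $\sigma$-finite invariant measure $m\times\Leb$ if and only if $\phi$ is not cohomologous to a cocycle taking values in a proper closed subgroup of $\RR$. The proper closed subgroups of $\RR$ are $\{0\}$ and $\alpha\ZZ$ with $\alpha>0$; the first case is excluded exactly as in (ii). If $\phi$ were cohomologous to an $\alpha\ZZ$-valued H\"older cocycle $\tilde\phi$ (Livsic again upgrades the transfer function to H\"older), then the Birkhoff sums of $\tilde\phi$ over any periodic orbit would lie in $\alpha\ZZ$; applied to the fixed points $p$ and $q$, this yields $\phi(p),\phi(q)\in\alpha\ZZ$, hence $\phi(p)/\phi(q)\in\QQ$, again contradicting rational independence.

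For (iii), I appeal to Donsker's functional invariance principle for smooth Anosov diffeomorphisms and H\"older observables of zero mean and positive asymptotic variance. The cleanest route is Gordin's martingale approximation: decompose $\phi=M+h\circ A-h$, where $h$ is H\"older and $M$ is a reverse-martingale difference with respect to the filtration generated by the unstable manifolds, then apply the martingale functional CLT. This yields that $n^{-1/2}S_{[n\,\cdot]}\phi$ converges weakly in $C([0,1])$ to $\sqrt{\sigma}$ times a standard Brownian motion, which after dividing by $\sqrt{\sigma}$ is exactly the normalization used in the definition of $X_n$. Replacing the discrete sum by the integral $\int_0^{nt}\phi(A^{[s]}x)\,ds$ introduces only a piecewise-linear interpolation whose sup-norm error is $O(\|\phi\|_\infty/\sqrt{\sigma n})\to 0$, so the same weak limit is obtained. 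The main technical obstacle I anticipate is in (i): although the principle is classical, carefully citing the version of Schmidt's criterion appropriate for $\RR$-cocycles over Anosov bases and combining it with Atkinson's conservativity is where care is needed; items (ii) and (iii) are essentially textbook consequences of uniform hyperbolicity plus the cohomological nondegeneracy forced by rational independence.
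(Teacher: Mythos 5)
Your treatment of (i) and (ii) follows essentially the same route as the paper: for (ii) the paper also combines exponential decay of correlations (to sum the series), the identification of $\sigma$ with the asymptotic variance of $S_n\phi$, and the chain ``$\sigma=0$ $\Rightarrow$ measurable coboundary $\Rightarrow$ (Livsic) continuous coboundary $\Rightarrow$ $\phi(p)=0$'', which contradicts rational independence; for (i) the paper cites Guivarc'h's ergodicity criterion for $\RR$-extensions, verified by exactly your fixed-point argument ruling out $\phi=\psi\circ A-\psi \mod \lambda$. One caveat on your version of (i): passing from ``the skew product is not ergodic'' to ``$\phi$ is cohomologous to a cocycle valued in a proper closed subgroup'' requires the cocycle to be \emph{regular}; this is automatic here because of the H\"older/Anosov rigidity (and is precisely what Guivarc'h's result packages), but in Schmidt's general theory non-regular recurrent cocycles exist, so you should say why regularity holds rather than treat the equivalence as formal.

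The genuine gap is in (iii). Your argument (Gordin decomposition plus the martingale functional CLT) proves that the law of $X_n$ under $m$ converges weakly to Wiener measure, i.e.\ a purely distributional statement about the measure $m$. The proposition, however, is an assertion ``for $m$-almost every point $x$'', and this is not cosmetic: in the application (the Claim in the construction of the example) one needs that for a.e.\ \emph{individual} $x$ the sequence $X_n(x)$ enters any prescribed open ball of path space for arbitrarily large $n$. Plain convergence in distribution only gives $m\{x: X_n(x)\in B_\e(h)\}\to W(B_\e(h))>0$ for each fixed large $n$, and without further input (independence, a second-moment/Borel--Cantelli estimate, or a zero--one law) one cannot conclude that a.e.\ $x$ realizes this for infinitely many $n$. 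The paper avoids the issue by invoking strong approximation results (the almost sure invariance principle of Denker--Philipp, and Dolgopyat's Corollary 4): a.s.\ $|S_t\phi - W(t)|=o(\sqrt t)$ on an enlarged space, after which the claim reduces to the recurrence of the rescaled Brownian paths $t\mapsto n^{-1/2}W(nt)$ in every open subset of positive Wiener measure. To repair your proof you should either upgrade to the ASIP, or at least establish the \emph{stable} (mixing) form of the martingale FCLT -- convergence in law of $X_n$ conditioned on every positive-$m$-measure subset -- which also suffices: if some positive-measure set of $x$ avoided $B_\e(h)$ for all $n\ge N$, the conditional convergence to $W$ on that set would be contradicted. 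As written, the step from your distributional limit to the statement ``for $m$-a.e.\ $x$'' is missing.
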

\begin{proof}
Since $\phi(p)$ and $\phi(q)$ are rationally independent, there do not exist
$\lambda\in \RR$ and $\psi\colon M\to \mathbb{R}$ such that
$\phi=\psi\circ A-\psi \mod[\lambda]$. The ergodicity (i) follows from~\cite[Corollary 3]{guivarch}.

The convergence of the sum defining $\sigma$ is a consequence of the exponential decay of the correlations,
see for instance~\cite[Theorem 3.9]{Liverani}.
Note that $\sigma$ is non-negative, because of 
$$\sum_{n\in\ZZ}\int \phi\cdot\phi\circ A^n\ud m=\lim_{n\rightarrow+\infty}\frac{1}{n}\int\big(\sum_{i=0}^{n-1}\phi\circ A^i\big)^2\ud m.$$
Since $\phi(p)\neq 0$, there is no continuous solution $\psi\colon M\to \RR$
to the cohomological equation
$$\phi=\psi\circ A-\psi.$$
Then in restriction to any $A$-invariant set with full measure for $m$,
there is no measurable solution, see~\cite[Theorem 9]{Li}.
One deduces that $\sigma$ does not vanish (see~\cite[Proposition 4.12]{PaPo}). This gives the second item.
The third item is now~\cite[Corollary 4]{D} for conservative Anosov diffeomorphisms
(see also~\cite[Corollary 3]{DP}).
\end{proof}

\subsubsection{Compactification of the skew translation}
We denote $\mathbb{T}=\mathbb{R}/\mathbb{Z}$.
Any skew translation over an Anosov diffeomorphism on $\mathbb{T}^2$ can be embedded as a partially hyperbolic diffeomorphism
on $\mathbb{T}^3$.

\begin{Proposition}\label{p.compactification}
Let us consider a smooth Anosov diffeomorphism $A$ on  $\mathbb{T}^2$,
a smooth function $\phi:\TT^2\to\mathbb{R}$ and the diffeomorphism $g$ on $\mathbb{T}^2\times \mathbb{R}$ defined by~\eqref{e.def-g}.
Then there exists a smooth diffeomorphism $f$ on $\mathbb{T}^3$
preserving a partially hyperbolic splitting $E^{ss}\oplus E^c\oplus E^{uu}$ such that:
\begin{itemize}
\item[--] the foliation by circles $\{x\}\times \mathbb{T}$ is preserved and tangent to $E^c$;
\item[--] $f$ preserves each torus $\mathbb{T}^2\times \{0\}$ and $\mathbb{T}^2\times \{1/2\}$,
and exchanges $\mathbb{T}^2\times (0,1/2)$ and $\mathbb{T}^2\times (1/2,1)$;
\item[--] the restriction of $f^2$ to $\mathbb{T}^2\times (0,1/2)$
is smoothly conjugated to $g^2$.
\end{itemize}
\end{Proposition}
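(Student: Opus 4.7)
The plan is to realize $f$ as the combination of $g$ (read through a smooth embedding of $\mathbb{T}^2\times\mathbb{R}$ as $\mathbb{T}^2\times(0,1/2)\subset\mathbb{T}^3$) with the involution $\iota(x,t)=(x,-t)$ of $\mathbb{T}^3$. This involution fixes each of the tori $\mathbb{T}^2\times\{0\}$ and $\mathbb{T}^2\times\{1/2\}$ pointwise and swaps the two open halves, so $f$ will apply $g$ in one half and then flip into the other; two iterations return to the first half and produce $g^2$ there.

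Explicitly, I choose an integer $N\ge 1$ and a smooth orientation-preserving diffeomorphism $\theta\colon\mathbb{R}\to(0,1/2)$ that is purely exponential at infinity:
$$\theta(s)=K_0\, e^{s/N}\text{ for }s\le -L,\qquad \theta(s)=\tfrac{1}{2}-K_1\,e^{-s/N}\text{ for }s\ge L,$$
for some positive constants $K_0,K_1,L$. Setting $\Theta(x,s):=(x,\theta(s))$, I define
$$f(x,t):=\begin{cases}(A(x),\;1-\theta(\theta^{-1}(t)+\phi(x))) & \text{if }t\in(0,1/2),\\ (A(x),\;\theta(\theta^{-1}(1-t)+\phi(x))) & \text{if }t\in(1/2,1),\\ (A(x),\,t) & \text{if }t\in\{0,1/2\}.\end{cases}$$
A direct computation shows that $f\circ\iota=\iota\circ f$, that $f$ preserves the circle foliation $\{\{x\}\times\mathbb{T}\}_{x\in\mathbb{T}^2}$ and each fixed torus, exchanges the two open halves, and that on $\mathbb{T}^2\times(0,1/2)$ one has $f^2=\Theta\circ g^2\circ\Theta^{-1}$, yielding the conjugacy required by the third bullet.

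Smoothness across the fixed tori and partial hyperbolicity are the two nontrivial points. The $\iota$-equivariance forces any smooth extension of $f$ through a fixed torus to have an odd Taylor expansion in the normal coordinate $t$; the exponential tail gives the translation identity $\theta(s+c)=e^{c/N}\theta(s)$ valid for $s\le -L-|c|$, which implies that on a collar neighborhood of $\mathbb{T}^2\times\{0\}$ the map $f$ is exactly $(x,t)\mapsto(A(x),-e^{\phi(x)/N}\,t)$, visibly $C^\infty$; a symmetric argument handles $\mathbb{T}^2\times\{1/2\}$. (A plain sigmoid $\theta(s)=1/(2(1+e^{-s}))$ would not suffice: the two one-sided extensions of $f$ at $t=0$ agree in value and first derivative but mismatch at order two when $\phi\not\equiv 0$.) For partial hyperbolicity, the derivative of $f$ along the center direction at $(x,t)$ is, up to sign, $\theta'(\theta^{-1}(t)+\phi(x))/\theta'(\theta^{-1}(t))$; using the exponential tails in the outer regions and continuity in the compact middle, this ratio can be made uniformly close to $1$ in $(x,t)$ by taking $N$ large enough. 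A standard cone-field argument then produces invariant bundles $E^{ss}$ and $E^{uu}$ transverse to $E^c=\ker d\pi$ (where $\pi\colon\mathbb{T}^3\to\mathbb{T}^2$ is the projection) and extending the stable/unstable directions of $A$, yielding the splitting $E^{ss}\oplus E^c\oplus E^{uu}$.

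The hard part is the tension between these two steps: the asymptotic shape of $\theta$ is forced by smoothness at the fixed tori (any tail slower than exponential introduces non-vanishing even Taylor coefficients in the $\iota$-equivariant extension and destroys $C^2$ smoothness), while the resulting center derivative on the boundary tori is exactly $e^{\pm\phi(x)/N}$ and must be dominated by the Anosov rates of the given $A$; the free parameter $N$ decouples these requirements, since replacing $\theta(s)$ by $\theta(s/N)$ does not affect the smooth conjugacy class of $f^2$ (the conjugacy $\Theta$ is merely rescaled in the fiber direction) but drives the center expansion toward $1$. All remaining verifications — smoothness in the interior of each half, invariance of the circle foliation, and compatibility between the two formulas on the boundary tori — are routine.
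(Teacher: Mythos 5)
Your construction is correct and is at heart the same compactification as the paper's: $f$ is the flip $\iota(x,t)=(x,-t)$ composed with a fiberwise slowed-down version of the translation by $\phi(x)$, and $f^2$ on one open half is conjugated to $g^2$ by $(x,s)\mapsto(x,\theta(s))$. The genuine difference lies in how smoothness across the two fixed tori is obtained. The paper realizes the fiber maps as the time-$\phi(x)$ maps of the flow $(\Phi_s)$ of a single smooth, $1$-periodic, \emph{odd} vector field $X$ vanishing at $0$ and $1/2$ and positive on $(0,1/2)$; then $F(x,t)=(Ax,-\Phi_{\phi(x)}(t))$ is manifestly smooth by joint smoothness of the flow, and the oddness of $X$ forces each $\Phi_s$ to be an odd function of $t$, so no boundary matching and no tail normalization of $\theta(s)=\Phi_s(1/4)$ are needed. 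Your route instead imposes exactly exponential tails on $\theta$, which linearizes the fiber map on a collar of each fixed torus and lets you verify smoothness by an explicit computation; your remark that a merely asymptotically exponential $\theta$ (the logistic sigmoid) fails at order two is correct, and in the paper's language it reflects the fact that the vector field $\theta'\circ\theta^{-1}$, extended by reflection, is then only $C^1$ at the zeros. Both arguments obtain partial hyperbolicity identically --- your limit $N\to\infty$ is the paper's choice of $X$ arbitrarily $C^1$-close to $0$, making $f$ $C^1$-close to $A\times\Id$. In short, your extra normalization buys an explicit linear model on the collars, but the paper's odd-vector-field formulation makes the ``hard part'' you isolate disappear entirely.
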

\begin{proof}
Let $X$ be a smooth vector field on $\mathbb{R}$ such that 
\begin{itemize}
\item $X(t)>0$ for $t\in(0,1/2)$ and $X(0)=X(1/2)=0$,
\item $X$ is $1$-periodic and satisfies $X(-t)=-X(t)$ for each $t\in \RR$.
\end{itemize}
Let $(\Phi_s)_{s\in \RR}$ be the flow induced by $X$ on $\RR$.
The diffeomorphism of $\mathbb{T}^2\times \RR$ defined by
$$F(x,t):=(Ax,-\Phi_{\phi(x)}(t))$$
satisfies $F(x,t+1)=F(x,t)-(0,1)$, hence induces a smooth diffeomorphism $f$ on $\mathbb{T}^3$.
Choosing $X$ arbitrarily close to $0$, the diffeomorphism $f$ is $C^1$-close to
the diffeomorphism $A\times \Id$, hence is partially hyperbolic.
The  first two items then follow.

Note that $f$ commutes with the involution $(x,t)\mapsto (x,-t)$ hence $f^2$ coincides with the diffeomorphism
induced by $$(x,t)\mapsto(A^2x,\Phi_{\phi(A(x))+\phi(x)}(t)).$$
The map $h\colon \mathbb{T}^2\times \RR\to \mathbb{T}^2\times (0,1/2)$ defined by
$(x,s)\mapsto (x,\Phi_s(1/4))$ conjugates the restriction of $f^2$ to $\mathbb{T}^2\times (0,1/2)$
with $g^2$ as claimed in the third item.
\end{proof}

\subsubsection{Historical behavior}
The proof of Theorem~\ref{thm-II} can be concluded as follows.

\begin{Proposition}
Let us consider a smooth Anosov diffeomorphism $A$ of $\mathbb{T}^2$
and a smooth function $\phi\colon \mathbb{T}^2\to \mathbb{R}$ as in Proposition~\ref{p.skew}.
Then the diffeomorphism $f$ of $\mathbb{T}^3$ induced by $A$ and $\phi$ as in Proposition~\ref{p.compactification}
has exactly only two ergodic Gibbs u-states $\nu_1,\nu_2$. Moreover, for Lebesgue almost every $z\in \mathbb{T}^3$,
\begin{itemize}
\item[--] the set of limit measures $\mathcal{M}(x)$ of $x$ is the segment $[\nu_1,\nu_2]$,
\item[--] the orbit of $z$ is dense in $\mathbb{T}^3$.
\end{itemize}
\end{Proposition}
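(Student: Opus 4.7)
The plan is to identify $\nu_1=m\times\delta_0$ and $\nu_2=m\times\delta_{1/2}$ as the ergodic Gibbs u-states living on the two fixed tori, rule out any other ergodic Gibbs u-state by pulling back to the skew product $g$ and invoking the Brownian behaviour from Proposition~\ref{p.skew}(iii), and then describe $\mathcal{M}(z)$ and the density of orbits via Theorem~\ref{thm-III-bis} together with an arcsine-type argument.

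\textbf{Classification of ergodic Gibbs u-states.} Since $X$ vanishes at $0$ and $1/2$, $f$ restricts on each of $\mathbb{T}^2\times\{0\}$ and $\mathbb{T}^2\times\{1/2\}$ to the volume-preserving Anosov $A$, whose unique Gibbs u-state is $m$; this gives the two ergodic Gibbs u-states $\nu_1,\nu_2$. Conversely, suppose $\mu$ is an ergodic Gibbs u-state of $f$ not supported on $\mathbb{T}^2\times\{0,1/2\}$, and let $\mu_1$ be the renormalised restriction of $\mu$ to $\mathbb{T}^2\times(0,1/2)$: this is an $f^2$-invariant Gibbs u-state supported off the fixed tori. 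Pulling back by the smooth conjugacy $h$ of Proposition~\ref{p.compactification} yields a $g^2$-invariant probability $\tilde\mu_1$ on $\mathbb{T}^2\times\mathbb{R}$ that is still a Gibbs u-state for $g^2$; since the canonical projection $\pi\colon(x,t)\mapsto x$ sends strong unstable leaves diffeomorphically onto unstable leaves of $A^2$, $\pi_*\tilde\mu_1$ is an SRB of $A^2$ and hence equals $m$. Writing the disintegration $\tilde\mu_1=\int\tilde\mu_{1,x}\,\ud m(x)$, the cocycle identity $\tilde\mu_{1,A^{2k}x}=(T_{S_k^{(2)}(x)})_*\tilde\mu_{1,x}$ with $S_n^{(2)}(x):=\sum_{j=0}^{2n-1}\phi(A^j x)$, together with Birkhoff applied to $x\mapsto\tilde\mu_{1,x}([-R,R])$ and a local-time estimate (namely, for $m$-a.e.\ $x$ the fraction of $k\le n$ with $|S_k^{(2)}(x)|\le R$ tends to $0$), then forces $\tilde\mu_1(\mathbb{T}^2\times[-R,R])=0$ for every $R$, contradicting $\tilde\mu_1(\mathbb{T}^2\times\mathbb{R})=1$.

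\textbf{Description of $\mathcal{M}(z)$.} By Theorem~\ref{thm-III-bis}, for Lebesgue a.e.\ $z$ every $\mu\in\mathcal{M}(z)$ is a Gibbs u-state, and by Corollary~\ref{c.convex-compact} combined with the classification above, $\mu$ must be of the form $\alpha\nu_1+(1-\alpha)\nu_2$, uniquely determined by its $t$-marginal $\alpha\delta_0+(1-\alpha)\delta_{1/2}$. Since $\mathcal{M}(z)$ is always connected, the reverse inclusion reduces to producing subsequences along which the fraction of $k\le n$ with $t_k$ near $0$ (resp.\ near $1/2$) tends to $1$. Through $h$ one has $t_k=\Phi_{t_0+S_k(x_0)}(1/4)$ with $\Phi_s(1/4)\to 0$ as $s\to-\infty$ and $\Phi_s(1/4)\to 1/2$ as $s\to+\infty$, so the local-time estimate from the previous step forces $t_k$ into any neighbourhood of $\{0,1/2\}$ for all but a vanishing fraction of $k$, and the almost sure arcsine-type behaviour of the standard Brownian motion (for which $\frac{1}{T}\mathrm{Leb}\{s\in[0,T]:B_s>0\}$ has $\limsup=1$ and $\liminf=0$ a.s.), transferred to $S_k$ via the invariance principle of Proposition~\ref{p.skew}(iii), supplies the required subsequences. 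Hence $\nu_1,\nu_2\in\mathcal{M}(z)$, and by connectedness $\mathcal{M}(z)=[\nu_1,\nu_2]$.

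\textbf{Density of orbits and main obstacle.} Proposition~\ref{p.skew}(i) gives ergodicity of the infinite invariant measure $m\times\mathrm{Leb}$ for $g$; the recurrence of $S_n$ to any bounded interval (from $\liminf S_n=-\infty,\,\limsup S_n=+\infty$ a.s.) makes $g$ conservative, and since $m\times\mathrm{Leb}$ has full support, $g$-orbits are dense in $\mathbb{T}^2\times\mathbb{R}$ for $(m\times\mathrm{Leb})$-a.e.\ point. Through $h$ this yields density of $f^2$-orbits in each of the two open halves $\mathbb{T}^2\times(0,1/2)$ and $\mathbb{T}^2\times(1/2,1)$, and the same unboundedness of $S_n$ gives accumulation on the two fixed tori; applying $f$ to swap halves produces a dense $f$-orbit in $\mathbb{T}^3$. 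The main obstacle in the plan is precisely the local-time estimate for the Anosov Birkhoff sums, and relatedly the quenched arcsine behaviour: Proposition~\ref{p.skew}(iii) provides only a weak functional CLT, whereas one needs the almost sure version of the local limit theorem and of the Sparre Andersen arcsine law. Both are available in the Anosov literature through Dolgopyat-type estimates and must be cited rather than re-derived.
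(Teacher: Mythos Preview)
Your route is viable but takes a harder path than the paper, and the obstacle you flag at the end is precisely the place where the two diverge.

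The paper reverses your order of arguments. It first proves that $\nu_1,\nu_2\in\mathcal{M}(z)$ for Lebesgue-a.e.\ $z$, and only \emph{afterwards} deduces uniqueness of the ergodic Gibbs u-states as a corollary. The key trick for $\nu_2\in\mathcal{M}(z)$ is not an arcsine law: one fixes $\rho\in(0,1)$ and a continuous $h\in C_0([0,1])$ with $h(t)>1$ on $[\rho,1]$; since Wiener measure has full support and the rescaled paths $X_n(\cdot,x)$ satisfy the invariance principle of Proposition~\ref{p.skew}(iii), for a.e.\ $x$ there are arbitrarily large $n$ with $\|X_n(\cdot,x)-h\|_\infty<\varepsilon$, forcing $S_j\phi(x)>\sqrt{\sigma n}$ for all $j\in[\rho n,n]$. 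Along such $n$ the center coordinate $\Phi_{S_j\phi(x)}(t)$ is close to $1/2$ for a fraction $\geq 1-\rho$ of times, so the empirical measure of $(x,t)$ is close to that of $(x,1/2)$, hence close to $\nu_2$. This needs only the functional invariance principle plus full support---no a.s.\ local-time or Sparre Andersen input.

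Uniqueness is then a bootstrap: if $\nu$ is an ergodic Gibbs u-state not supported on the two tori, then (by absolute continuity of unstable disintegrations, Theorem~\ref{t.ledrappier}) there is an unstable disc $D$ with $\mathrm{Leb}_D$-a.e.\ point in the basin of $\nu$; projecting $D$ to $\mathbb{T}^2$ shows $\pi_*\nu=m$. Hence $\nu$-a.e.\ $z$ has $\mathbb{T}^2$-projection in the full-$m$-measure set where the previous paragraph applies, so $\mathcal{M}(z)\supset\{\nu_1,\nu_2\}$---contradicting Birkhoff's theorem, which gives $\mathcal{M}(z)=\{\nu\}$. This replaces your direct measure-theoretic argument on $\mathbb{T}^2\times\mathbb{R}$ (disintegration, cocycle identity, local-time) by a two-line contradiction.

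Your density argument is essentially the same as the paper's, and your remark about conservativity is a point the paper glosses over. But the main lesson is that the ``quenched arcsine'' and ``a.s.\ local-time'' estimates you worry about are not needed: approximating the rescaled path by a single well-chosen $h$ along a subsequence suffices, and the uniqueness of Gibbs u-states follows from $\nu_i\in\mathcal{M}(z)$ rather than the other way around.
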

\begin{proof}
Let us recall that $A$ preserves a smooth volume $m$.
By absolute continuity of the stable foliation of $A$, it is the unique Gibbs u-state for $A$
and it is ergodic.
The two measures $\nu_1=m\times \delta_0$ and $\nu_2=m\times \delta_{1/2}$
are $f$-invariant and are Gibbs u-states.

Let us  denote  $S_n\phi(x)=\sum_{j=0}^{n-1}\phi(A^j(x))$ for $x\in \mathbb{T}^2$ and $n\in\NN$.
Then  the skew translation $g$ defined by~\eqref{e.def-g} satisfies
$$g^n(x,t)=(A^n(x), t+S_n\phi(x)).$$ 
We introduce $\cG_n(x):=\{0\leq j\leq n-1: S_j\phi(x)\geq \sqrt{\sigma\cdot n}\}$.

\begin{Claim}
For Lebesgue a.e. $x\in \mathbb{T}^2$
and any $\rho\in(0,1)$, there exists $n$ arbitrarily large such that $$\#\cG_n(x)\geq (1-\rho)\cdot n.$$
\end{Claim} 
\begin{proof}
Let $\cW=\{h\in C^0([0,1], \RR): h(0)=0\}$ endowed with $C^0$-norm.
We consider a continuous function $h:[0,1]\mapsto[0,+\infty)$ such that 
$$h(0)=0 \textrm{ and } h(t)>1 \textrm{ for $t\in[\rho,1]$}.$$
Let $0<\e<\inf_{t\in[\rho,1]}\frac{h(t)-1}{2}$ be small.
Since the Wiener measure has full support in $\cW$,
and since the process $(X_n)$ in $\cW$ defined in Proposition~\ref{p.skew} converges to the Wiener measure
for Lebesgue almost every $x\in \mathbb{T}^2$,
there exists $n$ arbitrarily large such that

$$\sup_{t\in[0,1]} \bigg|\frac{1}{\sqrt{\sigma\cdot n}}\int_0^{nt}\phi(A^{[s]}(x))\ud s-h(t)\bigg|<\e.$$
In particular for any integer $j\in \{0,\dots,n\}$, one has
$$\bigg|\frac{1}{\sqrt{\sigma\cdot n}}S_j\phi(x)-h(j/n)\bigg|<\e.$$
By the definition of $h$ and $\e,$  this gives $S_j\phi(x)>{\sqrt{\sigma\cdot n}}$ for all $j\geq (1-\rho)\cdot n$.
\end{proof}

\begin{Claim}
For Lebesgue a.e. $x\in \mathbb{T}^3$ and all $t\in (0,1/2)$
the measure $\nu_2$ belongs to $\cM(z)$.
\end{Claim}
\begin{proof}
Let $\Gamma$ be a continuous function on $\TT^2\times[0,1]$
and let us fix $\rho>0$ small. Let us consider the set $\cG_n(x)$ for an integer $n$ large given by the previous claim.
One has the estimate 
\begin{align*}
&\big|\frac{1}{n}\sum_{i=0}^{n-1}\Gamma(f^i(x,t))-\frac{1}{n}\sum_{i=0}^{n-1}\Gamma(f^i(x,1/2))\big|\\
&\leq  \frac{1}{n}\sum_{i\in\cG_n(x)}|\Gamma(f^i(x,t))-\Gamma(f^i(x,1/2))|+\frac{1}{n}\sum_{i\notin\cG_n(x)}|\Gamma(f^i(x,t))-\Gamma(f^i(x,1/2))|\\
&\leq \frac{1}{n}\sum_{i\in\cG_n(x)}|\Gamma(A^i(x),\varepsilon_i\cdot \Phi_{S_i\phi(x)}^X(t))-\Gamma(A^i(x),1/2)|+2\rho\cdot \sup  |\Gamma|,
\end{align*}
where $\varepsilon_i=+1$ when $i$ is even and $-1$ when $i$ is odd.

Notice that for $t\in(0,1/2)$, $\varphi_{s}^X(t)$ tends to $1/2$ when $s$ tends to $+\infty.$ By the arbitrariness of $\rho$ and the uniform continuity of $h$, one deduces that the empirical measures $m_{(x,t),n}$ and $m_{(x,1/2),n}$ are close.
\end{proof}
The claim shows that $\nu_2\in\cM(z)$ for Lebesgue a.e. $z\in\mathbb{T}^3$.
Analogously, $\nu_1\in\cM(z)$.

\begin{Claim}
$\nu_1$, $\nu_2$ are the unique ergodic Gibbs u-states.
\end{Claim}
\begin{proof}
Let $\nu$ be an ergodic Gibbs u-state. There is a strong  unstable disc $D$ such that for Leb$_D$ almost every $(x,t)\in D$,
$$\lim_{n\rightarrow+\infty}\frac{1}{n}\sum_{i=0}^{n-1}\delta_{f^i(x,t)}=\nu.$$
The disc $D$ projects to an unstable arc $D'\subset \mathbb{T}^2$ and for Lebesgue almost every $x\in D'$,
the empirical measures converge to the projection of $\nu$. This shows that the projection of $\nu$
to $\mathbb{T}^2$ coincides with $m$ (the unique Gibbs u-state for $A$).

Let us assume by contradiction that $\nu$ is not supported on
$\mathbb{T}^2\times \{0,1/2\}$.
In particular for $\nu$-almost every point $z$, the projection on $\mathbb{T}^2$
belongs to the full $m$-measure set given by the previous Claim.
This implies that the set of limit measures $\cM(z)$ of $z$ contains both $\nu_1$ and $\nu_2$.
This is a contradiction since the empirical measures of $z$ converge to $\nu$ (by Birkhoff ergodic theorem).
\end{proof}

It remains to prove the last statement of the proposition.
From Proposition~\ref{p.skew}, the skew translation $g^2$ is ergodic,
hence from the last item of Proposition~\ref{p.compactification}, the orbit of Lebesgue almost every point $z\in \mathbb{T}^2\times (0,1/2)$
under $f^2$ is dense in $\mathbb{T}^2\times (0,1/2)$.
Since $f$ exchanges the regions $\mathbb{T}^2\times (0,1/2)$ and $\mathbb{T}^2\times (1/2,1)$,
one deduces that the orbit of Lebesgue almost every point $z\in \mathbb{T}^3$ is dense.
\end{proof}

\appendix
 
\section{Generalized Pesin's inequality under a dominated splitting}~\label{Appendix-A}
We sketch here the proof of the inequality~\eqref{e.pesin}  stated in the introduction.
We recall that 
a splitting $T_\Lambda M=E\oplus F$ over  a compact invariant set $\Lambda$ of a diffeomorphism $f$ is \emph{dominated},
if there exists $N\in\mathbb{N}$ such that  
$$\|Df^N|_{E(x)}\|\cdot \|Df^{-N}|_{F(f^N(x))}\|\leq \frac{1}{2}. $$

\begin{theoremalph}[Entropy inequality]\label{thm.volume-bound}
For any $C^1$ diffeomorphism $f$, for any compact invariant  set $\Lambda$ admitting a dominated splitting $E\oplus F$, and for Lebesgue almost every point $x\in M$,
if  $\omega(x)\subset \Lambda$, then each limit measure $\mu\in\cM(x)$ satisfies
\begin{equation}\label{e.pesin2}
h_\mu(f)\geq \int \log|\det Df|_F|d\mu.
\end{equation}
\end{theoremalph}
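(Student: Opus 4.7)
The plan is to follow the strategy of~\cite{CCE}, adapted to the semi-local setting, and to re-use the machinery developed in Sections~\ref{s.measurable-partitions-to-unstable} and~\ref{s.volume-estimate}. By Theorem~\ref{thm.pseudo-physical}, for Lebesgue almost every $x\in M$ with $\omega(x)\subset\Lambda$, every limit measure $\mu\in\cM(x)$ is pseudo-physical and automatically supported on $\Lambda$. It therefore suffices to prove that any such pseudo-physical $\mu$ satisfies~\eqref{e.pesin2}. I argue by contradiction: if $h_\mu(f)<\int\log|\det Df|_F|\ud\mu-2\varepsilon$ for some $\varepsilon>0$, I will derive exponential decay of $\Leb\big(C_n(\mu,\eta)\cap\bigcap_{i=0}^{n-1}f^{-i}(U)\big)$ in $n$, which contradicts the pseudo-physicality of $\mu$ exactly as in the conclusion of Theorem~\ref{thm-III-bis}.

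The core technical step is an analogue of Theorem~\ref{thm.volume-control-by-entropy} in the ambient manifold: for each $\mu\in\cM_{\rm inv}(\Lambda,f)$ and $\varepsilon>0$ there exist a neighborhood $U$ of $\Lambda$ and $\eta,c>0$ such that
$$\Leb\Big(C_n(\mu,\eta)\cap\bigcap_{i=0}^{n-1}f^{-i}(U)\Big)\;\leq\; c\cdot\exp\Big(n\big(h_\mu(f)-{\textstyle \int}\log|\det Df|_F|\ud\mu+\varepsilon\big)\Big).$$
To prove it, I fix a $Df$-invariant cone field $\cC^F$ around $F$ on $U$ and a continuous extension $\psi\colon M\to\RR$ of $-\log|\det Df|_F|$, chosen (as in Section~\ref{s.volume-estimate}) so that $\psi$ oscillates by at most $\varepsilon/8$ on $\rho$-balls, and so that for any disc $D$ of dimension $\dim(F)$ tangent to the iterated cone field $Df^{N_\varepsilon}\cC^F$ and any $x\in D$, $|\log|\det Df|_{T_xD}|+\psi(x)|<\varepsilon/8$. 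For a maximal $(n,\rho)$-separated subset $X_{n,\rho}$ of $C_n(\mu,\eta)\cap\bigcap f^{-i}(U)$, the Bowen-ball argument of Proposition~\ref{p.volume-through-separating-set} adapts to yield, for each $x\in X_{n,\rho}$,
$$\Leb\Big(B_n(x,\rho)\cap{\textstyle\bigcap f^{-i}(U)}\Big)\;\leq\; c_\varepsilon\cdot e^{n\varepsilon/2+n\int\psi\ud\mu},$$
so that $\Leb(C_n(\mu,\eta)\cap\bigcap f^{-i}(U))\leq c_\varepsilon\cdot e^{n\varepsilon/2+n\int\psi\ud\mu}\cdot\#X_{n,\rho}$.

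It remains to bound $\#X_{n,\rho}$ by a constant times $e^{n(h_\mu(f)+\varepsilon/2)}$. I choose a finite partition $\alpha$ of $M$ with $\diam(\alpha)<\rho$ and $\mu(\partial\alpha)=0$, and an integer $m_0$ with $\frac{1}{m_0}H_\nu\big(\bigvee_{i=0}^{m_0-1}f^{-i}\alpha\big)<h_\mu(f)+\varepsilon/4$ for every $\nu$ in a neighborhood of $\mu$. Since each cell of $\bigvee_{i=0}^{n-1}f^{-i}\alpha$ contains at most one point of $X_{n,\rho}$, a Corollary~\ref{c.H-condition} decomposition combined with the concavity of entropy and the observation that the averaged measure $\mu_n=\frac{1}{\#X_{n,\rho}}\sum_{x\in X_{n,\rho}}\frac{1}{n}\sum_{i=0}^{n-1}\delta_{f^i(x)}$ lies in $B_\eta(\mu)$ (by convexity) produces the bound, closing the volume estimate.

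The main obstacle is the Bowen-ball volume estimate in the $C^1$ semi-local setting: without bounded distortion we cannot integrate directly as in the $C^{1+\alpha}$ case, and the cone field and $\psi$ are defined only on $U$ since $\Lambda$ need not be attracting. Both difficulties are addressed exactly as in Section~\ref{s.volume-estimate}: the forward invariance of $\cC^F$ implies that for any $x$ with $f^i(x)\in U$ for $0\leq i<n$, the intersection $B_n(x,\rho)\cap\bigcap f^{-i}(U)$ is foliated by plaques tangent to $Df^{N_\varepsilon}\cC^F$; the volume of each plaque is controlled by $\prod_{i=0}^{n-1}e^{\psi(f^i(x))+O(\varepsilon)}$ via the change of variables formula, while the transverse ``$E$-direction'' contributes only a bounded factor because the Bowen ball has diameter $\leq\rho$. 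Birkhoff approximation, valid for $x\in C_n(\mu,\eta)$ and the continuous function $\psi$, then replaces $\frac{1}{n}\sum_{i=0}^{n-1}\psi(f^i(x))$ by $\int\psi\ud\mu+O(\varepsilon)$.
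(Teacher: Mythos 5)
Your proposal is correct and follows essentially the same route as the paper's Appendix A: a contradiction via (relative) pseudo-physicality, reduced to a volume estimate for the convergent sets obtained from a maximal $(n,\rho)$-separated set, Bowen balls controlled along a cone field around $F$ with a continuous extension $\psi$ of $-\log|\det Df|_F|$, and a cardinality bound $\#X_{n,\rho}\leq e^{n(h_\mu(f)+\e)}$. The only cosmetic differences are that the paper states the volume estimate on discs tangent to $\cC^F$ and passes to the ambient measure by Fubini (your ``bounded transverse factor''), and that it cites the separated-set entropy lemma from Katok--Hasselblatt where you re-derive it with the partition $\alpha$ and Corollary~\ref{c.H-condition}.
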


This improves  a little bit~\cite[Theorem 1]{CCE} and ~\cite[Theorem 4.1]{CaY}.
(We do not assume the semi-continuity of the entropy, nor the existence of a global dominated splitting.)

\begin{Corollary}\label{Coro:improve-CY}
Let $f\in\diff^1(M)$ and $\Lambda$ be an attracting set with the dominated splitting $E\oplus F$.
Then for Lebesgue a.e. $x$ in the basin of $\Lambda$, each limit  measure $\mu\in\mathcal{M}(x)$ satisfies~\eqref{e.pesin2}.
\end{Corollary}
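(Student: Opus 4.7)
The corollary reduces immediately to Theorem~\ref{thm.volume-bound}: when $\Lambda$ is attracting with attracting neighborhood $U$, every point $x$ in the basin $\bigcup_{n\in\ZZ}f^n(U)$ satisfies $\omega(x)\subset\Lambda$, hence the theorem applies to such $x$. The real content is therefore in the theorem, whose proof parallels the strategy of Theorem~\ref{thm-III-bis}.

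The plan is to argue by contradiction using pseudo-physical measures. Fix a small neighborhood $U$ of $\Lambda$ on which the dominated splitting $E\oplus F$ extends to continuous cone fields $\cC^E,\cC^F$, and apply the Hirsch--Pugh--Shub plaque family theorem to obtain continuous families of $C^1$-embedded discs $\{W^F_z\},\{W^E_z\}$ tangent to $F,E$ respectively, of uniform radius $r_0$, defined on the forward-trapping region $U^*:=\bigcap_{n\geq 0}f^{-n}(U)$ and satisfying the semi-invariance $f(W^F_z)\supset W^F_{f(z)}$. By Theorem~\ref{thm.pseudo-physical}, for Lebesgue a.e.~$x\in M$ every $\mu\in\cM(x)$ is pseudo-physical. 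Suppose, for contradiction, that there is a positive-Lebesgue-measure set of points $x$ with $\omega(x)\subset\Lambda$ admitting a pseudo-physical $\mu\in\cM(x)$ with $h_\mu(f)<\int\log|\det Df|_F|\ud\mu-2\e$ for some $\e>0$.

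The core step is the volume estimate analogous to Theorem~\ref{thm.volume-control-by-entropy}: there exist $\eta,\rho,c>0$ such that for every $n\in\NN$,
\begin{equation}\label{e.volboundA}
\Leb\Bigl(C_n(\mu,\eta)\cap\bigcap_{i=0}^{n-1}f^{-i}(U)\Bigr)\leq c\cdot\exp\bigl(n(h_\mu(f)-{\textstyle\int}\log|\det Df|_F|\ud\mu+\e)\bigr).
\end{equation}
Its proof follows the scheme of Proposition~\ref{p.volume-through-separating-set}: take a maximal $(n,\rho)$-separated subset $X_{n,\rho}$ of the set on the left-hand side and bound $\#X_{n,\rho}\leq\exp(n(h_\mu(f)+\e/2))$ using the finite-partition approximation from Theorem~\ref{thm.switch-to-finite}, applied to a finite partition $\mathcal P$ with $\mu(\partial\mathcal P)=0$ that well-approximates $h_\mu(f)$. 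Each Bowen ball $B_n(x,\rho)\cap\bigcap f^{-i}(U)$ is bounded in volume by $c'\exp(-n\int\log|\det Df|_F|\ud\mu+n\e/2)$: this comes from a local product decomposition via the plaques $(W^F,W^E)$, with $\Leb_{W^F_x}(B_n(x,\rho))\leq c''\prod_{i=0}^{n-1}|\det Df|_F(f^i(x))|^{-1}$ from semi-invariance and $C^0$-distortion, and the transverse $E$-slice of uniform size $\leq\rho^{\dim E}$; a Birkhoff-type estimate on $C_n(\mu,\eta)$ replaces the product by the integral. Summing~\eqref{e.volboundA} in $n$ and applying Borel--Cantelli yields $\Leb\{y:\ud(\cM(y),\mu)<\eta\}=0$, contradicting pseudo-physicality.

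The main obstacle is the Bowen ball volume bound. Unlike the partially hyperbolic situation, $F$ need not be uniformly expanded and its plaques are only semi-invariant, so the classical Pesin--Sinai distortion argument requiring $C^{1+\alpha}$-regularity is unavailable. The saving observation is that for a \emph{single} orbit segment the plaques $W^F_{f^i(x)}$ form a coherent forward chain, and the intrinsic Jacobians along this chain behave multiplicatively up to a distortion governed purely by the modulus of continuity of $Df$ on the compact set $\overline U$; uniformity over $x$ is then automatic. Combined with Birkhoff control of $\tfrac1n\sum\log|\det Df|_F(f^i(x))|$ derived from the continuity of $\log|\det Df|_F|$ on $U$ and the definition of $C_n(\mu,\eta)$, this delivers the required $C^1$-version of the estimate.
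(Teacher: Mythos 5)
Your reduction of the corollary to Theorem~\ref{thm.volume-bound} is exactly the paper's (implicit) argument: every point of the basin of an attracting set has its $\omega$-limit set inside $\Lambda$, so the corollary is immediate once the theorem is known. Your outline of the theorem itself also follows the paper's scheme (pseudo-physical measures, a volume estimate for the convergent sets $C_n(\mu,\eta)$, a Bowen-ball bound via the Jacobian along $F$, a counting bound for separated sets, then Borel--Cantelli), but it contains one genuine gap and one misattribution.

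The gap is the ``local product decomposition via the plaques $(W^F,W^E)$''. The Hirsch--Pugh--Shub plaque families are only \emph{continuous} in the base point, and for a merely continuous family of $C^1$ discs the Fubini-type upper bound $\Leb(B)\lesssim \sup_v\Leb_{W^F_v}(B\cap W^F_v)\cdot\rho^{\dim E}$ is false in general: the associated product chart need not be absolutely continuous, and in the $C^1$ category families of discs adapted to the dynamics can be wildly non-absolutely continuous (a set of full Lebesgue measure may meet every plaque in a set of zero leaf-measure --- precisely the phenomenon of~\cite{RY,CQ,BMOS} that the paper is careful to avoid). The paper circumvents this entirely: the volume estimate (Theorem~\ref{thm.volume-control-domination}) is stated and proved for $\Leb_D$ on an \emph{individual} disc $D$ tangent to a cone field $\cC^{F}$ around $F$. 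No invariant plaque family is needed there, since the cone invariance $Df(\cC^{F})\subset\cC^{F}$ already guarantees that the forward images $f^i\big(D\cap\bigcap_j f^{-j}(U)\big)$ stay tangent to $\cC^{F}$, and the Jacobian of $f^n$ along $D$ is $e^{n\e}$-close to $\prod_i|\det Df|_{F}(f^i(x))|$ by continuity of $Df$ alone (your ``coherent forward chain'' observation, which is correct). The full Lebesgue-measure statement is then deduced by covering $U$ with \emph{smooth, non-dynamical} foliated boxes whose leaves are tangent to $\cC^{F}$ and applying the ordinary Fubini theorem, which is unproblematic because that foliation is smooth. If you replace your semi-invariant plaques by such a smooth foliation, your argument becomes the paper's. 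Separately, the bound $\#X_{n,\rho}\leq\exp(n(h_\mu(f)+\e))$ does not come from Theorem~\ref{thm.switch-to-finite}, which concerns the entropy along an unstable lamination and is not available for a general dominated splitting; it is the variational-principle counting lemma stated at the end of Appendix~\ref{Appendix-A} (cf.~\cite[Lemma 5.2]{KH}), whose mechanism is nevertheless the one you describe.
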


Considering the trivial splitting of $TM$, one gets:
\begin{Corollary}
Let $f\in\diff^1(M)$ with a fixed point $p$. If
$\delta_p$ is physical, then $|\det (Df(p))|\leq  1$.
\end{Corollary}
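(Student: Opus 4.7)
The corollary is a direct application of Theorem~\ref{thm.volume-bound} to the trivial dominated splitting on the whole manifold, combined with the vanishing of the entropy at a fixed point.

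First, I would apply Theorem~\ref{thm.volume-bound} to $\Lambda := M$, equipped with the (trivial) dominated splitting $E := \{0\} \oplus F := TM$; here the domination condition $\|Df^N|_E\|\cdot\|Df^{-N}|_F\|\leq \tfrac12$ holds vacuously since $E$ is the zero subbundle, and $\omega(x)\subset \Lambda$ is automatic for every $x\in M$. The theorem therefore yields: for Lebesgue almost every $x\in M$, every limit measure $\mu\in\mathcal M(x)$ satisfies
\begin{equation*}
h_\mu(f)\;\geq\;\int\log|\det Df|\,\ud\mu.
\end{equation*}

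Next, I would use the assumption that $\delta_p$ is physical. By definition, the set $\mathrm{Basin}(\delta_p)=\{x\in M:\mathcal M(x)=\{\delta_p\}\}$ has positive Lebesgue measure. Intersecting it with the full Lebesgue measure set provided by the previous step gives some $x$ for which the displayed inequality applies to $\mu=\delta_p$, hence
\begin{equation*}
h_{\delta_p}(f)\;\geq\;\int\log|\det Df|\,\ud\delta_p\;=\;\log|\det Df(p)|.
\end{equation*}

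Finally, since $p$ is a fixed point, $\delta_p$ is supported on a single periodic orbit and therefore $h_{\delta_p}(f)=0$. Combining this with the previous inequality gives $\log|\det Df(p)|\leq 0$, that is, $|\det Df(p)|\leq 1$, as claimed. There is no real obstacle in this argument: once one allows the degenerate case $E=\{0\}$ in Theorem~\ref{thm.volume-bound} (mentioned in the introduction as a legitimate instance of a dominated splitting), the only thing to check is that the hypothesis on $\omega(x)$ is automatic with $\Lambda=M$, which it is.
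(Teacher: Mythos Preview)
Your proof is correct and follows exactly the approach indicated in the paper, which simply states ``Considering the trivial splitting of $TM$, one gets:'' before the corollary. You have just spelled out the details: take $\Lambda=M$ with $E=\{0\}$, $F=TM$, apply Theorem~\ref{thm.volume-bound}, and use $h_{\delta_p}(f)=0$.
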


We also obtain a large deviation result.
\begin{theoremalph}[Large deviation]\label{thm.deviation-domination}
For any $C^1$ diffeomorphism $f$, for any invariant compact set $\Lambda$ admitting a dominated splitting $E\oplus F$,
for any continuous function $\varphi\colon M\to \RR$ and for any $\varepsilon>0$,
there exist a neighborhood $U$ of $\La$ and $a_\e,b_\e>0$ such that 
	$$\Leb\bigg\{ x\in \bigcap_{i=0}^{n-1}f^{-i}(U): \ud\bigg(\frac{1}{n}\sum_{i=0}^{n-1}\varphi(f^i(x)), I(\varphi)\bigg)\geq\e\bigg\}<a_\e\cdot e^{-n b_\e} \quad\textrm{ for any  $n\in\mathbb{N}$},$$
	$$\text{where}\quad I(\varphi):=\bigg\{\int\varphi\ud\mu:\;\; \textrm{$\mu\in\mathcal{M}_{\rm inv}(\La,f)$ satisfies~\eqref{e.pesin2}} \bigg\}.$$
\end{theoremalph}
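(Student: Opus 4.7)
The plan is to follow, almost verbatim, the three-step strategy used in the proof of Theorem~\ref{thm.large-deviation-for-lamination} in Section~\ref{s.large-deviation}, with the lamination-entropy $h_\mu(f,\cF^u)$ replaced by the metric entropy $h_\mu(f)$, the integrand $\log|\det Df|_{E^{uu}}|$ by $\log|\det Df|_F|$, and the Lebesgue measure on unstable discs by the ambient Lebesgue measure on $M$.

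The first and central step is a semi-local volume estimate analogous to Theorem~\ref{thm.volume-control-by-entropy}: for every $\mu\in\cM_{\rm inv}(\Lambda,f)$ and every $\varepsilon>0$, there exist a neighborhood $U$ of $\Lambda$ and constants $\eta,c>0$ such that for every integer $n\geq 1$,
$$\Leb\Bigl(C_n(\mu,\eta)\cap\bigcap_{i=0}^{n-1} f^{-i}(U)\Bigr)\;\leq\; c\cdot\exp\Bigl(n\bigl(h_\mu(f)-\int\log|\det Df|_F|\,\ud\mu+\varepsilon\bigr)\Bigr).$$
This is the quantitative form of the Pesin-type inequality of Theorem~\ref{thm.volume-bound}, and I would extract it from the proofs in~\cite{CCE,CaY}: cover $C_n(\mu,\eta)\cap\bigcap f^{-i}(U)$ by ``Bowen tubes'' obtained as preimages under $f^n$ of small plaques tangent to a cone field around $F$; the continuity of $\log|\det Df|_F|$ and a standard distortion argument then yield that each such tube has Lebesgue volume at most $c_\varepsilon\,e^{n\varepsilon/2}\cdot\exp(-S_n\log|\det Df|_F|)$ uniformly for orbits staying close to $\mu$, while the measurable-partition approximation of Section~\ref{s.measurable-partitions-to-unstable}, applied to any finite partition of $M$ with $\mu$-null boundary, bounds the number of these tubes meeting $C_n(\mu,\eta)$ by $\exp(n(h_\mu(f)+\varepsilon/2))$.

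With this estimate at hand, the remaining argument mirrors Section~\ref{s.large-deviation}. Given $\varphi$ continuous and $\varepsilon>0$, let $I_\varepsilon$ denote the open $\varepsilon/2$-neighborhood of $I(\varphi)$ and set
$$\cN_\varepsilon:=\Bigl\{\mu\in\cM_{\rm inv}(\Lambda,f)\,:\,\int\varphi\,\ud\mu\notin I_\varepsilon\Bigr\}.$$
Every $\mu\in\cN_\varepsilon$ satisfies $h_\mu(f)<\int\log|\det Df|_F|\,\ud\mu$ by definition of $I(\varphi)$; set $b_\mu:=\tfrac12\bigl(\int\log|\det Df|_F|\,\ud\mu-h_\mu(f)\bigr)>0$ and apply the volume estimate above with $\varepsilon=b_\mu$ to obtain $\eta_\mu,c_\mu>0$ and a neighborhood $U_\mu$, shrinking $\eta_\mu$ if necessary so that $|\int\varphi\,\ud\nu-\int\varphi\,\ud\mu|<\varepsilon/4$ whenever $\ud(\mu,\nu)<\eta_\mu$. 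Cover $\cM_{\rm inv}(\Lambda,f)$ by finitely many balls $B_{\eta_j}(\mu_j)$, $j=1,\dots,k$; an argument identical to Lemma~\ref{l.amount-to-iterate} provides a neighborhood $U_\varepsilon$ of $\Lambda$ and an integer $N_\varepsilon$ such that for $n\geq N_\varepsilon$ and $x\in\bigcap_{i=0}^{n-1} f^{-i}(U_\varepsilon)$, the empirical measure $m_{x,n}$ lies in some $B_{\eta_j}(\mu_j)$. Summing the exponential bounds over those indices $j$ with $\mu_j\in\cN_\varepsilon$ yields the required decay with $b_\varepsilon=\min_j b_{\mu_j}$ and $a_\varepsilon$ chosen to absorb the finitely many small-$n$ cases.

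The main obstacle lies in the first step. In Section~\ref{s.volume-estimate} the volume estimate was obtained along discs tangent to an unstable cone field, using the uniform expansion of $E^{uu}$ both to control the Jacobian distortion and to justify the delicate conditioning-on-$\beta$ step encoded in Lemma~\ref{l.error-estimate}. In the present setting $F$ need not be uniformly expanded, so the bounded distortion of $|\det Df|_F|$ must rely solely on the domination property together with the continuity of the Jacobian along pseudo-plaques tangent to an $F$-cone field (essentially the technique of~\cite{CCE,CaY}); moreover the estimate has to be made uniform across a fixed neighborhood of $\Lambda$ rather than on an invariant set. On the other hand, the partition-approximation argument simplifies considerably, since one no longer needs a partition subordinate to a lamination and the conditioning step can be dropped; once the distortion control is in place the rest of the proof is routine.
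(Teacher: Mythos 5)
Your proposal matches the paper's own proof of Theorem~\ref{thm.deviation-domination}: the paper likewise reduces everything to a volume estimate for the convergent sets $C_n(\mu,\eta)$ (its Theorem~\ref{thm.volume-control-domination}, stated for the Lebesgue measure on discs tangent to an $F$-cone field, from which the ambient estimate follows by Fubini), proves that estimate by the same Jacobian computation along $F$-plaques as in~\eqref{e1} combined with the separated-set counting lemma from the proof of the variational principle (\cite[Lemma 5.2]{KH}) --- exactly the simplification you identify, namely that the subordinate-partition and conditioning machinery of Sections~\ref{s.measurable-partitions-to-unstable}--\ref{s.volume-estimate} is unnecessary once the partial entropy is replaced by $h_\mu(f)$ --- and then concludes by the same compactness argument over $\cM_{\rm inv}(\La,f)$ as in Theorem~\ref{thm.large-deviation-for-lamination}. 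The only cosmetic difference is that you state the volume estimate directly for the ambient Lebesgue measure, whereas the paper works leafwise on $F$-discs and integrates over a foliated neighborhood.
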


The main step in the proofs of Theorems~\ref{thm.volume-bound} and~\ref{thm.deviation-domination}
is to bound the measure of the convergent set of invariant measures inside discs tangent to $F$;
then one concludes exactly as for Theorems~\ref{thm-III-bis} and~\ref{thm.large-deviation-for-lamination}.
We are thus reduced to a statement analogous to Theorem~\ref{thm.volume-control-by-entropy}.

\begin{theoremalph}[Volume estimate]~\label{thm.volume-control-domination}
For any $C^1$ diffeomorphism $f$ and for any invariant compact set $\Lambda$ admitting a dominated splitting $E\oplus F$,
there exist a cone field $\cC^{F}$ which is a neighborhood of the bundle $F$, a neighborhood $U$ of $\Lambda$
and $r_0>0$ with the following property:
	for any $\mu\in\cM_{\mathrm{inv}}(\La,f)$ and $\e>0$, there exist $\eta,c>0$ such that
	for each compact disc $D\subset U$ tangent to $\cC^{F}$ with $\diam(D)<r_0$ and each $n\in\mathbb{N}$, one has 
	$$\Leb_D\big(  C_{n}(\mu,\eta)\cap D\cap \bigcap_{i=0}^{n-1}f^{-i}(U)\big)<c\cdot \exp\bigg(n\bigg(h_\mu(f)-\int\log{|\det(Df|_{F})|}\ud\mu+\e\bigg)\bigg).$$
	\end{theoremalph}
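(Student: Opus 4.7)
The plan is to follow the same scheme as in Section~\ref{s.volume-estimate} for the partially hyperbolic case, but replacing the entropy along an unstable lamination by the full metric entropy, which considerably simplifies the counting step since no partition subordinate to a lamination is needed.

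First I would set up the geometric framework exactly as for Theorem~\ref{thm.volume-control-by-entropy}. The domination $E\oplus F$ extends to a neighborhood $U$ of $\La$, and by the cone field criterion of~\cite{CP} there is a forward $Df$-invariant cone field $\cC^F$ around the extended $F$-bundle on $U$. Choose a continuous extension $\psi\colon M\to \RR$ of $-\log|\det(Df|_F)|$. For any fixed $\e>0$, after iterating $N_\e$ times the cone contracts to a narrower cone field $\cC^F_\e=Df^{N_\e}(\cC^F)$ such that for any disc $D_0\subset U$ tangent to $\cC^F_\e$ and any $x\in D_0$, $|\log|\det(Df|_{T_xD_0})|+\psi(x)|<\e/8$. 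Any disc $D\subset U$ tangent to $\cC^F$ of diameter $\leq r_0$ has $f^{N_\e}(D)\cap\bigcap_{i=0}^{N_\e-1}f^i(U)$ covered by at most $L_\e$ discs tangent to $\cC^F_\e$ of diameter $r_0$, so one reduces to the narrower cone case.

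Next I would repeat verbatim the Bowen-ball argument of Proposition~\ref{p.volume-through-separating-set}. Taking a maximal $(n,\rho)$-separated set $X_{n,\rho}$ in $C_n(\mu,\eta)\cap D\cap\bigcap_{i=0}^{n-1}f^{-i}(U)$ and setting $\nu_n:=\frac{1}{\#X_{n,\rho}}\sum_{x\in X_{n,\rho}}\delta_x$, $\mu_n:=\frac{1}{n}\sum_{i=0}^{n-1}f^i_*\nu_n$, the change of variables plus the distortion estimate on $\cC^F_\e$ bound $\Leb_D(B_n(x,\rho)\cap\bigcap f^{-i}(U))$ by $c_\e e^{3n\e/8}e^{S_n\psi(x)}$. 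For $\eta$ small enough and $x\in C_n(\mu,\eta)$, $|\tfrac{1}{n}S_n\psi(x)-\int\psi\,\ud\mu|<\e/8$, yielding
$$\Leb_D\bigg(C_n(\mu,\eta)\cap D\cap\bigcap_{i=0}^{n-1}f^{-i}(U)\bigg)\;\leq\; c_\e\, e^{n\e/2}\, e^{n\int\psi\,\ud\mu}\,\#X_{n,\rho}.$$

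The main step, and the only place where the argument genuinely differs from the partially hyperbolic case, is the bound on $\#X_{n,\rho}$ by a quantity involving $h_\mu(f)$ rather than $h_\mu(f,\cF^u)$. I would fix a single finite partition $\alpha$ of $M$ with $\diam(\alpha)<\rho$ and $\mu(\partial\alpha)=0$, supplied by Lemma~\ref{l.partition}. Since $\tfrac{1}{m}H_\mu(\bigvee_{i=0}^{m-1}f^{-i}(\alpha))\searrow h_\mu(f,\alpha)\leq h_\mu(f)$, there exists $m_0$ with $\tfrac{1}{m_0}H_\mu(\bigvee_{i=0}^{m_0-1}f^{-i}(\alpha))<h_\mu(f)+\e/4$. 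As $\mu$ charges no boundary of $\bigvee_{i=0}^{m_0-1}f^{-i}(\alpha)$, the same estimate persists for every probability $\nu$ with $d(\nu,\mu)<\eta$ provided $\eta$ is small enough. The $(n,\rho)$-separation of $X_{n,\rho}$ together with $\diam(\alpha)<\rho$ forces $\log\#X_{n,\rho}\leq H_{\nu_n}(\bigvee_{i=0}^{n-1}f^{-i}(\alpha))$. Applying Corollary~\ref{c.H-condition} with trivial conditioning (or simply the subadditivity $H_\nu(\bigvee\gamma_k)\leq\sum H_\nu(\gamma_k)$) to a block decomposition of length $m_0$, followed by concavity of $H_{\cdot}(\beta)$ in the measure and the closeness of $\mu_n$ to $\mu$, gives
$$H_{\nu_n}\bigg(\bigvee_{i=0}^{n-1}f^{-i}(\alpha)\bigg)\leq \tfrac{n}{m_0}H_{\mu_n}\bigg(\bigvee_{i=0}^{m_0-1}f^{-i}(\alpha)\bigg)+O(m_0\log\#\alpha)\leq n\big(h_\mu(f)+\tfrac{\e}{2}\big)+O(m_0).$$
Combined with the volume inequality above, and absorbing the fixed factor $e^{O(m_0)}$ into the constant $c$, this completes the proof.

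The main obstacle is the concavity-continuity step in the last paragraph: one must check that the averages $\frac{1}{K}\sum_{k=0}^{K-1}f^{km_0}_*\nu_n$ remain $O(m_0/n)$-close to $\mu_n$, hence $\eta$-close to $\mu$, in order to apply the continuity of $H_{\cdot}(\beta)$ at $\mu$; this is routine but requires slightly shrinking $\eta$. Everything else is strictly parallel to (and lighter than) Section~\ref{s.volume-estimate}, since no measurable partition subordinate to an unstable lamination and no auxiliary refinement $\alpha\prec\beta$ are required.
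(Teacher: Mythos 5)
Your proposal is correct and follows essentially the same route as the paper: the same Bowen-ball/distortion estimate on discs tangent to a cone field around $F$ reduces everything to bounding $\#X_{n,\rho}$, and your block-decomposition-plus-concavity argument for $\log\#X_{n,\rho}\leq n(h_\mu(f)+\e/2)+O(m_0)$ is precisely the standard proof of the counting lemma that the paper simply cites from the variational principle (\cite[Lemma 5.2]{KH}) rather than reproving. The only difference is expository: you unfold that lemma's proof, and you correctly flag the one point needing care (that the averaged measure to which concavity and continuity of $\nu\mapsto H_\nu(\cdot)$ are applied is $O(m_0/n)$-close to $\mu_n$), which is indeed routine.
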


\begin{proof}[Sketch of the proof of Theorem~\ref{thm.volume-control-domination}]
As in the partially hyperbolic case,
we extend continuously the bundles $E,F$.
This allows to define a cone field $\cC^F$ on a neighborhood $U$ of $\Lambda$
which is a neighborhood of the bundle $F$.
We then consider
$F$-discs $D$, i.e. discs with the dimension of $F$ that are tangent to $\cC^F$,
and whose diameter is smaller than a small constant $r_0>0$.

Let $\mu$ be an invariant measure supported on $\Lambda$ and $\varepsilon>0$.
For any $\eta,\rho>0$ small, let us consider the $(n,\eta)$-convergent set
$C_n(\mu,\eta)$ of $\mu$ as in Section~\ref{s.volume-estimate} and let  $X_{n,\rho}$ be a $(n,\rho)$-separated subset
with maximal cardinal in
$$C_{n}(\mu,\eta)\cap D\cap\bigcap_{i=0}^{n-1}f^{-i}(U).$$
As in~\eqref{e1} (proof of Proposition~\ref{p.volume-through-separating-set}),
there exists $c_\varepsilon>0$ (only depending on $\varepsilon>0$) such that
$$
\Leb_D\big(C_{n}(\mu,\eta)\cap D\cap\bigcap_{i=0}^{n-1}f^{-i}(U)\big)\leq c_\e
\cdot e^{n(-\int\log{|\det(Df|_{F})|}\ud\mu+\e)}\cdot\#X_{n,\rho}.
$$

The proof of the variational principle (for a homeomorphism on a compact metric space)
gives the following estimate of $\#X_{n,\rho}$ (see~\cite[Lemma 5.2]{KH})
and concludes the proof.
\end{proof}

\begin{Lemma}
For any invariant measure $\mu$, and $\varepsilon,\rho>0$,
there exist $\eta>0$ and $n_0\geq 1$ with the following property.
If $X$ is a $(n,\rho)$-separated set with $n\geq n_0$
and
$$d\bigg(\;{\textstyle \frac{1}{\# X}} \sum_{x\in X} {\textstyle \frac 1 n}\sum_{k=0}^{n-1} \delta_{f^k(x)}
\;,\; \mu\;\bigg)<\eta,$$
then the cardinal $\# X$ is bounded by $\exp(n(h_\mu(f)+\varepsilon))$.
\end{Lemma}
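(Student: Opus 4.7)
The plan is to execute the standard Misiurewicz argument (as in the proof of the variational principle) in this non-invariant setting. First, using the regularity of $\mu$ (or directly Lemma~\ref{l.partition}), fix a finite measurable partition $\alpha$ of $M$ with $\diam(\alpha)<\rho$ and $\mu(\partial\alpha)=0$. The key elementary observation is that $\diam(\alpha)<\rho$ forces any two distinct points of a $(n,\rho)$-separated set $X$ to lie in different atoms of $\bigvee_{i=0}^{n-1}f^{-i}(\alpha)$, since otherwise $\ud(f^i(x),f^i(y))<\rho$ would hold for all $0\leq i<n$. Letting $\nu:=\frac{1}{\#X}\sum_{x\in X}\delta_x$, each atom of this refined partition has $\nu$-measure either $0$ or $1/\#X$, whence
$$\log\#X \;=\; H_\nu\!\left(\bigvee_{i=0}^{n-1}f^{-i}(\alpha)\right).$$

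The main technical step, which I expect to be the only non-trivial one, is the block decomposition; the delicate point is that $\nu$ is not $f$-invariant, which forces a symmetrization. Fix an auxiliary integer $m\geq 1$. For each $s\in\{0,\dots,m-1\}$, writing $n=s+qm+r$ with $0\leq r<m$ and iterating subadditivity of Shannon entropy yields
$$H_\nu\!\left(\bigvee_{i=0}^{n-1}f^{-i}(\alpha)\right)\;\leq\;\sum_{j=0}^{q-1}H_{f^{s+jm}_*\nu}\!\left(\bigvee_{i=0}^{m-1}f^{-i}(\alpha)\right)+2m\log\#\alpha.$$
Averaging over $s\in\{0,\dots,m-1\}$ and then using concavity of the map $\xi\mapsto H_\xi(\bigvee_{i=0}^{m-1}f^{-i}(\alpha))$ in the measure produces the cleaner bound
$$\frac{1}{n}\log\#X\;\leq\;\frac{1}{m}H_{\mu_n}\!\left(\bigvee_{i=0}^{m-1}f^{-i}(\alpha)\right)+\frac{2m\log\#\alpha}{n},$$
where $\mu_n=\frac{1}{n}\sum_{i=0}^{n-1}f^i_*\nu$ is exactly the measure appearing in the hypothesis.

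The conclusion will then be reached by three approximations performed in the correct order. First, I would choose $m$ large enough that $\frac{1}{m}H_\mu(\bigvee_{i=0}^{m-1}f^{-i}(\alpha))\leq h_\mu(f,\alpha)+\varepsilon/3\leq h_\mu(f)+\varepsilon/3$; this uses only the definition of $h_\mu(f,\alpha)$ as an infimum over $m$. Second, since $\mu(\partial\alpha)=0$, the map $\xi\mapsto H_\xi(\bigvee_{i=0}^{m-1}f^{-i}(\alpha))$ is continuous at $\mu$ in the weak-$*$ topology, so I pick $\eta>0$ so small that $\ud(\mu_n,\mu)<\eta$ gives
$$\frac{1}{m}H_{\mu_n}\!\left(\bigvee_{i=0}^{m-1}f^{-i}(\alpha)\right)\;\leq\;\frac{1}{m}H_\mu\!\left(\bigvee_{i=0}^{m-1}f^{-i}(\alpha)\right)+\frac{\varepsilon}{3}.$$
Third, take $n_0$ so large that $2m\log\#\alpha/n\leq\varepsilon/3$ for all $n\geq n_0$. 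Combining the three estimates yields $\log\#X\leq n(h_\mu(f)+\varepsilon)$ as required. The only mild subtlety left aside is producing a partition $\alpha$ with $\diam(\alpha)<\rho$ and $\mu(\partial\alpha)=0$, which is completely standard and already invoked several times in the paper.
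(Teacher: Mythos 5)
Your proof is correct and is precisely the standard Misiurewicz/Katok--Hasselblatt argument that the paper invokes by citing \cite[Lemma 5.2]{KH}: separated points fall in distinct atoms of the refined partition, subadditivity over blocks of length $m$ averaged over the starting phase, concavity of $\xi\mapsto H_\xi(\cdot)$ to pass to $\mu_n$, and continuity of the entropy of a fixed $\mu$-boundaryless partition in the weak-$*$ topology. Nothing to add.
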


\section{Large deviations for singular hyperbolic attractors}\label{Sec:flow}

Let $X$ be a $C^1$ vector field on $M$ and $(\phi_t)_{t\in\RR}$ be the flow generated by $X$. An attracting set $\Lambda$
is said to be \emph{singular hyperbolic}, if any singularity in $\Lambda$ is hyperbolic, and the time-one map $\phi_1$ admits a partially hyperbolic splitting  $T_\Lambda M=E^{ss}\oplus E^{cu}$ such that $E^{cu}$ is sectionally expanded
(there exists $t>0$ such that for any $x\in\Lambda$ the area along any $2$-plane $E\subset E^{cu}$ increases exponentially when one takes the image by $D\varphi_t$).
An SRB measure for  $(\phi_t)_{t\in\RR}$ is a probability measure which is preserved by the flow and which is SRB for
$\phi_1$ (it is then SRB for any $\phi_t$, $t>0$).

The previous statements allow to recover and improve a bit the results of~\cite{LeYa}.

\begin{Corollary}\label{Cor:basin-singular-hyperbolic}
For any $C^1$ vector field $X$, any singular hyperbolic attracting set $\Lambda$ supports an SRB measure.
More precisely, for Lebesgue almost every point $x$ in the basin of $\Lambda$, any limit measure $\mu\in\cM(x)$
is an SRB measure.
\end{Corollary}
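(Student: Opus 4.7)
The plan is to derive Corollary~\ref{Cor:basin-singular-hyperbolic} from the generalized Pesin inequality of Corollary~\ref{Coro:improve-CY}, applied to the time-one map $f=\phi_1$. On $\Lambda$ the decomposition $T_\Lambda M=E^{ss}\oplus E^{cu}$ is dominated for $\phi_1$ with $E^{ss}$ uniformly contracted, and $\Lambda$ is attracting, so Corollary~\ref{Coro:improve-CY} gives: for Lebesgue almost every $x$ in the basin of $\Lambda$, every $\phi_1$-limit measure $\mu$ of $x$ satisfies
\begin{equation}\label{e.flow-pesin}
h_\mu(\phi_1)\;\geq\;\int\log|\det(D\phi_1|_{E^{cu}})|\,\ud\mu.
\end{equation}
Since flow time-averages $\frac{1}{T}\int_0^T\delta_{\phi_t(x)}\,\ud t$ can be obtained from $\phi_1$-empirical averages by a further integration over one period of the flow, and both sides of~\eqref{e.flow-pesin} are affine in $\mu$, the inequality~\eqref{e.flow-pesin} also holds for every flow-limit measure $\mu\in\cM(x)$; such a $\mu$ is moreover invariant under every $\phi_t$.

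The right-hand side of~\eqref{e.flow-pesin} is then analyzed via the ergodic decomposition $\mu=\int\nu\,\ud\hat\mu(\nu)$. For an ergodic non-atomic $\nu$ the vector field $X$ is non-zero $\nu$-almost everywhere and provides a direction inside $E^{cu}$ with Lyapunov exponent zero, while the sectional expansion of $E^{cu}$ forces every other Lyapunov exponent in $E^{cu}$ to be strictly positive; since $E^{ss}$ is uniformly contracted, one therefore has
$$\int\log|\det(D\phi_1|_{E^{cu}})|\,\ud\nu\;=\;\sum\lambda^+(\nu).$$
For $\nu=\delta_\sigma$ at a singularity $\sigma$ of $X$, sectional expansion at $\sigma$ ensures that the sum of the two smallest real parts of eigenvalues of $DX(\sigma)|_{E^{cu}}$ is positive, hence so is the trace of $DX(\sigma)|_{E^{cu}}$; thus $\int\log|\det(D\phi_1|_{E^{cu}})|\,\ud\delta_\sigma>0$, whereas $h_{\delta_\sigma}(\phi_1)=0$.

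Combining these observations with Ruelle's inequality $h_\nu(\phi_1)\leq\sum\lambda^+(\nu)$ at the ergodic level and the affinity of both sides of~\eqref{e.flow-pesin} with respect to the ergodic decomposition, \eqref{e.flow-pesin} rewrites as
$$\int\Bigl(h_\nu(\phi_1)-\int\log|\det(D\phi_1|_{E^{cu}})|\,\ud\nu\Bigr)\,\ud\hat\mu(\nu)\;\geq\;0.$$
By the previous step the integrand is $\leq 0$ for every ergodic $\nu$ and strictly $<0$ at Dirac masses on singularities, hence $\hat\mu$ gives zero weight to the latter and equality $h_\nu(\phi_1)=\sum\lambda^+(\nu)$ holds for $\hat\mu$-almost every non-atomic component. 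Integrating back one obtains $h_\mu(\phi_1)=\int\sum\lambda^+(z)\,\ud\mu(z)$, so $\mu$ is SRB for $\phi_1$; being also flow-invariant, it is an SRB measure for the flow, and existence follows by picking any $x$ in the basin together with any $\mu\in\cM(x)$. The main obstacle is this last ergodic-decomposition step: Pesin's inequality is only a \emph{global} statement for $\mu$, so one must combine Ruelle's component-wise bound with the strict inequality at singularities in order to rule out ergodic components supported on fixed points of $X$.
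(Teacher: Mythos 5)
Your argument is correct and follows essentially the same route as the paper: apply the generalized Pesin inequality of Appendix~A to the time-one map $\phi_1$ with the dominated splitting $E^{ss}\oplus E^{cu}$, combine it with Ruelle's inequality and sectional expansion at the level of ergodic components to force the entropy equality $h_\nu(\phi_1)=\int\log|\det D\phi_1|_{E^{cu}}|\,\ud\nu=\sum\lambda^+$, and then pass to a flow-invariant measure via $\int_0^1(\phi_s)_*\nu\,\ud s$. The only difference is one of exposition: you make explicit the ergodic-decomposition bookkeeping and the exclusion of Dirac masses at singularities (via $h_{\delta_\sigma}=0<\int\log|\det D\phi_1|_{E^{cu}}|\,\ud\delta_\sigma$), which the paper's proof leaves implicit.
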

\begin{proof}
Theorem~\ref{thm.volume-bound} applied to $\phi_1$ shows that for Lebesgue almost every point $x$ in the basin of $\Lambda$, any limit measure $\nu_0\in\cM(x)$ satisfies
$$h_{\nu_0}(\phi_1)\geq \int\log |\det D\phi_1|_{E^{cu}}|\ud{\nu_0}.$$ 
Ruelle's inequality and the singular hyperbolicity give
$h_{\nu}(\phi_1)\leq \int\log |\det D\phi_1|_{E^{cu}}|\ud{\nu}$ for any invariant measure $\nu$. Hence for
any ergodic component of $\nu$ of $\nu_0$ and $\nu$-a.e. point $z$, one has
$$h_\nu(\phi_1)= \int\log |\det D\phi_1|_{E^{cu}}|\ud\nu=\sum\lambda^+(z).$$
The $\phi$-invariant measure $\mu=\int_0^1(\phi_s)_*(\nu){\rm d}s$ satisfies the same formula and is SRB for $(\phi_t)_{t\in\RR}$.
\end{proof}

With  higher regularity,  we also obtain the uniqueness of the SRB measure.
\begin{theoremalph}\label{Thm:unique-srb-flow}
Let $X$ be a $C^{1+\alpha}$ vector field. Then any
singular hyperbolic transitive attractor $\Lambda$ supports a unique SRB measure
$\mu$. Its basin has full Lebesgue measure in the basin of $\La.$
\end{theoremalph}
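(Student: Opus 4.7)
The plan is to combine three ingredients developed in the paper: the existence part of Corollary~\ref{Cor:basin-singular-hyperbolic}, a flow version of Proposition~\ref{Prop:srb-positive-lyapunov} upgrading the basin of each ergodic SRB measure to an open $\phi$-invariant set up to Lebesgue-null error, and transitivity of $\La$ to rule out two distinct ergodic SRB measures.

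First I would adapt Proposition~\ref{Prop:srb-positive-lyapunov} to the flow setting. Let $\mu$ be an ergodic SRB measure of the flow on $\La$. Each singularity $\sigma\in\La$ is hyperbolic with at least one positive exponent, hence $h_{\delta_\sigma}(\phi_1)=0<\sum\lambda^+(\sigma)$ and $\mu$ is not supported on a singularity. Because $X$ is $C^{1+\alpha}$, Ledrappier's theorem (as in Theorem~\ref{t.ledrappier}) applies to the weak unstable manifolds tangent to $E^{cu}$, giving absolute continuity of the disintegrations of $\mu$ along them. Arguing as in the proof of Proposition~\ref{Prop:srb-positive-lyapunov}, this yields a disc $D\subset\supp(\mu)$ tangent to $E^{cu}$ on which ${\rm Basin}(\mu)$ has full Lebesgue measure; absolute continuity of the strong-stable foliation~\cite{P} then shows that the union $O_0$ of local strong-stable leaves through points of $D$ is an open set meeting $\supp(\mu)$ and containing a full-Lebesgue-measure subset of ${\rm Basin}(\mu)$. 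Setting $O(\mu):=\bigcup_{t\in\RR}\phi_t(O_0)$ produces an open $\phi$-invariant set with $\Leb(O(\mu)\triangle{\rm Basin}(\mu))=0$ and $O(\mu)\cap\La\ne\emptyset$.

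Next I would deduce uniqueness from transitivity. Suppose $\mu_1\ne\mu_2$ are two ergodic SRB measures on $\La$. Then ${\rm Basin}(\mu_1)\cap{\rm Basin}(\mu_2)=\emptyset$, so $\Leb(O(\mu_1)\cap O(\mu_2))=0$; since both sets are open this forces $O(\mu_1)\cap O(\mu_2)=\emptyset$. Thus $O(\mu_1)\cap\La$ and $O(\mu_2)\cap\La$ are disjoint, nonempty, $\phi$-invariant open subsets of $\La$. Pick $y\in\La$ with dense orbit, which exists by transitivity. Density forces $\phi_{t_1}(y)\in O(\mu_1)$ for some $t_1$, and by $\phi$-invariance $\phi_t(y)\in O(\mu_1)$ for all $t\ge t_1$; density also forces $\phi_{t_2}(y)\in O(\mu_2)$ for some $t_2>t_1$, giving $\phi_{t_2}(y)\in O(\mu_1)\cap O(\mu_2)=\emptyset$, a contradiction. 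Hence the ergodic SRB measure $\mu$ on $\La$ is unique.

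For the full-basin statement, Corollary~\ref{Cor:basin-singular-hyperbolic} asserts that for Lebesgue almost every $x$ in the basin of $\La$, every element of $\cM(x)$ is a flow SRB measure; by the ergodic decomposition such a measure is an average of ergodic SRB measures, and uniqueness forces this average to equal $\mu$. Therefore $\cM(x)=\{\mu\}$ and $x\in{\rm Basin}(\mu)$, so ${\rm Basin}(\mu)$ has full Lebesgue measure in the basin of $\La$. The step I expect to be the main obstacle is the first one: carefully justifying the Pesin/Ledrappier input of Proposition~\ref{Prop:srb-positive-lyapunov} in the flow setting, where $E^{cu}$ contains the neutral flow direction and $\La$ may contain hyperbolic singularities, and checking that the saturated set $O(\mu)$ remains open and $\phi$-invariant near $\La$.
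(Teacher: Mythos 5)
Your overall strategy coincides with the paper's: existence from Corollary~\ref{Cor:basin-singular-hyperbolic}, an open set that is essentially the basin of each ergodic SRB measure, transitivity to rule out two such measures, and uniqueness combined with Corollary~\ref{Cor:basin-singular-hyperbolic} again for the full-basin statement. The uniqueness-from-transitivity step and the final step are fine.

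The genuine gap is exactly the one you flag as ``the main obstacle'', and it is not a routine verification. Theorem~\ref{t.ledrappier} gives absolute continuity of disintegrations along a strong unstable lamination tangent to a \emph{uniformly expanded} bundle $E^{uu}$; here $E^{cu}$ is only sectionally expanded, contains the neutral flow direction, and at singularities may even contain contracted directions, so there is no ``weak unstable lamination tangent to $E^{cu}$'' to which that theorem applies. Likewise Proposition~\ref{Prop:srb-positive-lyapunov} assumes all exponents along $E^{cu}$ are positive, which fails for the flow direction. So your proposal does not actually produce the disc $D$ tangent to $E^{cu}$ (i.e.\ of codimension $\dim E^{ss}$) on which the basin has full Lebesgue measure — and such a disc is indispensable, since the strong-stable saturation only yields an \emph{open} set $O_0$ when applied to a disc transverse to $E^{ss}$, not to a Pesin unstable disc $W^u(x)$, which is one dimension short. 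The paper closes this gap as follows: the singular hyperbolicity makes $\mu$ a hyperbolic measure of the flow, whose Pesin unstable manifolds $W^u(y)$ are codimension one inside the center-unstable set $W^{cu}(x)$ and are flow-translates of one another; hence the unstable foliation is \emph{Lipschitz} inside each $W^{cu}$-leaf, and the Ledrappier--Young characterization~\cite{LY} applies to $\phi_1$ under the $C^{1+\alpha}$ hypothesis, giving disintegrations of $\mu$ along the $W^u$-leaves equivalent to Lebesgue. Then ``Lebesgue-a.e.\ point of $W^u(x)$ lies in the basin'' is upgraded, by Fubini along the flow direction, to ``Lebesgue-a.e.\ point of $W^{cu}(x)$ lies in the basin'', and only after this can the argument of Proposition~\ref{Prop:srb-positive-lyapunov} (stable saturation and flow invariance) be run as you describe. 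Without this Lipschitz-regularity observation and the flow-saturation step, the first paragraph of your proof does not go through.
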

\begin{proof}
Corollary~\ref{Cor:basin-singular-hyperbolic} gives the existence.

Let $\mu$ an SRB measure: the singular hyperbolicity implies that $\mu$ is a hyperbolic measure of the flow.
More precisely, to $\mu$-almost every point $x$ is associated its center-unstable set $W^{cu}(x)$,
which is the set of points $y$ such that there exists an increasing homeomorphism $h$ of $\mathbb{R}$
satisfying $d(\phi_t(y),\phi_{h(t)}(x))\to 0$ as $t\to -\infty$. This is an immersed submanifold tangent to $E^{cu}_x$
that is foliated by unstable leaves $W^u(y)$ which are one-codimensional in $W^{cu}(x)$.
The unstable leaves are the images of $W^u(x)$ by the flow. Hence the unstable foliation
is Lipschitz inside the center-unstable leaves of $\mu$.
Applying~\cite{LY} to the diffeomorphism $\phi_1$, the disintegration of $\mu$ along the unstable leaves is equivalent to the Lebesgue measure:
the statement is given for $C^2$ diffeomorphisms, but the proof only uses a $C^{1+\alpha}$-regularity,
once one knows that the unstable lamination is Lipschitz along the center-unstable direction
see~\cite[Theorem A and Section 4.2]{LY}.

Note that any ergodic component of $\mu$ is still an SRB measure,
one will thus assume that $\mu$ is ergodic.
For $\mu$-almost every point $x$, the forward orbit of Lebesgue almost every point $y\in W^u(x)$
equidistributes towards $\mu$. Since $W^{cu}(x)$ can be obtained by flowing the unstable manifold $W^u(x)$,
one deduces that the forward orbit of Lebesgue almost every point $y\in W^{cu}(x)$
equidistributes towards $\mu$.
The same proof as for Proposition~\ref{Prop:srb-positive-lyapunov}
shows that there exists a non-empty open set $U$ which intersects the support of $\mu$
and has the property that the forward orbit of Lebesgue almost every point in $U$ equidistributes towards $\mu$.

If $\mu_1,\mu_2$ are two (ergodic) SRB measures supported on $\Lambda$,
one associates two open sets $U_1,U_2$.
The transitivity of $\Lambda$ implies that there exists a non-empty open subset $V\subset U_1$
having a forward iterate in $U_2$. Hence the forward orbit of almost every point in $V$ equidistributes towards $\mu_1$ and $\mu_2$.
This gives $\mu_1=\mu_2$, hence the uniqueness of the SRB measure.

By Corollary ~\ref{Cor:basin-singular-hyperbolic}, for   Lebesgue a.e. point $x$ in the basin of $\Lambda$, each limit measure of 
$$\frac{1}{t}\int_0^t\delta_{\phi_s(x)}\ud s$$
(as $t\to +\infty$) is an SRB measure. The uniqueness of the SRB measure implies that its basin has full Lebesgue measure in the basin of $\La$.
\end{proof}

We also deduce a large deviation estimate.
\begin{theoremalph}~\label{thm.flow-deviation}
Let $X$ be a $C^1$ vector field and $\La$ be a singular hyperbolic attracting set admitting a unique ergodic SRB measure $\mu$.
Then there exists a neighborhood $U$ of $\La$ such that for any continuous function $\psi:M\to\mathbb{R}$ and any $\e>0$, there exist $a_\e>0$ and $b_\e>0$ such that 
	$$\Leb\bigg(\bigg\{x\in U: \bigg|\frac{1}{t}\int_{0}^t\psi(\phi_s(x))\ud s-\int \psi\ud\mu\bigg|\geq\e\bigg\}\bigg)<a_\e\cdot e^{-tb_\e}\textrm{\: for any $t\in\mathbb{R}^+$}.$$
\end{theoremalph}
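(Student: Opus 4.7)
The plan is to reduce the continuous-time statement to the discrete-time statement of Theorem~\ref{thm.deviation-domination} applied to the time-one map $\phi_1$ and the dominated splitting $E^{ss}\oplus E^{cu}$ associated to the singular hyperbolic attracting set $\Lambda$ (here $E^{cu}$ plays the role of $F$). Given $\psi$ and $\varepsilon>0$, define the continuous function $\tilde\psi(x):=\int_0^1\psi(\phi_s(x))\ud s$. For $t=n+r$ with $n=\lfloor t\rfloor$ and $r\in[0,1)$ one has
$$\frac{1}{t}\int_0^t\psi(\phi_s(x))\ud s=\frac{n}{t}\cdot\frac{1}{n}\sum_{i=0}^{n-1}\tilde\psi(\phi_1^i(x))+\frac{1}{t}\int_n^t\psi(\phi_s(x))\ud s,$$
so for $t$ large enough, a deviation of size $\varepsilon$ on the left forces a deviation of size at least $\varepsilon/2$ of $\frac{1}{n}\sum_{i=0}^{n-1}\tilde\psi(\phi_1^i(x))$ from $\int\tilde\psi\ud\mu=\int\psi\ud\mu$. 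Small values of $t$ are handled by enlarging $a_\varepsilon$, since $\Leb(U)$ is finite.

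The key step is to show that the interval $I(\tilde\psi)$ produced by Theorem~\ref{thm.deviation-domination} is the singleton $\{\int\psi\ud\mu\}$. Let $\nu\in\cM_{\rm inv}(\Lambda,\phi_1)$ satisfy $h_\nu(\phi_1)\ge\int\log|\det D\phi_1|_{E^{cu}}|\ud\nu$, and set $\widetilde\nu:=\int_0^1(\phi_s)_*\nu\ud s$; this is a flow-invariant probability with $\int\tilde\psi\ud\nu=\int\psi\ud\widetilde\nu$. Proceeding exactly as in the proof of Corollary~\ref{Cor:basin-singular-hyperbolic} and applying ergodic decomposition under $\phi_1$, Ruelle's inequality combined with singular hyperbolicity forces each $\phi_1$-ergodic component $\nu_e$ of $\nu$ to be SRB for $\phi_1$. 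Averaging along the flow preserves this property: entropy is invariant under the conjugation by $\phi_s$, and $\phi_1$-invariance of $\nu_e$ together with the cocycle identity for $D\phi_1|_{E^{cu}}$ yield $\int\log|\det D\phi_1|_{E^{cu}}|\ud(\phi_s)_*\nu_e=\int\log|\det D\phi_1|_{E^{cu}}|\ud\nu_e$, so each $\mu_e:=\int_0^1(\phi_s)_*\nu_e\ud s$ is SRB for $\phi_1$ and, being flow-invariant, is SRB for the flow. Decomposing $\mu_e$ ergodically under the flow, every component is again flow-SRB (by affinity of entropy and of the integrated Jacobian), hence coincides with $\mu$ by uniqueness. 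Therefore $\widetilde\nu=\mu$ and $\int\tilde\psi\ud\nu=\int\psi\ud\mu$.

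With $I(\tilde\psi)=\{\int\psi\ud\mu\}$ established, Theorem~\ref{thm.deviation-domination} applied to $\phi_1,\tilde\psi$ and $\varepsilon/2$ yields a neighborhood $U'$ of $\Lambda$ and constants $a'_\varepsilon,b'_\varepsilon>0$ bounding $\Leb\{x\in\bigcap_{i=0}^{n-1}\phi_1^{-i}(U'):|\tfrac{1}{n}\sum_{i=0}^{n-1}\tilde\psi(\phi_1^i x)-\int\psi\ud\mu|\ge\varepsilon/2\}$ by $a'_\varepsilon e^{-nb'_\varepsilon}$. Since $\Lambda$ is an attracting set, one can fix an attracting neighborhood $U\subset U'$ with $\phi_1(\overline U)\subset U$, so that every forward $\phi_1$-orbit starting in $U$ stays in $U'$; this removes the intersection condition on the domain. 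Combining with the approximation in the first paragraph gives the desired estimate on $U$, with constants adjusted for the finitely many small values of $t$. The main obstacle is the identification of $I(\tilde\psi)$ with a single point: the uniqueness hypothesis concerns flow-SRB measures while the dominated splitting deviation theorem supplies $\phi_1$-invariant measures satisfying only Pesin's inequality, and the bridge between these is the averaging $\nu\mapsto\widetilde\nu$, which must be shown both to preserve the SRB property and to commute well with ergodic decomposition.
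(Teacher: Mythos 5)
Your proposal is correct, and it reaches the conclusion by a genuinely different (and in one respect cleaner) route than the paper. The paper discretizes the flow with a \emph{small} time step $t_0$: it approximates $\frac1t\int_0^t\psi\circ\phi_s\,\ud s$ by a Riemann sum over the orbit of $\phi_{t_0}$, chooses $t_0$ outside a countable set so that $\mu$ is ergodic for $\phi_{t_0}$ (Pugh--Shub), and uses this ergodicity, via extremality of $\mu$ among $\phi_{t_0}$-invariant measures, to show that $\mu$ is the \emph{unique} SRB measure of $\phi_{t_0}$, so that $I(\psi)$ for $\phi_{t_0}$ is the singleton $\{\int\psi\,\ud\mu\}$. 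You instead work with the time-one map and the averaged observable $\tilde\psi=\int_0^1\psi\circ\phi_s\,\ud s$, whose Birkhoff sums reproduce the flow integral exactly (no Riemann-sum error, no small $t_0$), and you only need that $\int\tilde\psi\,\ud\nu=\int\psi\,\ud\widetilde\nu$ depends on $\nu$ through its flow average $\widetilde\nu$; identifying $\widetilde\nu$ with $\mu$ then uses only the flow-averaging/affinity argument and the uniqueness of the \emph{flow}-ergodic SRB measure, with no need to undo the averaging and hence no need for Pugh--Shub or a special choice of time step. Both proofs rest on the same core facts (Ruelle's inequality plus sectional expansion to convert Pesin's inequality into the SRB property, affinity under ergodic decomposition, and invariance of the SRB property under $(\phi_s)_*$). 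One small point to tidy: as stated, the theorem asks for a neighborhood $U$ chosen before $\psi$ and $\e$, whereas your forward-invariant $U\subset U'$ depends on them; this is repaired exactly as in the paper by fixing once and for all an attracting neighborhood $U$ of $\La$, noting that every point of $U$ enters the $\e$-dependent region $\bigcap_{i\ge0}\phi_1^{-i}(U')$ after a uniformly bounded time, and absorbing that bounded initial segment (together with the small values of $t$, as you already do) into the constant $a_\e$.
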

\begin{proof}
Let $U$ be an attracting neighborhood of $\Lambda$ in its basin.
From the continuity of $\psi$,
for any $\varepsilon>0$ there exists $t_\e$ large such that for any $t_0>0$ small enough,
any $t>t_\e$ and any $x\in M$, denoting $n=[t/t_0]$ one has
\begin{equation}\label{e.continuity}
\bigg|\frac 1 t \int_0^t\psi(\phi_s(x))\ud s -  \frac 1 n \sum_{i=0}^{n-1} \psi\circ \phi_{it_0}(x) \bigg|<\e/2.
\end{equation}
One can assume that $t_0$ avoids a countable set and by~\cite{PuSh}
the measure $\mu$ is ergodic for the map $\phi_{t_0}$.
We also notice that it is the unique SRB for $\phi_{t_0}$ on $\Lambda$.
Indeed if $\nu$ is an SRB measure of $\phi_{t_0}$, then $\frac{1}{t_0}\int_{0}^{t_0}(\phi_s)_*\nu\ud s$ is an SRB measure for both $\phi_{t_0}$ and $(\phi_t)_{t\in\RR}$. 
Hence $\frac{1}{t_0}\int_{0}^{t_0}(\phi_s)_*\nu\ud s=\mu$.
But since $\mu$ is ergodic it is an extremal point of the set of $\phi_{t_0}$-invariant probability measures.
As a consequence $(\phi_t)_*\nu=\mu$ for any $t\in \RR$ and in particular $\nu=\mu$.

We can then apply Theorem~\ref{thm.deviation-domination} to $\Lambda$ and $\phi_{t_0}$:
there exists a neighborhood $U_\e$ of $\Lambda$ and $a,b_\e>0$ such that for any $n\in\mathbb{N}$

$$\Leb\bigg\{ x\in \bigcap_{i=0}^{n-1}\phi_{-it_0}(U): \ud\bigg(\frac{1}{n}\sum_{i=0}^{n-1}\psi(\phi_{it_0}(x)), \int \psi \ud\mu\bigg)\geq\e/2\bigg\}
<a\cdot e^{-n b_\e}.$$
Combined with~\eqref{e.continuity}, this gives the result for points in $\bigcap_{t\geq  0}\phi_{-t}(U)$ and $n\geq t/t_0$.
Since $U$ is an attracting neighborhood of $\Lambda$ in its basin, there exists $N\geq 1$ such that
for any $x\in U$, the image $\phi_N(x)$ belongs to $\bigcap_{t\geq  0}\phi_{-t}(U)$.
One thus concludes the large deviation estimate for any $t>0$ by considering $a_\e$ large enough.
\end{proof}

\small

\vskip 5pt

\begin{tabular}{l l l}
\emph{\normalsize Sylvain Crovisier}
& \quad &
\emph{\normalsize Dawei Yang}
\medskip\\

\small Laboratoire de Math\'ematiques d'Orsay
&& \small School of Mathematical Sciences\\
\small CNRS - Universit\'e Paris-Sud
&& \small Soochow University\\
\small Orsay 91405, France
&& \small Suzhou, 215006, P.R. China\\
\small \texttt{Sylvain.Crovisier@math.u-psud.fr}
&& \small \texttt{yangdw1981@gmail.com}\\
&& \small \texttt{yangdw@suda.edu.cn}
\medskip\\
\emph{\normalsize Jinhua Zhang}
\medskip\\

\small Laboratoire de Math\'ematiques d'Orsay
 \\
\small CNRS - Universit\'e Paris-Sud
 \\
\small Orsay 91405, France
\\
\small \texttt{jinhua.zhang@math.u-psud.fr, zjh200889@gmail.com}
 
\end{tabular}


\begin{thebibliography}{XXW}
	
\bibitem[ABV]{ABV}
J. Alves, C. Bonatti and M. Viana,
SRB measures for partially hyperbolic systems whose central direction is mostly expanding.
\emph{Invent. Math.} \textbf{140} (2000), 351--398.

\bibitem[BDV]{BDV}
C. Bonatti, L. D\'iaz and M. Viana,
\emph{Dynamics beyond uniform
hyperbolicity. A global geometric and probabilistic perspective.}
Encyclopaedia of Mathematical Sciences {\bf 102}, Springer-Verlag (2005).
 
\bibitem[BMOS]{BMOS}
C. Bonatti, S. Minkov, A. Okunev and S.  Shilin,
A $C^1$ Anosov diffeomorphism with a horseshoe that attracts almost any point.
\emph{Funktsional. Anal. i Prilozhen.} \textbf{51} (2017), 83--86.
Translated in \emph{Funct. Anal. Appl.} \textbf{51} (2017), 144--147. 

\bibitem[BV]{BV}
C. Bonatti and M. Viana,
SRB measures for partially hyperbolic systems whose central direction is mostly contracting.
\emph{Israel J. Math.} \textbf{115} (2000), 157--193. 

\bibitem[Bo]{Bow75}
R. Bowen,
Equilibrium states and the ergodic theory of Anosov diffeomorphisms.
{\it Lecture Notes in Mathematics} {\bf 470}, Springer-Verlag 1977.

\bibitem[Br]{Brown}
A. Brown,
\emph{Smoothness of stable holonomies inside center-stable manifolds and the $C^2$-hypothesis in Pugh-Shub and Ledrappier-Young theory}. arXiv:1608.05886.

\bibitem[CQ]{CQ}
J. Campbell and A. Quas,
A generic $C^1$ expanding map has a singular S-R-B measure.
\emph{Comm. Math. Phys.} \textbf{221} (2001), 335--349. 

\bibitem[CaYa]{CaY}
 Y. Cao and D. Yang,
On Pesin's entropy formula for dominated splittings without mixed behavior.
{\it J. Differential Equations} {\bf261} (2016), 3964--3986.

\bibitem[CE]{CE}
E. Catsigeras and H.  Enrich,
SRB-like measures for $C^0$ dynamics.
\emph{Bull. Pol. Acad. Sci. Math.} \textbf{59} (2011), 151--164.

\bibitem[CCE]{CCE}
E. Catsigeras, M. Cerminara and H. Enrich,
The Pesin entropy formula for $C^1$ diffeomorphisms with dominated splitting.
\emph{Ergodic Theory Dynam. Systems} \textbf{35} (2015), 737--761. 

\bibitem[CoYo]{CoYo}
W. Cowieson and L. Young,
SRB measures as zero-noise limits.
\emph{Ergodic Theory Dynam. Systems} \textbf{25} (2005), 1115--1138.

\bibitem[CP]{CP}
S. Crovisier and R. Potrie,
\emph{Introduction to partially hyperbolic dynamics}. Lecture notes for a
minicourse at ICTP (2015), available in the web page of the conference and the authors.

\bibitem[DP]{DP}
M. Denker and W.  Philipp,
Approximation by Brownian motion for Gibbs measures and flows under a function.
\emph{ Ergodic Theory Dynam. Systems} \textbf{4} (1984), 541--552. 

\bibitem[D]{D}
D. Dolgopyat,
Limit theorems for partially hyperbolic systems.
\emph{Trans. Amer. Math. Soc.} \textbf{356} (2004), no. 4, 1637--1689.

\bibitem[G]{guivarch}
Y. Guivarc'h. Propri\'et\'es ergodiques, en mesure infinie, de certains syst\`emes dynamiques fibr\'es.
\emph{Ergodic Theory Dynam. Systems} \textbf{9} (1989), 433--453.

\bibitem[HK]{HK} F. Hofbauer and G. Keller,
Quadratic maps without asymptotic measure.
\emph{Comm. Math. Phys.} \textbf{127} (1990), 319--337. 

\bibitem[HHW]{HHW} H. Hu, Y. Hua and  W. Wu,
Unstable Entropies and Variational Principle for Partially Hyperbolic Diffeomorphisms.
\emph{Adv. Math.} \textbf{321} (2017), 31--68.

\bibitem[HYY]{HYY} H. Hua, F. Yang and J. Yang,
\emph{New criterion of physical measures for partially hyperbolic diffeomorphisms.}
arXiv:1812.03818.

\bibitem[K]{kan}
I. Kan,
Open sets of diffeomorphisms having two attractors, each with an everywhere dense basin.
\emph{Bull. Amer. Math. Soc.} \textbf{31} (1994), 68--74. 

\bibitem[KH]{KH} A. Katok and B. Hasselblatt,
\emph{Introduction to the modern theory of dynamical systems}.
Encyclopedia of Mathematics and its Applications
\textbf{54}. Cambridge University Press (1995).

\bibitem[LeYa]{LeYa} R. Leplaideur and D. Yang, 
SRB measure for higher dimensional singular partially hyperbolic flows.
\emph{Ann. Inst. Fourier} \textbf{67} (2017), 2703--2717.

\bibitem[L]{L} F. Ledrappier,
Propri\'et\'es ergodiques des mesures de Sinai.
\emph{Inst. Hautes \'Etudes Sci. Publ. Math.} \textbf{59} (1984), 163--188.

\bibitem[LS]{LS}  F. Ledrappier and J. Strelcyn,
A proof of the estimation from below in Pesin's entropy formula.
\emph{Ergodic Theory Dynam. Systems} \textbf{2} (1982), 203--219.

\bibitem[LeYo1]{LY}  F.  Ledrappier  and L-S.  Young,
The metric entropy of diffeomorphisms. I. Characterization of measures satisfying Pesin's entropy formula.
\emph{Ann. of Math.} \textbf{122} (1985), 509--539.

\bibitem[LeYo2]{LY1}  F.  Ledrappier  and L-S.  Young,
The metric entropy of diffeomorphisms. II. Relations between entropy, exponents and dimension.
\emph{Ann. of Math.} \textbf{122} (1985), 540--574.

\bibitem[Live]{Liverani}C. Liverani,   Decay of correlations. \emph{Ann. of Math.}  (2) \textbf{142} (1995), no. 2, 239--301.  

\bibitem[Livs]{Li}A. Liv\v{s}ic,
Cohomology of dynamical systems (Russian).
\emph{Izv. Akad. Nauk SSSR Ser. Mat.} \textbf{36} (1972), 1296--1320. 

\bibitem[M]{M}
R. Ma\~n\'e,
A proof of Pesin's formula.
\emph{Ergodic Theory Dynamical Systems} \textbf{1} (1981), 95--102.

\bibitem[PP]{PaPo} W. Parry and M. Pollicott,
Zeta functions and the periodic orbit structure of hyperbolic dynamics.
\emph{Ast\'erisque} \textbf{187--188} (1990).

\bibitem[P]{P} Y. Pesin,
Characteristic Ljapunov exponents, and smooth ergodic theory.
\emph{Uspehi Mat. Nauk} \textbf{32} (1977), 55--112.

\bibitem[PeSi]{PS} Y. Pesin and  Y. Sinai,
Gibbs measures for partially hyperbolic attractors.
\emph{Ergodic Theory Dynam. Systems} \textbf{2} (1982), 417--438.

\bibitem[PuSh1]{PuSh} C. Pugh and M. Shub,
Ergodic elements of ergodic actions.
\emph{Compositio Math.} \textbf{23} (1971), 115--122.

\bibitem[PuSh2]{PuSh2}C. Pugh and M.  Shub,
Stable ergodicity and julienne quasi-conformality.
\emph{J. Eur. Math. Soc.}  \textbf{2} (2000),1--52. 

 \bibitem[Q]{Q}H. Qiu, 
Existence and uniqueness of SRB measure on $C^1$ generic hyperbolic attractors.
 \emph{Comm. Math. Phys.} \textbf{302} (2011), 345--357.

\bibitem[RY]{RY}C. Robinson and L. Young,
Nonabsolutely continuous foliations for an Anosov diffeomorphism. 
\emph{Invent. Math.} \textbf{61} (1980), 159--176. 

\bibitem[Ro]{Ro} V. A. Rokhlin,
Lectures on the entropy theory of measure-preserving transformations.
\emph{Russ. Math. Surveys} \textbf{22} (1967), 1--52.

\bibitem[Ru1]{Ru3} D. Ruelle,
A measure associated with axiom-A attractors.
\emph{Amer. J. Math.} \textbf{98} (1976), 619--654.

\bibitem[Ru2]{Ru2} D. Ruelle,
An inequality for the entropy of differentiable maps.
\emph{Bol. Soc. Brasil. Mat.} \textbf{9} (1978), 83--87.

\bibitem[Ru3]{Ru4}  D. Ruelle,
Historical behaviour in smooth dynamical systems.
\emph{Global analysis of dynamical systems} \textbf{63--66}. \emph{Inst. Phys.}, Bristol (2001).

\bibitem[S]{Sinai}Y. Sinai,
Gibbs measures in ergodic theory.
\emph{Uspehi Mat. Nauk} \textbf{27} (1972), 21--64.

\bibitem[T1]{T1} F. Takens,
Heteroclinic attractors: time averages and moduli of topological conjugacy.
\emph{Bull. Braz. Math. Soc.} \textbf{25} (1994), 107--120.

\bibitem[T2]{T2} F. Takens,
Orbits with historic behaviour, or non-existence of averages.
\emph{Nonlinearity} \textbf{21} (2008), 33--36.

\bibitem[VY]{VY} M. Viana and J. Yang,
Measure-theoretical properties of center foliations.
\emph{ Modern theory of dynamical systems}. \emph{Contemp. Math.}
\textbf{692}. Amer. Math. Soc., Providence (2017), 291--320.

\bibitem[WWZ]{WWZ}X. Wang, L. Wang and Y.  Zhu,
Formula of entropy along unstable foliations for $C^1$
diffeomorphisms with dominated splitting.
\emph{Discrete Contin. Dyn. Syst.} \textbf{38} (2018), 2125--2140.

\bibitem[Y]{Yan16}J. Yang,
\emph{Entropy along expanding foliations}. arxiv:1601.05504.

\end{thebibliography}
\end{document}